\newtheorem{lemma}{Lemma}[section]
\newtheorem{theorem}[lemma]{Theorem}
\newtheorem{proposition}[lemma]{Proposition}
\newtheorem{corollary}[lemma]{Corollary}
\theoremstyle{definition} 
\newtheorem{definitionnodiamond}[lemma]{Definition}
\newtheorem{examplenodiamond}[lemma]{Example}
\newtheorem{remarknodiamond}[lemma]{Remark}
\newenvironment{definition}{\begin{definitionnodiamond}}{\hfill\ensuremath\blacklozenge\end{definitionnodiamond}}
\newenvironment{example}{\begin{examplenodiamond}}{\hfill\ensuremath\blacklozenge\end{examplenodiamond}}
\newenvironment{remark}{\begin{remarknodiamond}}{\hfill\ensuremath\blacklozenge\end{remarknodiamond}}
\let\xx@thm\@thm
\renewcommand\qedhere{\qed}
\numberwithin{equation}{section}
\newcommand{\vtens}{\,\bar{\otimes}\,}
\crefname{section}{Section}{Sections}
\crefname{subsection}{}{Subsections}
\crefname{definition}{Definition}{Definitions}
\crefname{definitionnodiamond}{Definition}{Definitions}
\crefname{example}{Example}{Examples}
\crefname{examplenodiamond}{Example}{Examples}
\crefname{remark}{Remark}{Remarks}
\crefname{remarknodiamond}{Remark}{Remarks}
\crefname{convention}{Convention}{Conventions}
\crefname{lemma}{Lemma}{Lemmas}
\crefname{proposition}{Proposition}{Propositions}
\crefname{corollary}{Corollary}{Corollaries}
\crefname{theorem}{Theorem}{Theorems}
\crefname{assumption}{Assumption}{Assumptions}
\crefname{equation}{}{}
\crefname{align}{}{}
\crefname{proofstep}{Step}{Steps}
\newcommand{\cst}{\ifmmode\mathrm{C}^*\else{$\mathrm{C}^*$}\fi}
\newcommand{\wst}{\ifmmode\mathrm{C}^*\else{$\mathrm{W}^*$}\fi}
\newcommand{\st}{\;\vline\;}
\newcommand{\ot}{\otimes}
\newcommand{\bcls}{\mathrm{cls}}
\newcommand{\sA}{\mathsf{A}}
\newcommand{\sB}{\mathsf{B}}
\newcommand{\sC}{\mathsf{C}}
\newcommand{\sM}{\mathsf{M}}
\newcommand{\sN}{\mathsf{N}}
\newcommand{\sH}{\mathsf{H}}
\newcommand{\sX}{\mathsf{X}}
\newcommand{\sY}{\mathsf{Y}}
\newcommand{\sK}{\mathsf{K}}
\newcommand\bA{\mathbb A}
\newcommand\bB{\mathbb B}
\newcommand\bC{\mathbb C}
\newcommand\bD{\mathbb D}
\newcommand\bF{\mathbb F}
\newcommand\bG{\mathbb G}
\newcommand\bH{\mathbb H}
\newcommand\bK{\mathbb K}
\newcommand\bL{\mathbb L}
\newcommand\bM{\mathbb M}
\newcommand\bN{\mathbb N}
\newcommand\bP{\mathbb P}
\newcommand\bQ{\mathbb Q}
\newcommand\bR{\mathbb R}
\newcommand\bT{\mathbb T}
\newcommand\bZ{\mathbb Z}
\newcommand\cM{\mathcal M}
\newcommand\cO{\mathcal O}
\newcommand\cd{\textrm{cd}}
\newcommand{\id}{\textup{id}}
\newcommand{\I}{\mathds{1}}
\newcommand{\hh}[1]{\widehat{#1}}
\newcommand{\GG}{\mathbb{G}}
\newcommand{\RR}{\mathbb{R}}
\newcommand{\KK}{\mathbb{K}}
\newcommand{\cls}{\sigma-\textrm{cls}}
\newcommand{\HH}{\mathbb{H}}
\newcommand{\ww}{\mathrm{W}}
\newcommand{\WW}{{\mathds{V}\!\!\text{\reflectbox{$\mathds{V}$}}}}
\newcommand{\Ww}{\mathds{W}}
\newcommand{\wW}{\text{\reflectbox{$\Ww$}}\:\!}
\newcommand{\Vv}{\mathds{V}}
\newcommand{\vV}{\text{\reflectbox{$\Vv$}}\:\!}
\newcommand{\hGG}{\hh{\GG}}
\DeclareMathOperator{\C}{C}
\DeclareMathOperator{\M}{M}
\DeclareMathOperator{\Mor}{Mor}
\DeclareMathOperator{\Linf}{L^\infty\!\!\;}
\DeclareMathOperator{\Ltwo}{L^2\!\!\;}
\numberwithin{equation}{section}
\title{Fundamental isomorphism theorems for quantum groups}
\author{Alexandru Chirvasitu,\quad   Souleiman Omar Hoche,\quad Pawe{\l} Kasprzak}
\begin{document}

\date{}

\newcommand{\Addresses}{{
  \bigskip
  \footnotesize

  \textsc{Department of Mathematics, University
    of Washington, Seattle, WA 98195-4350, USA}\par\nopagebreak
  \textit{E-mail address}: \texttt{chirva@uw.edu}

  \medskip

  \textsc{Laboratoire de Math\'ematiques de Besan\c{c}on, Universit\'e de Bourgogne Franche-Comt\'e, 16, Route de Gray, 25030 Besan\c{c}on Cedex, France}\par\nopagebreak
  \textit{E-mail address}: \texttt{hoche.souleiman\_omar@univ-fcomte.fr}

  \medskip

  \textsc{Department of Mathematical Methods in Physics, Faculty of Physics, University of Warsaw, Poland}\par\nopagebreak
  \textit{E-mail address}: \texttt{pawel.kasprzak@fuw.edu.pl}

}}

\maketitle

\begin{abstract}
  The lattice of subgroups of a group is the subject of numerous
  results revolving around the central theme of decomposing the group
  into ''chunks'' (subquotients) that can then be compared to one
  another in various ways.  Examples of results in this class would be
  the Noether isomorphism theorems, Zassenhaus' butterfly lemma, the
  Schreier refinement theorem for subnormal series of subgroups, the
  Dedekind modularity law, and last but not least the Jordan-H\"older
  theorem.

  We discuss analogues of the above-mentioned results in the context
  of locally compact quantum groups and linearly reductive quantum
  groups. The nature of the two cases is different: the former is
  operator algebraic and the latter Hopf algebraic, hence the
  corresponding two-part organization of our study. Our intention is
  that the analytic portion be accessible to the algebraist and vice versa.

  The upshot is that in the locally compact case one often needs
  further assumptions (integrability, compactness, discreteness).  In
  the linearly reductive case on the other hand, the quantum versions
  of the results hold without further assumptions. Moreover the case
  of compact / discrete quantum groups is usually covered by both the
  linearly reductive and the locally compact framework, thus providing
  a bridge between the two.
\end{abstract}

\noindent {\em Key words: locally compact quantum group, discrete quantum group, linearly reductive quantum group, Zassenhaus lemma, Schreier refinement theorem, Jordan-H\"older theorem}

\vspace{.5cm}

\noindent{MSC 2010: 46L89; 46L85; 46L52; 16T20; 20G42}


\section*{Introduction}

The theory of quantum groups has been a rich and fruitful one, as
evidenced by the many excellent monographs on the subject
\cites{CP,K,KS} and the references therein. As the vastness of the
field would by necessity make any attempt at documenting the
literature incomplete, we cite only a select few sources in this
introduction and instead refer the reader to the papers that are more
immediately relevant for us in the main body of the paper.

The appropriately ill-defined concept of a quantum group is flexible
enough to allow for several branches of the theory, that continue to
develop vigorously but largely independently. In this paper we draw a
rough distinction between two flavors of quantum-group-theoretic
results: those of an analytic nature, where the objects to be studied
are operator algebras ($\C^*$ or von Neumann) that mimic the behavior
of algebras of (continuous, essentially bounded, etc.) functions on a
locally compact group (see e.g. \cite{KV} and the references cited
there for this perspective), and those of a purely algebraic
character, whereby the quantum groups are recast as Hopf algebras
(\cite{D} itself, where the term `quantum group' seems to have been
coined, or numerous other sources, such as \cite{AD}, where a category
of quantum groups is defined explicitly).

While there is a common core of notions to the two branches
(irreducible representations, Pontryagin-type duality, etc.), the
techniques used in practice and the attendant technical difficulties
are often specific to either the analytic or the algebraic
framework. For instance, as we will see below, for locally compact
quantum groups one obstruction to obtaining the types of results we
seek will be the lack of {\it integrability} (in the sense of
\cite{KKS}) for a quantum group action on a non-commutative space.

On the other hand, for algebraic quantum groups, perhaps not
surprisingly, the representation theory of quantum groups (i.e. the
study of modules and comodules over the respective Hopf algebras) is
anchored to purely algebraic technical conditions such as flatness
over subalgebras or coflatness over quotient coalgebras
(\cites{tak,schn,MW} provide a selection).

In the present paper we study various problems relating to the lattice
of quantum subgroups of a quantum group, and do so as a case study in
the contrasts and similarities between the two approaches to quantum
group theory mentioned above. The results in question are analogues of
Noether's isomorphism theorems, the Dedekind modularity law for the
lattice of subgroups of a group, the so-called butterfly lemma due to
Zassenhauss, the Schreier refinement theorem and Jordan-H\"older
theorem; we also study ancillary topics such as normal series and
composition series for quantum groups.

We develop the necessary machinery in parallel in order to accommodate
both the operator-algebraic and the purely algebraic perspectives, but
the results overlap when we discuss compact or discrete quantum
groups, both of which are amenable to a non-analytic treatment via the
general theory of CQG algebras \cite{DK}.

We thus sometimes obtain two independent proofs for certain results
where the overlap occurs. This should allow the analytically-minded
reader to appreciate the advantages that the algebraic machinery
sometimes affords. Similarly, the more algebraically inclined reader
may catch a glimpse of the difficulties specific to the topological
nature of locally compact quantum groups. For these reasons, it is our
hope that the problems at hand might elicit the interest of both
operator algebraists and pure algebraists working on quantum groups
from typically different perspectives.

There are connections to prior work in various particular cases. In
\cite{MR3119236} S. Wang establishes the Third Isomorphism Theorem for
compact quantum groups. On the other hand, S. Natale \cite{natale}
proves a Second Isomorphism theorem, a Zassenhauss lemma, a Schreier
refinement theorem and a Jordan-H\"older theorem for finite
dimensional Hopf algebras. The First Isomorphism Theorem (holding
trivially in the algebraic case) was recently established in the
locally compact context under an \textit{integrability} assumption
(see \cite{KKS}).


The structure of the paper is as follows. 

\Cref{1} is devoted to some preliminary material, such as useful definitions and results. We expand on this background material in \Cref{2}, where we prove some general auxiliary results about the lattice of closed quantum subgroups which might be of some independent interest. We hope to develop some of these in future work. 

In \Cref{se.lr} we treat the case of linearly reductive quantum groups, proving analogues of various results from classical group theory: the Second and the Third  Isomorphism Theorems, Dedekind's modularity law, the Zassenhauss lemma, Schreier's refinement theorem and the Jordan-H\"older theorem. 

Finally \Cref{se.lc} parallels \Cref{se.lr} in terms of the results we prove: the Second and Third  Isomorphism Theorems, the  Dedekind modularity law for locally compact quantum group, etc. As mentioned before, some of the subtleties intrinsic to the locally compact topology will arise in the form of additional assumptions we will have to make in many of the results.

\subsection*{Acknowledgements} We thank A. Skalski and U. Franz for reading and improving the   early version of this paper. 
AC was partially supported by an AMS-Simons Travel Grant and by NSF grant DMS - 1565226.
PK was partially supported by the NCN (National Centre of Science) grant 2015/17/B/ST1/00085. 
SOH acknowledges support by the PHC PROCOPE 33446QL.  We thank A. Skalski for useful discussions

\section{Preliminaries}\label{1}

\subsection{Preliminaries for locally compact quantum groups}\label{subse.lc}
The theory of locally compact quantum groups is formulated in terms of operator algebras. Operator algebra  theory is divided into two parts. In order to explain this division  let us fix a  Hilbert space $\sH$. The set of all bounded operators acting on $\sH$ forms a normed $*$-algebra which we denote by $B(\sH)$. This algebra, except the norm topology carries a host of locally convex topologies:  strong, $\sigma$-strong, weak, $\sigma$-weak and others. Although the aforementioned  distinction does not depend on the choice of the topology listed above, we choose the   $\sigma$-weak topology on $B(\sH)$ for its description. In this paper the scalar product $(\cdot|\cdot)$ on a Hilbert space will be linear in the second variable.
\begin{definition} Let $I$ be a directed set and $\sH$   a Hilbert space. Let $(T_i)_{i\in I}$ be a net of bounded operators acting on $\sH$ and let $T\in B(\sH)$. We say that  $(T_i)_{i\in I}$ $\sigma$-weakly converges to $T\in  B(\sH)$ if for all sequences $(\xi_n)_{n\in\bN}, (\zeta_n)_{n\in\bN}\in\sH$ satisfying $\displaystyle\sum_{n=1}^\infty\|\xi_n\|^2<\infty, \sum_{n=1}^\infty\|\zeta_n\|^2<\infty$ we have 
\[\lim_{i}\sum_{n=1}^\infty(\xi_n|T_i\zeta_n) = \sum_{n=1}^\infty(\xi_n|T\zeta_n).\]  We say that  $(T_i)_{i\in I}$  $\sigma$-$*$ strongly converges to  $T$ if \begin{align*}\lim_{i}\sum_{n=1}^\infty \|(T-T_i)\zeta_n\|^2 =& 0,\\ \lim_{i}\sum_{n=1}^\infty \|(T^*-T^*_i)\zeta_n\|^2  =& 0.\end{align*} 
\end{definition}
\begin{definition}\label{def:CvN}
Let $\sH$ be a Hilbert space.  
\begin{itemize}
    \item[(i)] A  $*$-subalgebra $\sA$ of $B(\sH)$ which is closed in the norm topology is  called a concrete $\C^*$-algebra.
    \item[(ii)] A  unital $*$-subalgebra $\sN$ of $B(\sH)$ which is closed in the $\sigma$-weak topology is called von Neumann  algebra.
\end{itemize}
\end{definition}
 Usually  we shall skip the term concrete and say that $\sA\subset B(\sH)$  is a $\C^*$-algebra. 
 
Let  $X\subset B(\sH)$ be a non-empty subset. The commutant  $X'$ of $X$ is defined as 
\[X' = \{y\in B(\sH):xy = yx \textrm{ for all } x\in X\}.\] We shall write $X'' = (X')'$. The famous bicommutant theorem implies  that a   *-subalgebra $\sA\subset B(\sH)$ is a von Neumann algebra if and only if $\sA = (\sA')'$.

Let $\sY$ be a topological vector space and $\sX $  a non-empty subset of $\sY$. The closure of the linear span of $\sX$ will be denoted $\sX^{\bcls} $. If $\sX$ is a $\C^*$-algebra then the norm closure of the linear span of $\sX$ will also be denoted by $\sX^{\|\cdot\|-\bcls}$. If $\sX$ is a von Neumann algebra then the $\sigma$-weak closure of the linear span of $\sX$ will   be denoted by $\sX^{ \cls}$. 

Given a pair of  $\C^*$-algebras $\sA_1\subset B(\sH_1)$ and $\sA_2\subset B(\sH_2)$, the (spatial) tensor product $\sA_1\otimes  \sA_2\subset B(\sH_1\otimes\sH_2 )$ is defined as 
\[\sA_1\otimes  \sA_2 = \{x\otimes y:x\in \sA_1,y\in \sA_2\}^{\|\cdot\|-\bcls}.\]
Similarly, given a pair of  von Neumann  algebras $\sN_1\subset B(\sH_1)$ and $\sN_2\subset B(\sH_2)$,  we  define 
\[\sN_1\vtens   \sN_2 = \{x\otimes y:x\in \sN_1,y\in \sN_2\}^{\cls}.\]

The Banach dual of the Banach space $(B(\sH), \|\cdot\|)$ will be denoted by $B(\sH)^*$. For $\zeta,\xi\in \sH$ we define a bounded functional $\omega_{\zeta,\xi}\in B(\sH)^*$: $\omega_{\zeta,\xi}(T) = (\zeta|T\xi)$ for all $T\in B(\sH)$. 
Let us consider a subset  $X$ of $B(\sH)^*$:
\[X = \{\omega_{\zeta,\xi}:\zeta,\xi\in\sH\}.\]
We shall denote  $ B(\sH)_* = X^{\|\cdot\|-\bcls}$. We say that  $\omega\in B(\sH)_*$ is a normal functional on $B(\sH)$. We have $(B(\sH)_*)^* = B(\sH)$ and the $\sigma$-weak topology coincide with the weak $*$-topology on $B(\sH)$.  
 
There is an abstract version of a (concrete) concept of a $ \C^*$-algebra and a von Neumann algebra formulated in \Cref{def:CvN}.  
\begin{definition}
Let $\sA$ be a Banach $*$-algebra. We say that $\sA$ is a $\C^*$-algebra if the $\C^*$-identity $\|a^*a\| = \|a\|^2$ holds for all $a\in \sA$. Let $\sN$ be a $\C^*$-algebra. We say that   $\sN$ is   a $W^*$-algebra if $\sN$ admits a predual Banach space. 
\end{definition}
Every $\C^*$-algebra can be identified with a concrete $\C^*$-algebra. A $\C^*$-algebra $\sN$ can be identified with a von Neumann algebra if and only if $\sN$ is a $W^*$-algebra. The predual space of a $W^*$-algebra $\sN$ is uniquely determined by $\sN$ and it will be denoted by $\sN_*$
 
 In this paper we shall always consider concrete $\C^*$-algebras which are non-degenerate.
 \begin{definition}
 Let $\sA\subset B(\sH)$ be a concrete $\C^*$-algebra. We say that $\sA$ is   non-degenerate if   $\bigcap_{a\in \sA} \ker a = \{0\}$.

 Let $\sA\subset B(\sH)$ be a non-degenerate $ \C^*$-algebra. The   $ \C^*$-algebra 
 \[\M(\sA) = \{x\in B(\sH): xa,ax\in \sA\}\] is called a {\it multiplier $\C^*$-algebra} of $\sA$. \end{definition}
 It can be proved that the multiplier $\C^*$-algebra   $\M(\sA)$ of a concrete $\C^*$-algebra $\sA\subset B(\sH)$ does not depend on the embedding $\sA\subset B(\sH)$. To be more precise if  $\sK$ is a Hilbert space and  $\pi:\sA\to B(\sK)$ is an injective $*$-homomorphisms then  $\pi(\sA)$ is a $\C^*$-subalgebra of $B(\sK)$ and $\M(\sA)$ and $\M(\pi(\sA))$ are isomorphic (as $\C^*$-algebras). 
 
Let  $\sB$ be a $\C^*$-algebra  and $\sC$   a $\C^*$-subalgebra of $\M(\sB)$. The set 
\[\{cb:c\in\sC, b\in\sB\}^{\|\cdot\|-\bcls}\] will be denoted $\sC\sB$. Let $\pi:\sA\to\M(\sB)$ be a $*$-homomorphism. We say that $\pi$ is non-degenerate if $\pi(\sA)\sB = \sB$. The set of non-degenerate $*$-homomorphisms from $\sA$ to $\M(\sB)$ will  be denoted by $\Mor(\sA,\sB)$. It can be checked that there exists a unique $*$-homomorphism  $\overline{\pi}:\M(\sA)\to\M(\sB)$ such that for all $x\in \M(\sA)$ and $a\in \sA$ we have $ \pi (xa) = \overline{\pi}(x)\pi(a)$. In particular $\overline{\pi}$ extends $\pi$ and in what follows this extension will be denoted by $\pi$. Note that for $\pi\in\Mor(\sA,\sB)$ and $\rho\in\Mor(\sB,\sC)$ we can form $\rho\circ\pi\in\Mor(\sA,\sC)$. This composition  gives rise to the category of $\C^*$-algebras with $\Mor(\sA,\sB)$ being morphisms. 

Let $\sN$ and $\sM$ be von Neumann  algebras. A unital $*$-homomorphism $\pi:\sN\to\sM$ is said to be normal if  it is continuous in the $\sigma$-weak topologies. 
The set of positive elements of $\sN$ will be denoted by $\sN^+$.
\begin{definition}
Let $\sM$ be a von Neumann algebra. A weight on $\sM$ is a function $\psi:\sM^+\to\mathbb{R}_{\geq 0}\cup\{\infty\}$ such that $\psi(0) = 0$, $\psi(x+y)= \psi(x)+\psi(y)$ and $\psi(tx) = t\psi(x)$ for all $t\in  \mathbb{R}_{\geq 0}$ and $x,y\in \sM^+$. We say that $\psi$ is normal if it is lower semi-continuous in the $\sigma$-weak topology on $\sM^+$. We say that $\psi$ is semifinite if the set 
\[\{x\in\sM^+:\psi(x)<\infty\}\] is $\sigma$-weakly dense in $\sM^+$. We say that that $\psi$ is faithful if  $\psi(x) = 0\implies x = 0$. A normal semifinite faithful weight will be called an n.s.f. weight.
\end{definition}

Let $\psi$ be an n.s.f. weight on $\sM$. Then we define the following sets:
\begin{itemize}
\item $\mathcal{M}^+_\psi = \{x\in\sM^+:\psi(x)<\infty\}$,
\item  $\mathcal{N}_\psi = \{x\in\sM: \psi(x^*x)<\infty\}$,
\item  $\mathcal{M}_\psi = \textrm{Lin}\mathcal{M}^+_\psi$.
\end{itemize}
Let us note that $\mathcal{N}_\psi$ forms a left ideal in $\sN$. It can be checked that $\mathcal{M}_\psi = \textrm{Lin}\{x^*y:x,y\in\mathcal{N}_\psi\}$ and $\psi$ yields a  linear map $\psi:\mathcal{M}_\psi\to\mathbb{C}$. 

The GNS-construction based on $\psi$ is a triple $(\sH_\psi,\pi_\psi,\eta)$ where $\sH_\psi$ is a Hilbert space $\pi_\psi:\sN\to B(\sH_\psi)$ is a normal $*$-homomorphism and $\eta:\mathcal{N}_\psi\to\sH_\psi$ is a  $\sigma$-$*$ strongly closed linear map  such that
\begin{itemize}
\item $(\eta(x)|\eta(y)) = \psi(x^*y)$ for all $x,y\in\mathcal{N}_\psi$,
\item $\eta(xy) = \pi_\psi(x)\eta( y)$ for all $x\in\sN$ and $y\in\mathcal{N}_\psi$.
\end{itemize}
A GNS construction for $\psi$ always exists and is essentially unique. 

For  the theory of locally compact quantum groups we refer to \cites{univ,KV}. 
\begin{definition}
 A von Neumann algebraic locally compact quantum group is a quadruple $\mathbb{G} = (\sM, \Delta_\GG,\varphi_\GG,\psi_\GG)$, where $\sM$ is a von Neumann algebra, $\Delta:\sM\to\sM\vtens\sM$ is a normal injective $*$-homomorphism satisfying 
 \[(\Delta\otimes\id)\circ\Delta = (\id\otimes\Delta)\circ\Delta\] 
  and $\varphi_\GG$ and $\psi_\GG$ are, respectively,  normal semifinite  faithful left and right invariant   weights on $\sM$, i.e. 
\begin{align*}
\psi_\GG((\id\otimes\omega)(\Delta(x)) &= \psi_\GG(x)\\
  \varphi_\GG((\omega\otimes\id)(\Delta(x)) &= \varphi_\GG(x)
  \end{align*} for all $x\in \sM^+$ and $\omega\in \sM_*^+$. 
\end{definition}
Let $\GG$ be a locally compact quantum group.   We shall use a notation $\sM = \Linf(\GG)$.
 The GNS Hilbert space of the right Haar weight $\psi_\GG$ will be denoted by  $\Ltwo(\GG)$ and the corresponding GNS map will be denoted by $\eta_\GG$. $\GG$ is assigned with  the \emph{antipode}, the \emph{scaling group} and the \emph{unitary antipode} which are  denoted  by $S$, $(\tau_t)_{t\in \RR}$ and $R$.
A fundamental role in the theory of locally compact quantum groups  is played by the multiplicative unitary $\ww^\GG\in B(\Ltwo(\GG)\otimes \Ltwo(\GG))$, which   is a unique unitary operator such that 
\[\ww^\GG(\eta_\GG(x)\otimes\eta_\GG(y)) = (\eta_\GG \otimes\eta_\GG )(\Delta_\GG(x)(\I\otimes y))\] 
for all $x,y\in D(\eta_\GG)$;  
 $\ww^\GG$  satisfies the pentagonal equation $\ww^\GG_{12}\ww^\GG_{13}\ww^\GG_{23} = \ww^\GG_{23}\ww^\GG_{12}$. Note that we use the leg numbering notation, e.g. $\ww^\GG_{12} = \ww^\GG\otimes\I \in B(\Ltwo(\GG)\otimes \Ltwo(\GG)\otimes \Ltwo(\GG))$. 
 Using $\ww^\GG$  one can recover $\Linf(\GG)$ and $\Delta_\GG$
\[\begin{split}
\Linf(\GG) =&\bigl\{ (\omega\otimes\id)(\ww^\GG)\st\omega\in B(\Ltwo(\GG))_*\bigr\}^{\cls},\\
\Delta_\GG(x) =&\ww^\GG(x\otimes\I){\ww^\GG}^*.
\end{split}\] 
A locally compact quantum group $\GG$ admits a  $\C^*$-version, which  can also be recovered from $\ww^\GG$. For example the $\C^*$-algebra assigned to  $\GG$  denoted by $ \C_0(\GG)$ is given by 
\[ \C_0(\GG) = \bigl\{ (\omega\otimes\id)(\ww^\GG)\st\omega\in B(\Ltwo(\GG))_*\bigr\}^{\|\cdot\|-\bcls}.\] 
We say that $\GG$ is a compact quantum group if $\I\in\C_0(\GG)$. 
A locally compact quantum group admits a dual object $\hh\GG = (\Linf(\hh\GG),\Delta_{\hh\GG},\varphi_{\hh\GG},\psi_{\hh\GG})$. For the detailed description of the Haar weights $\varphi_{\hh\GG},\psi_{\hh\GG}$ we refer to \cite{KV}; let us only mention that we have  $\Ltwo(\hh\GG) = \Ltwo(\GG)$. The multiplicative unitary assigned to $\hh\GG$ 
  is given by $\ww^{\hh\GG}=\boldsymbol{\sigma}({\ww^{\GG}})^*$, where $\boldsymbol{\sigma}$ denotes the flipping morphism   $\boldsymbol{\sigma}(a\otimes b) = b\otimes a$.  In particular we have 
\[\begin{split} 
\Linf(\hh\GG)&=\bigl\{( \omega\otimes\id)(\ww^{\hh\GG})\st\omega\in B(\Ltwo(\GG))_*\bigr\}^{\cls},\\ \Delta_{\hh\GG}(x)&=\ww^{\hh\GG}(x\otimes\I){\ww^{\hh\GG}}^*.
 \end{split}\] Moreover \[\C_0(\hh\GG) = \bigl\{ (\omega\otimes\id)(\ww^{\hh\GG})\st\omega\in B(\Ltwo(\GG))_*\bigr\}^{\|\cdot\|-\bcls}\] and we have $\ww^\GG\in\M(\C_0(\hh\GG)\otimes \C_0(\GG))$. 
\begin{definition}
Let $\GG$ be a locally compact quantum group. The opposite locally compact quantum group  $\GG^{\textrm{op}}$ is defined as $(\Linf(\GG),\Delta_{\GG}^{\textrm{op}}, \psi_\GG,\varphi_\GG)$ where   $\Delta_{\GG}^{\textrm{op}} = \boldsymbol{\sigma}\circ\Delta_\GG$. We say that  $\GG$ is  abelian if $\Delta_\GG =\Delta_{\GG^{\textrm{op}}}$; in other words $\GG$   is abelian if and only if $\hh\GG$ is classical, i.e. $\Linf(\hh\GG)$ is commutative.
\end{definition} 
\begin{definition}
Let $\GG$ be a locally compact quantum group,  $\sN$ a von Neumann algebra and $\alpha:\sN\to\Linf(\GG)\vtens\sN$ a normal, unital injective $*$-homomorphism. We say that $\alpha$ is a  left  action of $\GG$ on $\sN$ if 
\[(\Delta_\GG\otimes\id)\circ\alpha = (\id\otimes\alpha)\circ\alpha.\] 
We say that the action $\alpha$ is {\it integrable} if the set 
\[\{x\in\sN^+: (\psi_\GG\otimes\id)(\alpha(x))\in\sN^+\}\] is $\sigma$-weakly dense in $\sN^+$. 
\end{definition}
For an action  $\alpha$ of $\GG$ on $\sN$ we have (see \cite[Corollary 2.6]{KS})
\begin{equation}\label{eq:pod_cond1}\sN = \{(\omega\otimes\id)(\alpha(x)):\omega\in\Linf(\GG)_*, x\in\sN\}^{\cls }.\end{equation} 

 We also have a right counterpart of the concept of an action and the integrability condition.  

In the sequel we shall often use the  right  adjoint action $\beta:\Linf(\hh\GG)\to\Linf(\hh\GG)\vtens\Linf(\GG)$ of $\GG$ on $\Linf(\hh\GG)$ where  
\begin{equation}\label{eq:adact}\beta (x) = \ww^{\GG}(x\otimes\I){\ww^\GG}^* \end{equation} for all $x\in \Linf(\hh\GG)$.  

A von Neumann subalgebra $\sN\subset\Linf(\GG)$ is called 
\begin{itemize}
\item \emph{Left coideal}  if $\Delta_\GG(\sN)\subset\Linf(\GG)\vtens\sN$;
\item \emph{Invariant subalgebra} if $\Delta_\GG(\sN)\subset\sN\vtens\sN$;
\item \emph{Baaj-Vaes subalgebra} if $\sN$ is an invariant subalgebra of $\Linf(\GG)$ which is preserved by unitary antipode $R$  and the scaling group $(\tau_t)_{t\in\RR} $ of $\GG$;
\item \emph{Normal} if $\ww^\GG(\I\otimes\sN){\ww^\GG}^*\subset\Linf(\hh\GG)\vtens\sN$;
\item \emph{Integrable} if the set of integrable elements with respect to the right Haar weight $\psi_\GG$ is dense in $\sN^+$; in other words, the restriction of $\psi_\GG$ to $\sN$ is semifinite.
\end{itemize}
In the sequel a von Neumann subalgebra of $\Linf(\GG)$ which is a left coideal will be called a $\GG$-coideal or simply a coideal.  Note that $\Delta_\GG|_{\sN}$ is an action of $\GG$ on $\sN$. In particular (see \Cref{eq:pod_cond1})
\begin{equation}\label{eq:pod_cond}\sN = \{(\omega\otimes\id)(\Delta_\GG(x)):\omega\in\Linf(\GG)_*, x\in \sN\}^{\cls }.\end{equation}

Let $\sN\subset \Linf(\GG)$ be a Baaj-Vaes subalgebra. The restriction of $\Delta_\GG$ to $\sN$ will be denoted by  $\Delta_{\sN} :\sN\to\sN\vtens\sN$. We shall often use  the so called  Baaj-Vaes theorem   \cite[Proposition 10.5]{BValg}, which states that  $(\sN,\Delta|_{\sN})$   admits a structure of a locally compact quantum group. To be more precise there exists a pair of  n.s.f. weights $ \varphi_{\sN},\psi_{\sN}$ on $\sN$  such that $(\sN,\Delta_\GG|_{\sN},\varphi_{\sN},\psi_{\sN})$ is a locally compact quantum group.
\begin{definition}\label{def:lattice_coideals}
Let $\GG$ be a locally compact quantum group.  The set of $\GG$-coideals will be denoted by  $ \mathcal{CI}(\GG)$. We equip  $\mathcal{CI}(\GG)$ with the poset structure: for $\sN,\sM\in\mathcal{CI}(\GG)$ we write $\sN\le\sM$ if  $\sN\subset\sM$. The poset $(\mathcal{CI}(\GG),\le)$  admits two operations $\wedge,\vee$ 
\begin{itemize}
\item $\sN\wedge\sM = \sN\cap\sM$,
\item $\sN\vee\sM = \{xy:x\in\sN,y\in\sM\}''$.
\end{itemize}
$(\mathcal{CI}(\GG),\le,\wedge,\vee)$ forms a lattice which will be called a {\it lattice of coideals} of $\GG$. 

   The subset of $\mathcal{CI}(\GG)$ of normal $\GG$-coideals  will be  denoted $\mathcal{NCI}(\GG)$. The subset of $\mathcal{CI}(\GG)$ of Baaj-Vaes subalgebras of $\Linf(\GG)$ will be denoted $\mathcal{BV}(\GG)$.
 
 It is easy to check that $\mathcal{NCI}(\GG)$ and $\mathcal{BV}(\GG)$ form sublattices of $\mathcal{CI}(\GG)$. Similarly $\mathcal{NCI}(\GG)\cap\mathcal{BV}(\GG)$ forms a sublattice of $\mathcal{CI}(\GG)$.
 
\end{definition}
\begin{remark}\label{rem:coduality0}
    Using \cite[Theorem 3.9]{embed} we get a bijective map  $\mathcal{CI}(\GG)\ni\sN\mapsto\tilde{\sN}\in\mathcal{CI}(\hh\GG)$ where 
    \[\tilde{\sN} =\sN'\cap\Linf(\hh\GG).\]  The coideal $\tilde{\sN}$
    is said to be a codual of $\sN$ and the map
    $\sN\mapsto\tilde{\sN}$ is denoted by $\cd$. Note that
    $\cd:\mathcal{CI}(\GG) \to \mathcal{CI}(\hh\GG)$ is an
    anti-isomorphism of lattices:
    \[ \begin{split}
        \cd(\sN)&\le\cd(\sM) \textrm{ iff }  \sM\le\sN,\\
        \cd(\sN\wedge\sM) &= \cd(\sN)\vee\cd(\sM),\\
        \cd(\sN\vee\sM) &= \cd(\sN)\wedge\cd(\sM).
    \end{split}\]
    Moreover $\cd^2 = \id$ (note that   the coduality $\mathcal{CI}(\hh\GG)\to\mathcal{CI}(\GG))$ is also denoted by $\cd$).
    \end{remark}

\subsection{Preliminaries for linearly reductive quantum groups}\label{subse.lr}

In \Cref{se.lr}   we work with Hopf algebras over an algebraically closed field $k$, regarded as the function algebras of the quantum groups in question. For this reason, we typically speak of either the quantum group $\bG$ or the Hopf algebra $\cO(\bG)$ associated to it. Unless specified otherwise, antipodes are assumed to be bijective. For general background on coalgebras, bialgebras or Hopf algebras (which we recall somewhat briefly and selectively) the reader may consult e.g. \cites{Sweedler,abe,dnr,rad_book}; the various papers we cite are also good sources on specific points that arise in the course of the discussion below.

We use Sweedler notation for the comultiplication of a Hopf algebra (or more generally coalgebra) $H$, writing 
\begin{equation*}
  \Delta:x\mapsto x_1\otimes x_2
\end{equation*}
for the comultiplication $\Delta:H\to H\otimes H$. The reader should note that the symbol  $\otimes$ has double meaning in this paper - one in the context of $\C^*$-algebras and other in the context of algebras. Counits and antipodes are denoted by $\varepsilon$ and $S$. Finally, for a linear subspace $V\subseteq H$ of $H$, we denote 
\begin{equation*}
  V^-:=\mathrm{ker}(\varepsilon|_V). 
\end{equation*}

We denote categories of left / right modules over an algebra $A$ by ${}_A\cM$ and $\cM_A$ respectively. Similarly, the categories of left / right $C$-comodules for a coalgebra $C$ are denoted by ${}^C\cM$ and $\cM^C$ respectively. Following standard terminology (see e.g. \cite[Definition 1.4]{git}), the quantum group $\bG$ is {\it linearly reductive} when $\cO(\bG)$ is cosemisimple, i.e. its category $\cM^{\cO(\bG)}$ is semisimple. 

Every coalgebra is the union of its finite-dimensional subcoalgebras (the so-called fundamental theorem of coalgebras; e.g. \cite[Theorem 2.2.1]{Sweedler}), and cosemisimple coalgebras are direct sums of their {\it simple} subcoalgebras, i.e. those that have no proper non-zero subcoalgebras \cite[$\S$14]{Sweedler}. This latter decomposition is dual to the usual decomposition of semisimple algebras as (finite) products of simple algebras. In fact, simple coalgebras are dual to simple algebras, and hence, since we are working over an algebraically closed field, all simple subcoalgebras are of the form $M_n^*$ (duals of the matrix algebras $M_n=M_n(k)$).   

We will also deal with {\it discrete} quantum groups in a slightly more general setting than in \Cref{se.lc}. \Cref{def.red}, summarizing our conventions, will be sufficient for our purposes.

\begin{definition}\label{def.red}
  The category $\mathcal{QG}$ of {\it quantum groups} over a fixed field $k$ is the category opposite to that of Hopf algebras over $k$ with bijective antipode. 

  The category $\mathcal{RQG}$ of {\it linearly reductive quantum groups} over a fixed field $k$ is the full subcategory of $\mathcal{QG}$ consisting of cosemisimple Hopf algebras. 

  The category $\mathcal{DQG}$ of {\it algebraic discrete quantum groups} over a fixed field $k$ is the opposite category $\mathcal{RQG}^{op}$.

We often drop the adjective `algebraic' below.
\end{definition}

\begin{remark}
In other words, we regard discrete quantum groups as dual to linearly reductive groups. This mimics the usual machinery in the locally compact case, except that we allow here arbitrary algebraically closed fields of arbitrary characteristic, and there are no $*$ structures.   
\end{remark}

In the spirit of \Cref{def.red}, we regard the underlying Hopf algebra
$\cO(\hh{\bG})$ of a linearly reductive quantum group $\hh{\bG}$ as
the group algebra of its discrete Pontryagin dual $\bG$ of $\hh{\bG}$
and (working over the algebraically closed field $k$) use the notation
\begin{equation*}
  k \bG  = \cO(\hh{\bG})
\end{equation*}
when we want to emphasize this point of view.

One particular class of cosemisimple Hopf algebras are the {\it CQG algebras} of \cite{DK}, which in the context of \Cref{subse.lc} are dense complex Hopf $*$-subalgebras of $L^\infty(\bG)$ for some {\it compact} quantum group $\bG$. Some of the results of \Cref{se.lr} only apply to CQG algebras.

\begin{definition}\label{def.lr-sbgp}
  A {\it quantum subgroup} of the linearly reductive quantum group $\bG$ is a quotient Hopf algebra $\cO(\bG)\to \cO(\bK)$.  

Let $\bG$ be a discrete quantum group.  A {\it quantum subgroup} of $\bG$ is a Hopf subalgebra $A\subseteq k\bG$.
\end{definition}

\begin{remark}
  Note that quantum subgroups of a discrete quantum group are automatically discrete, because cosemisimplicity is preserved by passing to Hopf subalgebras.  Thus denoting $A = k\bK$ in the second part of \Cref{def.lr-sbgp} we see that a quantum subgroup $\bK$ of a discrete quantum group $\GG$ is a Hopf subalgebra $ k\bK\subseteq k\bG$.  A  quantum subgroups of linearly reductive quantum groups need not be reductive, however: consider the classical situation whereby the function algebra of $GL_2(\bC)$ surjects onto that of the subgroup of upper triangular invertible matrices.  
\end{remark}

Quantum subgroups of a given quantum group form a lattice both in $\mathcal{QG}$ and in $\mathcal{DQG}$.

\begin{definition}\label{def.alg-latt}
  Let $\cO(\bG)\to \cO(\bH_i)$, $i=1,2$ be two quantum subgroups of $\bG\in \mathcal{QG}$. Then, the {\it intersection} $\bH_1\wedge \bH_2$ whose underlying Hopf algebra $\cO(\bH_1\wedge \bH_2)$ is defined as the quotient of $\cO(\bG)$ by the smallest Hopf ideal $I$ invariant under the inverse $S^{-1}$ of the antipode of $\bG$, and which contains both ideals
  \begin{equation}\label{eq:ghi}
    \mathrm{ker}\left(\cO(\bG)\to \cO(\bH_i)\right),\ i=1,2.
  \end{equation}
  Similarly, the subgroup $\bH_1\vee \bH_2$ {\it generated by $\bH_i$} is defined as the object in $\mathcal{QG}$ whose underlying Hopf algebra is the quotient of $\cO(\bG)$ by the largest Hopf ideal invariant under $S^{-1}$ contained in both \Cref{eq:ghi}.

Now let $k\bH_i\subseteq k\bG$ be two quantum subgroups of a discrete quantum group $\bG$. Then, the {\it intersection} $\bH_1\wedge \bH_2$ is the discrete quantum group whose underlying group algebra $k(\bH_1\wedge \bH_2)$ is the intersection of $k\bH_i$ in $k\bG$. 
  
Similarly, the subgroup {\it generated by $\bH_i$} is defined as the discrete quantum group whose underlying Hopf algebra is the Hopf subalgebra of $k\bG$ generated as an algebra by $k\bH_i$.    
\end{definition}
We leave it to the reader to check that in both cases the operations $\wedge$ and $\vee$ are well defined and turn the sets of quantum subgroups into lattices.

\begin{remark}\label{re.wedge}
Classically, the intersection $\bH\wedge \bK$ can
be defined as the pullback
\begin{equation*}
  \begin{tikzpicture}[auto,baseline=(current  bounding  box.center)]
    \path[anchor=base] (0,0) node (HcapK) {$\bH\wedge \bK$} +(2,.5) node (H) {$\bH$} +(4,0) node (G) {$\bG$} +(2,-.5) node (K) {$\bK$};
         \draw[right hook->] (HcapK) to [bend left=6] (H);
         \draw[right hook->] (H) to [bend left=6] (G);
         \draw[right hook->] (HcapK) to [bend right=6] (K);
         \draw[right hook->] (K) to [bend right=6] (G);
  \end{tikzpicture}
\end{equation*}
in whatever category of groups is convenient (linear algebraic, etc.). The analogue in the $\mathcal{QG}$ case of \Cref{def.alg-latt} is the observation that we have a pushout 
\begin{equation*}
  \begin{tikzpicture}[auto,baseline=(current  bounding  box.center)]
    \path[anchor=base] (0,0) node (HcapK) {$\cO(\bH\wedge \bK)$} +(2,.5) node (H) {$\cO(\bH)$} +(4,0) node (G) {$\cO(\bG)$} +(2,-.5) node (K) {$\cO(\bK)$};
         \draw[->>] (H) to [bend right=6] (HcapK);
         \draw[->>] (G) to [bend right=6] (H);
         \draw[->>] (K) to [bend left=6] (HcapK);
         \draw[->>] (G) to [bend left=6] (K);
  \end{tikzpicture}
\end{equation*}
in the category of algebras, or equivalently, that of Hopf algebras (or Hopf algebras with bijective antipode). In other words, the left hand corner is universal among quotients that make the diagram commutative. 
\end{remark}

We will make frequent (mostly implicit) use of an algebraic version of the correspondence $\cd$ from \Cref{subse.lc} throughout \Cref{se.lr}. We elaborate on the construction here. 

First, for any Hopf algebra $H$, define $\mathcal{CI}(H)$ as the set of {\it right} coideal subalgebras $A\subseteq H$, i.e. those subalgebras for which 
\begin{equation*}
  \Delta(A)\subseteq A\otimes H.
\end{equation*}
(in opposition to \Cref{subse.lc}, we use right rather than left coideals in order to preserve compatibility with much of the literature on Hopf algebras accessible through our references). 

Now, for each $ A\subseteq H$ in $\mathcal{CI}(H)$ denote 
\begin{equation*}
  \cd(A) = H/HA^-. 
\end{equation*}
 
This is a left module quotient coalgebra of $H$ in the sense of \Cref{def:left_mod_co} (e.g. \cite[Proposition 1]{tak}), which justifies denoting the set of such module quotients by $\mathcal{MQ}(H)$. 
\begin{definition}\label{def:left_mod_co}
  A (left) {\it module coalgebra} over a Hopf algebra $H$ is a coalgebra $C$ equipped with an $H$-module action
\begin{equation*}
  H\otimes C\to C
\end{equation*}
that is a coalgebra map. 

A (left) {\it module quotient coalgebra} is a module coalgebra $H\otimes C\to C$ as before equipped with a surjection $H\to C$ of module coalgebras, i.e. a surjection that is both a coalgebra morphism and a morphism of left $H$-modules. 
\end{definition}

On the other hand, given $\pi:H\to C$ in $\mathcal{MQ}(H)$, define $\cd(\pi)$ (or usually $\cd(C)$ by a slight abuse of notation) to be 
\begin{equation*}
  \{x\in H\ |\ \pi(x_1)\otimes x_2 = \pi(\I)\otimes x\}. 
\end{equation*}
 It can be shown to be an object in $\mathcal{CI}(H)$ (\cite[Proposition 1]{tak}). 

Note that we are using the symbol $\cd$ for two different maps, relating $\mathcal{CI}$ and $\mathcal{MQ}$ in two opposite directions. They are not, in general, mutual inverses; that requires additional technical conditions, as we now recall.

\begin{definition}\label{def.flat}
  Let $\iota:A\to H$ be an algebra map. $H$ is left (resp. right) {\it faithfully flat} if the functor 
  \begin{equation*}
    -\otimes_AH:\cM_A\to \cM_H
  \end{equation*}
  resp. 
  \begin{equation*}
    H\otimes_A-: {}_A\cM\to {}_H\cM
  \end{equation*}
  preserves morphism injectivity. 

  Dually, let $\pi:H\to C$ be a coalgebra map. Then, $H$ is left (resp. right) {\it faithfully coflat} if the functor 
  \begin{equation*}
    -\square_CH:\cM^C\to \cM^H
  \end{equation*}
  resp. 
  \begin{equation*}
    H\square_C-: {}^C\cM\to {}^H\cM
  \end{equation*}
  preserves morphism surjectivity.  
\end{definition}
The notion of tensor coproduct $\square$, dual to that of tensor product, (see e.g. \cite[$\S$1]{tak}) can be briefly described as follows: 

Given a right $C$-comodule 
\begin{equation*}
  \rho_V:V\to V\otimes C
\end{equation*}
and a left $C$-comodule 
\begin{equation*}
  \rho_W:W\to C\otimes W,
\end{equation*}
$V\square_CW$ is the subspace of $V\otimes W$ on which the two arrows
\begin{equation*}
  \begin{tikzpicture}[auto,baseline=(current  bounding  box.center)]
    \path[anchor=base] (0,0) node (vw) {$V\otimes W$} +(4,0) node (vcw) {$V\otimes C\otimes W$};
         \draw[->] (vw) to [bend left=6] node [pos=.5,auto] {$\scriptstyle \rho_V\otimes\mathrm{id}_W$} (vcw);
         \draw[->] (vw) to [bend right=6] node [pos=.5,auto,swap] {$\scriptstyle \mathrm{id}_V\otimes\rho_W$} (vcw);
  \end{tikzpicture}
\end{equation*}
agree.

Then, part of the content of \cite[Theorems 1]{tak} is that 
\begin{equation*}
  \cd^2:\mathcal{CI}(H) \to \mathcal{CI}(H)
\end{equation*}
restricts to the identity to those $A\in \mathcal{CI}(H)$ for which $H$ is left $A$-faithfully flat. 

Similarly, \cite[Theorem 2]{tak} says (among other things) that 
\begin{equation*}
  \cd^2:\mathcal{MQ}(H)\to \mathcal{MQ}(H)
\end{equation*}
restricts to the identity on those $\pi:H\to C$ over which $H$ is right faithfully coflat.

We will be concerned almost exclusively with situations where either $\iota:A\to H$ or $\pi:H\to C$ is a Hopf algebra map. For that reason, we make the following simple observation (whose proof, being a simple computation, we leave to the reader).

\begin{lemma}\label{le.ad-inv}
Let $\pi:H\to C$ be a quotient Hopf algebra, and set $A=\cd(\pi)$. Then, $A$ is invariant under the right adjoint action of $H$ on itself defined by 
\begin{equation*}
  a\triangleleft x = S(x_1) ax_2 
\end{equation*}
\qedhere
\end{lemma}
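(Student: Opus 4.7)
The plan is to verify the coideal condition defining $A = \cd(\pi) = \{a \in H \mid \pi(a_1) \otimes a_2 = \pi(\I) \otimes a\}$ directly for $a \triangleleft x = S(x_1) a x_2$. Specifically, for $a \in A$ and $x \in H$, I would establish
\[ \pi\bigl((a \triangleleft x)_1\bigr) \otimes (a \triangleleft x)_2 \;=\; \pi(\I) \otimes (a \triangleleft x) \quad \text{in } C \otimes H. \]

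First I would expand $\Delta(S(x_1) a x_2)$ by multiplicativity of $\Delta$ together with the anti-multiplicative formula $\Delta \circ S = (S \otimes S) \circ \tau \circ \Delta$ for the antipode. Writing the fourfold coproduct of $x$ in Sweedler notation as $x_1 \otimes x_2 \otimes x_3 \otimes x_4$, this yields
\[ \Delta(S(x_1) a x_2) \;=\; S(x_2)\, a_1\, x_3 \,\otimes\, S(x_1)\, a_2\, x_4. \]
Applying $\pi \otimes \id$ and using that $\pi$ is an algebra homomorphism, the left-hand tensor factor becomes $\pi(S(x_2))\, \pi(a_1)\, \pi(x_3)$.

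The key step is then to invoke the hypothesis $a \in A$, that is $\pi(a_1) \otimes a_2 = \pi(\I) \otimes a$: for each fixed $x$, the expression depends linearly on $\pi(a_1) \otimes a_2$ (with the remaining factors acting by multiplication from both sides in each tensor slot), so substituting $\pi(\I) \otimes a$ for $\pi(a_1) \otimes a_2$ collapses it to
\[ \pi\bigl(S(x_2)\, x_3\bigr) \,\otimes\, S(x_1)\, a\, x_4. \]
Finally, the antipode identity $S(z_1) z_2 = \varepsilon(z) \I$ applied at the middle pair of Sweedler indices of $x$ (viewing $x_2 \otimes x_3$ as the comultiplication of the middle factor of $\Delta^2(x)$), together with the counitality identity $(\id \otimes \varepsilon \otimes \id) \circ \Delta^2 = \Delta$, reduces the result to $\pi(\I) \otimes S(x_1)\, a\, x_2 = \pi(\I) \otimes (a \triangleleft x)$, as required.

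The argument is entirely mechanical; the only minor subtlety is keeping the fourfold Sweedler bookkeeping straight across the several reindexings. This is presumably why the authors chose to leave the verification to the reader.
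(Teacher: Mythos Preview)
Your proof is correct and is precisely the direct Sweedler-notation computation the authors had in mind when they wrote ``whose proof, being a simple computation, we leave to the reader.'' There is nothing to compare, as the paper gives no argument of its own.
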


One consequence of \Cref{le.ad-inv} that we will use later is

\begin{corollary}\label{cor.ad-inv}
  Let $A,B\in \mathcal{CI}(H)$ and suppose $A=\cd(\pi)$ for some $\pi:H\to C$ in $\mathcal{MQ}(C)$. Then, the linear span
  \begin{equation*}
    BA = \mathrm{span}\{ba\ |\ a\in A,\ b\in B\}
  \end{equation*}
is a coideal subalgebra. 
\end{corollary}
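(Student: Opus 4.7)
The plan is to verify two conditions: that $BA$ is closed under multiplication (and contains $1$), and that it satisfies the right coideal condition $\Delta(BA)\subseteq BA\otimes H$. The coideal condition is essentially immediate, so the real work is in establishing multiplicative closure, which in turn reduces to showing $AB\subseteq BA$ (since then $(BA)(BA) = B(AB)A \subseteq B(BA)A = (BB)(AA)\subseteq BA$, using that $A$ and $B$ are themselves subalgebras).

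For the coideal part, using Sweedler notation and the fact that $A,B\in\mathcal{CI}(H)$, we have $\Delta(b)\in B\otimes H$ and $\Delta(a)\in A\otimes H$ for $b\in B$, $a\in A$. Hence
\begin{equation*}
  \Delta(ba)=\Delta(b)\Delta(a)=b_1 a_1\otimes b_2 a_2 \in BA\otimes H,
\end{equation*}
and this extends linearly to all of $BA$.

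For the inclusion $AB\subseteq BA$, the key identity is
\begin{equation*}
  ab = b_1\,(a\triangleleft b_2),\qquad a\in A,\ b\in B,
\end{equation*}
where $\triangleleft$ denotes the right adjoint action as in \Cref{le.ad-inv}. This identity is a direct consequence of the antipode axiom $x_1 S(x_2)=\varepsilon(x)\I$ together with coassociativity: expanding $b_1(a\triangleleft b_2) = b_1 S(b_{(2)(1)})\,a\,b_{(2)(2)}$ and reassociating gives $\varepsilon(b_1)\,a\,b_2 = ab$. Now $b_1\in B$ because $B$ is a right coideal, and by \Cref{le.ad-inv} the subalgebra $A=\cd(\pi)$ is stable under the right adjoint action, so $a\triangleleft b_2\in A$. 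Thus $ab\in BA$, which is what we wanted.

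There is no serious obstacle here; the proof is a short computation. The one point that requires care is remembering that only $A$, not $B$, is assumed to arise as $\cd(\pi)$ and hence to be adjoint-invariant, which is precisely why we push $a$ past $b$ (rather than $b$ past $a$) and need the asymmetric identity $ab=b_1(a\triangleleft b_2)$ rather than its mirror.
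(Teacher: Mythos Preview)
Your proof is correct and follows essentially the same approach as the paper: both reduce the subalgebra claim to the inclusion $AB\subseteq BA$, and both establish this via the identity $ab=b_1(S(b_2)ab_3)=b_1(a\triangleleft b_2)$ together with the adjoint-invariance of $A$ from \Cref{le.ad-inv}. You spell out the coideal verification and the reduction $(BA)(BA)\subseteq BA$ in slightly more detail, but the argument is the same.
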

\begin{proof}
Since it is clear that the space in question is a coideal, it suffices to show that it is a subalgebra. Specifically, we have to prove that for any $a\in A$ and $b\in B$, the product $ab$ belongs to $BA$. This follows from the identity
\begin{equation*}
  ab = b_1 (S(b_2) a b_3), 
\end{equation*}
together with the fact that according to \Cref{le.ad-inv} the parenthetic factors on the right hand side belong to $A$. 
\end{proof}

Let us also record the dual version of \Cref{le.ad-inv} (the proof is entirely analogous; we once more do not include it):

\begin{lemma}\label{le.coad-inv}
  Let $\iota:A\to H$ be an inclusion of Hopf algebras, and set
  $\pi:H\to C$ to be $\cd(A)$. Then, the left adjoint coaction of $H$
  on itself defined by
\begin{equation*}
  x\mapsto x_1 S(x_3)\otimes x_2
\end{equation*}  
descends to a coaction of $H$ on $C$ through the quotient $\pi:H\to C$.
\qedhere
\end{lemma}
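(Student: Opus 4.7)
The goal is to show that $(\id_H \otimes \pi) \circ \mathrm{ad}$ factors through $\pi$, where $\mathrm{ad}(x) = x_1 S(x_3)\otimes x_2$. Since $C = H/HA^-$ with $A^- := \ker(\varepsilon|_A)$, this reduces to verifying that $(\id\otimes\pi)\,\mathrm{ad}(ha) = 0$ for all $h\in H$ and $a\in A^-$.

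My plan is a direct computation. Because $\Delta$ is an algebra map and $S$ is anti-multiplicative, one easily rewrites
\[
  \mathrm{ad}(ha) \;=\; h_1\, a_1\, S(a_3)\, S(h_3) \otimes h_2\, a_2.
\]
Two facts then collapse the second tensor leg after applying $\id\otimes\pi$. First, since $HA^-$ is a left ideal of $H$, the projection $\pi$ is a morphism of left $H$-modules, so $\pi(h_2 a_2) = h_2 \cdot \pi(a_2)$. Second, since $A$ is a Hopf subalgebra we have $a_2\in A$, and writing $a_2 = \varepsilon(a_2)\I + (a_2-\varepsilon(a_2)\I)$ with the second summand in $A^-\subseteq \ker\pi$ yields $\pi(a_2) = \varepsilon(a_2)\,\pi(\I)$. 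Substituting, absorbing the scalar $\varepsilon(a_2)$ into the first leg, and then applying the counit axiom $\varepsilon(a_2)\,a_1 \otimes a_3 = a_1 \otimes a_2$ followed by the antipode axiom $a_1 S(a_2) = \varepsilon(a)\,\I$, collapses the expression to $\varepsilon(a)\,h_1 S(h_3) \otimes \pi(h_2)$, which vanishes because $a\in A^-$.

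Once the descent $\rho : C \to H \otimes C$ is in place, the coaction axioms for $\rho$ follow by a standard diagram chase from the corresponding axioms for $\mathrm{ad}$, using that $\pi$ is a coalgebra map (so that $\id\otimes\pi$ commutes with the relevant structure maps on the target side). I do not anticipate any real obstacle; the argument mirrors \Cref{le.ad-inv} with the roles of ``$A$ stable under right adjoint action'' and ``$C$ stable under left adjoint coaction'' dualized. The one point worth underlining is that $\pi$ must be exploited not merely as a coalgebra map but as a map of left $H$-modules, which is precisely what lets the $A$-factor be pushed through $\pi$ as a scalar.
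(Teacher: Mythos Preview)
Your proof is correct and follows exactly the kind of direct computation the paper has in mind (the paper omits the proof entirely, noting it is dual to \Cref{le.ad-inv}). The key step---pushing the $A$-factor through $\pi$ as the scalar $\varepsilon(a_2)$ using that $A$ is a Hopf subalgebra and that $\pi$ is a left $H$-module map---is carried out cleanly. One small quibble: for the coaction axioms on $\rho$, what you actually use is that $\mathrm{ad}$ itself is a left $H$-coaction together with the surjectivity of $\pi$; the coalgebra-map property of $\pi$ is not really the operative fact there, but this does not affect the validity of the argument.
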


We will often encounter the situation when {\it both} $\iota:A\to H$ and $\pi:H\to C$ are morphisms in $\mathcal{QG}$. Given the results recalled briefly above on the importance of (co)flatness, we fix our terminology as follows.

\begin{definition}\label{def.exact}
An {\it exact sequence} of quantum groups is a diagram 
\begin{equation}\label{eq:exact}
  k\to \cO(\bH)\to \cO(\bG)\to \cO(\bK)\to k
\end{equation}
in $\mathcal{QG}$ where the second arrow is an inclusion, the third arrow is a surjection, $\cd$ interchanges these two arrows, and moreover $\cO(\bG)$ is (co)flat over $\cO(\bH)$ (respectively $\cO(\bK)$). In this case we denote $\bH = \bG/\bK$.

The quantum subgroup $\cO(\bG)\to \cO(\bK)$ of $\bG$ is {\it normal} if it fits into an exact sequence \Cref{eq:exact}. 

The discrete quantum subgroup $k\bK\subseteq k\bG$ of $k\bG\in \mathcal{DQG}$ is {\it normal} if the inclusion in question is the second arrow in an exact sequence \Cref{eq:exact}. 
\end{definition}

\begin{remark}
Cf. \cite{AD}, where the definition of an exact sequence is the same, minus the (co)flatness conditions.   
\end{remark}

\section{Lattice of closed quantum subgroups: basic facts}\label{2}
\begin{definition}\label{def:lattice_coideals1}
Let $\GG$ be a locally compact quantum group.  The  lattice   $\mathcal{BV}(\hh\GG)$   will be denoted $\mathcal{QS}(\GG)$ and called a {\it  lattice of   quantum subgroups} of $\GG$.

The lattice $\mathcal{NCI}(\hh\GG)\cap\mathcal{BV}(\hh\GG)$ will be denoted $\mathcal{NQS}(\GG)$ and called a {\it  lattice of   normal   quantum subgroups} of $\GG$. 
\end{definition}
 Let $\sN\in\mathcal{QS}(\GG)$. Using  Baaj-Vaes theorem, we conclude the existence of   a locally compact quantum group $\HH$ such that $\sN = \Linf(\hh\HH)$. Thus when convenient we will   write $\HH\in\mathcal{QS}(\GG)$. Similarly for $\HH_1,\HH_2\in\mathcal{QS}(\GG)$ we  write $\HH_1\wedge\HH_2,\HH_1\vee\HH_2$. 
  \begin{remark}\label{rem:coduality}

    Let $\HH\in\mathcal{QS}(\GG)$ and  let  $\cd:\mathcal{CI}( \GG)\to \mathcal{CI}(\hh\GG)$ be the coduality (see \Cref{rem:coduality0}). Then $\cd(\Linf(\hh\HH))\in\mathcal{CI}(\GG)$ is denoted by $\Linf(\GG/\HH)$. It can be checked that (see e.g. \cite{KSProj})
    \begin{itemize}
    \item $\cd(\mathcal{QS}(\GG))\subset \mathcal{NCI}(\GG)$,
    \item $\cd(\mathcal{NQS}(\GG)) = \mathcal{NQS}(\hh\GG)$.
    \end{itemize} If   $\HH\in\mathcal{NQS}(\GG)$ then the normal quantum subgroup   $\cd(\HH)\in\mathcal{NQS}(\hh\GG)$  is denoted by $\hh{\GG/\HH}$. For the concept of short   exact sequence of locally compact quantum groups we refer to \cite[Definition 3.2.]{Vain_Vaes}. Up to natural isomorphisms all examples are of the form   
    \begin{equation}\label{short_exact_seq}  \bullet\to\HH\to \GG\to \GG/\HH\to\bullet\end{equation} where $\bullet$ denotes a trivial group. Since $\cd^2 = \id$ we also have the dual exact sequence 
   \begin{equation}\label{short_exact_seq1} \bullet\to\hh{\GG/\HH}\to \hh\GG\to \hh{\HH}\to\bullet.\end{equation}
 
  \end{remark}

Let $\GG$ be a locally compact quantum group.  As formulated in \Cref{def:lattice_coideals1},   a closed quantum subgroup of $\GG$ corresponds to    a Baaj-Vaes subalgebra of $\Linf(\hh\GG)$. In particular  a locally compact quantum group $\GG$ can be assigned with a quantum subgroup $\mathcal{Z}(\GG)\le\GG$ which is called a center of $\GG$: by definition $\Linf(\hh{\mathcal{Z}(\GG)})$ is the largest Baaj-Vaes subalgebra contained in the center of the von Neumann algebra $\Linf(\hh\GG)$. In particular  $\mathcal{Z}(\GG)$ is a normal quantum  subgroup of $\GG$ and  one can form the quotient group $\GG/\mathcal{Z}(\GG)$. For the detailed description of the corresponding exact sequence  \[ \bullet\to\mathcal{Z}(\GG)\to \bG\to \GG/\mathcal{Z}(\GG)\to\bullet\] see \cite{KSS}. In what follows we shall describe the quantum analog of the quotient of $\GG$ by its commutator subgroup.

\begin{proposition}\label{thm:quot_comm}
  Let $\GG$ be a locally compact quantum group and let us consider 
  \begin{equation}\label{def:M}\sM = \{x\in\Linf(\GG): (\id\otimes\Delta_\GG^{\textrm{op}})(\Delta_\GG(x)) = (\id\otimes\Delta_\GG)(\Delta_\GG(x))\}.\end{equation} 
  Then $\sM$ is a Baaj-Vaes subalgebra of $\Linf(\GG)$. The   quantum group $\HH$ such that $\sM = \Linf(\HH)$  is abelian. Let  $\sN$ be another Baaj-Vaes subalgebra  and $\bL$ be the locally compact quantum group assigned to $\sN$. If $\bL$ is abelian then  $\sN\subset \sM$.
\end{proposition}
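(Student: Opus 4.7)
My plan is to verify the conditions in the definition of a Baaj-Vaes subalgebra one at a time, deduce abelianity of $\HH$ from a Podle\'s-type density argument, and then settle maximality by a direct computation. First, since the maps $(\id\otimes\Delta_\GG^{\op})\circ\Delta_\GG$ and $(\id\otimes\Delta_\GG)\circ\Delta_\GG$ from $\Linf(\GG)$ to $\Linf(\GG)^{\vtens 3}$ are both normal $*$-homomorphisms, the equalizer $\sM$ is automatically a $\sigma$-weakly closed unital $*$-subalgebra of $\Linf(\GG)$. The key simplification is to use $\Delta_\GG^{\op}=\boldsymbol{\sigma}\circ\Delta_\GG$ to rephrase the defining condition as $\Delta_\GG^{(2)}(x)=\boldsymbol{\sigma}_{23}\Delta_\GG^{(2)}(x)$, where $\Delta_\GG^{(2)} := (\id\otimes\Delta_\GG)\circ\Delta_\GG = (\Delta_\GG\otimes\id)\circ\Delta_\GG$ by coassociativity and $\boldsymbol{\sigma}_{23}$ denotes the flip of the last two tensorands in $\Linf(\GG)^{\vtens 3}$.

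What I expect to be the main obstacle is the coideal property $\Delta_\GG(\sM)\subseteq\Linf(\GG)\vtens\sM$. The route I would take is through $\Delta_\GG^{(3)}$: applying $\Delta_\GG\otimes\id\otimes\id$ to both sides of $\Delta_\GG^{(2)}(x)=\boldsymbol{\sigma}_{23}\Delta_\GG^{(2)}(x)$ and using the elementary intertwining $(\Delta_\GG\otimes\id\otimes\id)\circ\boldsymbol{\sigma}_{23}=\boldsymbol{\sigma}_{34}\circ(\Delta_\GG\otimes\id\otimes\id)$ (verified on simple tensors) yields $\Delta_\GG^{(3)}(x)=\boldsymbol{\sigma}_{34}\Delta_\GG^{(3)}(x)$. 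For $x\in\sM$ and $\omega\in\Linf(\GG)_*$, set $z := (\omega\otimes\id)\Delta_\GG(x)$; a short computation gives $\Delta_\GG^{(2)}(z)=(\omega\otimes\id^{\otimes 3})\Delta_\GG^{(3)}(x)$ and $\boldsymbol{\sigma}_{23}\Delta_\GG^{(2)}(z)=(\omega\otimes\id^{\otimes 3})\boldsymbol{\sigma}_{34}\Delta_\GG^{(3)}(x)$, so the $\boldsymbol{\sigma}_{34}$-symmetry of $\Delta_\GG^{(3)}(x)$ forces $z\in\sM$. This establishes the left coideal property.

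With the coideal property in hand, \Cref{eq:pod_cond} presents $\sM$ as the $\sigma$-weak closure of the span of such $z$; for each such $z$, a direct computation using the $\boldsymbol{\sigma}_{23}$-symmetry of $\Delta_\GG^{(2)}(x)$ shows $\Delta_\GG(z)=\Delta_\GG^{\op}(z)$. Since the locus $\{y:\Delta_\GG(y)=\Delta_\GG^{\op}(y)\}$ is $\sigma$-weakly closed, we conclude $\Delta_\GG|_\sM=\Delta_\GG^{\op}|_\sM$, which is the abelianity of $\HH$. This equality immediately upgrades the coideal property to full invariance: flipping $\Delta_\GG(\sM)\subseteq\Linf(\GG)\vtens\sM$ via $\boldsymbol{\sigma}$ gives $\Delta_\GG(\sM)=\Delta_\GG^{\op}(\sM)\subseteq\sM\vtens\Linf(\GG)$, and intersecting yields $\Delta_\GG(\sM)\subseteq\sM\vtens\sM$. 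Invariance under $\tau_t$ is immediate from $\Delta_\GG\circ\tau_t=(\tau_t\otimes\tau_t)\circ\Delta_\GG$ applied to each side of the defining equation, while for $R$-invariance I would combine $\Delta_\GG\circ R=(R\otimes R)\circ\Delta_\GG^{\op}$ with $\Delta_\GG=\Delta_\GG^{\op}$ on $\sM$: a short calculation shows that both $(\id\otimes\Delta_\GG^{\op})\Delta_\GG(R(x))$ and $(\id\otimes\Delta_\GG)\Delta_\GG(R(x))$ reduce to $(R\otimes R\otimes R)\circ(\id\otimes\Delta_\GG)\Delta_\GG(x)$, so $R(x)\in\sM$. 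The Baaj-Vaes theorem then furnishes the quantum group $\HH$ with $\Linf(\HH)=\sM$, and $\HH$ is abelian by the equality $\Delta_\GG|_\sM=\Delta_\GG^{\op}|_\sM$ established above.

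Finally, for maximality, let $\sN\in\mathcal{BV}(\GG)$ with associated $\bL$ abelian, so $\Delta_\GG|_\sN=\Delta_\GG^{\op}|_\sN$. For $x\in\sN$ the invariance of $\sN$ places $\Delta_\GG(x)$ in $\sN\vtens\sN$; since $\Delta_\GG$ and $\Delta_\GG^{\op}$ agree on $\sN$, applying either to the second tensor factor produces the same element, and consequently $(\id\otimes\Delta_\GG^{\op})\Delta_\GG(x)=(\id\otimes\Delta_\GG)\Delta_\GG(x)$, i.e.\ $x\in\sM$. Hence $\sN\subseteq\sM$.
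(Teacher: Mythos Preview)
Your proof is correct and follows essentially the same route as the paper: both establish the coideal property by showing the iterated coproduct $\Delta_\GG^{(3)}(x)$ is symmetric in the last two legs (the paper does this via a chain of coassociativity identities, you via the $\boldsymbol{\sigma}_{23}$-rephrasing and explicit slicing), then use the Podle\'s density \Cref{eq:pod_cond} to obtain $\Delta_\GG|_\sM=\Delta_\GG^{\op}|_\sM$, from which invariance, $\tau_t$- and $R$-stability, abelianity, and maximality follow by the same short computations. The only cosmetic difference is your use of slice maps where the paper works directly at the level of tensor identities.
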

\begin{proof}
  Clearly $\sM$ is a von Neumann subalgebra of $\Linf(\GG)$. We shall first show that $\Delta_\GG(\sM)\subset \Linf(\GG)\vtens\sM$. Let $x\in\sM$. Then 
  \[\begin{array}{cc}(\id\otimes\id\otimes\Delta_\GG)(\id\otimes\Delta_\GG)(\Delta_\GG(x)) &= (\Delta_\GG\otimes\id\otimes\id )(\id\otimes\Delta_\GG)(\Delta_\GG(x))\\&=(\Delta_\GG\otimes\id\otimes\id )(\id\otimes\Delta^{\textrm{op}}_{\GG})(\Delta_\GG(x))\\&=(\id\otimes\id\otimes\Delta^{\textrm{op}}_\GG )(\id\otimes\Delta_{\GG})(\Delta_\GG(x))
  \end{array}\] and we get $\Delta_\GG(x)\in\Linf(\GG)\vtens\sM$.
  
  Using \Cref{eq:pod_cond} and \Cref{def:M} we see that $\Delta_\GG|_{\sM} = \Delta^{\textrm{op}}_\GG|_{\sM}$. In particular $\Delta_\GG(\sM)\subset \sM\vtens\sM$. 
  
  The $\tau_t^\GG$-invariance of $\sM$ follows easily from the relation $\Delta_\GG\circ\tau_t^\GG = (\tau_t^\GG\otimes\tau_t^\GG)\circ \Delta_\GG$. Since $\Delta_\GG\circ R^\GG = (R^\GG \otimes R^\GG)\circ\Delta^{\textrm{op}}_\GG$ and $\Delta_\GG|_{\sM} = \Delta^{\textrm{op}}_\GG|_{\sM}$ we get $R^\GG(\sM)\subset \sM$. Summarizing $\sM$ forms a Baaj-Vaes subalgebra. 
  
  If $\sN\subset \Linf(\GG)$ is a Baaj-Vaes subalgebra such that  $\Delta_\GG|_{\sN} = \Delta^{\textrm{op}}_\GG|_{\sN}$ then it is clear that for all $x\in \sN$ the condition $(\id\otimes\Delta^{\textrm{op}}_\GG)(\Delta_\GG(x)) = (\id\otimes\Delta_\GG)(\Delta_\GG(x))$ holds, i.e. $\sN\subset \sM$.
\end{proof}

\begin{remark}\label{re.lrg-class}
  In other words, $\sM\in \Linf(\bG)$ introduced in \Cref{thm:quot_comm} is the largest cocommutative Baaj-Vaes subalgebra. This means that it corresponds to the largest classical closed quantum subgroup of $\hh\bG$.   
\end{remark}

\begin{example}
Let $\GG$ be a classical locally compact  group. Adopting the notation of \Cref{thm:quot_comm} we see that $f\in \sM$ if for all (up to measure zero subset) $(p,q,r)\in \bG^3$  we have $f(pqr) = f(prq)$. This condition is equivalent with $f(pqrq^{-1}r^{-1}) = f(p)$. Thus $f\in \sM$ if and only if $f$ is constant on the cosets of the commutator subgroup $\bN\subset \GG$ where $\bN$ is defined as the smallest closed subgroup of $\GG$ containing  $\{qrq^{-1}r^{-1}:q,r\in\bG\}$. In conclusion, we have $\sM = \Linf(\bG/\bN)$. 
\end{example}

\begin{remark}\label{re.no-comm}
Now, let $\GG$ be a locally compact quantum group and $\sM\subset \Linf(\GG)$ the Baaj-Vaes algebra described in \Cref{thm:quot_comm}. In general      a normal quantum subgroup $\bN\trianglelefteq\bG$ such that $\sM = \Linf(\GG/\bN)$ does not exist. Actually such $\bN$  exists if and only if  $\sM$, viewed as a coideal $\Linf(\GG)$, is normal; the normality of $\sM$ in turn  is  equivalent with the equality 
\begin{equation}\label{eq:normal_comm}
  \ww^\GG_{13}\ww^\GG_{14}(\id\otimes\Delta_\GG)(\Delta_\GG(x))_{234}{\ww_{14}^\GG}^*{\ww_{13}^\GG}^*  = \ww^\GG_{14}\ww^\GG_{13}(\id\otimes\Delta_\GG)(\Delta_\GG(x))_{234}{\ww^\GG_{13}}^*{\ww^\GG_{14}}^*
\end{equation} 
being satisfied for all $x\in \sM$. \Cref{ex.no-comm} below shows that \Cref{eq:normal_comm} does not always hold; when it does, we call $\bN \trianglelefteq\bG$ as above the {\it commutator subgroup} of $\GG$.   
\end{remark}

\begin{example}\label{ex.no-comm}
  As indicated in \Cref{re.no-comm} above, the largest cocommutative Baaj-Vaes subalgebra of $L^\infty(\bG)$ is not, in general, of the form $L^\infty(\bG/\bN)$ for a normal closed quantum subgroup $\bN\trianglelefteq \bG$. To see this, note that upon dualizing, the claim takes the form, that there exist locally compact quantum groups with the property that the largest classical closed quantum subgroup is not normal (see \Cref{re.lrg-class}). 

For examples of this latter phenomenon, consider one of the free unitary groups $U_n^+$ for some $n\ge 2$ (these are the quantum groups whose underlying CQG algebras $A_u(n)$ are defined in \cite{wang-free} as being freely generated by $n\times n$ unitary matrix of generators $u_{ij}$ such that $(u_{ij}^*)$ is also unitary). 

It's largest classical quantum subgroup is the ordinary unitary group $U_n$ obtained as the object dual to the largest commutative CQG quotient algebra of $A_u(n)$, whereas it is known \cite[Corollary 12]{free-unit} that proper normal quantum subgroups of $U_n^+$ are contained in the common center $\bT$ of $U_n<U_n^+$.  
\end{example}

Let us move on to the discussion of  {\it morphisms} of locally compact quantum groups. This  requires the universal $\C^*$-version of a given locally compact quantum group $\GG$ (see e.g. \cite{univ}).
 The universal version $\C_0^u(\GG)$  of $\C_0(\GG)$    is equipped  with a comultiplication $\Delta_\GG^u \in\Mor(\C_0^u(\GG),\C_0^u(\GG)\otimes \C_0^u(\GG))$. The multiplicative unitary $\ww^\GG\in\M(\C_0(\hh\GG)\otimes\C_0(\GG))$ admits the universal lift  $\WW^\GG\in\M(\C_0^u(\hh\GG)\otimes \C_0^u(\GG))$.  The reducing morphisms for $\GG$ and $\hh\GG$ will be denoted by 
 \[\begin{split}\Lambda_\GG&\in\Mor(\C_0^u(\GG),\C_0(\GG)),\\\Lambda_{\hh\GG}&\in\Mor(\C_0^u(\hh\GG),\C_0(\hh\GG))\end{split}\]  respectively. Then 
\[(\Lambda_{\hh\GG}\otimes\Lambda_\GG)(\WW^\GG)=\ww^\GG\]
We shall also use the half-lifted versions of $\ww^\GG$
 \begin{align*} \Ww^\GG&=(\id\otimes\Lambda_\GG)(\WW^\GG)\in\M(\C_0^u(\hh\GG)\otimes \C_0(\GG)),\\ \wW^\GG&=(\Lambda_{\hh\GG}\otimes\id)(\WW^\GG) \in\M(\C_0(\hh\GG)\otimes \C_0^u(\GG)) \end{align*} 
which satisfy the appropriate versions of the pentagonal equation 
 \begin{align*}
\Ww^\GG_{12}\Ww^\GG_{13}\ww^\GG_{23}&=\ww^\GG_{23}\Ww^\GG_{12},\\
\ww^\GG_{12}\wW^\GG_{13}\wW^\GG_{23}&=\wW^\GG_{23}\ww^\GG_{12}.
\end{align*} 
The half-lifted versions of the comultiplications will be denoted by  $\Delta_r^{r,u}\in\Mor(\C_0(\GG),\C_0(\GG)\otimes \C_0^u(\GG))$ 
and 
$\hh{\Delta}_r^{r,u}\in\Mor(\C_0(\hh\GG),\C_0(\hh\GG)\otimes \C_0^u(\hh\GG))$, e.g.  
\[\begin{split}
\Delta_r^{r,u}(x)&=\wW^\GG(x\otimes\id){\wW^\GG}^*,~~~~~x\in\C_0(\GG).  
\end{split} \]
  We have 
\begin{equation}\label{LDDrL}
\begin{split}
(\Lambda_\GG\otimes\id)\circ\Delta_\GG^u &= \Delta_r^{r,u}\circ\Lambda_\GG,\\
(\Lambda_{\hh\GG}\otimes\id)\circ\Delta_{\hh\GG}^u &= \hh{\Delta}_r^{{r,u}} \circ\Lambda_{\hh\GG}.
\end{split}
\end{equation}

Given two locally compact quantum groups $\GG$ and $\HH$, a morphism  $\Pi:\HH\to\GG$ (see e.g. \cite{SLW12}) is represented by a \cst-morphism $\pi\in \Mor(\C_0^u(\GG), \C_0^u(\HH))$ intertwining the respective coproducts:
\[ (\pi \ot \pi) \circ \Delta_{\GG} = \Delta_{\HH} \circ \pi.\] It can be equivalently described via:
\begin{itemize} 
\item a  \emph{bicharacter} from $\HH$ to $\GG$, i.e.\ a unitary $V \in \Linf(\hh{\GG})\vtens\Linf(\HH) $ such that
\[ (\Delta_{\hGG} \ot \id_{\C_0(\HH)}) (V) = V_{23} V_{13},\]
\[ (\id_{\C_0(\hGG)} \ot \Delta_{\HH}) (V) = V_{12} V_{13}.\] In fact $V\in\M(\C_0(\hh\GG)\otimes\C_0(\HH))$ and 
$V= (\id \ot \Lambda^{\HH} \circ \pi)(\wW^{\GG})$. We shall also use $\vV = (\id \ot  \pi)(\wW^{\GG})\in\M(\C_0(\hh\GG)\ot\C_0^u(\HH))$. 
\item   a \emph{right quantum group homomorphism} i.e.\  an action  $\alpha:\Linf(\bG)\to\Linf(\bG)\vtens\Linf(\bH)$ of $\bH$ on $\Linf(\bG)$ satisfying 
\[(\Delta_{\bG}\otimes\id)\circ\alpha = (\id\otimes\alpha)\circ\Delta_{\bG}\] 
In fact
$\alpha(x) = V(x\otimes\I)V^*$. We shall also use the obvious  left version of the concept of a right quantum group homomorphism, which is refereed to  as a \emph{left quantum group homomorphism}.
\end{itemize}
 Let $\Pi:\HH\to \GG$. The right quantum group homomorphism assigned to $\Pi$ will be denoted  $\alpha_{\Pi}$ or $\alpha_{\HH\to\GG}$ when convenient. 
\begin{example}\label{cl_q_sub_mor}
Let $\GG$ be a locally compact quantum group and $\HH\le \GG$. Since $\Linf(\hh\HH)$  is a Baaj-Vaes subalgebra of $\Linf(\hh\GG)$,  the multiplicative unitary $\ww^\HH\in\Linf(\hh\HH)\vtens\Linf(\HH)$ can viewed as an element   $V\in \Linf(\hh\GG)\vtens\Linf(\HH)$. Since  $V$ is a bicharacter from $\HH$ to $\GG$,   $\HH\le \GG$ yields a morphism from $\HH$ to $\GG$. 

Let $\GG$ and $\HH$ be locally compact quantum groups and  $\Pi:\HH\to \GG$ a morphism. We say that $\Pi$ identifies $\HH$ with a closed quantum subgroup of $\GG$ if there exists a normal injective $*$-homomorphism $\gamma:\Linf(\hh\HH)\to\Linf(\hh\GG)$ such that $V = (\gamma\otimes\id)(\ww^\HH)$ (see \cite{DKSS}). 
\end{example}

Clearly, a closed  quantum subgroup $\HH\in\mathcal{QS}(\GG)$ is normal if and only 
if 
\[\beta(\Linf(\hh\HH))\subset\Linf(\hh\HH)\vtens\Linf(\GG) \] where $\beta$ is the adjoint action \Cref{eq:adact}.
Let us also note the following result whose classical version is well known. 
\begin{proposition}\label{pr.norm-ab}
Let $\GG$ be a locally compact quantum group and $\bN\trianglelefteq\GG$ an abelian normal quantum  subgroup of $\GG$. Then for every $x\in\Linf(\hh\bN)$ we have 
\[\ww^\GG(x\otimes\I)\ww^{\GG*}\in\Linf(\hh\bN)\vtens\Linf(\GG/\bN)\] In particular the restriction of the adjoint action $\beta$ to $\Linf(\hh\bN)$ gives rise to  the action of $\GG/\bN$ on $\Linf(\hh\bN)$. Conversely if the adjoint action restricted to $\Linf(\hh\bN)$ gives rise to the action of $\GG/\bN$ then $\bN$ is abelian. 
\end{proposition}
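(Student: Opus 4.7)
The plan is to reformulate the containment $\beta(x)\in\Linf(\hh\bN)\vtens\Linf(\GG/\bN)$ as a commutation condition, and then reduce that condition to the (non-)commutativity of $\Linf(\hh\bN)\vtens\Linf(\hh\bN)$ via a computation with $\ww^\GG$.

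First, recall from \Cref{rem:coduality0} that $\Linf(\GG/\bN) = \Linf(\hh\bN)'\cap\Linf(\GG)$, so $\Linf(\GG/\bN)' = (\Linf(\hh\bN)\cup\Linf(\GG)')''$. Using the tensor commutant theorem $(A\vtens B)' = A'\vtens B'$ together with the hypothesis $\beta(x)\in\Linf(\hh\bN)\vtens\Linf(\GG)$ (which is the content of normality of $\bN$, cf. the discussion preceding the proposition), one sees that $\beta(x)\in\Linf(\hh\bN)\vtens\Linf(\GG/\bN)$ is equivalent to
\[\beta(x)(\I\otimes y) = (\I\otimes y)\beta(x),\qquad x,y\in\Linf(\hh\bN).\]
Indeed, commutation with $\I\otimes\Linf(\GG)'$ is automatic, so it remains only to test commutation with $\I\otimes\Linf(\hh\bN)$.

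Second, using the identity $\ww^{\hh\GG} = \boldsymbol{\sigma}({\ww^\GG})^*$, I would verify
\[{\ww^\GG}^*(\I\otimes y)\ww^\GG = \boldsymbol{\sigma}(\Delta_{\hh\GG}(y)),\]
from which the preceding commutation is equivalent to
\[\Delta_{\hh\GG}(y)(\I\otimes x) = (\I\otimes x)\Delta_{\hh\GG}(y),\qquad x,y\in\Linf(\hh\bN).\]
Now $\Linf(\hh\bN)\in\mathcal{BV}(\hh\GG)$ forces $\Delta_{\hh\GG}(y)\in\Linf(\hh\bN)\vtens\Linf(\hh\bN)$. When $\bN$ is abelian, $\Linf(\hh\bN)$ is commutative, hence so is $\Linf(\hh\bN)\vtens\Linf(\hh\bN)$, and the displayed commutation holds trivially. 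This settles the direct implication.

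For the converse, assume $\beta(\Linf(\hh\bN))\subset\Linf(\hh\bN)\vtens\Linf(\GG/\bN)$. By the equivalence above, $\Delta_{\hh\GG}(y)$ commutes with $\I\otimes x$ for all $x,y\in\Linf(\hh\bN)$. Slicing the first leg, for any $\omega\in\Linf(\hh\GG)_*$ we obtain $(\omega\otimes\id)(\Delta_{\hh\GG}(y))\in\Linf(\hh\bN)\cap\Linf(\hh\bN)' = Z(\Linf(\hh\bN))$. By the coideal density \Cref{eq:pod_cond} applied to $\Linf(\hh\bN)\subset\Linf(\hh\GG)$, such slices span $\Linf(\hh\bN)$ $\sigma$-weakly, so $\Linf(\hh\bN)\subset Z(\Linf(\hh\bN))$, i.e.\ $\Linf(\hh\bN)$ is commutative and $\bN$ is abelian. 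The main piece of bookkeeping is the second step: carefully deriving the flip identity from $\ww^{\hh\GG}=\boldsymbol{\sigma}({\ww^\GG})^*$ and using it to convert commutations across the two legs.
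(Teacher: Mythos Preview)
Your proof is correct and takes a genuinely different route from the paper's. The paper argues via the right quantum group homomorphism $\alpha:\Linf(\GG)\to\Linf(\GG)\vtens\Linf(\bN)$: using $(\id\otimes\alpha)(\ww^\GG) = \ww^\GG_{12}\ww^\bN_{13}$, one computes $(\id\otimes\alpha)(\beta(x)) = \ww^\GG_{12}\ww^\bN_{13}(x\otimes\I\otimes\I)\ww^{\bN*}_{13}\ww^{\GG*}_{12}$, and commutativity of $\Linf(\hh\bN)$ makes the $\ww^\bN_{13}$-conjugation trivial, so $(\id\otimes\alpha)(\beta(x)) = \beta(x)\otimes\I$; this says the second leg of $\beta(x)$ lies in the fixed-point algebra $\Linf(\GG/\bN)$ of $\alpha$. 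The converse is then read off directly as $\ww^\bN(x\otimes\I)\ww^{\bN*}=x\otimes\I$ for all $x\in\Linf(\hh\bN)$. Your approach instead exploits the commutant description $\Linf(\GG/\bN)=\Linf(\hh\bN)'\cap\Linf(\GG)$, reduces the containment to a commutation condition, and via the flip identity $\ww^{\GG*}(\I\otimes y)\ww^\GG=\boldsymbol{\sigma}(\Delta_{\hh\GG}(y))$ transports this to commutation of $\Delta_{\hh\GG}(y)$ with $\I\otimes x$ inside $\Linf(\hh\bN)\vtens\Linf(\hh\bN)$. The paper's argument is a bit shorter and stays within the coaction framework; yours avoids introducing $\alpha$ and its fixed-point interpretation altogether, trading it for the tensor commutant theorem and the slicing/density argument \Cref{eq:pod_cond}, which makes the equivalence with abelianness of $\bN$ very transparent.
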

\begin{proof}Let $\alpha:\Linf(\GG)\to \Linf(\GG)\vtens\Linf(\bN)$ be the right quantum group homomorphism assigned to $\bN\trianglelefteq\GG$. Since $(\id\otimes\alpha)(\ww^\GG) = \ww^\GG_{12}\ww^{\bN}_{13}$ and $\bN$ is abelian (i.e. $\Linf(\hh\bN)$ is commutative), we conclude that 
\[(\id\otimes\alpha)(\ww^\GG(x\otimes\I)\ww^{\GG*}) = \ww^\GG_{12}\ww^{\bN}_{13}(x\otimes\I\otimes\I)\ww^{\bN*}_{13}\ww^{\GG*}_{12} = (\ww^\GG(x\otimes\I)\ww^{\GG*})\otimes\I\]
for all  $ x\in\Linf(\hh\bN)$. Thus $\ww^\GG(x\otimes\I)\ww^{\GG*}\in\Linf(\hh\bN)\vtens\Linf(\GG/\bN)$. 

Conversely, the condition  \[\ww^\GG_{12}\ww^{\bN}_{13}(x\otimes\I\otimes\I)\ww^{\bN*}_{13}\ww^{\GG*}_{12} = (\ww^\GG(x\otimes\I)\ww^{\GG*})\otimes\I\] holds  if and only if $\ww^{\bN}(x\otimes\I)\ww^{\bN*}=(x\otimes\I)$ for all $x\in\Linf(\hh\bN)$, which is equivalent to $\bN$ being abelian. 
\end{proof}

Let $\Pi:\bH\to \bG$ be a morphism of locally compact quantum groups. It turns out that $\Pi$ cannot (in general) be assigned with  a   quantum analog of the kernel subgroup  $\ker\Pi\le \bH$ (the case $\bH=\bG$ and $\Pi$ being a projection $\Pi^2 =\Pi$ was thoroughly studied in \cite{KSProj}). In particular $\Pi$ cannot be assigned with    the exact sequence
\begin{equation}\label{ex_seq_basic}
  \bullet\to\ker\Pi\to \bH\to \bH/\ker\Pi\to\bullet.
\end{equation}  
 As noted in  \cite{KKS},  the quantum analog of $\bH/\ker\Pi$ can  always be constructed. In what follows we shall provide a number of  descriptions  of $\bH/\ker\Pi$ and formulate the condition  which yields the existence of  $\ker\Pi$  entering the exact sequence  \Cref{ex_seq_basic}. 

The von Neumann algebra  $\Linf(\bH/\ker\Pi)$ is defined as (see \cite[Definition 4.4]{KKS}
\begin{equation}\label{Hker1}\Linf(\bH/\ker\Pi) = \{(\omega\otimes\id)(V):\omega\in\Linf(\hh\bG)_*\}^{\sigma - \textrm{cls}}.\end{equation}
To be more precise the right hand side of \Cref{Hker1} forms a      Baaj-Vaes subalgebra of $\Linf(\bH)$, thus yields a locally compact quantum group which we denote $\bH/\ker\Pi$.  Since $V= (\id \ot \Lambda^{\HH} \circ \pi)(\wW^{\GG})$ the following holds 
\begin{equation}\label{def:2quotker}\Linf(\bH/\ker\Pi) = \{\Lambda_{\HH} ( \pi(x)): x\in\C_0^u(\GG)\}^{\sigma - \textrm{cls}}\end{equation} which is the second description of $\Linf(\bH/\ker\Pi)$. The third description is the subject of \cite[Theorem 4.7]{KKS}:
\begin{equation}\label{eq:quot_ker}\Linf(\bH/\ker\Pi) =\{(\omega\otimes\id)(\alpha(x)):\omega\in\Linf(\bG)_*, x\in\Linf(\bG)\}''.\end{equation}
In what follows we shall give a simple proof of a slightly stronger version of  \Cref{eq:quot_ker}.
\begin{lemma}\label{q_def}
Given a morphism $\Pi:\bH\to \bG$ we have 
\[\Linf(\bH/\ker\Pi) =\{(\omega\otimes\id)(\alpha(x)):\omega\in\Linf(\bG)_*, x\in\Linf(\bG)\}^{\sigma - \textrm{cls}}\]
\end{lemma}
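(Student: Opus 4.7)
Let $\sN_0$ denote the right-hand side of the asserted equality. The inclusion $\sN_0 \subseteq \Linf(\bH/\ker\Pi)$ is the easy direction: since $V = (\id \otimes \Lambda_\HH \circ \pi)(\wW^\GG)$ takes values in $\Linf(\hh\bG) \vtens \Linf(\bH/\ker\Pi)$ (the image of $\Lambda_\HH \circ \pi$ lies in $\M(\C_0(\bH/\ker\Pi))$), it follows that $\alpha(x) = V(x \otimes \I)V^* \in \Linf(\bG) \vtens \Linf(\bH/\ker\Pi)$ for every $x \in \Linf(\bG)$, and therefore every first-leg slice $(\omega \otimes \id)(\alpha(x))$ with $\omega \in \Linf(\bG)_*$ lies in $\Linf(\bH/\ker\Pi)$.

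For the reverse inclusion, I would use the factorization
\[
\alpha(x) = (\id \otimes \Lambda_\HH \circ \pi)(\Delta_r^{r,u}(x)),
\]
which follows by substituting the definitions $V = (\id \otimes \Lambda_\HH \circ \pi)(\wW^\GG)$ and $\Delta_r^{r,u}(x) = \wW^\GG(x \otimes \I){\wW^\GG}^*$, together with the fact that $\Lambda_\HH \circ \pi$ is a unital $*$-homomorphism. Slicing the first leg yields
\[
(\omega \otimes \id)(\alpha(x)) = \Lambda_\HH\bigl(\pi\bigl((\omega \otimes \id)(\Delta_r^{r,u}(x))\bigr)\bigr).
\]
Combined with the second description \Cref{def:2quotker}, namely $\Linf(\bH/\ker\Pi) = \{\Lambda_\HH(\pi(y)) : y \in \C_0^u(\bG)\}^{\sigma-\textrm{cls}}$, the reverse inclusion follows once one establishes that the set $\{(\omega \otimes \id)(\Delta_r^{r,u}(x)) : \omega \in \Linf(\bG)_*,\ x \in \Linf(\bG)\}$ is norm-dense in $\C_0^u(\bG)$; applying the $*$-homomorphism $\Lambda_\HH \circ \pi$ and then taking $\sigma$-weak closure then exhausts $\Linf(\bH/\ker\Pi)$.

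The remaining density statement is a Pod\l{}es-type condition for the half-lifted coproduct. I would prove it via the identity $\wW^\GG_{23}\ww^\GG_{12}{\wW^\GG}^*_{23} = \ww^\GG_{12}\wW^\GG_{13}$, which is an immediate rearrangement of the mixed pentagon $\ww^\GG_{12}\wW^\GG_{13}\wW^\GG_{23} = \wW^\GG_{23}\ww^\GG_{12}$ recalled in \Cref{subse.lc}. Slicing the first leg of this identity against a normal functional on $\Linf(\hh\bG)$, and choosing $x$ of the form $(\nu \otimes \id)(\ww^\GG) \in \Linf(\bG)$ for $\nu \in \B(\Ltwo(\bG))_*$, rewrites arbitrary second-leg slices of $\wW^\GG$---which span a norm-dense subspace of $\C_0^u(\bG)$---as first-leg slices of $\Delta_r^{r,u}$ applied to elements of $\Linf(\bG)$. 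The main obstacle is the topological bookkeeping between the reduced and universal multiplier algebras during this reduction, but each manipulation is routine within the multiplicative-unitary formalism set up in \Cref{subse.lc}.
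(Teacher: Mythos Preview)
Your argument is correct, but it takes a detour through the universal level that the paper avoids entirely. The paper works directly with the bicharacter $V$ at the reduced level: from the bicharacter identity one has $V_{23}\ww^\GG_{12}V_{23}^* = \ww^\GG_{12}V_{13}$, and since $\Linf(\bG)$ is the $\sigma$-weak closed span of slices $(\mu\otimes\id)(\ww^\GG)$ with $\mu\in\Linf(\hh\bG)_*$, one computes in a single chain of equalities
\[
\{(\omega\otimes\id)(\alpha(x))\}^{\sigma-\textrm{cls}} = \{(\mu\otimes\omega\otimes\id)(\ww^\GG_{12}V_{13})\}^{\sigma-\textrm{cls}} = \{(\omega\otimes\id)(V)\}^{\sigma-\textrm{cls}} = \Linf(\bH/\ker\Pi),
\]
absorbing the unitary $\ww^\GG_{12}$ into the functionals and invoking the first description \Cref{Hker1}. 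No inclusions, no universal algebra, no half-lifted coproduct.

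Your route---splitting into two inclusions, factoring $\alpha$ through $\Delta_r^{r,u}$ via $\Lambda_\HH\circ\pi$, and appealing to the second description \Cref{def:2quotker}---is essentially the same pentagon manipulation but lifted to $\C_0^u(\bG)$ and then pushed back down. This works, though note that $\Delta_r^{r,u}$ is a priori only defined on $\C_0(\bG)$, not $\Linf(\bG)$; your argument is saved by the fact that you only need $x$ of the form $(\nu\otimes\id)(\ww^\GG)\in\C_0(\bG)$, so the domain issue is harmless. The paper's approach buys you a shorter proof that sidesteps both this domain subtlety and the ``topological bookkeeping'' you flag at the end.
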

\begin{proof}
 
The bicharacter equation for $V$ yields
\[V_{23}\ww^\GG_{12}V_{23}^* = \ww^\GG_{12}V_{13}.\]
In particular, since 
\[\Linf(\GG) = \{(\mu\otimes\id)(\ww^\GG):\mu\in\Linf(\hh\bG)_*\}^{\sigma - \textrm{cls}}\] we have
\[\begin{split}
\{(\omega\otimes\id)&(\alpha(x)):\omega\in\Linf(\bG)_*, x\in\Linf(\bG)\}^{\sigma - \textrm{cls}}\\& =\{(\mu\otimes \omega\otimes\id)(V_{23}\ww^\GG_{12}V_{23}^*):\mu\in\Linf(\hh\GG)_*,\omega\in\Linf(\bG)_* \}^{\sigma - \textrm{cls}}\\& =\{(\mu\otimes \omega\otimes\id)(\ww^\GG_{12}V_{13}):\mu\in\Linf(\hh\GG)_*,\omega\in\Linf(\bG)_* \}^{\sigma - \textrm{cls}}\\& =\{(\mu\otimes \omega\otimes\id)( V_{13}):\mu\in\Linf(\hh\GG)_*,\omega\in\Linf(\bG)_*\}^{\sigma - \textrm{cls}}\\& =\{( \omega\otimes\id)(V):\omega\in\Linf(\bG)_*, \}^{\sigma - \textrm{cls}} = \Linf(\bH/\ker\Pi)
\end{split}\]
where in third  equality ${\sigma - \textrm{cls}}$   in the subscript    and unitarity of $\ww^\GG $   enabled us to  absorb  $\ww^\GG$ into the   functional  $\mu\otimes \omega$ without changing the resulting set. 
\end{proof}
Let $\Pi:\HH\to\GG$. 
Then the  embedding   $\Linf(\bH/\ker\Pi)\subset \Linf(\bH)$ can be interpreted as  $\hh{\bH/\ker\Pi}\le\hh\HH$. In particular  a short  exact sequence \Cref{short_exact_seq1} starting with 
\[\bullet\to \hh{\bH/\ker\Pi}\to \hh{\bH}\] exists  if and only if   $\hh{\bH/\ker\Pi}\in\mathcal{NQS}(\hh\HH)$. In this case  defining   $\Linf(\hh{\ker\Pi}) = \cd(\Linf(\bH/\ker\Pi))\in\mathcal{NQS}( \HH)$  we  get a short  exact sequence of locally compact quantum groups \Cref{ex_seq_basic}. 

A morphism  $\Pi:\HH\to\GG$ is assigned with the  dual morphism $\hh\Pi:\hh\GG\to\hh\HH$ which in terms of bicharacter is given by $\hh{V} = \sigma(V)^*$. The locally compact quantum group  $\hh\GG/\ker\hh\Pi $ will be denoted  by $\overline{\textrm{im}\Pi}$ (see   \cite[Definition 4.3]{KKS}). In particular, using \eqref{Hker1} we can see that  $V\in\Linf(\hh\GG)\vtens\Linf(\HH)$ is actually an element of $\Linf(\hh{\overline{\textrm{im}\Pi}})\vtens\Linf(\HH/\ker\Pi)$. Using the inclusions  \begin{align*}\Linf(\hh{\overline{\textrm{im}\Pi}})\vtens\Linf(\HH/\ker\Pi)&\subset\Linf(\hh\GG)\vtens\Linf(\HH/\ker\Pi)\\ \Linf(\hh{\overline{\textrm{im}\Pi}})\vtens\Linf(\HH/\ker\Pi)&\subset\Linf(\hh{\overline{\textrm{im}\Pi}})\vtens\Linf(\HH)\end{align*} we  see that a morphism   $\Pi_{\HH\to\GG}:\HH\to\GG$ induces three morphisms  $ \Pi_{\HH/\ker\Pi\to \overline{\textrm{im}\Pi}}$,  $\Pi_{\HH/\ker\Pi\to\GG}$ and $\Pi_{{\HH}\to {\overline{\textrm{im}\Pi}}}$. Using \cite[Theorem  6.2, Corollary 6.5]{KKS}  we shall now formulate the First Isomorphism Theorem for locally compact quantum groups. 
\begin{theorem}\label{thm:first_iso_thm}
  Let $\HH$ and $\GG$ be locally compact quantum groups, $\Pi:\HH\to\GG$  a morphism and let  $\Pi_{\HH/\ker\Pi\to \overline{\textrm{im}\Pi}}$,   $\Pi_{\HH/\ker\Pi\to\GG}$,   $\hh\Pi_{\hh{\overline{\textrm{im}\Pi}}\to\hh\HH}$ be the morphisms induced by $\Pi$ as described above. Then the following conditions are equivalent:
  \begin{itemize}
  \item[(i)] $\Pi_{\HH/\ker\Pi\to \overline{\textrm{im}\Pi}}$ is an isomorphism; \item[(ii)] the action $\alpha:\Linf(\overline{\textrm{im}\Pi})\to\Linf(\overline{\textrm{im}\Pi})\vtens\Linf(\HH/\ker\Pi)$ is integrable;
  \item[(iii)] $\Pi_{\HH/\ker\Pi\to\GG}$ identifies $\HH/\ker\Pi$ with a closed quantum subgroup of $\GG$;
  \item[(iv)] $\hh\Pi_{\hh{\overline{\textrm{im}\Pi}}\to\hh\HH}$ identifies $\hh{\overline{\textrm{im}\Pi}}$  with a closed quantum subgroup of $\hh\HH$. 
  \end{itemize}
\end{theorem}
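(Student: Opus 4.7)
The plan is to reduce all four conditions to statements about a single common bicharacter. The morphism $\Pi$ is represented by the bicharacter $V\in\Linf(\hh\GG)\vtens\Linf(\HH)$, which by \eqref{Hker1} (applied both to $\Pi$ and to $\hh\Pi$ via $\hh V=\sigma(V)^*$) actually lies in $\Linf(\hh{\overline{\textrm{im}\Pi}})\vtens\Linf(\HH/\ker\Pi)$ and has each of its two legs $\sigma$-weakly total in the corresponding Baaj-Vaes subalgebra. In this restricted form $V$ represents $\Pi_{\HH/\ker\Pi\to\overline{\textrm{im}\Pi}}$, and the theorem is essentially the assertion that each of (ii), (iii), (iv) is equivalent to $V$ being the multiplicative unitary of a single quantum group, modulo the isomorphism in (i).

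For (iii)$\Leftrightarrow$(i): assuming (iii), \Cref{cl_q_sub_mor} supplies a normal injective $*$-homomorphism $\gamma:\Linf(\hh{\HH/\ker\Pi})\to\Linf(\hh\GG)$ such that $V_{\HH/\ker\Pi\to\GG}=(\gamma\otimes\id)(\ww^{\HH/\ker\Pi})$. Fullness of the first leg of $V$ in $\Linf(\hh{\overline{\textrm{im}\Pi}})$ forces the image of $\gamma$ to be exactly this Baaj-Vaes subalgebra, so $\gamma$ is a Hopf--von Neumann isomorphism onto $\Linf(\hh{\overline{\textrm{im}\Pi}})$, which dualizes to (i). The converse is immediate by composing the given isomorphism with the inclusion $\Linf(\hh{\overline{\textrm{im}\Pi}})\hookrightarrow\Linf(\hh\GG)$. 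The equivalence (iv)$\Leftrightarrow$(i) is then obtained by applying the same argument to the dual morphism $\hh\Pi:\hh\GG\to\hh\HH$, using the symmetry $V\mapsto\sigma(V)^*$ that interchanges the roles of $\HH/\ker\Pi$ and $\overline{\textrm{im}\Pi}$ under Pontryagin duality.

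For (i)$\Leftrightarrow$(ii): the forward direction is direct, since under the isomorphism from (i) the action $\alpha$ is conjugate to the comultiplication $\Delta_{\overline{\textrm{im}\Pi}}$, whose integrability against the right Haar weight $\psi_{\overline{\textrm{im}\Pi}}$ is part of the axioms of a locally compact quantum group. The substantial content is the converse (ii)$\Rightarrow$(i), which is precisely the combination of \cite[Theorem~6.2]{KKS} and \cite[Corollary~6.5]{KKS}: integrability of $\alpha$ supplies an operator-valued weight whose Takesaki--Tomita data yields an inverse to $\Pi_{\HH/\ker\Pi\to\overline{\textrm{im}\Pi}}$. This converse is the main obstacle; without integrability there is no a priori way to construct the inverse, which is precisely why the locally compact statement imposes the hypothesis that is automatic in the algebraic setting of \Cref{se.lr}.
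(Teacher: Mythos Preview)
Your proposal is correct. In the paper this theorem is not proved at all: it is simply stated as a reformulation of \cite[Theorem~6.2, Corollary~6.5]{KKS}, with no argument given for the individual equivalences. Your write-up is therefore strictly more detailed than the paper's treatment: you supply direct arguments for (i)$\Leftrightarrow$(iii), (i)$\Leftrightarrow$(iv) and (i)$\Rightarrow$(ii), and you correctly isolate (ii)$\Rightarrow$(i) as the substantive step, invoking the same reference \cite{KKS} that the paper cites. The bicharacter/leg-density argument you give for (iii)$\Rightarrow$(i) and its dual is sound, and your remark that the integrability hypothesis is what replaces the automatic algebraic situation of \Cref{se.lr} matches the paper's overall narrative.
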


 \begin{remark}
   Let $\Pi:\HH\to\GG$ be a morphism of locally compact quantum groups. Clearly $\overline{\textrm{im}\Pi}$ is abelian if and only if $\HH/\ker\Pi$ is abelian. Let $\sM\subset\Linf(\GG)$ be the   Baaj-Vaes algebra described in \Cref{thm:quot_comm}. Then $\HH/\ker\Pi$ is abelian if and only if $\Linf(\HH/\ker\Pi)\subset \sM$. For further discussion let us suppose that  $\ker\Pi\trianglelefteq\GG$ exists  (see the paragraph containing \Cref{ex_seq_basic}) and   there exists a $\bN\trianglelefteq\GG$ such that $\sM =\Linf(\bG/\bN)$. Then $\Linf(\HH/\ker\Pi)\le \Linf(\bG/\bN)$ if and only if $\bN\le\ker\Pi$. Thus in the discussed  case we get a quantum analog of the well known classical fact: the closed image of  $\Pi:\HH\to \GG$ is abelian if and only if the kernel $\ker\Pi\trianglelefteq\HH$ contains the commutator subgroup $\bN\trianglelefteq\HH$.
\end{remark}
The next lemma will be needed further. 
\begin{lemma}\label{codual_alpha}Let $\Pi:\HH\to\GG$. 
 Then 
  \[ \cd(\Linf(\HH/\ker\Pi))= \{y\in\Linf(\hh\HH):\hh\alpha(y) = y\otimes\I\}\]
\end{lemma}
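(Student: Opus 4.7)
The strategy is to reduce both sides of the claimed equality to a single commutation relation with the bicharacter $V\in\Linf(\hh\GG)\vtens\Linf(\HH)$ associated to $\Pi:\HH\to\GG$.

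For the left hand side, I would first invoke \Cref{rem:coduality0} to write
\[ \cd(\Linf(\HH/\ker\Pi)) = \Linf(\HH/\ker\Pi)'\cap \Linf(\hh\HH). \]
Combined with the description \Cref{Hker1} of $\Linf(\HH/\ker\Pi)$ as the $\sigma$-weak closure of the linear span of $\{(\omega\otimes\id)(V):\omega\in \Linf(\hh\GG)_*\}$, membership of $y\in\Linf(\hh\HH)$ in $\cd(\Linf(\HH/\ker\Pi))$ is equivalent to $(\omega\otimes\id)(V)\,y = y\,(\omega\otimes\id)(V)$ for every $\omega\in \Linf(\hh\GG)_*$. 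Viewing $y$ as an element of $B(\Ltwo(\HH))$ and pairing this pointwise identity with a further slice $\mu\in B(\Ltwo(\HH))_*$ on the second leg, I would conclude, since normal functionals separate points of $\Linf(\hh\GG)\vtens B(\Ltwo(\HH))$, that this is equivalent to the single operator identity
\[ V(\I\otimes y) = (\I\otimes y)V. \]

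For the right hand side, I would unfold the definition of $\hh\alpha$. Since the bicharacter of $\hh\Pi:\hh\GG\to\hh\HH$ is $\hh V = \sigma(V)^*\in \Linf(\HH)\vtens\Linf(\hh\GG)$, the associated right quantum group homomorphism is $\hh\alpha(y) = \hh V(y\otimes\I)\hh V^*$ for $y\in\Linf(\hh\HH)$. The equation $\hh\alpha(y)=y\otimes\I$ is thus equivalent to $(y\otimes\I)\sigma(V) = \sigma(V)(y\otimes\I)$, and applying the flip $\sigma$ to both sides transforms this into exactly the commutation $V(\I\otimes y) = (\I\otimes y)V$ obtained from the left hand side. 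This finishes the proof once the two equivalences are stitched together.

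The argument is essentially bookkeeping; the only mildly delicate point is the passage from the pointwise commutation $(\omega\otimes\id)(V)\,y = y\,(\omega\otimes\id)(V)$ to the operator-level commutation $V(\I\otimes y)=(\I\otimes y)V$, which is handled by the slicing observation above. Everything else is a direct translation via the definitions of the codual and of the bicharacter of $\hh\Pi$.
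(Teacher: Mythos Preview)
Your proof is correct and follows essentially the same route as the paper: both reduce each side of the equality to the commutation $V(\I\otimes y)=(\I\otimes y)V$, using \Cref{Hker1} for the left hand side and the explicit formula $\hh\alpha(y)=\boldsymbol{\sigma}(V^*(\I\otimes y)V)$ for the right. You simply make explicit the slicing step that the paper leaves implicit when it says ``We conclude using \Cref{Hker1}.''
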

\begin{proof} Let $V\in\Linf(\hh\GG)\vtens\Linf(\HH)$ be  the bicharacter corresponding  to $\Pi$. The right quantum group homomorphism $\hh\alpha:\Linf(\hh\HH)\to\Linf(\hh\HH)\vtens\Linf(\hh\GG)$ corresponding to $\hh\Pi$ is given by   
\[\hh\alpha(y) = \boldsymbol{\sigma}(V^*(\I\otimes y)V)\] 
for all $y\in\Linf(\hh\HH)$. In particular  $\hh\alpha(y) = y\otimes\I$ if and only if 
\[ (\I\otimes y)V = V(\I\otimes y)\] 
We conclude using \Cref{Hker1}. 
\end{proof}
 
\begin{example}\label{normal_subgroup}
Let us consider  $\HH\in\mathcal{NQS}(\GG)$ and the exact sequence
\[\bullet\to\HH\to \GG\to \GG/\HH\to\bullet.\]
Let us denote the morphism $\GG\to \GG/\HH$   by $\Pi$. Since $\Linf(\GG/\HH)$ is defined as a Baaj-Vaes subalgebra of $\Linf(\GG)$,  the dual morphism $\hh\Pi:\hh{\GG/\HH}\to \hh\GG$ identifies $\hh{\GG/\HH}$ with a closed quantum subgroup of $\hh\GG$.  In particular  $\overline{\textrm{im}\Pi} = \GG/\HH$ and    $\GG/\ker\Pi=\GG/\HH$. 

Let $\hh\alpha:\Linf(\hh\GG)\to\Linf(\hh\GG)\vtens\Linf(\hh{\GG/\HH})$ be the right quantum group homomorphism assigned to   $\hh\Pi:\hh{\GG/\HH}\to \hh{\GG}$. Since $\hh\Pi$ identifies $\hh{\GG/\HH}$ with a closed quantum subgroup of $\hh{\GG}$ we get  
\begin{equation}\label{basic_eq}\Linf(\hh{\GG/\HH}) = \{(\omega\otimes\id)(\hh\alpha (a)):\omega\in\Linf(\hh\GG)_*, a\in\Linf(\hh \GG)\}^{\sigma - \textrm{cls}}. \end{equation}
Finally, using \Cref{codual_alpha} we see that 
\begin{equation}\label{normal_subgroup1}\Linf(\hh\HH) = \{y\in\Linf(\hh\GG): \hh\alpha(y) = y\otimes\I\}.\end{equation}
\end{example}
\begin{lemma}\label{restriclemma} Let $\bG$ and $\bK$ be locally compact quantum groups
 $\Pi_1:\GG\to\KK$ a homomorphism  and $\hh\alpha_1:\Linf(\hh\bG)\to\Linf(\hh\bG)\vtens\Linf(\hh\bK)$ the corresponding right quantum group homomorphism. Let  $\HH\in\mathcal{QS}(\GG)$,   $\Pi_2:\HH\to\KK$  the restriction of $\Pi_1$ to $\HH\le\GG$ and $\hh\alpha_2:\Linf(\hh\bH)\to\Linf(\hh\bH)\vtens\Linf(\hh\bK)$ the  right quantum group homomorphism corresponding to $\hh\Pi_2$. Then   $\hh\alpha_2 = \hh\alpha_1|_{\Linf(\hh\HH)}$. 
\end{lemma}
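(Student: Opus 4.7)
The plan is to reduce the equality of right quantum group homomorphisms to an equality of bicharacter expressions, and then verify that identity by lifting to the universal level. First, I would invoke the formula $\hh\alpha(y) = \boldsymbol{\sigma}(V^*(\I \otimes y)V)$ derived in the proof of \cref{codual_alpha}, where $V$ is the bicharacter of the underlying morphism. The assertion of the lemma, interpreted through the Baaj--Vaes embedding $\gamma: \Linf(\hh\HH) \hookrightarrow \Linf(\hh\GG)$ associated to $\HH \in \mathcal{QS}(\GG)$, is that $(\gamma \otimes \id) \circ \hh\alpha_2 = \hh\alpha_1 \circ \gamma$. After applying $\boldsymbol{\sigma}^{-1}$ this is equivalent to
\[
(\id \otimes \gamma)\bigl(V_2^*(\I \otimes y)V_2\bigr) = V_1^*(\I \otimes \gamma(y))V_1, \qquad y \in \Linf(\hh\HH),
\]
both sides being elements of $\Linf(\hh\KK) \vtens \Linf(\hh\GG)$, where $V_i \in \Linf(\hh\KK) \vtens \Linf(\bullet_i)$ is the bicharacter of $\Pi_i$.

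Next I would pass to the universal level. Let $\pi_1 \in \Mor(\C_0^u(\KK), \C_0^u(\GG))$ represent $\Pi_1$ and $\pi_\iota \in \Mor(\C_0^u(\GG), \C_0^u(\HH))$ represent the closed quantum subgroup inclusion $\iota: \HH \to \GG$ supplied by \cref{cl_q_sub_mor}. The hypothesis that $\Pi_2$ is the restriction of $\Pi_1$ to $\HH$ translates to $\pi_2 = \pi_\iota \circ \pi_1$, and writing $\vV_1 = (\id \otimes \pi_1)(\wW^\KK)$ we have
\[
V_1 = (\id \otimes \Lambda^\GG)(\vV_1) \qquad\text{and}\qquad V_2 = (\id \otimes \Lambda^\HH \circ \pi_\iota)(\vV_1).
\]
The structural identity characterizing closed quantum subgroups in the Vaes sense,
\[
(\gamma \otimes \id)(\wW^\HH) = (\id \otimes \pi_\iota)(\wW^\GG),
\]
which is a half-lifted form of $V_\iota = (\gamma \otimes \id)(\ww^\HH)$, then provides the bridge between the two bicharacters. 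Combined with the fact that $\gamma$ is a homomorphism of Hopf--von Neumann algebras (so $(\gamma \otimes \gamma) \circ \Delta_{\hh\HH} = \Delta_{\hh\GG} \circ \gamma$, a consequence of $\Linf(\hh\HH)$ being an \emph{invariant} Baaj--Vaes subalgebra), one can rewrite both sides of the target identity in a common form at the universal level and conclude that they coincide after applying the reducing morphisms.

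The main obstacle I foresee is the bookkeeping involved in keeping track of the various underlying Hilbert spaces: $V_1$ acts on $\Ltwo(\KK) \otimes \Ltwo(\GG)$ while $V_2$ acts on $\Ltwo(\KK) \otimes \Ltwo(\HH)$, so the two conjugations in the identity a priori live on different spaces. Reconciling them requires the compatibility of $\gamma$ with the GNS constructions for $\HH$ and $\GG$, which is precisely the content of the structural identity above. Once this is set up cleanly, the remaining computation is a routine application of the pentagonal equation together with the bicharacter relations $(\id \otimes \Delta_\GG)(V_1) = V_{1,12}V_{1,13}$ and its $V_2$-analogue.
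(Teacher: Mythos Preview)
Your proposal is correct and follows essentially the same strategy as the paper: reduce the equality $\hh\alpha_2 = \hh\alpha_1|_{\Linf(\hh\HH)}$ to a bicharacter identity and verify it by passing to the universal level via $\pi_2 = \pi_\iota\circ\pi_1$. The paper's execution is a bit more streamlined than what you outline: rather than working directly with the target identity $(\id\otimes\gamma)(V_2^*(\I\otimes y)V_2)=V_1^*(\I\otimes\gamma(y))V_1$ for each $y$, it starts from the single three-leg relation $V_{12}^*\wW^{\bG}_{23}V_{12}=\vV_{13}\wW^{\bG}_{23}$ characterizing the half-universal lift, applies $\pi_\iota$ to the third leg and reduces (which is exactly where your identity $(\gamma\otimes\id)(\wW^\HH)=(\id\otimes\pi_\iota)(\wW^\GG)$ enters), and obtains $V_{12}^*\ww^{\bH}_{23}V_{12}=U_{12}^*\ww^{\bH}_{23}U_{12}$; slicing the middle leg of $\ww^\HH$ then gives the statement for all $y\in\Linf(\hh\HH)$ at once, bypassing the separate appeal to the Hopf--von Neumann property of $\gamma$.
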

\begin{proof} Let $V\in\Linf(\hh\bK)\vtens\Linf(\GG)$ and $U\in\Linf(\hh\bK)\vtens\Linf(\HH)$ be the  bicharacters corresponding to $\Pi_1$ and  $\Pi_2$ respectively. 
Let  $\vV\in\M(\C_0(\hh\KK)\otimes\C_0^u(\GG))$ be the  universal lift of $V$:
\begin{equation}\label{restr1} V_{12}^*\wW^{\bG}_{23}V_{12} =  \vV_{13}\wW^{\bG}_{23}.\end{equation} Applying quantum group morphism  $\pi\in\Mor(\C_0^u(\GG),\C_0^u(\HH))$ (corresponding to $\HH\le \GG$) to the third leg of \Cref{restr1} and reducing the result we get 
 \begin{equation}\label{restr} V_{12}^* \ww^{\bH}_{23}V_{12} = U_{13}\ww^{\bH}_{23} = U_{12}^* \ww^{\bH}_{23}U_{12}\end{equation} (note that we use the embedding $\Linf(\hh\bH)\subset \Linf(\hh\bG)$ on the left side of \Cref{restr}). We conclude by recalling that  $\hh\alpha_1$ is implemented by $\hh V$ and $\hh\alpha_2$ is implemented by $\hh U$. 
\end{proof}

We shall also need the following 
 \begin{lemma}\label{normalization}
   Let $\GG$ be a locally compact quantum group, $\sN\in\mathcal{NCI}(\GG)$ and $\sM\in\mathcal{BV}(\GG)$. Let $\HH$ be a locally compact quantum group such that $\sM=\Linf(\HH)$. Then 
   \begin{equation}\label{lemma:norm_bv1}
     \ww^\HH(\I\otimes\sN)\ww^{\HH*}\subset \Linf(\hh\HH)\vtens\sN.
   \end{equation} 
   In particular $\sN\wedge\Linf(\HH)\in\mathcal{NCI}(\HH)$. Moreover
   \begin{equation}\label{lemma:norm_bv}
     \sN\vee\Linf(\HH)=\{xy:x\in\sN,y\in\sM\}^{\sigma - \textrm{cls}}.
   \end{equation}
 \end{lemma}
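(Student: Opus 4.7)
The plan for part (1) is to verify the coideal and normality conditions separately. For the coideal condition, note that $\Delta_\HH=\Delta_\GG|_{\sM}$ and combine the coideal property of $\sN$ with the Baaj-Vaes invariance of $\sM$:
\[
\Delta_\GG(\sN\cap\sM)\subseteq(\Linf(\GG)\vtens\sN)\cap(\sM\vtens\sM)=\sM\vtens(\sN\cap\sM),
\]
invoking the standard tensor-intersection identity for commuting pairs of von Neumann subalgebras. For the normality statement \eqref{lemma:norm_bv1}, the idea is to use the closed quantum subgroup morphism $\hh\HH\to\hh\GG$ that arises from the Baaj-Vaes embedding $\iota:\sM=\Linf(\HH)\hookrightarrow\Linf(\GG)$. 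By \Cref{cl_q_sub_mor}, its bicharacter is $V=(\iota\otimes\id)(\ww^{\hh\HH})\in\Linf(\GG)\vtens\Linf(\hh\HH)$, so that $\ww^\HH$, extended to $\Linf(\hh\HH)\vtens\Linf(\GG)$ via $\id\otimes\iota$, coincides with $\sigma(V)^*$. I would transfer the normality of $\sN$ (i.e.\ $\ww^\GG(\I\otimes\sN)\ww^{\GG*}\subseteq\Linf(\hh\GG)\vtens\sN$) from $\ww^\GG$ to $\ww^\HH$ using a bicharacter-intertwining argument in the spirit of the proof of \Cref{restriclemma}. Intersecting with $\Linf(\hh\HH)\vtens\sM$ then yields $\sN\wedge\Linf(\HH)\in\mathcal{NCI}(\HH)$.

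For part (2), set $\cL:=\{xy:x\in\sN,y\in\sM\}^\cls$. The inclusion $\cL\subseteq\sN\vee\sM$ is clear, as the right-hand side is a von Neumann algebra containing $\sN\cup\sM$. For the reverse inclusion, a direct commutant computation gives
\[
\cL'=\sN'\cap\sM'=(\sN\vee\sM)',
\]
using $\I\in\sN\cap\sM\subseteq\cL$ (so $\cL\supseteq\sN\cup\sM$) and the elementary verification that any $z\in\sN'\cap\sM'$ satisfies $z\cdot xy=xzy=xyz$. Hence $\cL''=\sN\vee\sM$, so it suffices to show $\cL=\cL''$, i.e.\ that $\cL$ is a von Neumann algebra. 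Since $\cL$ is $\sigma$-weakly closed and unital, this reduces to checking closure under products and under the adjoint; both of these further reduce to the single inclusion $\sM\cdot\sN\subseteq\cL$. To prove it, fix $y\in\sM$ and $x\in\sN$, and apply the normality of $\sN$ to write
\[
\ww^\GG(\I\otimes x)\ww^{\GG*}=\sum_i a_i\otimes x_i\in\Linf(\hh\GG)\vtens\sN.
\]
Combining this with the Podle\'s density \eqref{eq:pod_cond} applied to $\sM$ and slicing by suitable normal functionals expresses $yx$ as a $\sigma$-weak limit of finite sums $\sum_j x'_j y'_j$ with $x'_j\in\sN$ and $y'_j\in\sM$, so $yx\in\cL$.

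The main technical obstacle, most acute in part (2), is implementing the slicing manipulation that factors $yx$ into elements of $\sN\sM$ starting from normality of $\sN$ alone; this requires careful bookkeeping of the interplay between the multiplicative unitaries $\ww^\GG$ and $\ww^\HH$ on different Hilbert spaces, and in (1) enters via the embedding $\iota$, while in (2) enters through the comultiplicative/Podleś description of the coideal $\sM$.
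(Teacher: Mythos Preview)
Your approach to \eqref{lemma:norm_bv1} is essentially the paper's: one applies the right quantum group homomorphism $\hh\alpha:\Linf(\hh\GG)\to\Linf(\hh\GG)\vtens\Linf(\hh\HH)$ attached to $\hh\HH\le\hh\GG$ to the normality inclusion $\ww^\GG(\I\otimes\sN)\ww^{\GG*}\subset\Linf(\hh\GG)\vtens\sN$, using $(\hh\alpha\otimes\id)(\ww^\GG)=\ww^\HH_{23}\ww^\GG_{13}$, and then invokes the Podle\'s density \eqref{eq:pod_cond1} for the $\hh\GG$-action $x\mapsto\ww^\GG(\I\otimes x)\ww^{\GG*}$ on $\sN$ to strip off the $\ww^\GG_{13}$ conjugation. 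You gesture at the first step but do not mention the second; without it the argument is incomplete.

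The real gap is in your plan for \eqref{lemma:norm_bv}. Your reduction to $\sM\sN\subseteq\cL$ via the commutant computation is fine, but the proposed mechanism---combining the $\ww^\GG$-normality $\ww^\GG(\I\otimes x)\ww^{\GG*}\in\Linf(\hh\GG)\vtens\sN$ with Podle\'s density of the coideal $\sM$---does not produce a factorization of $yx$ through $\sN\sM$. Slices of $\ww^\GG$ on the first leg give elements of $\Linf(\GG)$, not of $\sM$, and the Podle\'s description of $\sM$ as $\{(\omega\otimes\id)\Delta_\GG(m):m\in\sM\}^{\cls}$ inserts $\ww^\GG(m\otimes\I)\ww^{\GG*}$ rather than $\ww^{\GG*}(\I\otimes x)\ww^\GG$, so the two pieces do not mesh. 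The paper's argument instead \emph{uses part} \eqref{lemma:norm_bv1} \emph{itself}: one writes $y\in\sM=\Linf(\HH)$ as a slice $(\omega_{p,q}\otimes\id)(\ww^{\HH*})$, computes
\[
xy=(\omega_{p,q}\otimes\id)\bigl(\ww^{\HH*}\,\ww^\HH(\I\otimes x)\ww^{\HH*}\bigr)
=\sum_i(\omega_{p,e_i}\otimes\id)(\ww^{\HH*})\cdot(\omega_{e_i,q}\otimes\id)\bigl(\ww^\HH(\I\otimes x)\ww^{\HH*}\bigr),
\]
and then \eqref{lemma:norm_bv1} places the second factor in $\sN$ while the first lies in $\sM$. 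Thus the key unitary is $\ww^\HH$, not $\ww^\GG$, and the dependence on the already-proved \eqref{lemma:norm_bv1} is essential; your final paragraph hints at $\ww^\HH$ but your explicit plan never invokes it.
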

\begin{proof}
 As explained in \Cref{cl_q_sub_mor} the embedding $\Linf(\HH)\subset \Linf(\GG)$ corresponds to a morphism $\Pi:\GG\to\HH$. Let $\hh\Pi:\hh\HH\to\hh\GG$ be the dual morphism and $\hh\alpha:\Linf(\hh\GG)\to\Linf(\hh\GG)\vtens\Linf(\hh\HH)$ the corresponding right quantum group homomorphism. Applying $(\hh\alpha\otimes\id)$ to the normality condition 
\[\ww^\GG(\I\otimes\sN)\ww^{\GG*}\subset \Linf(\hh\GG)\vtens\sN\] and using 
\[(\hh\alpha\otimes\id)(\ww^\GG) = \ww^\HH_{23}\ww^\GG_{13}\] we get 
\begin{equation}\label{Vnorm1}\ww^\HH_{23}\ww^\GG_{13}(\I\otimes\I\otimes\sN)\ww^{\GG*}_{13}\ww^{\HH*}_{23}\in\Linf(\hh\GG)\vtens\Linf(\hh\HH)\vtens\sN.\end{equation} 
Using \Cref{eq:pod_cond1} in the context of the $\hh\GG$-action 
\[\alpha:\sN\ni x\to \ww^\GG(\I\otimes x)\ww^{\GG*}\] we get 
\[\sN = \{(\omega\otimes\id)(\ww^\GG(\I\otimes x)\ww^{\GG*}):\omega\in\Linf(\hh\GG)_*, x\in\sN\}^{\cls }.\] Thus \Cref{Vnorm1}  implies that 
 \[ \ww^\HH(\I\otimes\sN)\ww^{\HH*}\subset \Linf(\hh\HH)\vtens\sN.\]
 
Let us fix $x\in\sN$ and $y\in\Linf(\HH)$ of the form $y= (\omega_{p,q}\otimes\id)(\ww^{\HH*})$ where $p,q\in\Ltwo( \HH)$. In order to check that \Cref{lemma:norm_bv} holds it suffices to check that 
 \begin{equation}\label{lemma:norm_bv2}xy\in\{\Linf(\HH)\sN\}^{\sigma - \textrm{cls}}.\end{equation} Indeed the latter enables us to conclude that the right hand side of \Cref{lemma:norm_bv} forms a von Neumann algebra  and this suffices since $\sN\vee\Linf(\HH)$ is the   von Neumann algebra generated by  $\sN$ and $\Linf(\HH)$. In the following computation we fix an orthonormal basis $(e_i)_{i\in I}$ of $\Ltwo(\HH)$ 
 \[\begin{array}{rl}
 xy & = x (\omega_{p,q}\otimes\id)(\ww^{\HH*})\\
 & = (\omega_{p,q}\otimes\id)(\ww^{\HH*}(\ww^\HH(\I\otimes x)\ww^{\HH*}))\\
&= \sum_{i\in I}(\omega_{p,e_i}\otimes\id)(\ww^{\HH*})(\omega_{e_i,q}\otimes\id)(\ww^\HH(\I\otimes x)\ww^{\HH*}).
 \end{array}\]
 This computation together with \Cref{lemma:norm_bv1} shows that \Cref{lemma:norm_bv2} indeed holds which ends the proof. 
\end{proof}
\begin{definition}\label{def:normalization}
Let $\GG$ be a locally compact quantum group and $\HH,\bM\in\mathcal{QS}(\GG)$. We say that $\HH$ is normalized by $\bM$ if $\HH\in\mathcal{NQS}(\HH\vee \bM)$. 

\end{definition}
Let $\HH$ and $\bM$ be as in \Cref{def:normalization}. Using \Cref{normalization} we get 
\begin{equation}\label{eq:weak_normal}\ww^{\hh\bM}(\I\otimes\Linf(\hh\HH))\ww^{\hh\bM*}\subset \Linf(\bM)\vtens\Linf(\hh\HH).\end{equation}
More generally the  following holds. 

\begin{lemma}\label{norminter}
  Let $\GG$ be a locally compact quantum group, $\HH,\bM\in\mathcal{QS}(\GG)$ and suppose that $\HH$ is normalized by $\bM$. Let $\bL\le\bM$. Then $\HH$ is normalized by $\bL$. Moreover \[\Linf(\hh{\HH\vee\bL}) = \{xy:x\in\Linf(\hh{\bL}),y\in\Linf(\hh\HH)\}^{\sigma - \textrm{cls}} \] and $\HH\wedge\bL\in\mathcal{NQS}(\bL)$. 
In particular if   $\HH\in\mathcal{NQS}(\GG)$  then $\HH$ is normalized by every $\bL\in\mathcal{QS}(\GG)$ and $\HH\wedge\bL\in\mathcal{NQS}(\bL)$.
\end{lemma}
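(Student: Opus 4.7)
My plan is to apply \Cref{normalization} twice with ambient locally compact quantum group $\widehat{\HH\vee\bM}$, using two different Baaj-Vaes subalgebras as the argument $\sM$. The setup works because $\bL\le\bM\le\HH\vee\bM$ implies $\bL,\HH\vee\bL\in\mathcal{QS}(\HH\vee\bM)$, whence both $\Linf(\hh\bL)$ and $\Linf(\widehat{\HH\vee\bL})$ are Baaj-Vaes subalgebras of $\Linf(\widehat{\HH\vee\bM})$; the hypothesis $\HH\in\mathcal{NQS}(\HH\vee\bM)$ says precisely that $\Linf(\hh\HH)$ is an element of $\mathcal{NCI}(\widehat{\HH\vee\bM})\cap\mathcal{BV}(\widehat{\HH\vee\bM})$.

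First, I apply \Cref{normalization} to $\widehat{\HH\vee\bM}$ with $\sN=\Linf(\hh\HH)$ and $\sM=\Linf(\widehat{\HH\vee\bL})$; the quantum group whose function algebra is $\sM$ is $\widehat{\HH\vee\bL}$, so the conclusion reads
\[\ww^{\widehat{\HH\vee\bL}}\bigl(\I\otimes\Linf(\hh\HH)\bigr)\ww^{\widehat{\HH\vee\bL}*}\subset\Linf({\HH\vee\bL})\vtens\Linf(\hh\HH).\]
This is exactly the assertion that $\Linf(\hh\HH)\in\mathcal{NCI}(\widehat{\HH\vee\bL})$, i.e.\ $\HH\in\mathcal{NQS}(\HH\vee\bL)$, so that $\HH$ is normalized by $\bL$.

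Second, I apply \Cref{normalization} again with the same $\sN$ but $\sM=\Linf(\hh\bL)$, so that the corresponding quantum group is $\hh\bL$. The join portion of the conclusion gives
\[\Linf(\hh\HH)\vee\Linf(\hh\bL)=\{xy:x\in\Linf(\hh\HH),y\in\Linf(\hh\bL)\}^{\sigma - \textrm{cls}};\]
by coduality (\Cref{rem:coduality}) the left-hand side is $\Linf(\widehat{\HH\vee\bL})$, and since the right-hand side is a von Neumann algebra, it is closed under the $*$-operation, so the order of the two factors may be swapped by passing to adjoints, yielding the formula in the statement. The meet portion of \Cref{normalization} gives $\Linf(\hh\HH)\cap\Linf(\hh\bL)\in\mathcal{NCI}(\hh\bL)$; again by coduality this intersection equals $\Linf(\widehat{\HH\wedge\bL})$, which together with $\HH\wedge\bL\le\bL$ yields $\HH\wedge\bL\in\mathcal{NQS}(\bL)$.

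The final in-particular assertion follows by taking $\bM=\GG$, in which case $\HH\vee\bM=\GG$ and the hypothesis $\HH\in\mathcal{NQS}(\HH\vee\bM)$ degenerates to the standing assumption $\HH\in\mathcal{NQS}(\GG)$; the main conclusions then apply to every $\bL\in\mathcal{QS}(\GG)$. The only real obstacle is bookkeeping: keeping track of which duality is in play at each step and correctly matching the ``$\HH$'' parameter of \Cref{normalization} (the quantum group whose algebra equals the chosen Baaj-Vaes subalgebra $\sM$, so here once $\widehat{\HH\vee\bL}$ and once $\hh\bL$) with the hats on the remaining objects.
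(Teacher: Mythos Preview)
Your proof is correct and is precisely the argument the paper has in mind: the lemma is stated in the paper without proof, immediately after \Cref{normalization}, and your two applications of that lemma (with ambient group $\widehat{\HH\vee\bM}$, taking $\sM$ first to be $\Linf(\widehat{\HH\vee\bL})$ and then $\Linf(\hh\bL)$) are exactly the intended route.

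One small terminological remark: when you identify $\Linf(\hh\HH)\vee\Linf(\hh\bL)$ with $\Linf(\widehat{\HH\vee\bL})$ and $\Linf(\hh\HH)\cap\Linf(\hh\bL)$ with $\Linf(\widehat{\HH\wedge\bL})$, this is not the coduality map $\cd$ of \Cref{rem:coduality0} but simply the definition of the lattice operations on $\mathcal{QS}(\GG)=\mathcal{BV}(\hh\GG)$ from \Cref{def:lattice_coideals,def:lattice_coideals1}. The coduality $\cd$ relates coideals of $\GG$ to coideals of $\hh\GG$ via commutants, whereas here you are only using that the join and meet of Baaj--Vaes subalgebras of $\Linf(\hh\GG)$ correspond, by convention, to the join and meet of the associated closed quantum subgroups of $\GG$. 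This does not affect the validity of the argument.
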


\begin{proposition}\label{lemma:normalization_quotient}
 Let $\GG$ be  a locally compact quantum group and  $\HH,\bM\in\mathcal{QS}(\GG)$ and suppose that  $\HH$ is normalized by $\bM$. Then  $\alpha_{\bM\to\bG}(\Linf(\GG/\HH))\subset \Linf(\GG/\HH)\vtens\Linf(\bM)$.
\end{proposition}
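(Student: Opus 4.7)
The plan is to show that for each $x\in\Linf(\GG/\HH)$, the element $\alpha_{\bM\to\bG}(x)=\ww^\bM(x\otimes\I)\ww^{\bM*}$ not only belongs to $\Linf(\GG)\vtens\Linf(\bM)$ (which it does tautologically, since $\ww^\bM$ is the bicharacter of the inclusion $\bM\le\GG$ after identifying $\Linf(\hh\bM)\subset\Linf(\hh\GG)$) but also has every right slice $(\id\otimes\omega)(\alpha_{\bM\to\bG}(x))$ lying inside $\Linf(\GG/\HH)=\Linf(\hh\HH)'\cap\Linf(\GG)$. By the standard slice-map characterization of spatial tensor products of von Neumann algebras, this will force $\alpha_{\bM\to\bG}(x)\in\Linf(\GG/\HH)\vtens\Linf(\bM)$.

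Since the slice $(\id\otimes\omega)(\alpha_{\bM\to\bG}(x))$ already lies in $\Linf(\GG)$, I would check that it commutes with every $y\in\Linf(\hh\HH)$. Applying $\id\otimes\omega$ to an operator identity, this reduces to showing
\[
[\alpha_{\bM\to\bG}(x),\,y\otimes\I]\;=\;\ww^\bM\,\bigl[x\otimes\I,\,\ww^{\bM*}(y\otimes\I)\ww^\bM\bigr]\,\ww^{\bM*}\;=\;0
\]
for every $y\in\Linf(\hh\HH)$, where the first equality is a direct manipulation using $\ww^\bM\ww^{\bM*}=\I$. Hence the whole problem collapses to verifying $[x\otimes\I,\,\ww^{\bM*}(y\otimes\I)\ww^\bM]=0$.

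The normalization hypothesis enters through \Cref{eq:weak_normal}, which asserts $\ww^{\hh\bM}(\I\otimes\Linf(\hh\HH))\ww^{\hh\bM*}\subset\Linf(\bM)\vtens\Linf(\hh\HH)$. Applying the flip $\boldsymbol{\sigma}$ and using $\ww^{\hh\bM}=\boldsymbol{\sigma}(\ww^{\bM*})$ converts this into
\[
\ww^{\bM*}(\Linf(\hh\HH)\otimes\I)\ww^\bM\subset\Linf(\hh\HH)\vtens\Linf(\bM).
\]
On the other hand, $x\in\Linf(\hh\HH)'\cap\Linf(\GG)$ means that $x$ commutes with $\Linf(\hh\HH)$, so $x\otimes\I$ commutes with every simple tensor $a\otimes b$, $a\in\Linf(\hh\HH)$, $b\in\Linf(\bM)$, and hence (by $\sigma$-weak continuity) with the entire von Neumann algebra $\Linf(\hh\HH)\vtens\Linf(\bM)$. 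Together with the previous inclusion, this delivers the commutator vanishing and finishes the proof.

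The only conceptually non-routine step is the reformulation of \Cref{eq:weak_normal} via the flip, which is precisely what places $\Linf(\hh\HH)$ in the first tensor leg where it can interact with $x\otimes\I$; everything else is standard bookkeeping with multiplicative unitaries and slice maps.
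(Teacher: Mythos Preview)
Your proof is correct and follows essentially the same route as the paper's. Both arguments reduce to the key inclusion $\ww^{\bM*}(\Linf(\hh\HH)\otimes\I)\ww^{\bM}\subset\Linf(\hh\HH)\vtens\Linf(\bM)$, derived from \Cref{eq:weak_normal} via the flip, and then exploit $x\in\Linf(\hh\HH)'$; the only cosmetic difference is that the paper packages the computation in three tensor legs by commuting $(\alpha_{\bM\to\bG}(x))_{12}$ with $\ww^\bH_{13}$ and then slicing, whereas you work directly in two legs with an arbitrary $y\in\Linf(\hh\HH)$.
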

\begin{proof}Let us first recall that 
\[\alpha_{\bM\to\bG}(x) = \ww^{\bM}(x\otimes\I)\ww^{\bM*}\] Let us fix $x\in\Linf(\GG/\HH)$, i.e. $x\in\Linf(\GG)$ and 
  \[\ww^{\bH}(x\otimes\I) = (x\otimes\I)\ww^{\bH}.\] We have to prove that
  \begin{equation}\label{eq_loc_lemma}\ww^{\bH}_{13}\ww^{\bM}_{12}(x\otimes\I\otimes\I)\ww^{\bM*}_{12}=\ww^{\bM}_{12}(x\otimes\I\otimes\I)\ww^{\bM*}_{12}\ww^{\bH}_{13}.\end{equation} Using \Cref{eq:weak_normal} we see that 
  \begin{equation}\label{eq:norm_coond_w}\ww^{\bM*}_{12}\ww^{\bH}_{13}\ww^{\bM}_{12}\subset \Linf(\hh\HH)\vtens\Linf(\bM)\vtens\Linf(\bH)\end{equation} and we compute
  \[\begin{array}{cl}
      \ww^{\bH}_{13}\ww^{\bM}_{12}(x\otimes\I\otimes\I)\ww^{\bM*}_{12} &=  \ww^{\bM}_{12}\ww^{\bM*}_{12}\ww^{\bH}_{13}\ww^{\bM}_{12}(x\otimes\I\otimes\I)\ww^{\bM*}_{12} \\
       &=  \ww^{\bM}_{12}(x\otimes\I\otimes\I)\ww^{\bM*}_{12}\ww^{\bH}_{13}\ww^{\bM}_{12}\ww^{\bM*}_{12}\\ &=  \ww^{\bM}_{12}(x\otimes\I\otimes\I)\ww^{\bM*}_{12}\ww^{\bH}_{13} 
  \end{array}\]
  where in the second equality we use \Cref{eq:norm_coond_w} and the fact that  $x\in\Linf(\hh\HH)'$  
\end{proof}

\begin{remark}\label{re.alt-norm}
  Note that in the proof of \Cref{lemma:normalization_quotient} we needed somewhat less than
  \Cref{def:normalization}: it is enough to have 
  \begin{equation*}
    \ww^{\hh\bM}(\I\vtens \Linf(\hh\bH))\ww^{\hh\bM*} \subset
    \Linf(\bM)\otimes \Linf(\hh\bH),
  \end{equation*}
  which is more akin to the classical notion of one group normalizing
  another. 
\end{remark}

The following simple observation regarding a universal property of
quotient quantum groups will come in handy repeatedly in \Cref{se.lc}.

\begin{lemma}\label{le.univ_quot}
  Let $\Pi:\bG\to \bP$ be a morphism of quantum groups, and $\bN\in\mathcal{NQS}(\bG)$. Then, $\Pi$ factors as 
\begin{equation*}
  \begin{tikzpicture}[auto,baseline=(current  bounding  box.center)]
    \path[anchor=base] (0,0) node (g) {$\bG$} +(2,-.5) node (g/k) {$\bG/\bN$} +(4,0) node (p) {$\bP$};
         \draw[->] (g) to [bend left=6] node[pos=.5,auto] {$\scriptstyle \Pi$} (p);
         \draw[->] (g) to [bend right=6] (g/k);
         \draw[->] (g/k) to [bend right=6] (p);
  \end{tikzpicture}
\end{equation*}
if and only if the composition $\bN\to\bG\to\bP$ is trivial (i.e the image of the composition is the trivial group). 
\end{lemma}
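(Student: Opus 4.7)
The plan is to convert the claim into a chain of equivalences between the accompanying (dual) right quantum group homomorphisms $\alpha_\Pi\colon\Linf(\bP)\to\Linf(\bP)\vtens\Linf(\bG)$ and $\hh\alpha_\Pi\colon\Linf(\hh\bG)\to\Linf(\hh\bG)\vtens\Linf(\hh\bP)$ of $\Pi$ and of its dual $\hh\Pi$.

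First I would observe that $\Pi$ factors through the quotient $q\colon\bG\to\bG/\bN$ precisely when $\alpha_\Pi(\Linf(\bP))\subseteq\Linf(\bP)\vtens\Linf(\bG/\bN)$: morphisms $\widetilde\Pi\colon\bG/\bN\to\bP$ correspond bijectively to right quantum group homomorphisms valued in $\Linf(\bP)\vtens\Linf(\bG/\bN)$, and the quotient $q$ itself is encoded at the level of right quantum group homomorphisms by the inclusion $\Linf(\bG/\bN)\subseteq\Linf(\bG)$. Combined with \Cref{q_def}, which identifies $\Linf(\bG/\ker\Pi)$ with the $\sigma$-weak closure of the slices of $\alpha_\Pi$, this translates the factoring condition into the coideal inclusion $\Linf(\bG/\ker\Pi)\subseteq\Linf(\bG/\bN)$.

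Next I would apply the anti-isomorphism $\cd$ of \Cref{rem:coduality0} to flip this into $\Linf(\hh\bN)=\cd(\Linf(\bG/\bN))\subseteq\cd(\Linf(\bG/\ker\Pi))$. By \Cref{codual_alpha} the right-hand side equals $\{y\in\Linf(\hh\bG):\hh\alpha_\Pi(y)=y\otimes\I\}$, so the factoring condition is equivalent to $\hh\alpha_\Pi(y)=y\otimes\I$ for every $y\in\Linf(\hh\bN)$. By \Cref{restriclemma} the restriction $\hh\alpha_\Pi|_{\Linf(\hh\bN)}$ is precisely $\hh\alpha_{\Pi|_\bN}$, so the last condition says that the dual right quantum group homomorphism of $\Pi|_\bN$ is trivial. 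This in turn is equivalent to the bicharacter of $\Pi|_\bN$ being $\I$ and hence to $\overline{\mathrm{im}(\Pi|_\bN)}$ being the trivial quantum group, i.e.\ to the composition $\bN\to\bG\to\bP$ being trivial.

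The main subtlety is the very first step: once $\alpha_\Pi$ takes values in $\Linf(\bP)\vtens\Linf(\bG/\bN)$ I still have to check that the corestriction is a right quantum group homomorphism with respect to $\bG/\bN$ (not merely with respect to $\bG$), so that it actually comes from a morphism $\widetilde\Pi$. This is however immediate from the fact that $\Linf(\bG/\bN)$ is a Baaj-Vaes subalgebra, so that $\Delta_\bG$ restricts to the comultiplication of the locally compact quantum group $\bG/\bN$; everything else amounts to cleanly chaining \Cref{q_def}, \Cref{codual_alpha} and \Cref{restriclemma} through the coduality.
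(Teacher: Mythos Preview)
Your argument is correct and uses the same essential ingredient as the paper---namely \Cref{restriclemma} together with the identification $\Linf(\bG/\bN)=\Linf(\hh\bN)'\cap\Linf(\bG)$---but the route is organized differently. The paper works directly with the bicharacter $V\in\Linf(\hh\bP)\vtens\Linf(\bG)$: triviality of $\bN\to\bG\to\bP$ combined with \Cref{restriclemma} forces $\hh\alpha$ to fix $\Linf(\hh\bN)$ pointwise, which says $V$ commutes with $\I\otimes\Linf(\hh\bN)$ and hence lies in $\Linf(\hh\bP)\vtens\Linf(\bG/\bN)$; this $V$ is then the bicharacter of the desired $\bG/\bN\to\bP$. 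You instead route the argument through the right quantum group homomorphism $\alpha_\Pi$ and the coideal $\Linf(\bG/\ker\Pi)$, invoking \Cref{q_def}, the coduality $\cd$, and \Cref{codual_alpha} to produce a full chain of equivalences. The advantage of your phrasing is that both directions fall out symmetrically from the same chain, whereas the paper dismisses the forward implication as clear and only argues the converse; the paper's version is in turn shorter and avoids having to discuss the corestriction subtlety you flag at the end.
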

\begin{proof}
  The direct implication is clear. Conversely, suppose the composition
  $\bN\to \bG\to \bP$ is trivial. We will apply \Cref{restriclemma} to
  $\Pi_1=\Pi$ and $\bH=\bN\in\mathcal{QS}(\GG)$. The right quantum
  group homomorphism
  $\hh\alpha:\Linf(\hh\GG)\to\Linf(\hh\GG)\vtens\Linf(\hh\bP)$
  assigned to $\hh\Pi$ is given  by  
  \begin{equation*}
  \hh\alpha(x) = \boldsymbol{\sigma} (V^*)  (x\otimes 1) \boldsymbol{\sigma} (V^*),
  \end{equation*}where $x\in\Linf(\hh\GG)$  
   and 
$ V\in \Linf(\hh\bP)\vtens \Linf(\bG)$
  is the bicharacter assigned $\Pi$. Using   
  \Cref{restriclemma} we conclude that $V$ is contained in
  $ \Linf(\hh\bP)\vtens (\Linf(\hh\bN)'\cap\Linf(\bG))$, and hence must be contained in
 $ \Linf(\hh\bP)\vtens \Linf(\bG/\bN)$. 
In particular $V$ viewed as an element $\Linf(\hh\bP)\vtens \Linf(\bG/\bN)$ defines a morphism $\bG/\bN\to \bP$.   Running through the way in which bicharacters, regarded as morphisms, compose in the category of locally compact quantum groups, this means precisely that $\Pi$ factors through $\bG/\bN$. 
\end{proof}

\begin{remark}\label{re.univ_quot}
  From the perspective of the category of locally compact quantum groups, \Cref{le.univ_quot} simply says that $\bG\to \bG/\bN$ is the coequalizer of the inclusion $\bN\to \bG$ and the trivial map $\bN\to \mathds{1}\to \bG$.  

Moreover, by the self-duality of the category of locally compact quantum groups, we can conclude that the inclusion $\bN\to \bG$ of a normal subgroup is the equalizer of the arrows $\bG\to \bG/\bN$ and $\bG\to \mathds{1}\to \bG/\bN$. 
\end{remark}

In fact, we can improve on \Cref{le.univ_quot} somewhat. For future
reference, we record the result in \Cref{le.univ_quot+} below. Before its formulation let us consider  an   action $ \alpha:\sN\to \sN\vtens \Linf(\bG)$ of  a locally compact quantum group $\GG$ on a von Neumann algebra $\sN$. Then given a left quantum group homomorphism $\gamma:\Linf(\GG)\to\Linf(\HH)\vtens\Linf(\GG)$  there exists a unique action $\beta:\sN\to \sN\vtens \Linf(\bH)$ such that 
\[(\id\otimes\gamma)\circ\alpha = (\beta\otimes\id)\circ\alpha.\] In particular, given $\HH\le\GG$ we get $\beta$ which we denote by   $\alpha|_\HH:\sN\to\sN\vtens \Linf(\bH)$ and we say that  $\alpha|_\HH$ is the restriction of $\alpha$ to $\HH$. The details yielding the existence of $\beta$ are left to the reader. 
\begin{lemma}\label{le.univ_quot+}
  Let $\alpha:\sN\to \sN\vtens \Linf(\bG)$ and $\bN\trianglelefteq
  \bG$ a closed normal subgroup. Then, $\alpha$ factors as
\begin{equation*}
  \begin{tikzpicture}[auto,baseline=(current  bounding  box.center)]
    \path[anchor=base] (0,0) node (n) {$\sN$} +(3,-.5) node (ngn)
    {$\sN\vtens \Linf(\bG/\bN)$} +(6,0) node (ng) {$\sN\vtens \Linf(\bG)$};
         \draw[->] (n) to [bend left=6] node[pos=.5,auto] {$\scriptstyle \alpha$} (ng);
         \draw[->] (n) to [bend right=6] (ngn);
         \draw[->] (ngn) to [bend right=6] (ng);
  \end{tikzpicture}
\end{equation*}
through an action by $\bG/\bN$ on $\sN$ if and only if $\bN$ acts
trivially on $\sN$. 
\end{lemma}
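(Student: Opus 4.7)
The plan is to reduce the lemma to a fixed-point characterisation of $\Linf(\bG/\bN)$ inside $\Linf(\bG)$, together with the defining equation
\[
(\id_{\sN}\otimes\gamma)\circ\alpha=(\alpha|_{\bN}\otimes\id)\circ\alpha,
\]
where $\gamma:\Linf(\bG)\to\Linf(\bN)\vtens\Linf(\bG)$ is the left quantum group homomorphism corresponding to $\bN\le\bG$. The key ingredient to record first is the equality
\[
\Linf(\bG/\bN)=\{y\in\Linf(\bG):\gamma(y)=\I\otimes y\},
\]
which is the dual analogue of \Cref{normal_subgroup1}: applying that identity to the exact sequence $\bullet\to\hh{\bG/\bN}\to\hh\bG\to\hh\bN\to\bullet$ of \Cref{short_exact_seq1} (dual to $\bullet\to\bN\to\bG\to\bG/\bN\to\bullet$) and invoking Pontryagin duality identifies $\Linf(\bG/\bN)\subset\Linf(\bG)$ with the elements fixed by the right quantum group homomorphism attached to $\bN\to\bG$, and the left-handed version reads exactly as displayed.

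With this in place, the ``if'' direction (triviality of the $\bN$-action implies factoring) is straightforward. Assuming $\alpha|_{\bN}(y)=y\otimes\I$ for every $y\in\sN$, the defining equation collapses to $(\id\otimes\gamma)(\alpha(x))=\alpha(x)_{13}$, with $\I$ inserted in the middle $\Linf(\bN)$ leg. Slicing the first leg by normal functionals on $\sN$ shows that every $\Linf(\bG)$-slice of $\alpha(x)$ is $\gamma$-fixed, hence lies in $\Linf(\bG/\bN)$; thus $\alpha(\sN)\subset\sN\vtens\Linf(\bG/\bN)$. The corestriction $\tilde\alpha$ is a $\bG/\bN$-action because unitality, injectivity, normality, coassociativity and Podle\'s density all descend from $\alpha$ to $\tilde\alpha$, using that $\Delta_{\bG/\bN}$ is the restriction of $\Delta_{\bG}$ to the Baaj-Vaes subalgebra $\Linf(\bG/\bN)$.

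Conversely (``only if''), if $\alpha=(\id\otimes\iota)\circ\tilde\alpha$ with $\iota:\Linf(\bG/\bN)\hookrightarrow\Linf(\bG)$, then the easy inclusion $\Linf(\bG/\bN)\subset\{y:\gamma(y)=\I\otimes y\}$ of the characterisation gives $\gamma\circ\iota(y)=\I\otimes\iota(y)$, so the defining equation reduces to $(\alpha|_{\bN}\otimes\id)(\alpha(x))=\alpha(x)_{13}$. Slicing the third leg against $\omega\in\Linf(\bG)_*$ and invoking Podle\'s density \Cref{eq:pod_cond1} for $\alpha$ forces $\alpha|_{\bN}(y)=y\otimes\I$ on a $\sigma$-weakly dense subset of $\sN$, hence on all of $\sN$ by normality. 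The main technical point is pinning down the fixed-point description of $\Linf(\bG/\bN)$; once that is in place the argument is essentially a mechanical slicing manipulation.
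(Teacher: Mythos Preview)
Your proposal is correct and follows essentially the same approach as the paper: both hinge on the identity $(\id\otimes\gamma)\circ\alpha=(\alpha|_{\bN}\otimes\id)\circ\alpha$ together with the fixed-point description of $\Linf(\bG/\bN)$ inside $\Linf(\bG)$. The only presentational differences are that the paper treats the ``only if'' direction as trivial (without invoking Podle\'s density), and it obtains the fixed-point characterisation by first identifying $\{y:\gamma(y)=\I\otimes y\}$ as $\Linf(\bN\backslash\bG)$ and then citing normality for $\Linf(\bN\backslash\bG)=\Linf(\bG/\bN)$, rather than arguing via the dual exact sequence as you do.
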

\begin{proof}
  Once again, one implication is trivial, so we prove the other one;
  that is, we assume that the restriction $\alpha|_\bN$ of $\alpha$ to $\bN$ is
  trivial. Let $\gamma:\Linf(\GG)\to\Linf(\bN)\vtens\Linf(\GG)$ be the left quantum group homomorphism assigned to $\bN\trianglelefteq \GG$. Using the identity  $(\alpha|_{\bN}\otimes\id)\circ\alpha = (\id\otimes\gamma)\circ\alpha$ and the trviality of $\alpha|_{\bN}$ we conclude that $\alpha(\sN)\subset \sN\vtens\Linf(\bN\backslash \GG)$ where 
  \[\Linf(\bN\backslash \GG) = \{x\in\Linf(\GG):\gamma(x) = \I\otimes x\}.\] We conclude by noting that  normality of $\bN$ yields $ \Linf(\bN\backslash \GG) = \Linf(\GG/\bN)$ (see e.g. \cite[Eq. (1.4)]{KalKS}). 
 
\end{proof}

\begin{remark}
  In phrasing of \cite[$\S$4.3]{KKS}, \Cref{le.univ_quot+} says that
  the quotient of $\bG$ by the kernel of $\alpha$ factors through
  $\bG/\bN$ if and only if $\alpha|_\bN$ is trivial. 

  In addition to \Cref{le.univ_quot}, \Cref{pr.norm-ab} above is also
  a consequence of \Cref{le.univ_quot+}; in effect, the intuitive content of
  that proposition is that since the abelian normal subgroup
  $\bN\trianglelefteq \bG$ acts trivially on itself by conjugation,
  the conjugation action of $\bG$ on $\bN$ descends to a
  $(\bG/\bN)$-action.
\end{remark}

Now, we shall discuss  open quantum subgroups. Let us begin with \cite[Definition 2.2]{KalKS}. 
\begin{definition}
Let $\HH$ and $\GG$ be locally compact quantum groups and $\pi:\Linf(\GG)\to\Linf(\HH)$ a normal surjective $*$-homomorphism. We say that $\pi$ identifies $\HH$ with an open quantum  subgroup of $\GG$ if $\Delta_\HH\circ\pi = (\pi\otimes\pi)\circ\Delta_\GG$.
\end{definition}
Let $\HH$ be a locally compact quantum group which is  identified with an open quantum subgroup of $\GG$ via $\pi:\Linf(\GG)\to\Linf(\HH)$. The central support of $\pi$ (i.e. the smallest projection of $P\in\Linf(\GG)$  such that  $\pi( P) = \I$) will be denoted by $\I_\HH$ and it will be refereed to as a group-like projection assigned to $\pi$. The morphism $\pi$ defines a morphism $\Pi:\HH\to\GG$ which in terms of the bicharacter is given by $V = (\id\otimes\pi)(\ww^\GG)\in\Linf(\hh\GG)\vtens\Linf(\HH)$. Using \cite[Theorem 3.6]{KalKS} we see that $\Pi$ identifies $\HH$ with a closed quantum subgroup of $\GG$ as described in \Cref{cl_q_sub_mor}. 

Let $\HH\in\mathcal{QS}(\GG)$. Then as proved in \cite[Corollary 3.4]{KKS}, $\HH$ can be identified with an open quantum subgroup of $\GG$, (we shall say shortly that $\HH$ is open in $\GG$) if and only if the Haar weight $\psi^{\hh\GG}$ restricts to the Haar weight on $\Linf(\hh\HH)$. In other words if and only if the restriction of $\psi^{\hh\GG}$ to $\Linf(\hh\HH)\subset\Linf(\hh\GG)$ is semifinite. This in turn is equivalent with the existence of a conditional expectation $T:\Linf(\hh\GG)\to\Linf(\hh\GG)$ onto $\Linf(\hh\HH)$ satisfying (see \cite[Theorem 6.1]{KK}
\begin{equation}\label{eq:expect}(T\otimes\id)\circ\Delta_{\hh\GG} = \Delta_{\hh\GG}\circ T = (\id\otimes T)\circ\Delta_{\hh\GG}.\end{equation} 
The subset of $\mathcal{QS}(\GG)$ that consists of open quantum subgroups of $\GG$ will be denoted by $\mathcal{OQS}(\GG)$. In what follows we shall investigate the structure of $\mathcal{OQS}(\GG)$ showing in particular that it forms a lattice. 

\begin{proposition}\label{pr.aux_discr_lc}
  Let $\bG$ be a locally compact quantum group,   $\bH\in\mathcal{OQS}(\bG)$  and $\bM\le\mathcal{QS}(\bG)$. Then
   $ \bH\wedge \bM \in\mathcal{OQS}(\bM)$. 

Moreover, if 
\begin{equation*}
  T:\Linf(\hh{\bG})\to \Linf(\hh{\bH})\text{ and } T':\Linf(\hh{\bM})\to \Linf(\hh{\bH \wedge \bM})
\end{equation*}
are the expectations associated to the respective Haar weights, we have
\begin{equation*}
  T' = T|_{\Linf(\hh{\bM})}.  
\end{equation*}
\end{proposition}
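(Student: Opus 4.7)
Use the characterization recalled just above the statement: $\bK\le\bL$ is open precisely when there exists a normal conditional expectation $\Linf(\hh\bL)\to\Linf(\hh\bK)$ that is bi-invariant in the sense of \Cref{eq:expect}, and such an expectation is then unique \cite[Theorem 6.1]{KK}. I plan to show that $T|_{\Linf(\hh\bM)}$ is itself such an expectation for the pair $(\bM,\bH\wedge\bM)$. The uniqueness clause then yields the openness of $\bH\wedge\bM$ in $\bM$ together with the identification $T'=T|_{\Linf(\hh\bM)}$ in one shot.

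The decisive step is to prove $T(\Linf(\hh\bM))\subseteq\Linf(\hh\bM)$. Given $x\in\Linf(\hh\bM)$, Baaj-Vaes invariance gives $\Delta_{\hh\bG}(x)\in\Linf(\hh\bM)\vtens\Linf(\hh\bM)$. Combining this with $\Delta_{\hh\bG}\circ T=(\id\otimes T)\circ\Delta_{\hh\bG}$ yields
\begin{equation*}
\Delta_{\hh\bG}(T(x))=(\id\otimes T)\Delta_{\hh\bG}(x)\in\Linf(\hh\bM)\vtens\Linf(\hh\bG).
\end{equation*}
Slicing the second leg by normal functionals then produces elements of $\Linf(\hh\bM)$ that, by a standard recovery argument -- the right-handed counterpart of \Cref{eq:pod_cond} applied to the Baaj-Vaes subalgebra $\Linf(\hh\bM)$, made available by the $R_{\hh\bG}$-invariance of $\Linf(\hh\bM)$ -- span $T(x)$ in the $\sigma$-weak topology. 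Thus $T(x)\in\Linf(\hh\bM)$, and therefore in $\Linf(\hh\bH)\cap\Linf(\hh\bM)=\Linf(\hh{\bH\wedge\bM})$.

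Granted this, set $T':=T|_{\Linf(\hh\bM)}$. It maps into $\Linf(\hh{\bH\wedge\bM})$, fixes it pointwise (because $T$ fixes $\Linf(\hh\bH)\supseteq\Linf(\hh{\bH\wedge\bM})$), and is normal, so it is a normal conditional expectation. Its bi-invariance with respect to $\Delta_{\hh\bM}=\Delta_{\hh\bG}|_{\Linf(\hh\bM)}$ is read off from \Cref{eq:expect} by restriction, using the previous step to know that the outputs lie in $\Linf(\hh\bM)$. Hence $T'$ witnesses $\bH\wedge\bM\in\mathcal{OQS}(\bM)$, and by the uniqueness part of \cite[Theorem 6.1]{KK} it must be the canonical expectation associated to $\bH\wedge\bM\le\bM$. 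The main technical obstacle is the recovery step in the second paragraph: in the absence of a genuine counit on $\Linf(\hh\bG)$, extracting $T(x)$ from the inclusion $\Delta_{\hh\bG}(T(x))\in\Linf(\hh\bM)\vtens\Linf(\hh\bG)$ requires a slicing/approximation argument -- routine but the one place where the locally compact setting, as opposed to the compact one, demands genuine care.
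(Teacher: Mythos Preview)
Your overall strategy matches the paper's exactly: restrict $T$ to $\Linf(\hh\bM)$, check it is a bi-invariant expectation onto $\Linf(\hh{\bH\wedge\bM})$, and conclude via \cite[Theorem~6.1]{KK}. The only substantive step is $T(\Linf(\hh\bM))\subseteq\Linf(\hh\bM)$, and here your argument has a gap in the recovery step.

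You invoke ``the right-handed counterpart of \Cref{eq:pod_cond} applied to the Baaj--Vaes subalgebra $\Linf(\hh\bM)$'' to conclude that $T(x)$ lies in the closed span of its own second-leg slices. But \Cref{eq:pod_cond} (left- or right-handed) says that a coideal $\sN$ equals the closed span of slices of $\Delta(y)$ for $y$ \emph{ranging over~$\sN$}; it does not assert that an arbitrary $z\in\Linf(\hh\bG)$ lies in the closure of $\{(\id\otimes\omega)\Delta_{\hh\bG}(z):\omega\}$. Since you do not yet know $T(x)\in\Linf(\hh\bM)$, applying the Podle\'s identity \emph{to} $\Linf(\hh\bM)$ is circular. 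What you actually need is the single-element statement ``$z$ lies in the $\sigma$-weak closure of its right-leg slices'' for $z=T(x)\in\Linf(\hh\bG)$, and this is neither \Cref{eq:pod_cond} nor an immediate consequence of it in the locally compact setting (there is no bounded counit, and $\Linf(\hh\bG)_*$ need not have a bounded approximate identity for convolution).

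The paper sidesteps this entirely by using the \emph{other} identity in \Cref{eq:expect}, namely $(\id\otimes T)\circ\Delta_{\hh\bG}=(T\otimes\id)\circ\Delta_{\hh\bG}$, together with \Cref{eq:pod_cond} as written (first-leg slices). For $y\in\Linf(\hh\bM)$ one computes
\[
T\bigl((\omega\otimes\id)\Delta_{\hh\bG}(y)\bigr)=(\omega\otimes\id)\bigl((\id\otimes T)\Delta_{\hh\bG}(y)\bigr)=(\omega\otimes\id)\bigl((T\otimes\id)\Delta_{\hh\bG}(y)\bigr)=(\omega\circ T\otimes\id)\Delta_{\hh\bG}(y),
\]
which is again a first-leg slice of $\Delta_{\hh\bG}(y)$ and hence lies in $\Linf(\hh\bM)$ by the left-coideal property. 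Normality of $T$ and \Cref{eq:pod_cond} then give $T(\Linf(\hh\bM))\subseteq\Linf(\hh\bM)$ with no recovery step needed. If you swap in this identity, your argument becomes the paper's.
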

\begin{proof}
   Let $T :\Linf(\hh\GG)\to\Linf(\hh\HH)$ be the conditional expectation assigned to $\HH\in\mathcal{OQS}(\bG)$ (see \Cref{eq:expect}). 
Consider the restriction of $T$ to $\Linf(\hh{\bM})\le
\Linf(\hh{\bG})$. The equality (see \Cref{eq:pod_cond}) 
\begin{equation*}
\Linf(\hh{\bM}) =\{(\omega\otimes \id )(\Delta_{\hh{\bG}}(x)): \omega\in \Linf(\hh{\bG})_*, x\in \Linf(\hh{\bM})\}^{\sigma - \textrm{cls}}
\end{equation*} together with (see \Cref{eq:expect})
 \[(\id\otimes T)\circ \Delta_{\hh{\bG}}=(T\otimes \id)\circ \Delta_{\hh{\bG}}\] imply  that 
$T(\Linf(\hh{\bM}))\subset \Linf(\hh{\bM})$. Thus we get an inclusion $T(\Linf(\hh{\bM}))\subset  \Linf(\hh{\bH})\wedge \Linf(\hh{\bM})$. 
On the other hand, this restriction is the identity on
\begin{equation*}
  \Linf(\hh{\bH})\wedge \Linf(\hh{\bM}) = \Linf(\hh{\bH\wedge
  \bM}). 
\end{equation*}
In conclusion, $T|_{\Linf(\hh{\bM})}$ is an expectation onto
$L^{\infty}(\hh{\bH\wedge \bM})$ which clearly satisfies
\Cref{eq:expect}. Both claims of the conclusion now follow from   \cite[Theorem 6.1]{KK}. 
\end{proof}

Let us also note in passing that in the particular case when $\bH\le
\bM$ we obtain:

\begin{corollary}\label{cor.passing}
If $\bH\le \bM\le \bG$ is a sequence of closed quantum group
embeddings with $\bH$ open in $\bG$ then $\bH\le \bM$ is also open.  
\qedhere  
\end{corollary}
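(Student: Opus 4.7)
The plan is to derive this immediately as a special case of \Cref{pr.aux_discr_lc}. The key observation is that when $\bH\le \bM$ inside $\bG$, the intersection $\bH\wedge \bM$ in $\mathcal{QS}(\bG)$ coincides with $\bH$ itself: at the level of von Neumann algebras we have
\begin{equation*}
  \Linf(\hh{\bH\wedge \bM}) \;=\; \Linf(\hh{\bH})\wedge \Linf(\hh{\bM}) \;=\; \Linf(\hh{\bH}),
\end{equation*}
since $\Linf(\hh{\bH})\subseteq \Linf(\hh{\bM})$. Hence $\bH\wedge \bM=\bH$ as quantum subgroups of $\bM$.

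Given this, \Cref{pr.aux_discr_lc} applied to the pair $(\bH, \bM)$ yields $\bH = \bH\wedge \bM \in \mathcal{OQS}(\bM)$, which is exactly the claim. There is essentially nothing else to verify; the corollary records the symbolic consequence of the proposition in the specific case of nested subgroups, and no further analysis (Haar weight restriction, conditional expectations, etc.) needs to be redone, since those were already handled inside the proof of \Cref{pr.aux_discr_lc}. In fact, the second sentence of that proposition also gives for free that the conditional expectation $\Linf(\hh{\bM})\to \Linf(\hh{\bH})$ witnessing openness of $\bH$ in $\bM$ is simply the restriction to $\Linf(\hh{\bM})$ of the expectation $\Linf(\hh{\bG})\to \Linf(\hh{\bH})$ witnessing openness of $\bH$ in $\bG$. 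There is no real obstacle; the only thing to make sure of is the identification $\bH\wedge \bM = \bH$, which is immediate from the definition of $\wedge$ in $\mathcal{QS}(\bG)$.
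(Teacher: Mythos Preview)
Your proof is correct and takes essentially the same approach as the paper: the corollary is stated there as an immediate specialization of \Cref{pr.aux_discr_lc} to the case $\bH\le \bM$, which forces $\bH\wedge \bM=\bH$. You have simply spelled out this identification and the resulting application explicitly.
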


In fact, we can improve on \Cref{cor.passing} as in the next result,
proving a strong transitivity of openness.

\begin{proposition}\label{pr.open_trans}
  Let $\bH\le \bM\le \bG$ be a chain of closed embeddings of locally
  compact quantum groups. Then, $\bH$ is open in $\bG$ if and only if 
  \begin{equation*}
    \bH\le \bM \quad \text{and} \quad \bM\le \bG
  \end{equation*}
are both open. 
\end{proposition}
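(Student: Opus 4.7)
The plan is to handle the two implications separately, using the characterization (cited immediately before $\mathcal{OQS}(\bG)$ is introduced) that an inclusion $\bK\le \bL$ of locally compact quantum groups is open if and only if there exists a normal conditional expectation $T:\Linf(\hh\bL)\to \Linf(\hh\bK)$ satisfying \Cref{eq:expect}, equivalently the restriction $\psi^{\hh\bL}|_{\Linf(\hh\bK)}$ is semifinite.

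For the backward direction, let $T_1:\Linf(\hh\bM)\to \Linf(\hh\bH)$ and $T_2:\Linf(\hh\bG)\to \Linf(\hh\bM)$ be the expectations associated to $\bH\le \bM$ and $\bM\le \bG$. I would set $T:=T_1\circ T_2$. Since the image of $T_1$ lies in $\Linf(\hh\bH)\subseteq \Linf(\hh\bM)$, the composition $T_2\circ T_1$ equals $T_1$; thus $T^2=T_1T_2T_1T_2=T_1T_2=T$, making $T$ a normal conditional expectation onto $\Linf(\hh\bH)$. For \Cref{eq:expect}, one uses $T_2(x)\in \Linf(\hh\bM)$ together with the Baaj-Vaes identity $\Delta_{\hh\bG}|_{\Linf(\hh\bM)}=\Delta_{\hh\bM}$ to chain the compatibilities of $T_1$ and $T_2$, obtaining
\begin{equation*}
(T\otimes\id)\circ\Delta_{\hh\bG} = \Delta_{\hh\bG}\circ T = (\id\otimes T)\circ\Delta_{\hh\bG},
\end{equation*}
at which point \cite[Theorem 6.1]{KK} finishes this direction.

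For the forward direction, openness of $\bH\le \bM$ is exactly \Cref{cor.passing}, so only the openness of $\bM\le \bG$ needs arguing; this is the main obstacle. I would prove semifiniteness of $\psi^{\hh\bG}|_{\Linf(\hh\bM)}$ by a squeezing argument. Since by hypothesis $\psi^{\hh\bG}|_{\Linf(\hh\bH)}$ equals the Haar weight of $\hh\bH$, which is n.s.f.\ on the unital Baaj-Vaes subalgebra $\Linf(\hh\bH)$, we may pick an increasing net $(e_\alpha)_\alpha\subseteq \Linf(\hh\bH)$ with $0\le e_\alpha\le \I$, $\psi^{\hh\bG}(e_\alpha)<\infty$ and $e_\alpha\nearrow \I$ $\sigma$-strongly. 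For arbitrary $y\in \Linf(\hh\bM)^+$, the elements $e_\alpha y e_\alpha$ belong to $\Linf(\hh\bM)^+$ (using $\Linf(\hh\bH)\subseteq \Linf(\hh\bM)$), converge $\sigma$-strongly to $y$, and satisfy
\begin{equation*}
\psi^{\hh\bG}(e_\alpha y e_\alpha)\le \|y\|\,\psi^{\hh\bG}(e_\alpha^2)\le \|y\|\,\psi^{\hh\bG}(e_\alpha)<\infty
\end{equation*}
by virtue of $e_\alpha^2\le e_\alpha$. This exhibits a $\sigma$-weakly dense set of elements of $\Linf(\hh\bM)^+$ on which $\psi^{\hh\bG}$ is finite, giving the desired semifiniteness and hence openness of $\bM\le \bG$.
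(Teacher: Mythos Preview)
Your proof is correct, and both directions differ in method from the paper's.

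For the backward direction, the paper simply appeals to the semifiniteness characterization \cite[Corollary 3.4]{KKS}: if $\psi^{\hh\bG}|_{\Linf(\hh\bM)}$ is semifinite it is (a scalar multiple of) $\psi^{\hh\bM}$, and then $\psi^{\hh\bG}|_{\Linf(\hh\bH)}=\psi^{\hh\bM}|_{\Linf(\hh\bH)}$ is semifinite by the openness of $\bH\le\bM$. Your composition-of-expectations argument is an equally valid alternative working directly with the conditional-expectation characterization \cite[Theorem 6.1]{KK}; it is slightly more hands-on but avoids the implicit uniqueness-of-Haar-weight step.

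For the forward direction the difference is more substantive. The paper invokes \cite[Theorem 3.3]{KKS}, which says that $\bM\le\bG$ is open as soon as a \emph{single} non-zero element of $\Linf(\hh\bM)$ is square-integrable for $\psi^{\hh\bG}$; such elements are supplied by $\Linf(\hh\bH)\subseteq\Linf(\hh\bM)$. Your squeezing argument bypasses that external result entirely and proves semifiniteness of $\psi^{\hh\bG}|_{\Linf(\hh\bM)}$ directly from the existence of an approximate identity $(e_\alpha)$ in $\mathcal{M}^+_{\psi^{\hh\bG}}\cap\Linf(\hh\bH)$, which is standard for n.s.f.\ weights. This is more elementary and self-contained; the paper's route is shorter only because it outsources the work to \cite{KKS}.
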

\begin{proof}
 In order to see  the leftward implication `$\Leftarrow$'  we use \cite[Corollary 3.4]{KKS}. 
 
Let us check the rightward implication `$\Rightarrow$'. 
We have already seen in \Cref{cor.passing} that $\bH\le \bM$ is
open. On the other hand, \cite[Theorem 3.3]{KKS} shows that $\bM\le
\bG$ is open if and only if there is some non-zero element of
\begin{equation*}
  \Linf(\hh{\bM})\le \Linf(\hh{\bG})
\end{equation*}
that is square-integrable with respect to the (either left or right)
Haar weight of $\hh{\bG}$. An application of the same result in the
opposite direction shows that there are such elements in 
\begin{equation*}
  \Linf(\hh{\bH})\le \Linf(\hh{\bM})\le \Linf(\hh{\bG}). 
\end{equation*}
This concludes the proof.
\end{proof}
Using \Cref{pr.aux_discr_lc} and \Cref{pr.open_trans} we get: 
\begin{corollary}
  The set $\mathcal{OQS}(\GG)$ forms a sublattice of $\mathcal{ QS}(\GG)$.
\end{corollary}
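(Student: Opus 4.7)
The plan is to verify the two lattice operations separately on $\mathcal{OQS}(\GG)$: given $\bH_1,\bH_2\in\mathcal{OQS}(\GG)$, we must argue that both $\bH_1\wedge\bH_2$ and $\bH_1\vee\bH_2$ are again open in $\GG$. The two propositions cited just before the corollary handle the meet almost immediately, while the join requires a short extra argument.

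For the meet, the natural move is to factor the inclusion $\bH_1\wedge\bH_2\le\GG$ through $\bH_2$. First I would apply \Cref{pr.aux_discr_lc} with $\bH=\bH_1\in\mathcal{OQS}(\GG)$ and $\bM=\bH_2\in\mathcal{QS}(\GG)$ to conclude that $\bH_1\wedge\bH_2\in\mathcal{OQS}(\bH_2)$. Since by hypothesis $\bH_2$ is itself open in $\GG$, the transitivity statement in \Cref{pr.open_trans} applied to the chain $\bH_1\wedge\bH_2\le\bH_2\le\GG$ then gives $\bH_1\wedge\bH_2\in\mathcal{OQS}(\GG)$.

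For the join I would exploit the square-integrability criterion recalled in the proof of \Cref{pr.open_trans}: a closed quantum subgroup $\bM\le\GG$ is open precisely when $\Linf(\hh\bM)\le\Linf(\hh\GG)$ contains a non-zero element that is square-integrable with respect to a Haar weight of $\hh\GG$. Since $\bH_1\le\bH_1\vee\bH_2$, we have the containment of Baaj-Vaes subalgebras $\Linf(\hh\bH_1)\subset\Linf(\hh{\bH_1\vee\bH_2})$ inside $\Linf(\hh\GG)$. Openness of $\bH_1$ in $\GG$ produces the required square-integrable element already in $\Linf(\hh\bH_1)$, and this element witnesses openness of $\bH_1\vee\bH_2$ in $\GG$. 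Combined with the previous paragraph, this proves that $\mathcal{OQS}(\GG)$ is closed under both lattice operations of $\mathcal{QS}(\GG)$.

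The only subtle point is making sure the meet and join taken in $\mathcal{OQS}(\GG)$ really coincide with those computed in $\mathcal{QS}(\GG)$; this is automatic in the argument above because we verified that the $\mathcal{QS}(\GG)$-operations applied to elements of $\mathcal{OQS}(\GG)$ already land in $\mathcal{OQS}(\GG)$, so no separate universal check is needed.
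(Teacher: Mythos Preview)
Your argument is correct and follows the same route the paper intends. The one minor inefficiency is in the join: you re-derive the relevant implication of \Cref{pr.open_trans} via the square-integrability criterion, whereas you could simply cite \Cref{pr.open_trans} directly on the chain $\bH_1\le\bH_1\vee\bH_2\le\GG$ (the forward direction immediately gives that $\bH_1\vee\bH_2$ is open in $\GG$ once $\bH_1$ is).
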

\begin{remark} 
Let $\bH\le \bG$ be an open quantum subgroup and   $\omega\in \C_0^u(\widehat{\bG})^* $  the idempotent state assigned to $\bH\le \bG$ as described in \cite[Remark 6.3]{KK}. The group-like projection $\I_\HH\in \Linf(\GG)$ assigned to $\bH\le \bG$ is given by 
\[\I_\HH = (\omega\otimes\id)(\Ww^\GG).\] 
 Let us denote   $\I_\HH^u =(\omega\otimes\id)(\WW)\in \M(\C_0^u(\GG))$; applying the   reducing morphism $\Lambda_\GG\in\Mor(\C_0^u(\bG),\C_0(\bG))$ we get  $\Lambda_\GG(\I_\HH^u) =\I_\HH$. Denoting by $\pi^u\in \M(\C_0^u(\bG),\C_0^u(\bH))$ the morphism assigned to $\bH\le \bG$ we see that 
\begin{equation}\label{comput}\begin{split}(\pi^u\otimes \id)(\Delta^u(\I_\HH^u)) &=(\pi^u\otimes \id)(\Delta^u((\omega\otimes\id)(\WW)))\\ &=(\omega\otimes\id\otimes\id)((\id\otimes\pi^u)(\WW_{12}) \WW_{13})\\& = \I\otimes \I_\HH^u\end{split}\end{equation}
where in the third equality we  used the identity  $\omega(x\hat\pi^u(y)) = \omega(x)\hh\varepsilon(y))$ which holds for all $x\in \C^u_0(\hh\bG)$ and $y\in \C^u_0(\hh\bH)$. The latter can be easily concluded from the fact that the image of the conditional expectation $T^u = (\id\otimes\omega)\circ\hh\Delta^u$ is equal to $ \hat\pi^u(\C_0^u(\hh\HH))$. 

Similarly we can prove that $(\id\otimes \pi^u)(\Delta^u(\I_\HH^u)) = \I_\HH^u\otimes\I$. In particular using  \Cref{comput} and \Cref{LDDrL}   we get 
\begin{equation}\label{comput2}\begin{split}(\I_\HH\otimes\I)\Delta^{r,u}_r(\I_\HH)& = \I_\HH\otimes \I_\HH^u,\\ (\I\otimes \I_\HH)\Delta^{u,r}_r(\I_\HH) & = \I_\HH^u\otimes \I_\HH.
\end{split}
\end{equation}

Now let $\bM\le \bG$ be a closed quantum subgroup and $\rho\in\Mor( \C_0^u(\bG), \C_0(\bM))$ the corresponding morphism. The group-like projection   assigned to the open containment  $\bH\wedge
  \bM\le \bM$ (see \Cref{pr.aux_discr_lc}) will be denoted by $\I_{\HH\wedge\bM}\in \Linf(\bM)$. Using \Cref{pr.aux_discr_lc} we get 
\begin{equation}\label{qpu}
\I_{\HH\wedge\bM} =  \rho(\I_{\HH}^u)\end{equation} 
which follows from the computation  \[(\I\otimes \I_{\HH\wedge\bM})  \ww^{\bM}=(T'\otimes\id)(\ww^{\bM}) = (T\otimes\rho)(\wW^{\bG}) = (\I\otimes\rho(\I_{\HH}^u))\ww^{\bM}\] (in this computation we use the embedding $\Linf(\hh\bM)\subset \Linf(\hh\bG)$ and the notation of the proof of  \Cref{pr.aux_discr_lc}). 
Denoting the  action
\begin{equation*}
  \alpha\in\Mor(\C_0(\bG),\C_0(\bG)\otimes \C_0(\bM))
\end{equation*} describing the embedding $\bM\le \bG$ we get 
\[(\I_{\HH}\otimes\I)\alpha(\I_{\HH}) = \I_{\HH}\otimes \I_{\HH\wedge\bM}\]  
Indeed, this follows \Cref{comput2}, \Cref{qpu} and the the identity $\alpha = (\id\otimes\rho)\circ\Delta^{r,u}_r $. Similarly we get \[(\I\otimes \I_{\HH})\alpha(\I_{\HH}) = \I_{\HH\wedge\bM}\otimes \I_{\HH}.\] 
\end{remark}

\subsection{Well positioned quantum subgroups}\label{subse.well}

For subgroups $\bH\le \bG$, we will be working with the quantum
homogeneous spaces (see \Cref{rem:coduality})
\begin{equation*}
  A_\bH  = \Linf(\bG/\bH)=\cd(\Linf(\hh\HH))\subseteq \Linf(\bG). 
\end{equation*}

\begin{definition}\label{def.well}
  Let $\bH$ and $\bM$ be two closed quantum subgroups of a locally
  compact quantum group $\bG$. We say that $\bH$ and $\bM$ are {\it
  (relatively)  well positioned}  if we
  have the equality
  \begin{equation}\label{eq:well}
    A_\bH \vee A_\bM  = \{A_\bH A_\bM\}^{\cls}
  \end{equation}
(or equivalently its analogue with $\bH$ and $\bM$ reversed).  
\end{definition}

As we will see in \Cref{th.mod_lc}, the well positioning
property is relevant to the modular law for quantum subgroups of a
locally compact quantum group. Here, we discuss sufficient conditions
that ensure well positioning. Let us also note that in the algebraic context  the counterpart of  well positioning always holds as noted in \Cref{cor.ad-inv}.

\begin{proposition}\label{pr.well}
  The closed quantum subgroups $\bH,\bM\le \bG$ are relatively well positioned if 
  \begin{enumerate}
    \renewcommand{\labelenumi}{(\alph{enumi})}
     \item $\bG$ is classical;
     \item one of $\bH$ and $\bM$ is compact;
     \item one of $\bH$ and $\bM$ is normal;
     \item $\bG$ is dual-classical. 
  \end{enumerate}
\end{proposition}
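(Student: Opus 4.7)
The plan is to handle the four cases (a)--(d) separately. In each, well positioning reduces to showing that $\{A_\bH A_\bM\}^{\cls}$ is closed under products and under adjoints, equivalently to the identity $\{A_\bH A_\bM\}^{\cls} = \{A_\bM A_\bH\}^{\cls}$: once that is established, the resulting $\cls$-closed $*$-subalgebra of $\Linf(\bG)$ contains both $A_\bH$ and $A_\bM$ and hence coincides with their join.

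For (a) the identity $A_\bH A_\bM = A_\bM A_\bH$ holds at the level of sets by commutativity of $\Linf(\bG)$. For (c), assume without loss of generality that $\bH$ is normal; then $A_\bH = \cd(\Linf(\hh\bH))$ lies in $\mathcal{BV}(\bG)$ by \Cref{rem:coduality}, while $A_\bM \in \mathcal{NCI}(\bG)$ holds unconditionally (same remark). The ``moreover'' clause of \Cref{normalization}, applied with $\sN := A_\bM$ and $\sM := A_\bH$, then gives $A_\bM \vee A_\bH = \{A_\bM A_\bH\}^{\cls}$; taking adjoints and using that $A_\bH$ and $A_\bM$ are $*$-subalgebras delivers the symmetric equality $\{A_\bH A_\bM\}^{\cls} = \{A_\bM A_\bH\}^{\cls}$, as required.

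For (d) the strategy is to reduce to (c) by showing that when $\Linf(\hh\bG)$ is commutative every closed quantum subgroup of $\bG$ is automatically normal. Concretely, realize $\hh\bG$ as a classical group $\hh G$; a direct computation with the multiplicative unitary (with, say, $(\ww^{\hh G}\xi)(s,t) = \xi(st,t)$) applied to multiplication operators $M_f$ yields $\ww^{\hh G}(\I \otimes M_f)\ww^{\hh G *} = \I \otimes M_f$, so the conjugation $x \mapsto \ww^{\hh\bG}(\I \otimes x)\ww^{\hh\bG *}$ on $\Linf(\hh\bG)$ is trivial. The normality condition $\ww^{\hh\bG}(\I \otimes \Linf(\hh\bH))\ww^{\hh\bG *} \subseteq \Linf(\bG) \vtens \Linf(\hh\bH)$ is thus met for every Baaj-Vaes $\Linf(\hh\bH)$, and (c) takes over.

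Case (b) is the substantive one. When $\bH$ is compact, the finite Haar state $\varphi_\bH$ and the right $\bH$-translation action $\alpha_\bH$ on $\bG$ furnish a conditional expectation $E_\bH = (\id \otimes \varphi_\bH) \circ \alpha_\bH : \Linf(\bG) \to A_\bH$, together with an associated isotypic decomposition of $\Linf(\bG)$ under $\alpha_\bH$. The hoped-for strategy is to exploit this decomposition---expanding $b \in A_\bM$ into components labeled by irreducible $\bH$-corepresentations---in order to rewrite $ba$ (for $a \in A_\bH$) as a $\cls$-limit of elements of $A_\bH A_\bM$. The principal obstacle is that $\alpha_\bH$ does not preserve $A_\bM$ in general, so the isotypic pieces of $b$ need not themselves lie in $A_\bM$; controlling this failure, likely through the interaction of the bicharacter governing $\bH \le \bG$ with the coideal structure of $A_\bM$, is where the real work must go.
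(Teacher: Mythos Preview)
Your treatment of parts (a), (c), and (d) matches the paper's proof essentially verbatim: commutativity for (a), \Cref{normalization} (specifically \Cref{lemma:norm_bv}) with $\sN = A_\bM$ and $\sM = A_\bH$ for (c), and reduction to (c) via automatic normality of closed quantum subgroups when $\hh\bG$ is classical for (d).

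Part (b) is where your proposal has a genuine gap. You yourself flag it: you want to decompose $b\in A_\bM$ into $\alpha_\bH$-isotypic pieces and then commute each piece past $a\in A_\bH$, but since $\alpha_\bH$ need not preserve $A_\bM$ this route stalls exactly where you say it does. The paper bypasses this obstacle entirely by reversing the roles in the computation. Rather than decomposing elements of $A_\bM$, one uses a specific dense spanning set of $A_\bH$: with $\varphi$ the Haar state of $\bH$, $V$ the bicharacter for $\bH\le\bG$, and $\ww=\ww^\bG$, the elements
\[
x=(\omega_{p,q}\otimes\id\otimes\varphi)(\ww_{12}V_{13}),\qquad p,q\in\Ltwo(\bG),
\]
span a dense subspace of $A_\bH$ (they are $E_\bH$ applied to slices of $\ww$). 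For $a\in A_\bM$ one then computes $xa$ by inserting $\ww_{12}^*\ww_{12}$ and expanding over an orthonormal basis, obtaining a sum of products of the form
\[
(\omega_{p,e_i}\otimes\id)\bigl(\ww(1\otimes a)\ww^*\bigr)\cdot(\omega_{e_i,q}\otimes\id\otimes\varphi)(\ww_{12}V_{13}).
\]
The second factor lies in $A_\bH$ by construction; the first lies in $A_\bM$ because $A_\bM$ is a \emph{normal} coideal of $\Linf(\bG)$, i.e.\ $\ww(1\otimes A_\bM)\ww^*\subseteq\Linf(\hh\bG)\vtens A_\bM$. This normality is automatic for $A_\bM=\Linf(\bG/\bM)$ whenever $\bM$ is a closed quantum subgroup (see \Cref{rem:coduality}), and it is the invariance property you should be reaching for---not $\alpha_\bH$-invariance of $A_\bM$, which indeed fails in general.
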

\begin{proof}
We prove the different points separately, as they require different
techniques. 

\vspace{.5cm}

{\bf (a)} This is immediate: $\Linf (\bG)$ is then commutative, and
hence it does not matter in which order we multiply elements of
$A_\bH$ and $A_\bM$.

\vspace{.5cm}

{\bf (b)} The condition is symmetric, so let us assume that $\bH$ is
compact and show that $A_\bM A_\bH$ is linearly dense in $A_\bH\vee A_\bM$. We
will adapt the proof of \Cref{normalization}.

We write $\ww=\ww^\bG$ and $\varphi$ for the   Haar state on the compact
quantum group $\bH$. We further denote by $\alpha$ the canonical
coaction 
\begin{equation*}
 \Linf (\bG)\to \Linf (\bG)\vtens \Linf (\bH)
\end{equation*}
and by $V\in \Linf (\bG)\vtens \Linf (\bH)$ the bicharacter corresponding to $\alpha$.  

Note that the elements of the form 
\begin{equation}\label{eq:gen}
  x=(\mathrm{id}\otimes\varphi)\alpha((\omega_{p,q}\otimes\mathrm{id})\ww)
  = (\omega_{p,q}\otimes\mathrm{id}\otimes \varphi)(\ww_{12} V_{13})
\end{equation}
span a dense subset of $A_\bH$ for $p,q\in \Ltwo(\bG)$. Hence, it
suffices to show that an element obtained by multiplying $x$ as in
\Cref{eq:gen} and an arbitrary $a\in A_\bM$ belongs to the closure of linear span on 
$A_\bM A_\bH$. 

With this purpose in mind, we first use
\begin{equation*}
  (\mathrm{id}\otimes\alpha) \ww = \ww_{12} V_{13}
\end{equation*}
again to write
\begin{equation*}
  xa = (\omega_{p,q}\otimes\mathrm{id}\otimes \varphi)(\ww_{12}V_{13}
  (1\otimes a\otimes 1))= (\omega_{p,q}\otimes\mathrm{id}\otimes \varphi)(\ww_{12} (1\otimes a\otimes 1)\ww_{12}^*\ww_{12} V_{13})
   ). 
\end{equation*}
Using 
\begin{equation*}
  \omega_{pq}(\bullet -) = \sum_i \omega_{p,e_i}(\bullet) \omega_{e_i,q}(-), 
\end{equation*}
the expression turns into
\begin{equation*}
  \sum_i (\omega_{p,e_i}\otimes\mathrm{id})(\ww_{12}(1\otimes a)\ww_{12}^*)\cdot
  (\omega_{e_i,q}\otimes\mathrm{id}\otimes \varphi)(\ww_{12}V_{13}). 
\end{equation*}
Now, the left hand side of the `$\cdot$' symbol belongs to $A_\bM$ by
the normality condition
\begin{equation*}
  \ww(1\otimes A_\bM)\ww^*\subseteq \Linf(\hh\bG)\otimes A_\bM,
\end{equation*}
whereas the right hand side is of the same form as \Cref{eq:gen} and hence belongs
to $A_\bH$.

\vspace{.5cm}

{\bf (c)} Once again the condition is symmetric, so for the sake of
making a choice we assume $\bH$ is normal. But then $A_\bH$ is a
Baaj-Vaes subalgebra of $\Linf(\bG)$, and hence the desired result
follows from an application of \Cref{normalization} (in the form of
\Cref{lemma:norm_bv}) to $\sN=A_\bM$ and $\sM=A_\bH$.

\vspace{.5cm}

{\bf (d)} Since $\bG$ is abelian,     quantum subgroups of $\GG$ are   normal and part
(c) applies.    
\end{proof}

\begin{remark}
Let $\GG$ be a locally compact quantum group and $\sN\subset B(\Ltwo(\GG))$  a von Neumann algebra such that $\ww(\I\otimes \sN)\ww^*\subset \Linf(\hh\GG)\vtens\sN$. Let $\sM$ be a von Neumann subalgebra of $\Linf(\GG)$ equipped with a conditional expectation $E:\Linf(\GG)\to \sM$. Let $\sN\vee\sM$ be    the von Neumann algebra generated by $\sN$ and $\sM$. 
Using the  method of the proof of point (b) of \Cref{pr.well} we get
 \[\sN\vee\sM =  \{\sN \sM\}^{\cls}.\] 
\end{remark}

\section{Lattices of quantum subgroups of a linearly reductive quantum group}\label{se.lr}

In this section we tackle some analogues of the group isomorphism theorems in the setting of (mostly linearly reductive) linear algebraic quantum groups.

\subsection{The second isomorphism theorem}

We will prove a version of the second isomorphism theorem \cite[Theorem 2.26]{rot}  for function algebras of linearly reductive quantum groups, i.e. cosemisimple Hopf algebras (see \Cref{def.red}). The general setup is as follows. 

Recall from \Cref{subse.lr} that unless specified otherwise, we work over an algebraically closed field $k$ of arbitrary characteristic. $\cO(\bG)$ denotes a cosemisimple Hopf algebra, to be thought of as the algebra of regular functions on a quantum group $\bG$. We fit the latter into an exact sequence 
\begin{equation*}
  1\to \bK\to \bG\to \bG/\bK\to 1
\end{equation*}
in the sense that we have an exact sequence 
\begin{equation*}
  k\to \cO(\bG/\bK)\to \cO(\bG)\to \cO(\bK)\to k
\end{equation*}
as in \Cref{def.exact}. Note that $\cO(\bG/\bK)$ is automatically cosemisimple (being a Hopf subalgebra of a cosemisimple Hopf algebra), and hence (\cite[Theorem 2.1]{chi_cos}) the inclusion $\cO(\bG/\bK)\to \cO(\bG)$ is automatically faithfully flat both on the left and the right. It then follows \cite[Theorem 2.5]{chi_cos} that $\cO(\bK)$ is itself cosemisimple.

Assume now that we have another linearly reductive quantum subgroup $\bH\le \bG$, i.e. a quotient cosemisimple Hopf algebra $\cO(\bG)\to \cO(\bH)$. We will now examine the issue of whether or not the intersection $\bH\wedge \bK$ from \Cref{def.alg-latt} is linearly reductive.

First, define the Hopf subalgebra $\bullet$ of $\cO(\bH)$ so as to make the following diagram commute.  
\begin{equation}\label{eq:hopf}
  \begin{tikzpicture}[auto,baseline=(current  bounding  box.center)]
    \path[anchor=base] (0,0) node (G/K) {$\cO(\bG/\bK)$} +(2,.5) node (G) {$\cO(\bG)$} +(4,0) node (H) {$\cO(\bH)$} +(2,-.5) node (H/HcapK) {$\bullet$};
         \draw[right hook->] (G/K) to [bend left=6] (G);
         \draw[->>] (G/K) to [bend right=6] (H/HcapK);
         \draw[->>] (G) to [bend left=6] (H);
         \draw[right hook->] (H/HcapK) to [bend right=6] (H);
  \end{tikzpicture}
\end{equation}
$\bullet$ is then a Hopf subalgebra of $\cO(\bH)$, and hence automatically cosemisimple. It is also invariant under the adjoint actions of $\cO(\bH)$ on itself, since $\cO(\bG/\bK)$ is ad-invariant in $\cO(\bG)$. This means that $\bullet$ is of the form 
\begin{equation*}  
\cO(\bH/\bN)\subseteq \cO(\bH)
\end{equation*}
for some normal linearly reductive quantum subgroup $\cO(\bH)\to \cO(\bN)$ of $\bH$. By construction, we have a morphism
\begin{equation*}
  \bH/\bN\to \bG/\bK. 
\end{equation*}
Our goal is to argue that we have  
\begin{equation*}
  \bH\wedge \bK= \bN,
\end{equation*}
for the intersection operation $\wedge$ as in
\Cref{def.alg-latt}. According to \Cref{re.wedge}, this is achieved by the following result.

\begin{proposition}\label{pr.colim}
  In the setting above, the diagram 
\begin{equation}\label{eq:push}
  \begin{tikzpicture}[auto,baseline=(current  bounding  box.center)]
    \path[anchor=base] (0,0) node (HcapK) {$\cO(\bN)$} +(2,.5) node (H) {$\cO(\bH)$} +(4,0) node (G) {$\cO(\bG)$} +(2,-.5) node (K) {$\cO(\bK)$};
         \draw[->>] (H) to [bend right=6] (HcapK);
         \draw[->>] (G) to [bend right=6] (H);
         \draw[->>] (K) to [bend left=6] (HcapK);
         \draw[->>] (G) to [bend left=6] (K);
  \end{tikzpicture}
\end{equation}
is a pushout in the category of algebras, or equivalently, that of bialgebras, or Hopf algebras, or Hopf algebras with bijective antipode. 
\end{proposition}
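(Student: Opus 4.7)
My plan is to first reduce the statement to a computation in the category of algebras, and then identify the resulting quotient with $\cO(\bN)$ using the faithful flatness granted by cosemisimplicity. The equivalence of the four categories listed in the conclusion will be handled in the same step, as follows. For the two surjections $\pi_H:\cO(\bG)\twoheadrightarrow\cO(\bH)$ and $\pi_K:\cO(\bG)\twoheadrightarrow\cO(\bK)$ with respective two-sided Hopf ideal kernels $I_H,I_K$, the pushout in the category of algebras is simply $\cO(\bG)/(I_H+I_K)$. Since the sum $I_H+I_K$ is itself a Hopf ideal invariant under $S^{\pm 1}$ (sums of such ideals being closed under $\Delta$, $\varepsilon$, and $S^{\pm 1}$ coordinatewise), the quotient carries a canonical Hopf algebra structure with bijective antipode. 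The universal property then transfers verbatim from algebras to bialgebras, Hopf algebras, and Hopf algebras with bijective antipode, because any morphism out of a Hopf quotient of $\cO(\bG)$ into another Hopf algebra is automatically a Hopf morphism whenever its underlying algebra map is induced from one.

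It remains to identify $\cO(\bG)/(I_H+I_K)$ with $\cO(\bN)$. Since $\cO(\bG)$ is cosemisimple, it is both left and right faithfully flat over the normal Hopf subalgebra $\cO(\bG/\bK)$, which by standard Hopf-algebraic exactness yields
\begin{equation*}
  I_K \;=\; \cO(\bG)\cdot\cO(\bG/\bK)^- \;=\; \cO(\bG/\bK)^-\cdot\cO(\bG).
\end{equation*}
Pushing forward along $\pi_H$ and using the commutativity of \Cref{eq:hopf} to identify $\pi_H(\cO(\bG/\bK))$ with $\cO(\bH/\bN)$, we obtain
\begin{equation*}
  \pi_H(I_K) \;=\; \cO(\bH)\cdot\cO(\bH/\bN)^-.
\end{equation*}
Hence the pushout equals $\cO(\bH)/\bigl(\cO(\bH)\cdot\cO(\bH/\bN)^-\bigr)$. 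Applying the same faithful-flatness argument to the cosemisimple $\cO(\bH)$ and its normal Hopf subalgebra $\cO(\bH/\bN)$, this last quotient is precisely $\cO(\bN)$, by the defining exact sequence for the normal subgroup $\bN\trianglelefteq\bH$ constructed just before the proposition.

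The main technical input, such as it is, is the necessity of faithful flatness to guarantee the generator description of $I_K$ (and then of $\cO(\bH)\cdot\cO(\bH/\bN)^-$); cosemisimplicity supplies this for free in the linearly reductive setting, so the remaining argument is formal ideal bookkeeping once one knows to factor it through the intermediate quotient $\cO(\bH)$. No additional hypotheses are required.
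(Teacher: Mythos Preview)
Your proof is correct and follows essentially the same route as the paper's: both identify the algebra pushout as $\cO(\bH)/\pi_H(I_K)$, use the exact-sequence description $I_K=\cO(\bG)\cdot\cO(\bG/\bK)^-$ to rewrite $\pi_H(I_K)=\cO(\bH)\cdot\cO(\bH/\bN)^-$, and then recognize this quotient as $\cO(\bN)$ by the definition of $\bN$. The only real difference is in handling the equivalence of the four categories: the paper invokes the general fact that the forgetful functor from Hopf algebras to algebras is a left adjoint and hence preserves pushouts, whereas you argue directly that $I_H+I_K$ is an $S^{\pm1}$-stable Hopf ideal and that a factored algebra map through a surjective Hopf quotient is automatically a Hopf map; both are valid, and yours is slightly more self-contained while the paper's is more conceptual.
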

\begin{proof}
  For the fact that the forgetful functor from Hopf algebras to bialgebras or algebras is a left adjoint and hence preserves colimits (such as pushouts) we refer to \cite{porst_limcolim}. Hence, we will focus on showing that the diagram is a pushout of algebras. 

The exactness of the sequence 
\begin{equation*}
  k\to \cO(\bG/\bK)\to \cO(\bG)\to \cO(\bK)\to k
\end{equation*}
implies that the kernel of the surjection $\cO(\bG)\to \cO(\bK)$ is the ideal 
\begin{equation*}
  \cO(\bG)\cO(\bG/\bK)^- = \cO(\bG/\bK)^-\cO(\bG),
\end{equation*}
where the `$-$' superscripts indicate kernels of counits. But this means that the pushout of the two right hand arrows of \Cref{eq:push} is the quotient of $\cO(\bH)$ by the ideal generated by the kernel of the counit of 
\begin{equation*}
  \cO(\bH/\bN) := \mathrm{Im}(\cO(\bG/\bK)\to \cO(\bH)). 
\end{equation*}
Finally, the general theory of exact sequences of Hopf algebras as covered in \cite{AD} and recalled in \Cref{subse.lr} above says that this is precisely right hand quotient in the sequence 
\begin{equation*}
  k\to \cO(\bH/\bN)\to \cO(\bH)\to \square \to k,
\end{equation*}
which is by definition our $\cO(\bH)\to \cO(\bN)$.
\end{proof}

\begin{remark}
  The substance of \Cref{pr.colim} is that the algebra colimit in question is automatically cosemisimple as a Hopf algebra. This is analogous to the classical fact that a normal subgroup of a linearly reductive linear algebraic group is automatically linearly reductive, as follows easily, for instance, from the classification of linearly reductive groups \cite{nagata}.  
\end{remark}

Finally, suppose $\bH$ and $\bK$ generate $\bG$ in the following representation-theoretic sense: a linear map $f:V\to W$ between comodules $V,W\in \cM^{\cO(\bG)}$ is a $\bG$-intertwiner if and only if it is both an $\bH$- and a $\bK$-intertwiner (see e.g. \cite{BCV} for the identical notion of topological generation for compact quantum groups, or \cite{chi_rfd}, where the same property is phrased in terms of the injectivity of the map from $\cO(\bG)$ into the product $\cO(\bK)\times \cO(\bH)$ in the category of Hopf algebras).

\begin{remark}
  One can show that the condition above is equivalent to $\bH\vee\bK=\bG$, for the operation `$\vee$' from \Cref{def.alg-latt}. 
\end{remark}

With all of this in place, we have 

\begin{theorem}\label{th.2nd_bis}
Let $\bH\le \bG$ and $\bK\trianglelefteq \bG$ be linearly reductive quantum subgroups of a linearly reductive quantum group. If $\bH$ and $\bK$ generate $\bG$, then the canonical morphism $\bH/\bH\wedge \bK\to \bG/\bK$ is an isomorphism. 
\end{theorem}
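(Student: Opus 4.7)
The plan is to show that the Hopf algebra map $\varphi:\cO(\bG/\bK)\to \cO(\bH/\bN)$ dual to the morphism in the statement (writing $\bN:=\bH\wedge \bK$ for brevity) is both surjective and injective. Surjectivity is built into the setup: by the very definition of $\bN$ given just before \Cref{pr.colim}, $\cO(\bH/\bN)$ was introduced as the image of $\cO(\bG/\bK)$ in $\cO(\bH)$, so $\varphi$ is tautologically onto. The whole content of the theorem therefore lies in injectivity.

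To initiate injectivity, I would take $x\in \cO(\bG/\bK)$ with $\varphi(x)=0$ and view it as an element of $\cO(\bG)$ via the inclusion $\cO(\bG/\bK)\subseteq \cO(\bG)$. Two observations then follow. First, $x$ dies in $\cO(\bH)$ because the map $\cO(\bG/\bK)\to \cO(\bH)$ factors through $\cO(\bH/\bN)$ via diagram \Cref{eq:hopf}. Second, the image of $x$ in $\cO(\bK)$ is also zero: by exactness of
\begin{equation*}
  k\to \cO(\bG/\bK)\to \cO(\bG)\to \cO(\bK)\to k
\end{equation*}
the composition $\cO(\bG/\bK)\to \cO(\bK)$ is nothing but the scalar projection $y\mapsto \varepsilon(y)\cdot 1$, and the counit $\varepsilon_\bG$ factors through $\cO(\bH)$, forcing $\varepsilon(x)=0$.

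The decisive step is then to pass from the single element $x$ to the smallest Hopf ideal $J\subseteq \cO(\bG)$ containing $x$ and invariant under $S^{-1}$ (well defined as an intersection of such ideals). Both kernels $\ker(\cO(\bG)\to \cO(\bH))$ and $\ker(\cO(\bG)\to \cO(\bK))$ are $S^{-1}$-invariant Hopf ideals --- automatically so, as they are kernels of Hopf algebra surjections between objects of $\mathcal{QG}$ --- and each contains $x$; consequently $J$ lies inside their intersection. The generation hypothesis $\bH\vee \bK=\bG$, read through \Cref{def.alg-latt}, says precisely that the largest $S^{-1}$-invariant Hopf ideal in this intersection is zero. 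Hence $J=0$ and in particular $x=0$, yielding injectivity.

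I expect the only genuine obstacle to be the transition from the element $x$ to the Hopf ideal it generates, which is exactly the place where the generation hypothesis has to enter; everything else is bookkeeping inside the short exact sequence attached to $\bK\trianglelefteq \bG$. In particular, no (co)flatness subtleties should intrude, because cosemisimplicity of $\cO(\bG)$ renders every Hopf subalgebra automatically faithfully flat and every Hopf quotient automatically faithfully coflat, so the exact sequences in play behave as cleanly as in the classical case.
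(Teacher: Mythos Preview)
Your overall strategy is sound and, once corrected, yields a more elementary argument than the paper's (the paper proves injectivity by showing the restriction functor $\cM^{\cO(\bG/\bK)}\to\cM^{\cO(\bH)}$ is full, invoking Tannaka-type results from \cite{schau_tann}). Your route via \Cref{def.alg-latt} bypasses the representation theory entirely. However, there is a genuine gap.

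The problematic step is the existence of ``the smallest $S^{-1}$-invariant Hopf ideal $J\subseteq\cO(\bG)$ containing $x$, well defined as an intersection of such ideals.'' Intersections of Hopf ideals are \emph{not} Hopf ideals in general: the coideal condition $\Delta(I)\subseteq I\otimes H+H\otimes I$ is not preserved under intersection. Concretely, take $H=k^G$ for $G=\bZ/2\times\bZ/2=\{e,a,b,ab\}$ and the Hopf ideals $I_K,I_L$ for the subgroups $K=\{e,a\}$, $L=\{e,b\}$. Then $I_K\cap I_L=k\cdot\delta_{ab}$, which is not a coideal (dually, the span of $K\cup L$ in $kG$ is not a subalgebra). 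Moreover the Hopf ideals containing $\delta_{ab}$ are exactly $I_{\{e\}},I_K,I_L$, none contained in the other two, so no smallest one exists. Thus the passage from the element $x$ to a Hopf ideal is not automatic.

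The fix is to work globally with $\ker\varphi$ rather than elementwise. Since $\ker\varphi=\cO(\bG/\bK)\cap I_\bH$ is the intersection of two $\mathrm{ad}$-invariant subspaces of $\cO(\bG)$ (cf.\ \Cref{le.ad-inv}), it is itself $\mathrm{ad}$-invariant, and one checks directly that $\tilde J:=\cO(\bG)\cdot\ker\varphi$ is a two-sided $S^{\pm1}$-invariant Hopf ideal of $\cO(\bG)$ (the coideal condition follows from $\Delta(\ker\varphi)\subseteq\ker\varphi\otimes\cO(\bG/\bK)+\cO(\bG/\bK)\otimes\ker\varphi$, since $\ker\varphi$ is a Hopf ideal of $\cO(\bG/\bK)$). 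Then $\tilde J\subseteq I_\bH$ because $\ker\varphi\subseteq I_\bH$, and $\tilde J\subseteq I_\bK=\cO(\bG)\cdot\cO(\bG/\bK)^-$ because $\ker\varphi\subseteq\cO(\bG/\bK)^-$. Now the generation hypothesis (read through \Cref{def.alg-latt}) forces $\tilde J=0$, hence $\ker\varphi=0$.
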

\begin{proof}
 By construction,
 \begin{equation*}
   \cO(\bG/\bK)\to \cO(\bH/\bH\wedge\bK)
 \end{equation*}
is onto. In order to prove injectivity and complete the proof, it suffices to show that the functor from $(\bG/\bK)$-representations to $\bH$-representations induced by the the upper composition in \Cref{eq:hopf} is full, i.e. it induces a bijection between sets of morphisms. The fact that this condition is equivalent to the bijectivity of a map of coalgebras that is known to be onto follows e.g. from \cite[Lemmas 2.2.12, 2.2.13]{schau_tann}.

Let $V$ and $W$ be finite-dimensional $(\bG/\bK)$-representations, and $f:V\to W$ an $\bH$-intertwiner between them. Since the category $\cM^{\cO(\bG/\bK)}$ of comodules over $\cO(\bG/\bK)$ is the full subcategory of $\cM^{\bG}$ consisting of objects that break up as copies of the trivial representation when restricted to $\bK$, $f$ is also a $\bK$-intertwiner. But then, by the hypothesis that $\bH$ and $\bK$ generate $\bG$, $f$ is a $\bG$- and hence a $(\bG/\bK)$-intertwiner. This completes the proof.
\end{proof}

\begin{remark}
When working over $\bC$ and all Hopf algebras in sight are CQG, \Cref{th.2nd_bis} gives an alternate proof of the case of \Cref{th.2nd} when all quantum groups are compact.   
\end{remark}

We also have a version of \Cref{th.2nd_bis} taking place in $\mathcal{DQG}$.

\begin{theorem}\label{th.2nd_bis_discr}
  If the subgroup $\bH$ and the normal subgroup $\bK$ generate the discrete quantum group $\bG$, then the canonical morphism $\bH/\bH\wedge \bK\to \bG/\bK$ is an isomorphism. 
\end{theorem}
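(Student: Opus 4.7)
We reduce the statement to a purely Hopf-algebraic claim. Writing $A=k\bG$, $B=k\bH$, $N=k\bK$, and $I:=B\cap N=k(\bH\wedge\bK)$ (all cosemisimple Hopf algebras), normality of $N$ amounts to the existence of an exact sequence $k\to N\to A\to A/AN^+\to k$ in the sense of \Cref{def.exact}. What we must show is that the canonical Hopf-algebra map
\begin{equation*}
  \phi:B/BI^+\longrightarrow A/AN^+
\end{equation*}
induced by $B\hookrightarrow A\twoheadrightarrow A/AN^+$ is an isomorphism.

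The crux is showing that $BN$ is itself a Hopf subalgebra of $A$, for then $B\vee N=BN$ and the hypothesis becomes $A=BN$. By \Cref{le.ad-inv}, $N$ is stable under the right adjoint action $n\triangleleft x=S(x_1)nx_2$; bijectivity of the antipode (combined with $S(N)=N$) promotes this to stability under the left adjoint action $x\cdot n=x_1 nS(x_2)$ via the standard substitution argument. The Hopf identities $nb=b_1(S(b_2)nb_3)$ and $bn=(b_1 nS(b_2))b_3$ then yield $NB\subseteq BN$ and $BN\subseteq NB$ respectively, so $BN=NB$. This makes $BN$ a subalgebra; it is plainly a subcoalgebra and $S$-stable, hence a Hopf subalgebra containing both $B$ and $N$, and therefore equal to $B\vee N$.

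Surjectivity of $\phi$ is then immediate: for $a=\sum b_i n_i\in A=BN$, the fact that $n_i-\varepsilon(n_i)\in N^+$ gives $\pi(n_i)=\varepsilon(n_i)\cdot\I$ in $A/AN^+$, whence $\pi(a)=\sum\varepsilon(n_i)\phi(b_i)\in\operatorname{Im}\phi$. To compute $\ker\phi$ I appeal to the standard theory of exact sequences of cosemisimple Hopf algebras---the Takeuchi correspondence recalled in \Cref{subse.lr} together with the faithful (co)flatness supplied by cosemisimplicity (see \cite{chi_cos}): the exact sequence for $N$ identifies it with the coinvariants $A^{co\pi}=\{a\in A\,|\,\pi(a_1)\otimes a_2=1\otimes a\}$, and restricting the coinvariant condition, using $\Delta(B)\subseteq B\otimes B$ and the injectivity of $B\hookrightarrow A$, yields $B^{co\phi}=B\cap A^{co\pi}=I$. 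Applying the same correspondence now to the surjection $\phi$ itself (both source and target being cosemisimple) gives $\ker\phi=B\cdot(B^{co\phi})^+=BI^+$, so $\phi$ descends to the desired Hopf-algebra isomorphism. The main technical step is the commutation identity $BN=NB$; once it is in hand, the remainder is a fairly mechanical deployment of the Hopf-algebraic machinery already pervasive in the paper.
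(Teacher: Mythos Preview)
Your proof is correct and follows essentially the same strategy as the paper's, though with different emphasis and phrasing. The paper simply asserts that $\bH\vee\bK=\bG$ translates to $k\bG=k\bH\,k\bK$ without further comment, whereas you carefully justify $BN=NB$ via adjoint invariance (a worthwhile point, since \Cref{cor.ad-inv} is stated only for coideal subalgebras). For injectivity, the paper argues representation-theoretically---identifying those simple $k\bH$-comodules that become trivial over $k(\bG/\bK)$ as precisely the $k(\bH\wedge\bK)$-comodules---while you compute the coinvariants $B^{co\phi}=I$ directly and invoke Takeuchi's $\cd^2=\id$ to obtain $\ker\phi=BI^+$. These are two sides of the same coin: the comodule statement ``trivial over $C$ iff an $N$-comodule'' is exactly what makes $A^{co\pi}=N$ hold, and the passage from $B^{co\phi}=I$ to $\ker\phi=BI^+$ is precisely the Takeuchi machinery that underlies the paper's terse ``injectivity amounts to\ldots'' claim. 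Your version has the virtue of making the appeal to faithful coflatness (via cosemisimplicity and \cite{chi_cos}) explicit, whereas the paper leaves the reader to supply the link between the comodule characterization and the equality of kernels.
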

\begin{proof}
 The hypothesis that $\bH\vee \bK=\bG$ means, in the context of algebraic discrete quantum groups, that we have
 \begin{equation*}
   k\bG = k\bH k\bK,
 \end{equation*}
and the surjectivity of the canonical map 
\begin{equation*}
  k(\bH/\bH\wedge \bK) = k\bH/(k\bH (k\bH\cap k\bK)^-)\to k\bG/k\bG k\bK^- = k(\bG/\bK)
\end{equation*}
follows from this. 

As for injectivity, it amounts to showing that those simple $k\bH$-comodules that become trivial (i.e. break up as direct sums of copies of the trivial comodule) over $k(\bG/\bK)$ are precisely those corresponding to subcoalgebras of $k\bH\cap k\bK$; this is immediate, using the fact that a $k\bG$-comodule becomes trivial over $k(\bG/\bK)$ if and only if it is a $k\bK$-comodule. 
\end{proof}

Note incidentally that a trivial version of the first isomorphism theorem is implicit in the proof of \Cref{th.2nd_bis}. For a morphism $\Pi:\bH\to \bG$  of locally compact quantum  groups,   $\bH/\ker \Pi$ is essentially the smallest   ``quotient LCQG'' $\bH\to ?$   for which $\Pi$ factors as 
\begin{equation}\label{eq:fact}
  \begin{tikzpicture}[auto,baseline=(current  bounding  box.center)]
    \path[anchor=base] (0,0) node (H) {$\bH$} +(4,0) node (G) {$\bG$} +(2,-.5) node (H/ker) {$?$};
         \draw[->] (H) to node[pos=.5,auto] {$\scriptstyle \Pi$} (G);
         \draw[->>] (H) to [bend right=6] (H/ker);
         \draw[->] (H/ker) to [bend right=6] (G);
  \end{tikzpicture}
\end{equation} (see e.g. \Cref{def:2quotker})
Similarly,   $\overline{\mathrm{im} \Pi}$   is the smallest  $?\le\GG$   such that $\Pi$ factors similarly to \Cref{eq:fact} as
\begin{equation}\label{eq:fact1}
  \begin{tikzpicture}[auto,baseline=(current  bounding  box.center)]
    \path[anchor=base] (0,0) node (H) {$\bH$} +(4,0) node (G) {$\bG$} +(2,-.5) node (H/ker) {$?$};
         \draw[->] (H) to node[pos=.5,auto] {$\scriptstyle \Pi$} (G);
         \draw[->] (H) to [bend right=6] (H/ker);
         \draw[right hook->] (H/ker) to [bend right=6] (G);
  \end{tikzpicture}
\end{equation}
In the algebraic case, the image of a Hopf algebra morphism $\cO(\bG)\to \cO(\bH)$ clearly has both factorization universality properties, and hence by default the two concepts coincide. For this reason, we do not state a First Isomorphism Theorem in the present section.

\subsection{The modular law and Zassenhaus lemma}

Throughout this subsection $\bG$ denotes a linearly reductive quantum group. We will be interested in studying its poset of subgroups $\cO(\bG)\to \cO(\bH)$.

First, recall the {\it modular law} for subgroups of a discrete group $G$: whenever $M$, $N$ and $H$ are subgroups of $G$ with $N\le H$ we have 
\begin{equation*}
  H\cap MN = (H\cap M)N,
\end{equation*}
where the juxtaposition $AB$ of subgroups $A,B\le G$ means the set 
\begin{equation*}
  \{ab\ |\ a\in A,\ b\in B\}. 
\end{equation*}
We will be interested in cases where the set products in question are actually subgroups. To this end, we first prove

\begin{proposition}\label{pr.mod}
  Let $\bN\le \bH$ and $\bM$ be normal subgroups of the linearly reductive quantum group $\bG$. Then, we have 
  \begin{equation}\label{eq:mod}
    \bH\wedge (\bM\vee\bN) = (\bH\wedge\bM)\vee\bN. 
  \end{equation}
\end{proposition}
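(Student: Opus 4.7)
The inclusion $(\bH \wedge \bM) \vee \bN \le \bH \wedge (\bM \vee \bN)$ is purely formal: the left-hand side is dominated by $\bH$ (as both $\bH \wedge \bM$ and $\bN$ are) and by $\bM \vee \bN$ (as both $\bH \wedge \bM \le \bM$ and $\bN$ are), so it is dominated by their meet.

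For the reverse inclusion, my plan is to translate the problem to the lattice of right coideal subalgebras of $\cO(\bG)$ via the anti-isomorphism $\cd$. Writing $A_\bK := \cd(\bK) \subseteq \cO(\bG)$ for each subgroup $\bK$, cosemisimplicity of $\cO(\bG)$ (via \cite{chi_cos}) guarantees Takeuchi's correspondence on such subalgebras. The hypothesis $\bN \le \bH$ becomes $A_\bH \subseteq A_\bN$, while normality of $\bM$ promotes $A_\bM$ from an ad-invariant coideal subalgebra to a genuine Hopf subalgebra of $\cO(\bG)$ via the exact sequence in \Cref{def.exact}. Combining ad-invariance of $A_\bM$ with \Cref{cor.ad-inv} identifies $A_\bM A_\bH = A_\bH A_\bM$ with $\cd(\bH \wedge \bM)$; the reverse of $\cd$ on joins gives $\cd(\bM \vee \bN) = A_\bM \cap A_\bN$. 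The claim thus reduces to the coideal identity
\begin{equation*}
(A_\bH A_\bM) \cap A_\bN \;=\; A_\bH (A_\bM \cap A_\bN).
\end{equation*}

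The inclusion $\supseteq$ here is immediate from $A_\bH \subseteq A_\bN$. For the reverse inclusion, which I expect to be the crux, I would exploit the exact sequence $k \to A_\bM \to \cO(\bG) \to \cO(\bM) \to k$ of cosemisimple Hopf algebras: because every term is cosemisimple, this normal extension is cleft, supplying an $\cO(\bM)$-colinear section $\sigma:\cO(\bM)\to \cO(\bG)$ and an isomorphism $\cO(\bG) \cong A_\bM \otimes \cO(\bM)$ of left $A_\bM$-modules. Any $x \in A_\bH A_\bM$ then writes as a finite sum $\sum_i h_i m_i$ with $h_i \in A_\bH$ and $m_i \in A_\bM$, and the condition $x \in A_\bN$, unpacked via the coaction $(\pi_\bN \otimes \id)\Delta$ characterizing $A_\bN$ and combined with $A_\bH \subseteq A_\bN$, forces the $A_\bM$-coordinates into $A_\bM \cap A_\bN$ after a suitable rearrangement.

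The main obstacle is precisely that last rewriting step: commuting the $A_\bN$-coinvariance of $x$ past the $A_\bH$-factors so as to pin the $A_\bM$-factors down inside $A_\bM \cap A_\bN$. Normality of $\bM$ (equivalently ad-invariance of $A_\bM$) is what makes the relevant commutations in the cleft decomposition respect the subalgebra filtration, while faithful flatness, automatic in the cosemisimple setting, ensures that the decomposition is unique enough for the extraction to succeed.
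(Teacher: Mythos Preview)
Your reduction is correct and matches the paper: under the anti-isomorphism to Hopf subalgebras $A_\bullet\subseteq\cO(\bG)$ (all three are Hopf subalgebras here, since $\bH,\bM,\bN$ are all normal in $\bG$), the claim becomes precisely
\[
(A_\bH A_\bM)\cap A_\bN \;=\; A_\bH(A_\bM\cap A_\bN),
\]
with the inclusion $\supseteq$ formal. The problem is your proposed attack on $\subseteq$: it is not a proof but a hope, and you yourself flag the ``main obstacle'' without resolving it. The cleft decomposition $\cO(\bG)\cong A_\bM\otimes \cO(\bM)$ is a decomposition along the $A_\bM$-axis, whereas the element you start with is written as $x=\sum_i h_i m_i$ along the $A_\bH$-axis; these two decompositions have nothing to do with each other, and there is no evident mechanism by which the $A_\bN$-coinvariance of $x$ forces the individual $m_i$ (which are not even uniquely determined) into $A_\bM\cap A_\bN$. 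Faithful flatness buys you uniqueness in the $A_\bM\otimes\cO(\bM)$ decomposition, not in the $A_\bH\cdot A_\bM$ factorization, so the appeal to it at the end does not close the gap.

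The paper avoids element manipulation entirely by working with simple comodules, which is where cosemisimplicity really pays off. Each side of the identity is a sum of simple subcoalgebras of $\cO(\bG)$, so it suffices to compare which simple comodules $V$ they contain. A simple $V$ lies in $A_\bH A_\bM$ exactly when $V\hookrightarrow W\otimes X$ for some simple $W\in\cM^{A_\bH}$, $X\in\cM^{A_\bM}$; the condition $V\in\cM^{A_\bN}$ is the extra constraint on the left side. Now the tensor--hom adjunction turns the nonzero map $V\to W\otimes X$ into a surjection $W^*\otimes V\twoheadrightarrow X$; since $A_\bH\subseteq A_\bN$ gives $W,W^*\in\cM^{A_\bN}$ and $V\in\cM^{A_\bN}$ by hypothesis, the domain lies in $\cM^{A_\bN}$, hence so does the quotient $X$. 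Thus $X\in\cM^{A_\bM\cap A_\bN}$, and $V$ lies in the right side. This one-line duality trick is the missing idea in your approach.
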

\begin{proof}
We will dualize the picture, and study quotient Hopf algebras $\cO(\bG)\to \cO(\bullet)$ corresponding to normal quantum subgroups from the perspective of the corresponding Hopf subalgebras $A_\bullet=\cO(\bullet\backslash \bG)\subseteq \cO(\bG)$. 

This dualization procedure reverses the lattice operations on quotient Hopf algebras and Hopf subalgebras. For this reason, the Hopf subalgebra 
\begin{equation*}
  A=A_{ \bH\wedge (\bM\vee\bN)}
\end{equation*}
corresponding to the left hand side of \Cref{eq:mod} is equal to
\begin{equation}\label{eq:modl}
  A_\bH(A_\bM\wedge A_\bN). 
\end{equation}
Similarly, the Hopf subalgebra 
\begin{equation*}
  B=A_{(\bH\wedge \bM) \vee\bN}
\end{equation*}
corresponding to the right hand side is 
\begin{equation}\label{eq:modr}
  (A_\bH A_\bM)\wedge A_\bN.
\end{equation}
Now note that \Cref{eq:modl} is the sum of those simple subcoalgebras of $\cO(\bG)$ whose simple comodules 
\begin{equation*}
  V\le W\otimes X,
\end{equation*}
where $W$ is a simple $A_\bH$-comodule and $X$ is a simple comodule over both $A_\bM$ and $A_\bN$ (see the discussion on cosemisimple coalgebras in \Cref{subse.lr}). 

On the other hand, the simple comodules of \Cref{eq:modr} are characterized by the fact that they are $A_\bN$-comodules, and also embed into tensor products of the form $W\otimes X$, for simple comodules
\begin{equation}\label{eq:wx}
  W\in \cM^{A_\bH},\ X\in \cM^{A_\bM}. 
\end{equation}
Clearly, the latter property for a simple comodule $V\in \cM^{\cO(\bG)}$ is weaker than the former, and hence $A\le B$.

On the other hand, suppose the simple comodule $V\in \cM^{A_\bN}$ embeds into $W\otimes X$ with $W$ and $X$ as in \Cref{eq:wx} (and hence $V\in \cM^B$). Then we have a non-zero morphism
\begin{equation*}
  V\to W\otimes X,
\end{equation*}
which by duality gives a non-zero morphism 
\begin{equation*}
  W^*\otimes V\to X
\end{equation*}
(automatically an epimorphism, since $X$ is assumed to be simple). But since 
\begin{equation*}
  V\in \cM^{A_\bN},\ W\in \cM^{A_\bH}\subseteq \cM^{A_\bN},
\end{equation*}
we get $X\in \cM^{A_\bN}$, and hence $X$ is actually a comodule over
\begin{equation*}
  A_\bM\wedge A_\bN. 
\end{equation*}
This means that $V$ is actually an $A$-comodule, and the proof is complete. 
\end{proof}

\begin{remark}
  Alternatively, we can restate \Cref{pr.mod} as saying that the normal quantum subgroups of a linearly reductive quantum group form a modular lattice. 
\end{remark}

We can prove somewhat more when $\bG$ is a compact quantum group. As noted above, the identity
\begin{equation*}
  H\cap MN = (H\cap M)N
\end{equation*}
holds for {\it all} subgroups $N\le H\le G$ and $M\le G$. Our version (\Cref{pr.mod_cpct} below) will still not be as general as this, but we will impose just enough restrictions to ensure that classically, the product sets $MN$ and $(H\cap M)N$ are actually subgroups. To this end, we need
\begin{definition}\label{def.norm}
  A quantum subgroup $\bL\le \bG$ {\it normalizes} another $\bM\le
  \bG$ if the latter is normal in the quantum subgroup $ 
  \bM\vee\bL$. 
\end{definition}

We are now ready to state

\begin{proposition}\label{pr.mod_cpct}
  Let $\bG$ be a compact quantum group, with quantum subgroups $\bL\le \bH\le \bG$ and $\bM\le \bG$ such that $\bL$ normalizes $\bM$. Then, the equality $\bH\wedge (\bM\vee\bL) = (\bH\wedge\bM)\vee\bL$ holds. 
\end{proposition}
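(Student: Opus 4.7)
The plan is to dualize the statement via the coduality of \Cref{rem:coduality0}. Setting $A_\bullet = \Linf(\bG/\bullet) = \cd(\Linf(\hh\bullet))$ and using that $\cd$ is an anti-isomorphism of lattices, the identity we must establish becomes
\[
A_\bH \vee (A_\bM \wedge A_\bL) = (A_\bH \vee A_\bM) \wedge A_\bL
\]
inside $\mathcal{CI}(\bG)$; here $\bL \le \bH$ translates to $A_\bH \subseteq A_\bL$, and $A_\bM \wedge A_\bL = A_\bM \cap A_\bL = A_{\bM \vee \bL}$. The inclusion $\subseteq$ is then immediate since both $A_\bH$ and $A_{\bM \vee \bL}$ sit inside each of $A_\bL$ and $A_\bH \vee A_\bM$.

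For the reverse inclusion, two tools will be central. First, since $\bG$ is compact, every closed quantum subgroup of $\bG$ is compact, so \Cref{pr.well}(b) applies to every pair, yielding the well-positioning identities
\[
A_\bH \vee A_\bM = \{A_\bH A_\bM\}^{\cls} \qquad\text{and}\qquad A_\bH \vee (A_\bM \wedge A_\bL) = \{A_\bH (A_\bM \wedge A_\bL)\}^{\cls}.
\]
Second, the compactness of $\bL$ furnishes a normal conditional expectation $E_\bL \colon \Linf(\bG)\to A_\bL$, given by $E_\bL = (\id\otimes h_\bL)\circ\alpha_\bL$, where $\alpha_\bL$ is the right quantum group homomorphism attached to $\bL\le\bG$ and $h_\bL$ is the Haar state. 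The key observation will be that $E_\bL$ sends $A_\bM$ into $A_\bM \wedge A_\bL$. Indeed, the hypothesis that $\bL$ normalizes $\bM$ lets us apply \Cref{lemma:normalization_quotient} (swapping the roles of $\bH$ and $\bM$ in its statement) to conclude $\alpha_\bL(A_\bM)\subseteq A_\bM\vtens\Linf(\bL)$; evaluating $\id\otimes h_\bL$ then lands the output in $A_\bM$, while by construction $E_\bL(\Linf(\bG))\subseteq A_\bL$.

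To conclude, take $x\in (A_\bH\vee A_\bM)\wedge A_\bL$ and use the first well-positioning identity to $\sigma$-weakly approximate $x$ by finite sums $x_n=\sum_i h_{i,n}m_{i,n}$ with $h_{i,n}\in A_\bH$ and $m_{i,n}\in A_\bM$. Since $x\in A_\bL$ we have $E_\bL(x)=x$, and normality of $E_\bL$ combined with its $A_\bL$-bimodularity (valid on each $h_{i,n}$ because $A_\bH\subseteq A_\bL$) gives
\[
x = \lim_n E_\bL(x_n) = \lim_n \sum_i h_{i,n}\, E_\bL(m_{i,n}),
\]
with each summand lying in $A_\bH\cdot(A_\bM\wedge A_\bL)$ by the key observation. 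The second well-positioning identity then identifies the $\sigma$-weak closure of this set with $A_\bH\vee(A_\bM\wedge A_\bL)$, completing the proof. The only substantive step is the containment $E_\bL(A_\bM)\subseteq A_\bM$, which is exactly where the normalizing hypothesis enters through \Cref{lemma:normalization_quotient}; everything else amounts to routine bookkeeping with $\sigma$-weak closures.
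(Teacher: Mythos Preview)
Your proof is correct, but it takes the operator-algebraic route rather than the paper's algebraic one. The paper proves \Cref{pr.mod_cpct} working entirely inside the CQG algebra $\cO(\bG)$: the coideal subalgebras $A_\bullet$ sit in $\cO(\bG)$, products like $A_\bH A_\bM$ are algebraic spans (no closures needed, by \Cref{cor.ad-inv}), and the key containment $E_\bL(A_\bM)\subseteq A_\bM\wedge A_\bL$ is isolated as \Cref{le.aux} and established via a direct Sweedler-notation computation exploiting \Cref{le.coad-inv}. Your argument instead lives in $\Linf(\bG)$, uses $\sigma$-weak closures throughout, and derives the same key containment from \Cref{lemma:normalization_quotient}. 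This is precisely how the paper later proves the locally compact analogue \Cref{th.mod_lc}(a) (via \Cref{le.aux_cpct_lc}); in effect you have rediscovered the Section~\ref{se.lc} argument, specialized to compact $\bG$ where well-positioning is automatic for every pair by \Cref{pr.well}(b). The paper's algebraic proof has the advantage of avoiding all analysis and, in principle, working over any base field; your approach is cleaner in that it sidesteps the Sweedler calculation and shows transparently how the result fits into the locally compact framework. One small point: your approximation by sequences $x_n$ should strictly be by nets, but since $E_\bL$ is normal this is cosmetic.
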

\begin{proof}
  As in the proof of \Cref{pr.mod}, the goal is to show that we have
  \begin{equation}\label{eq:goal}
    A_\bH(A_\bM\wedge A_\bL) = A_\bH A_\bM\wedge A_\bL,
  \end{equation}
or rather that the right hand side is contained in the left hand side (the opposite inclusion being immediate). Note that we have used \Cref{le.ad-inv,cor.ad-inv} implicitly in order to conclude that the subspace products in \Cref{eq:goal} are both coideal subalgebras.  

For any quantum subgroup $\pi:\cO(\bG)\to \cO(\bK)$ we have an expectation $E_\bK=\cO(\bG)\to A_\bK$ defined as 
\begin{equation*}
  \begin{tikzpicture}[auto,baseline=(current  bounding  box.center)]
    \path[anchor=base] (0,0) node (1) {$\cO(\bG)$} +(3,1) node (2) {$\cO(\bG)\otimes \cO(\bG)$} +(7,1) node (3) {$\cO(\bK)\otimes \cO(\bG)$} +(10,0) node (4) {$\cO(\bG)$} + (5,-1) node (5) {$A_\bK$};
         \draw[->] (1) to[bend left=6] node[pos=.5,auto] {$\scriptstyle \Delta$} (2);
         \draw[->] (2) to node[pos=.5,auto] {$\scriptstyle \pi\otimes\id$} (3);
         \draw[->] (3) to[bend left=6] node[pos=.5,auto] {$\scriptstyle h_\bK\otimes\id$} (4);
         \draw[->] (1) to[bend right=6] node[pos=.5,auto,swap] {$\scriptstyle E_\bK$} (5);
         \draw[right hook->] (5) to[bend right=6] (4);
  \end{tikzpicture}  
\end{equation*}
It is automatically an $A_\bK$-bimodule map, and intertwines $h_\bG$ and its restriction to $A_\bK$.

Now consider the expectation $E_\bL:\cO(\bG)\to A_\bL$. Applied to an element $x$ in the right hand side of \Cref{eq:goal}, it fixes $x$ that element (because $E_\bL$ acts as the identity on $A_\bL$). On the other hand, writing $x=x_\bH x_\bM$ for 
\begin{equation*}
  x_\bH\in \bH,\ x_\bM\in \bM,
\end{equation*}
 we have
 \begin{equation*}
   E_\bL(x) = x_\bH E_\bL(x_\bM)
 \end{equation*}
because $x_\bH\in A_\bH\le A_\bL$ and $E_\bL$ is the identity on $A_\bL$. In conclusion, we would be done if we could show that $E_\bL(x_\bM)\in A_\bM\wedge A_\bL$. This is taken care of by \Cref{le.aux} below. 
\end{proof}

For the next result we will use the same notation as in the proof of \Cref{pr.mod_cpct} for coideal subalgebras $A_\bL=\cO(\bL\backslash \bG)$, expectations $E_\bL:\cO(\bG)\to A_\bL$, etc. We will also denote by $\pi_\bL$ the surjection $\cO(\bG)\to \cO(\bL)$ onto the function algebra of a quantum subgroup.

\begin{lemma}\label{le.aux}
  Let $\bM$ and $\bL$ be quantum subgroups of a compact quantum group $\bG$ such that $\bL$ normalizes $\bM$. Then, we have 
  \begin{equation*}
    E_\bL(A_\bM)\subseteq A_\bM\wedge A_\bL. 
  \end{equation*}
\end{lemma}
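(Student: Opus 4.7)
Since $E_\bL$ maps $\cO(\bG)$ into $A_\bL$ by construction, the content of the lemma is that $E_\bL(A_\bM)\subseteq A_\bM$. The cleanest route is via convolution on $\cO(\bG)^\ast$: writing $\omega_\bX := h_\bX\circ\pi_\bX$ for a subgroup $\bX\le\bG$ and $\omega\star f := (\omega\otimes\id)\Delta(f)$, a direct computation gives $E_\bX = \omega_\bX\star(-)$, identifies its image with $A_\bX$, and yields the characterization $A_\bX=\{a : \omega_\bX\star a=a\}=\omega_\bX\star\cO(\bG)$.

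The heart of the argument is the convolution identity
\begin{equation}\label{eq:keyplan}
  \omega_\bM\star\omega_\bL \;=\; \omega_\bL\star\omega_\bM \qquad \text{in } \cO(\bG)^\ast.
\end{equation}
Granting it, for $a\in A_\bM$ associativity of convolution yields
\[
  \omega_\bM\star E_\bL(a) \;=\; (\omega_\bM\star\omega_\bL)\star a \;=\; (\omega_\bL\star\omega_\bM)\star a \;=\; \omega_\bL\star(\omega_\bM\star a) \;=\; \omega_\bL\star a \;=\; E_\bL(a),
\]
so that $E_\bL(a)\in\omega_\bM\star\cO(\bG)=A_\bM$, completing the proof.

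To establish \Cref{eq:keyplan} I would pass to $\bN := \bM\vee\bL$: both $\omega_\bM$ and $\omega_\bL$ factor through the bialgebra quotient $\pi_\bN\colon\cO(\bG)\to\cO(\bN)$ (since they are supported on $\bN$), and $\pi_\bN$ intertwines convolution, so the identity reduces to the corresponding one in $\cO(\bN)^\ast$. The normalization hypothesis translates to $\bM\trianglelefteq\bN$, so the remaining task is the general fact that the idempotent state attached to a normal closed quantum subgroup of a compact quantum group is central in the convolution algebra, or equivalently that the left- and right-$\bM$-averaging operators $E_\bM^L=\omega_\bM^\bN\star(-)$ and $E_\bM^R=(-)\star\omega_\bM^\bN$ coincide as endomorphisms of $\cO(\bN)$.

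I expect this last step to be the main obstacle, and I would handle it by a Peter--Weyl argument. By cosemisimplicity, $\cO(\bN) = \bigoplus_V C(V)$ with $V$ running over isomorphism classes of simple $\cO(\bN)$-comodules and $C(V)\cong V^\ast\otimes V$; each operator $E_\bM^L,E_\bM^R$ stabilizes $C(V)$, projecting respectively onto $(V^\bM)^\ast\otimes V$ and $V^\ast\otimes V^\bM$. Normality of $\bM$ in $\bN$ forces $V^\bM$ to be an $\cO(\bN)$-subcomodule of $V$ --- one deduces this from the equality of left- and right-$\bM$-invariants in $\cO(\bN)$, both of which coincide with the single Hopf subalgebra $\cO(\bN/\bM)$ --- and the simplicity of $V$ then gives $V^\bM\in\{0,V\}$. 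On each summand $C(V)$ both averaging operators therefore act as $0$ or as the identity in the same way, and summing over $V$ delivers $E_\bM^L = E_\bM^R$ on $\cO(\bN)$, whence \Cref{eq:keyplan}.
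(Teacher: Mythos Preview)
Your proof is correct and takes a genuinely different route from the paper's.

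The paper argues by a direct Sweedler-notation computation: after the same reduction to $\bM\trianglelefteq\bL\vee\bM$, it shows the stronger identity
\[
  \pi_\bL(f_1)\otimes\pi_\bM(f_2)\otimes f_3 \;=\; \pi_\bL(f_1)\otimes 1\otimes f_2
\]
for $f\in A_\bM$, by rewriting the left side as $\pi_\bL(f_1 S(f_3)f_4)\otimes\pi_\bM(f_2)\otimes f_5$ and invoking the fact (\Cref{le.coad-inv}) that the surjection $\cO(\bL)\to\cO(\bM)$ intertwines the left adjoint coactions. Your argument instead repackages the problem as the convolution identity $\omega_\bM\star\omega_\bL=\omega_\bL\star\omega_\bM$, reduces it to the centrality of $\omega_\bM^\bN$ in $\cO(\bN)^*$, and proves the latter by Peter--Weyl: normality forces $V^\bM$ to be a subcomodule of each simple $V$, hence $V^\bM\in\{0,V\}$, so the left and right averaging projections agree blockwise. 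The paper's approach is purely computational and makes no explicit appeal to cosemisimplicity, so it would survive in settings where Peter--Weyl is unavailable; your approach, on the other hand, isolates a clean structural statement (centrality of the idempotent state attached to a normal subgroup) that is of independent interest and makes the passage from normality to $E_\bL(A_\bM)\subseteq A_\bM$ essentially transparent. The one step you leave terse---that $V^\bM$ is an $\cO(\bN)$-subcomodule---does go through exactly as you indicate: for $v\in V^\bM$ one first checks $\rho(v)\in V\otimes A_\bM^R$ from coassociativity, then uses $A_\bM^R=A_\bM$ (normality), and finally coassociativity again forces the first tensor legs of $\rho(v)$ back into $V^\bM$.
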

\begin{proof}
  Since the range of $E_\bL$ is $A_\bL$, we are trying to show that $A_\bM$ is invariant under $E_\bL$. To this end, let $f\in A_\bM$ be an arbitrary element. This means by definition that
  \begin{equation}\label{eq:m_inv}
    \pi_\bM(f_1)\otimes f_2 = 1\otimes f. 
  \end{equation}
Also by definition, the expression for the expectation is
\begin{equation*}
  E_\bL(f) = h_\bL(\pi_\bL(f_1))f_2,
\end{equation*}
and hence our goal is to prove that we have
\begin{equation*}
  h_\bL(\pi_\bL(f_1))\pi_\bM(f_2)\otimes f_3 =   1\otimes h_\bL(\pi_\bL(f_1))f_2.
\end{equation*}
More generally, we will show that in fact we have 
\begin{equation}\label{eq:am_inv}
  \pi_\bL(f_1)\otimes \pi_\bM(f_2)\otimes f_3 = \pi_\bL(f_1)\otimes 1\otimes f_2.  
\end{equation}
Moreover, by substituting $  \bL\vee\bM $ for $\bL$, we may as well assume that $\bM\trianglelefteq \bL$.  

Using the defining property of the antipode, the left hand side of \Cref{eq:am_inv} equals
\begin{equation}\label{eq:5_tensorands}
  \pi_\bL(f_1 S(f_3) f_4)\otimes \pi_\bM( f_2) \otimes f_5. 
\end{equation}
The normality assumption $\bM\trianglelefteq\bL$ implies by \Cref{le.coad-inv} that the surjection $\cO(\bL)\to \cO(\bM)$ is one of left $\cO(\bL)$-comodules under the left adjoint coaction
\begin{equation*}
  x\mapsto x_1 S(x_3)\otimes x_2. 
\end{equation*}
For this reason, \Cref{eq:5_tensorands} is the result of first applying the left adjoint $\cO(\bL)$-coaction to the left hand leg of
\begin{equation*}
  \pi_\bM(f_1)\otimes f_2,
\end{equation*}
and then subjecting the result to the operation
\begin{equation}\label{eq:bullets}
  \bullet\otimes\bullet\otimes \square \mapsto \bullet (\pi_\bL \square_1) \otimes \bullet\otimes \square_2. 
\end{equation}
The conclusion now follows from \Cref{eq:m_inv}, which ensures that the input of \Cref{eq:bullets} is $1\otimes 1\otimes f$.  
\end{proof}

\begin{remark}
  The proof of \Cref{le.aux} is a quantum version of the following classical argument that will be much more transparent: 

Given a continuous function $f$ on $\bG$, the expression for its expectation is 
\begin{equation*}
  E_\bL f = \int_\bL f(l-)\ \mathrm{d}l. 
\end{equation*}
We want to argue that if $f$ is invariant under left translation by $\bM$, then so is $E_\bL f$. In order to see this, let $m\in \bM$. We then have 
\begin{equation*}
  (E_\bL f)(m-) = \int_\bL f(lm-)\ \mathrm{d}l = \int_\bL f(lml^{-1}\cdot l-)\ \mathrm{d}l,
\end{equation*}
which, because $lml^{-1}\in \bM$ and $f$ is $\bM$-invariant, equals 
\begin{equation*}
  \int_\bL f(l-)\ \mathrm{d}l = E_\bL f. 
\end{equation*}
\end{remark}

\begin{remark}
  We note that \Cref{pr.mod_cpct} would be problematic in the
  more general setting of linearly reductive quantum groups (which is
  why we only have \Cref{pr.mod} in the latter case). 

The reason is that even classically, intersections of linearly
reductive subgroups (such as $(\bH\wedge\bM)\vee\bL)$ need not be linearly reductive again, as
\Cref{ex.lin_red_bad} below shows. 
\end{remark}

\begin{example}\label{ex.lin_red_bad}
  Let 
  \begin{equation*}
    \bG = SL_3 = SL_3(\bC).
  \end{equation*}
  Using the correspondence between complex Lie subalgebras of
  $\mathfrak{g} = \mathfrak{sl}_3$ and complex linear algebraic
  subgroups of $\bG$ (\cite[discussion preceding 3.42 and Theorem
  4.22]{milneLAG}) and the fact that this correspondence is compatible
  with intersections (\cite[Proposition 3.19]{milneLAG} or \cite[6.12]{borel}), it
  suffices to exhibit two semisimple Lie subalgebras $\mathfrak{a}$
  and $\mathfrak{b}$ of $\mathfrak{g}$ whose intersection is not
  semisimple. We take the span of
\begin{equation}\label{eq:sl2_triple}
  e=\begin{pmatrix}0&0&0\\0&0&0\\1&0&0\end{pmatrix},\ h=\begin{pmatrix}1&0&0\\0&0&0\\0&0&-1\end{pmatrix},\ f=\begin{pmatrix}0&0&1\\0&0&0\\0&0&0\end{pmatrix}
\end{equation}
for $\mathfrak{a}$. The three displayed elements
are a so-called {\it $\mathfrak{sl}_2$ triple}; this means that their
identification with
\begin{equation*}
  \begin{pmatrix}0&1\\0&0\end{pmatrix},\ \begin{pmatrix}1&0\\0&-1\end{pmatrix},\ \begin{pmatrix}0&0\\1&0\end{pmatrix}  
\end{equation*}
respectively implements an isomorphism $\mathfrak{a}\cong
\mathfrak{sl}_2$. 

Similarly, we take the conjugate of $\mathfrak{a}$ by 
\begin{equation}\label{eq:matr}
  \begin{pmatrix}
    1&1&0\\0&1&1\\0&0&1
  \end{pmatrix}.
\end{equation}
for $\mathfrak{b}$. 

Since \Cref{eq:matr} commutes with the leftmost element $e$ of
\Cref{eq:sl2_triple} and conjugates the semisimple element $h$ outside
of $\mathfrak{a}$ (as can easily be seen), the intersection
$\mathfrak{a}\cap\mathfrak{b}$ is the one-dimensional span of $e$, and
hence at the level of groups the corresponding intersection is a copy
of the (non-linearly reductive) additive algebraic group $\bG_a$ over
$\bC$ (i.e. just $\bC$ with its usual additive group structure). 
\end{example}

There is also a dual version to \Cref{pr.mod_cpct}, dealing with
discrete quantum groups in the sense of \Cref{def.red}.

Quantum subgroups $\bM\le \bG$ then correspond to Hopf subalgebras
\begin{equation*}
  k\bM \subseteq k \bG.
\end{equation*}

\begin{definition}\label{def.norm_bis}
  Let $\bG$ be an algebraic discrete quantum group. A quantum subgroup $\bL$ {\it
  normalizes} another $\bM\le \bG$ if the group algebra $k\bM$ of
the latter is invariant under the adjoint actions of $k\bL$ on
$k\bG$. 
\end{definition}

\begin{proposition}\label{pr.mod_discr}
  Let $\bG$ be an algebraic discrete quantum group, with quantum subgroups $\bL\le \bH\le \bG$ and $\bM\le \bG$ such that $\bL$ normalizes $\bM$. Then, the equality \Cref{eq:mod} holds.   
\end{proposition}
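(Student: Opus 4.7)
The plan is to parallel \Cref{pr.mod_cpct}, now at the level of Hopf subalgebras of the cosemisimple Hopf algebra $k\bG$ rather than of coideal subalgebras. Writing $A = k\bH$, $B = k\bM$, $C = k\bL$, the equality to prove reads $A\cap BC = (A\cap B)\cdot C$.

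First I would verify that both products in question are Hopf subalgebras. From the antipode axiom one has the identity $ax = x_{(1)}(a\triangleleft x_{(2)})$, where $a\triangleleft y := S(y_{(1)}) a y_{(2)}$ is the right adjoint action; combined with the hypothesis that $B$ is stable under $\triangleleft C$, this gives $BC \subseteq CB$, and by symmetry $BC = CB$. Closure under $\Delta$ and $S$ is straightforward, so $BC$ is the Hopf subalgebra $k(\bM\vee\bL)$. Since $C\subseteq A$, the subspace $A\cap B$ inherits stability under $\triangleleft C$, so $(A\cap B)C$ is likewise a Hopf subalgebra. The inclusion $(A\cap B)C \subseteq A \cap BC$ is then immediate.

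For the reverse inclusion I would invoke cosemisimplicity of $k\bG$: by the cosemisimple freeness theorems (see \cite{chi_cos} and the references recalled in \Cref{subse.lr}), $k\bG$ is free as a right module over every Hopf subalgebra, so multiplication yields isomorphisms $B\otimes_{B\cap C}C \cong BC$ and $(A\cap B)\otimes_{B\cap C}C \cong (A\cap B)C$. Given $y\in A\cap BC$, I would expand $y = \sum_\alpha x_\alpha\ell_\alpha$ with $\{x_\alpha\}$ a $(B\cap C)$-basis of $B$ and $\ell_\alpha\in C$ uniquely determined. The goal is to deduce that the $x_\alpha$ corresponding to nonzero $\ell_\alpha$ all lie in $A\cap B$, which would place $y$ in $(A\cap B)C$.

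The main obstacle is securing the compatibility of the basis: one wants a $(B\cap C)$-basis of $B$ whose subset lying in $A$ constitutes a $(B\cap C)$-basis of $A\cap B$. Classically this is a trivial choice of coset representatives, but in the Hopf-algebraic setting the statement amounts to the inclusion $A\cap B\hookrightarrow B$ splitting as right $(B\cap C)$-modules. This is precisely where the cosemisimple hypothesis is consumed: decomposing $k\bG$ into simple subcoalgebras partitions $B$ into those simples contained in $A\cap B$ and the rest, yielding the required compatible direct-sum decomposition. Alternatively one may argue via the injection $B/(A\cap B)\hookrightarrow k\bG/A$ of right $(B\cap C)$-modules, tensor with $C$ over $B\cap C$ using flatness, and then descend the resulting injection of $(B/(A\cap B))\otimes_{B\cap C}C \cong BC/(A\cap B)C$ into $k\bG/A$; this descent again rests on cosemisimplicity, and producing it cleanly is the technical heart of the proof.
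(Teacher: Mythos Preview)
Your module-theoretic route can be completed, but two of the steps you gloss are not as light as you suggest, and once filled in they collapse to the paper's own argument.

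First, the isomorphism $B\otimes_{B\cap C}C\cong BC$ is not a consequence of ``$k\bG$ is free over its Hopf subalgebras''. What you actually need is that a lift to $C$ of a $k$-basis of $C/(B\cap C)^+C$ is also a left $B$-basis of $BC$; this requires identifying $C/(B\cap C)^+C$ with $BC/B^+(BC)$ via \Cref{th.2nd_bis_discr} and then invoking the normal-basis property of the faithfully flat Hopf--Galois extension $B\subseteq BC$. True, but not a one-line citation. Your alternative (b) is worse: the ``descent'' map $(k\bG/A)\otimes_{B\cap C}C\to k\bG/A$ has no reason to be injective, and the injectivity of $BC/(A\cap B)C\to k\bG/A$ that you ultimately want is literally the statement $A\cap BC=(A\cap B)C$ you are trying to prove.

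Second, and more to the point, the claim that the simple-subcoalgebra splitting $B=(A\cap B)\oplus B'$ is a right $(B\cap C)$-module decomposition needs the implication: if a simple subcoalgebra $D_k$ occurs in $D_iD_j$ with $D_j,D_k\subseteq A$, then $D_i\subseteq A$. Equivalently, $V_k\hookrightarrow V_i\otimes V_j$ with $V_j,V_k\in\cM^A$ forces $V_i\in\cM^A$, via the surjection $V_k\otimes V_j^*\twoheadrightarrow V_i$.

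The paper's proof is exactly that last observation, applied directly with no module-theoretic scaffolding: it simply reruns the comodule argument of \Cref{pr.mod} with $k\bH,k\bL,k\bM$ in place of $A_\bN,A_\bH,A_\bM$. A simple comodule $V$ over $k\bH\cap(k\bM\cdot k\bL)$ sits in some $W\otimes X$ with $W\in\cM^{k\bM}$, $X\in\cM^{k\bL}$; since $V,X^*\in\cM^{k\bH}$ we get $W\in\cM^{k\bH}$, hence $W\in\cM^{k\bH\cap k\bM}$ and $V\in\cM^{(k\bH\cap k\bM)\cdot k\bL}$. Your freeness apparatus, once repaired, routes through this same step, so the direct argument is both shorter and what you should record.
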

\begin{proof}
The proof is essentially the same as that of \Cref{pr.mod}, once we
substitute $k\bG$ for $\cO(\bG)$ in that result, and similarly
substitute the Hopf subalgebras
\begin{equation*}
  k\bH,\ k\bL,\ k\bM\ \subseteq\  k\bG
\end{equation*}
of $k\bG$ for 
\begin{equation*}
 A_\bL,\ A_\bH,\ A_\bM\ \subseteq\ \cO(\bG)  
\end{equation*}
respectively. 
\end{proof}

\Cref{pr.mod_cpct,pr.mod_discr} will allow us to prove the following version of the butterfly (or Zassenhaus) lemma (\cite[Vol. 1, p. 77]{kur} or \cite[Chapter 2, Lemma 5.10]{rot}) for compact and discrete quantum groups.

\begin{proposition}\label{pr.zas}
  Let $\bA'\trianglelefteq \bA$ and $\bB'\trianglelefteq \bB$ be
  quantum subgroups of either a compact or an algebraic discrete
  quantum group $\bG$. Then, we have an isomorphism
  \begin{equation*}
    \frac{\bA'\vee(\bA\wedge \bB)}{\bA'\vee(\bA\wedge \bB')} \cong \frac{\bB'\vee(\bA\wedge \bB)}{\bB'\vee(\bA'\wedge \bB)}. 
  \end{equation*}
The analogous statement holds for linearly reductive $\bG$ provided
$\bA$, $\bA'$, etc. are all normal in $\bG$. 
\end{proposition}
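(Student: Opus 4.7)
The plan is to prove that both quotients appearing in the claimed isomorphism are canonically isomorphic to a common quotient $\bC/\bD$, where
\[
  \bC := \bA\wedge\bB, \qquad
  \bD := (\bA'\wedge\bB)\vee(\bA\wedge\bB') = (\bA'\wedge\bC)\vee(\bC\wedge\bB').
\]
Since $\bA'\trianglelefteq\bA$ gives $\bA'\wedge\bC\trianglelefteq\bC$ and, symmetrically, $\bB'\trianglelefteq\bB$ gives $\bC\wedge\bB'\trianglelefteq\bC$, the join $\bD$ is a normal subgroup of $\bC$. In the linearly reductive setting this uses \Cref{pr.colim} together with the blanket normality hypothesis on all subgroups in sight; in the compact and discrete cases the join is formed as in \Cref{def.alg-latt} (or its compact CQG analogue), and normality of $\bD$ in $\bC$ follows from the normality of its two generators.

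To reduce the left hand side, note that $\bA'\wedge\bC\le\bA'$ and $\bC\wedge\bB'=\bA\wedge\bB'$, which give $\bA'\vee\bD = \bA'\vee(\bC\wedge\bB') = \bA'\vee(\bA\wedge\bB')$; hence the left hand side equals $(\bA'\vee\bC)/(\bA'\vee\bD)$. I then apply the Second Isomorphism Theorem (\Cref{th.2nd_bis} for the linearly reductive or the compact CQG case, \Cref{th.2nd_bis_discr} for the algebraic discrete case) to the subgroup $\bC\le\bA'\vee\bC$ and the normal subgroup $\bA'\vee\bD$. The generation condition $\bC\vee(\bA'\vee\bD)=\bA'\vee\bC$ is immediate from $\bD\le\bC$, and the intersection $\bC\wedge(\bA'\vee\bD)$ is computed via the modular law (\Cref{pr.mod}, \Cref{pr.mod_cpct}, or \Cref{pr.mod_discr}) applied with $\bH=\bC$, $\bM=\bA'$, $\bL=\bD$: this yields $\bC\wedge(\bA'\vee\bD)=(\bC\wedge\bA')\vee\bD=\bD$, the last equality because $\bC\wedge\bA'$ is one of the two generators of $\bD$. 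The hypothesis ``$\bD$ normalizes $\bA'$'' required by the compact and discrete modular laws follows from $\bD\le\bC\le\bA$ together with $\bA'\trianglelefteq\bA$, because a normal subgroup is normalized by every intermediate subgroup in each of the three settings (in the linearly reductive case \Cref{pr.mod} imposes no such condition, as all three subgroups are assumed normal in $\bG$). The Second Isomorphism Theorem therefore delivers $(\bA'\vee\bC)/(\bA'\vee\bD)\cong\bC/\bD$.

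The entirely symmetric argument, interchanging the roles of the pairs $(\bA,\bA')$ and $(\bB,\bB')$, produces $(\bB'\vee\bC)/(\bB'\vee\bD)\cong\bC/\bD$; since $\bB'\vee\bD=\bB'\vee(\bA'\wedge\bC)=\bB'\vee(\bA'\wedge\bB)$, this is precisely the right hand side of the claimed isomorphism, and composing these two isomorphisms yields the desired identification. The main obstacle is the bookkeeping across the three settings: one must verify that $\bC$, $\bD$, $\bA'\vee\bC$, etc.\ are admissible quantum subgroups of the appropriate type—e.g.\ that they remain linearly reductive in the first case, which is precisely why the statement imposes universal normality of all subgroups in $\bG$, since normal subgroups of linearly reductive groups are linearly reductive (cf.\ the remark following \Cref{pr.colim})—and that the generation and normalization hypotheses of the Second Isomorphism Theorem and the modular law hold in each framework. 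Once these verifications are in place the butterfly isomorphism is then a formal consequence of the modular law and the Second Isomorphism Theorem, exactly as in the classical proof.
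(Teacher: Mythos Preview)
Your proof is correct and follows essentially the same approach as the paper: both arguments reduce each side of the butterfly to the common quotient $(\bA\wedge\bB)/((\bA'\wedge\bB)\vee(\bA\wedge\bB'))$ via the Second Isomorphism Theorem, computing the relevant intersection by an application of the modular law. The only cosmetic difference is that the paper applies the modular law with $\bL=\bA\wedge\bB'$ whereas you take $\bL=\bD$; since $\bA'\vee\bD=\bA'\vee(\bA\wedge\bB')$ (as you note), the two applications yield the same identity and the arguments are interchangeable.
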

\begin{proof}
We focus first on the compact / discrete case, following the usual strategy (as in \cite[Vol. 1, p. 77]{kur} or the proof of \cite[Chapter 2, Lemma 5.10]{rot}, for instance) of proving that we have isomorphisms
\begin{equation*}
  \begin{tikzpicture}[auto,baseline=(current  bounding  box.center)]
    \path[anchor=base] (0,0) node (1) {$\frac{\bA'\vee(\bA\wedge \bB)}{\bA'\vee(\bA\wedge \bB')}$} +(8,0) node (2) {$\frac{\bB'\vee(\bA\wedge \bB)}{\bB'\vee(\bA'\wedge\bB)}$} +(4,-.5) node (3) {$\frac{\bA\wedge \bB}{(\bA'\wedge \bB)\vee(\bA\wedge \bB')}$};
         \draw[->] (1) to [bend right=6] node[pos=.5,auto,swap] {$\scriptstyle \cong$} (3);
         \draw[->] (2) to [bend left=6] node[pos=.5,auto] {$\scriptstyle \cong$} (3);
  \end{tikzpicture}
\end{equation*}
By symmetry, it suffices to focus on the left hand side of this
diagram. The required isomorphism will follow from the compact /
discrete quantum version of the second isomorphism theorem (\Cref{th.2nd_bis,th.2nd_bis_discr}) applied to
\begin{equation*}
  \bH= \bA\wedge \bB \text{ and } \bK=\bA'\vee(\bA\wedge \bB')
\end{equation*}
once we prove that we have
\begin{equation*}
  (\bA\wedge \bB)\wedge (\bA'\vee(\bA\wedge\bB')) = (\bA'\wedge \bB)\vee(\bA\wedge \bB').
\end{equation*}
In turn, this follows from \Cref{pr.mod_cpct} or \Cref{pr.mod_discr}
applied to $\bH=\bA\wedge \bB$, $\bL=\bA\wedge \bB'$ and $\bM=\bA'$.

As for the last claim regarding the linearly reductive case, its proof
is virtually identical, using \Cref{pr.mod} instead of \Cref{pr.mod_cpct}. 
\end{proof}

\subsection{The Schreier refinement theorem}
In this subsection we prove an analogue of the Schreier refinement
theorem for compact and discrete quantum groups (see
e.g. \cite[Chapter 2, Theorem 5.11]{rot} for the classical analogue
for ordinary discrete groups). To this aim, we need to define a quantum analogue of the notion of (sub)normal series.

\begin{definition}\label{def.series}
Let $\bG$ be either a compact or (algebraic) discrete quantum group. A finite system 
\begin{equation}\label{eq:series}
  \bG=\bG_{0}\ge\bG_{1}\ge\bG_{2}\ge\bG_{3}\ge\cdots\ge\bG_{k}= 1
\end{equation}
of closed quantum subgroups of $\bG$ is called a {\it subnormal series} of $\bG$ if every subgroup $\bG_{i}$ is a proper normal closed quantum subgroup of $\bG_{i-1}$ , 
$i\in \left\{1,2,\cdots,k\right\}$. In particular, $\bG_{1}$ is a normal closed quantum subgroup of $\bG$, $\bG_{2}$ is a normal closed quantum subgroup of $\bG_{1}$, but not necessarily of $\bG$, and so on.

A subnormal series is {\it normal} if each $\bG_i$ is normal in the
ambient group $\bG$.

The corresponding subquotient quantum groups 
\begin{equation*}
  \bG_1\backslash \bG,\ \bG_{2}\backslash \bG_{1},\ \cdots,\ \bG_{k}\backslash \bG_{k-1} 
\end{equation*}
of $\bG$ are the {\it factors} of the (sub)normal series \Cref{eq:series}.

The integer $k$ is the {\it length} of the series \Cref{eq:series}.
\end{definition}

\begin{definition}\label{def.refine}
A subnormal series 
\begin{equation}\label{eq:series2}
\bG=\bH_{0}\ge \bH_{1}\ge \bH_{2}\ge \bH_{3}\ge \cdots\ge \bH_{l}= 1  
\end{equation}
is called a {\it refinement} of the subnormal series \Cref{eq:series} if every quantum subgroup $\bG_{i}$ of \Cref{eq:series} coincides with one of the quantum subgroups $\bH_{j}$, i.e. if every quantum subgroup that occurs in \Cref{eq:series} also occurs in \Cref{eq:series2}.

In particular, every normal series is a refinement of itself. The lengths of the normal series \Cref{eq:series} and its refinement \Cref{eq:series2} of course satisfy the inequality $k\leq l$.

Two subnormal series of a compact quantum groups are called {\it equivalent} if their lengths are equal and their constituent subquotients are isomorphic up to permutation. 
\end{definition}

We are now ready for the following analogue of Schreier's refinement theorem. As we will see, its proof, given the Zassenhaus lemma (\Cref{pr.zas}) is virtually automatic.

\begin{theorem}\label{th.sch}
Any two subnormal series of a compact / discrete quantum group $\bG$
have equivalent refinements. 

The same holds for any two normal series of a linearly reductive
quantum group. 
\end{theorem}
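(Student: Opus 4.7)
The plan is to mimic the classical derivation of the Schreier refinement theorem from the butterfly lemma: given two subnormal series
\begin{equation*}
  \bG = \bG_0\ge \bG_1\ge \cdots\ge \bG_k = \mathds{1}
  \quad\text{and}\quad
  \bG = \bH_0\ge \bH_1\ge \cdots\ge \bH_l = \mathds{1},
\end{equation*}
insert between consecutive terms $\bG_{i-1}\ge \bG_i$ the chain
\begin{equation*}
  \bG_{i-1} = \bG_{i,0}\ge \bG_{i,1}\ge \cdots\ge \bG_{i,l} = \bG_i,
  \qquad \bG_{i,j} := \bG_i\vee (\bG_{i-1}\wedge \bH_j),
\end{equation*}
and analogously insert between $\bH_{j-1}\ge \bH_j$ the chain of $\bH_{j,i}:=\bH_j\vee (\bH_{j-1}\wedge \bG_i)$. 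Concatenating these produces two refinements each of length $kl$; the task then reduces to matching their factors two by two.

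For that matching I would invoke \Cref{pr.zas} with $\bA=\bG_{i-1}$, $\bA'=\bG_i$, $\bB=\bH_{j-1}$, $\bB'=\bH_j$ (whose hypotheses $\bG_i\trianglelefteq \bG_{i-1}$ and $\bH_j\trianglelefteq \bH_{j-1}$ are exactly the subnormality assumption). The lemma simultaneously supplies the required normalities $\bG_{i,j}\trianglelefteq \bG_{i,j-1}$ and $\bH_{j,i}\trianglelefteq \bH_{j,i-1}$, and the isomorphism
\begin{equation*}
  \bG_{i,j-1}/\bG_{i,j}\;\cong\;\bH_{j,i-1}/\bH_{j,i},
\end{equation*}
so the two refinements have the same multiset of subquotients via the involution $(i,j)\leftrightarrow(j,i)$.

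One technical nuisance will be that \Cref{def.series} asks for \emph{proper} inclusions in a subnormal series, whereas the chains $\bG_{i,\bullet}$ and $\bH_{j,\bullet}$ above may have repeated consecutive terms when the corresponding subquotient is trivial. Because the Zassenhaus pairing is a bijection, a factor on one side is trivial if and only if its partner is; deleting the repetitions on both sides in tandem therefore produces honest subnormal refinements of equal length with identical factors up to permutation. For the linearly reductive statement I would remark that if the two starting series are normal in $\bG$ then every $\bG_{i,j}$ and $\bH_{j,i}$, being built from $\bG_i,\bH_j$ via $\wedge$ and $\vee$, is again normal in $\bG$ (the lattice $\mathcal{NQS}$ being closed under these operations), so the normality hypothesis in the last sentence of \Cref{pr.zas} is met at every step and the same argument goes through verbatim.

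The main obstacle I anticipate is not any single step but bookkeeping: once \Cref{pr.zas} is in hand the combinatorial argument is essentially formal, and the only place where something could go wrong is the check that $\bG_{i,j}\le \bG_{i,j-1}$ (immediate from $\bH_j\le \bH_{j-1}$) and that one is entitled to call the refined chain a subnormal series in the sense of \Cref{def.series} after the deletion of duplicates.
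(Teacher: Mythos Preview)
Your proposal is correct and follows essentially the same route as the paper: define the double-indexed interpolants $\bG_{i,j}=\bG_i\vee(\bG_{i-1}\wedge\bH_j)$ and $\bH_{j,i}=\bH_j\vee(\bH_{j-1}\wedge\bG_i)$, then apply \Cref{pr.zas} with $\bA=\bG_{i-1}$, $\bA'=\bG_i$, $\bB=\bH_{j-1}$, $\bB'=\bH_j$ to obtain both the required normalities and the factor isomorphisms, and handle the linearly reductive case by the last clause of \Cref{pr.zas}. Your discussion of deleting repeated terms to recover \emph{proper} chains in the sense of \Cref{def.series} is an extra bit of bookkeeping the paper leaves implicit.
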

\begin{proof}
  We focus first on the claim relating to compact and algebraic
  discrete quantum groups.

Let \Cref{eq:series,eq:series2} be two normal series of a compact quantum group $\bG$, and set
\begin{equation*}
  \bG_{ij}=\bG_{i}\vee(\bG_{i-1}\wedge\bH_{j}),\ \bH_{ij}=\bH_{j}\vee(\bH_{j-1}\wedge\bG_{i}).
\end{equation*}
For $i\in \left\{1,2,\cdots,k\right\}$ and $j\in \left\{1,2,\cdots,l\right\}$ we obtain two new refinements of \Cref{eq:series} and \Cref{eq:series2} respectively:
\begin{equation}\label{eq:ref1}
  \bG_{i-1}=\bG_{i0}> \bG_{i(j-1)}> \bG_{ij}> \bG_{ii}=\bG_{i};
\end{equation}
\begin{equation}\label{eq:ref2}
  \bH_{j-1}=\bH_{0j}> \bH_{(i-1)j}> \bH_{ij}> \bG_{kj}=\bH_{j}.  
\end{equation}
By \Cref{pr.zas}, $\bG_{ij}$ is a normal closed quantum subgroup of $\bG_{i(j-1)}$ and  $\bH_{ij}$ is normal in $\bH_{(i-1)j}$, and moreover 
\begin{equation}\label{eq:appl_zas}
  \bG_{ij}\backslash \bG_{i(j-1)}\cong \bH_{ij}\backslash \bH_{(i-1)j}.
\end{equation}
The refinements induced by \Cref{eq:ref1,eq:ref2} have the same length, and \Cref{eq:appl_zas} says that they are equivalent. 

The proof of the second claim follows similarly, using the
corresponding second half of \Cref{pr.zas}. 
\end{proof}
\begin{remark}
  This is simply an adaptation to the quantum setting of the usual
  proof of the Schreier refinement theorem (see e.g. the proof of
  \cite[Chapter 2, Theorem 5.11]{rot}). As mentioned above, once we
  have the Zassenhaus lemma the standard argument goes through
  mechanically. The same goes for the Jordan-H\"older theorem below. 
\end{remark}

\subsection{The Jordan-H\"older theorem}
In this subsection we prove analogues of the Jordan-H\"older theorem
for compact and discrete quantum groups (and a weaker form of it in
the linearly reductive case). We begin with the following definition.

\begin{definition}
A subnormal series \Cref{eq:series} is a {\it composition series} of $\bG$ if $\bG_{i}$ is a proper maximal normal closed quantum subgroup of $\bG_{i-1}$ for $1\le i\le k$.
\end{definition}

\begin{remark}\label{re.comp}
  In other words, a composition series is a subnormal series that cannot
  be refined further. 
\end{remark}

The main result of this subsection is

\begin{theorem}\label{th.jh}
Any two composition series of a compact or discrete quantum group $\bG$ are equivalent. 
\end{theorem}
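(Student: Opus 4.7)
The plan is to deduce \Cref{th.jh} from the Schreier refinement theorem (\Cref{th.sch}) by the standard classical argument, which carries over to the compact/discrete quantum setting once the lattice machinery of the previous subsections is in place. The structure of the proof is entirely parallel to the classical reduction (see e.g. the treatment following \cite[Chapter 2, Theorem 5.11]{rot}), and as noted in the remark after the proof of \Cref{th.sch}, most of the work is already absorbed into the Zassenhaus lemma (\Cref{pr.zas}) and its consequence \Cref{th.sch}.

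Given two composition series
\begin{equation*}
\bG=\bG_{0}\ge\bG_{1}\ge\cdots\ge\bG_{k}= 1, \qquad \bG=\bH_{0}\ge\bH_{1}\ge\cdots\ge\bH_{l}= 1,
\end{equation*}
I would first apply \Cref{th.sch} to produce two equivalent refinements of these series, of some common length $m$. The second and crucial step is the observation, encoded in \Cref{re.comp}, that a composition series admits no genuine refinement. More precisely: if an intermediate term $\bK$ is inserted so that $\bG_{i+1}\le \bK\le \bG_i$ with $\bK$ normal in $\bG_i$, then the maximality of $\bG_{i+1}$ as a proper normal closed quantum subgroup of $\bG_i$ forces $\bK$ to coincide with either $\bG_i$ or $\bG_{i+1}$. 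Iterating, any chain of such intermediate insertions between $\bG_i$ and $\bG_{i+1}$ must consist entirely of copies of these two endpoints. Each repeated endpoint contributes a trivial quantum subquotient $\bG_i\backslash \bG_i$ to the list of factors of the refinement.

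Consequently, upon erasing the trivial factors from the lists of subquotients associated to the two equivalent refinements, the surviving lists on either side are precisely the factor lists of the two original composition series. Equivalence of the refinements (as multisets of isomorphism classes of subquotients) implies that the numbers of trivial factors match and that the non-trivial factors match up to permutation and isomorphism as well, from which the desired equivalence of the two composition series follows. The only real bookkeeping obstacle is a mild compatibility issue between \Cref{def.series}, which demands strict inclusions in a subnormal series, and the construction in the proof of \Cref{th.sch}, where the intermediate chains $\bG_{ij}=\bG_i\vee(\bG_{i-1}\wedge \bH_j)$ may a priori contain coincidences. The clean way to handle this is to allow an enlarged notion of ``weak'' subnormal series permitting repetitions during the refinement step, and only trim the trivial factors at the end; with this convention the argument becomes mechanical, and no further input beyond \Cref{pr.zas} and \Cref{th.sch} is needed.
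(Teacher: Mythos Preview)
Your proposal is correct and follows essentially the same approach as the paper's own proof: apply \Cref{th.sch} to obtain equivalent refinements, invoke \Cref{re.comp} to conclude that composition series admit no proper refinements, and deduce that the original series are already equivalent. The paper's proof is terser (two sentences), while you spell out the bookkeeping concerning repeated terms and trivial factors, but no substantive difference in strategy is present.
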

\begin{proof}
This is immediate from \Cref{th.sch} together with the observation
(made in \Cref{re.comp}) that composition series cannot be refined
strictly: two composition series have equivalent refinements, and
hence they must already be equivalent. 
\end{proof}

Analogously, making use of the second half of \Cref{th.sch}, we have

\begin{proposition}\label{pr.jh}
  Any two normal series of a linearly reductive quantum group which
  are maximal with respect to refinement are equivalent. 
\qedhere
\end{proposition}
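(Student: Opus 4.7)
The plan is to mimic the proof of \Cref{th.jh} but feed in the linearly reductive half of \Cref{th.sch} instead of the compact/discrete half. So suppose we are given two normal series
\begin{equation*}
  \bG = \bG_0 \ge \bG_1 \ge \cdots \ge \bG_k = 1, \qquad \bG = \bH_0 \ge \bH_1 \ge \cdots \ge \bH_l = 1
\end{equation*}
of a linearly reductive quantum group $\bG$, each maximal with respect to refinement by further normal series.

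The second half of \Cref{th.sch} applied to these two series produces refinements
\begin{equation*}
  \bG = \bG'_0 \ge \bG'_1 \ge \cdots \ge \bG'_m = 1, \qquad \bG = \bH'_0 \ge \bH'_1 \ge \cdots \ge \bH'_m = 1
\end{equation*}
(still normal series of $\bG$) whose lengths are equal and whose successive subquotients agree up to permutation. Maximality with respect to refinement then forces each refinement to coincide with the original series (as a sequence of distinct subgroups, after deleting repetitions), so $m=k=l$ and the two refinements are literally the two original series. Hence the series themselves are equivalent. This uses nothing beyond the Schreier refinement statement already established for linearly reductive quantum groups; the only mild subtlety is the convention, already built into \Cref{def.refine} via the strict inequalities, that a refinement inserts strictly intermediate subgroups.

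No real obstacle is expected here, since all the work has been done in \Cref{th.sch}; the step to double-check is merely that maximality with respect to refinement in the category of normal series of a linearly reductive quantum group rules out any strict refinement, so that the Schreier-produced refinements must equal the originals. Given this, the argument is completely parallel to \Cref{th.jh}.
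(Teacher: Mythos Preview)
Your proposal is correct and follows precisely the approach indicated in the paper: the proposition is stated with only a \qedhere and the preceding sentence ``Analogously, making use of the second half of \Cref{th.sch}, we have'', so the intended argument is exactly the one you wrote out---apply the linearly reductive Schreier refinement to two maximal normal series and use maximality to force the refinements (after deleting repetitions) to coincide with the originals.
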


\section{Isomorphism theorems, modular law: the locally compact case}\label{se.lc}

\subsection{The second isomorphism theorem}

We shall first consider the setting of the second isomorphism theorem for
ordinary discrete groups, transported to the present framework: $\bG$
is a locally compact quantum group, $\bH\in\mathcal{QS}(\GG)$ and $\bN\in\mathcal{NQS}(\GG)$. 

In order to make sense of the statement of \Cref{th.2nd} below, note
first that according to \Cref{le.univ_quot}, $\bH\to \bG \to \bG/\bN$
factors through a morphism 
\begin{equation*}
  \bH/\bH\wedge \bN\to \bG/\bN
\end{equation*}
inducing an action of $\bH/\bH\wedge\bN$ on $\bG/\bN$.

\begin{proposition}\label{th.2nd}
Let $  \GG$ be a locally compact quantum group, $\bH\in\mathcal{QS}(\GG)$ and $\bN\in\mathcal{NQS}(\GG)$. Let us denote by $\Pi:\bH \to \bG/\bN$ the induced morphism.
Then $ \bH\wedge \bN =  \ker\Pi$. 
If  moreover $\GG = \HH\vee\bN$
then $\overline{\textrm{im}\Pi} = \bG/\bN$. 
\end{proposition}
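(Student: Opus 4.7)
My plan is to prove both claims by reducing each equality of quantum subgroups to the equality of the corresponding Baaj--Vaes subalgebras of the relevant dual algebra, and then exploiting the coduality $\cd$ together with the restriction lemmas already in the paper.

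\textbf{Part 1 ($\bH\wedge\bN=\ker\Pi$).} Since $\bN\trianglelefteq\bG$, the final clause of \Cref{norminter} gives $\bH\wedge\bN\in\mathcal{NQS}(\bH)$, so the Baaj--Vaes subalgebra $\Linf(\bH/(\bH\wedge\bN))\subseteq\Linf(\bH)$ is well defined. By coduality it is enough to verify
\[
  \cd(\Linf(\bH/\ker\Pi))\;=\;\Linf(\hh\bH)\cap\Linf(\hh\bN)\;=\;\Linf(\hh{\bH\wedge\bN}).
\]
\Cref{codual_alpha} rewrites the left-hand side as $\{y\in\Linf(\hh\bH):\hh\alpha_\Pi(y)=y\otimes\I\}$, where $\hh\alpha_\Pi$ is the right quantum group homomorphism attached to $\hh\Pi$. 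Applying \Cref{restriclemma} to the quotient morphism $\Pi_1\colon\bG\to\bG/\bN$, of which $\Pi$ is the restriction to $\bH\le\bG$, identifies $\hh\alpha_\Pi$ with the restriction to $\Linf(\hh\bH)$ of the larger right quantum group homomorphism $\hh\alpha_1\colon\Linf(\hh\bG)\to\Linf(\hh\bG)\vtens\Linf(\hh{\bG/\bN})$. Since \Cref{normal_subgroup} identifies the $\hh\alpha_1$-fixed points as $\Linf(\hh\bN)$, the middle equality follows.

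\textbf{Part 2 ($\overline{\tu{im}\,\Pi}=\bG/\bN$ when $\bG=\bH\vee\bN$).} The inclusion $\Linf(\hh{\overline{\tu{im}\Pi}})\subseteq\Linf(\hh{\bG/\bN})$ is built in; for the reverse, \Cref{norminter} combined with $\bG=\bH\vee\bN$ yields
\[
  \Linf(\hh\bG)=\{xn:x\in\Linf(\hh\bH),\;n\in\Linf(\hh\bN)\}^{\cls},
\]
so each $a\in\Linf(\hh\bG)$ is a $\sigma$-weak limit of sums $\sum_i x_i n_i$. \Cref{normal_subgroup} presents $\Linf(\hh{\bG/\bN})$ as the closed span of matrix coefficients $(\omega\otimes\id)(\hh\alpha_1(a))$, while the dual form of \Cref{q_def} applied to $\hh\Pi\colon\hh{\bG/\bN}\to\hh\bH$, combined with the identification of $\hh{\bG/\bN}/\ker\hh\Pi$ with $\overline{\tu{im}\Pi}$, presents $\Linf(\hh{\overline{\tu{im}\Pi}})$ as the closed span of $(\mu\otimes\id)(\hh\alpha_\Pi(x))$ for $\mu\in\Linf(\hh\bH)_*$ and $x\in\Linf(\hh\bH)$. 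Multiplicativity of $\hh\alpha_1$, together with $\hh\alpha_1(n)=n\otimes\I$ from \Cref{normal_subgroup} and $\hh\alpha_1|_{\Linf(\hh\bH)}=\hh\alpha_\Pi$ from \Cref{restriclemma}, gives
\[
  (\omega\otimes\id)\bigl(\hh\alpha_1(xn)\bigr)=\bigl(\omega(\cdot\,n)|_{\Linf(\hh\bH)}\otimes\id\bigr)\bigl(\hh\alpha_\Pi(x)\bigr),
\]
exhibiting each spanning element of $\Linf(\hh{\bG/\bN})$ as a first-leg slice of $\hh\alpha_\Pi$; passing to $\sigma$-weak closures yields the desired inclusion.

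The principal obstacle is the dual formulation of \Cref{q_def} used in Part 2---namely, the description of $\Linf(\hh{\overline{\tu{im}\Pi}})$ as the closed span of first-leg matrix coefficients of $\hh\alpha_\Pi$. This amounts to applying \Cref{q_def} to $\hh\Pi$ and unpacking the identification $\overline{\tu{im}\Pi}=\hh{\bG/\bN}/\ker\hh\Pi$ set up in the discussion preceding \Cref{thm:first_iso_thm}; once that is in place, Part 2 reduces to the slicing computation above.
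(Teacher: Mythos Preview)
Your proof is correct and follows essentially the same route as the paper's. Both parts hinge on the same ingredients: \Cref{restriclemma} to identify $\hh\alpha_\Pi$ as the restriction of $\hh\alpha_1$, \Cref{codual_alpha} together with \Cref{normal_subgroup1} for Part~1, and the product decomposition from \Cref{norminter} plus the slicing identity $(\omega\otimes\id)(\hh\alpha_1(xn))=(\omega(\cdot\,n)\otimes\id)(\hh\alpha_\Pi(x))$ for Part~2. The only cosmetic difference is that where you invoke \Cref{q_def} for $\hh\Pi$ to describe $\Linf(\hh{\overline{\tu{im}\,\Pi}})$, the paper cites \Cref{basic_eq} and makes the identification implicitly in its final line; the content is the same.
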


\begin{proof}

Let us consider  homomorphisms $\Pi_1:\GG\to\GG/\bN$ and $\Pi:\HH\to\GG\to\GG/\bN$. The right quantum group homomorphism assigned to $\hh\Pi_1$  and $\hh\Pi$  will be denoted by  $\hh\alpha_1:\Linf(\hh\GG)\to\Linf(\hh\GG)\vtens\Linf(\hh{\GG/\bN})$   and   $\hh\alpha:\Linf(\hh\HH)\to\Linf(\hh\HH)\vtens\Linf(\hh{\GG/\bN})$ respectively.  Viewing $\Linf(\hh\HH)$ as a subalgebra of $\Linf(\hh\GG)$ we have
$\hh\alpha:\Linf(\hh\HH)\to\Linf(\hh\HH)\vtens\Linf(\hh{\GG/\bN})$ where $\hh\alpha = \hh\alpha_1|_{\Linf(\hh\HH)}$ (see \Cref{restriclemma}). Let  $x\in\cd(\Linf(\HH/\ker\Pi ))$. Then $x\in\Linf(\hh\HH)$ and $\hh\alpha_1(x) = x\otimes \I$. By \Cref{codual_alpha} we have  $x\in\Linf(\hh\bN)\wedge\Linf(\hh\HH)$. Thus $\Linf(\HH/\ker\Pi ) = \Linf(\HH/\HH\wedge\bN)$, i.e.
$\ker\Pi = \HH\wedge\bN$. 

Since    $\GG = \HH\vee\bN$ and $\bN\in\mathcal{NQS}(\GG)$  we conclude from  \Cref{norminter} that
\begin{equation}\label{gen_quot}\Linf(\hh\GG) =  \{\Linf(\hh{\bN})\Linf(\hh\HH)\}^{\sigma - \textrm{cls}}.\end{equation}
Using \Cref{normal_subgroup1} we can see that  for all $\omega\in\Linf(\hh\GG)_*$, $x\in\Linf(\hh\HH)$ and $y\in\Linf(\hh\bN)$ we have 
\begin{equation}\label{eq1}(\omega\otimes\id)(\hh\alpha_1(xy)) = (y\cdot\omega\otimes\id)(\hh\alpha(x)).\end{equation}
 Using \Cref{basic_eq}, \Cref{gen_quot}, \Cref{eq1} we get
\[\Linf(\hh{\GG/\bN}) = \{(\omega\otimes\id)(\hh\alpha (a)):\omega\in\Linf(\hh\HH)_*, a\in\Linf(\hh\HH)\}^{\cls }\]
 i.e. the closure of the image of $\Pi $ is also $\GG/\bN$. 
\end{proof}

Using \Cref{thm:first_iso_thm} and \Cref{th.2nd} we get 
\begin{corollary}\label{th.2nd.corr}
  The homomorphism $ \bH/\bH\wedge \bN \to \GG/\bN$ is an isomorphism
  if and only if the corresponding action of $\bH/\bH\wedge \bN$ on
  $\Linf(\GG/\bN)$ is integrable.  
\qedhere
\end{corollary}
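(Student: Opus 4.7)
The plan is to deduce the corollary directly from the equivalence (i)$\Leftrightarrow$(ii) of \Cref{thm:first_iso_thm}, applied to the induced morphism $\Pi : \bH \to \bG/\bN$, with \Cref{th.2nd} doing all of the bookkeeping that translates the abstract data of $\Pi$ into the quantities that appear in the statement.

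First I would invoke \Cref{th.2nd} to identify $\ker\Pi = \bH\wedge\bN$, so that the first-isomorphism quotient $\bH/\ker\Pi$ of \Cref{thm:first_iso_thm} is nothing but $\bH/\bH\wedge\bN$. Then, again by \Cref{th.2nd}, under the hypothesis $\bG = \bH\vee\bN$ that is implicit in the corollary (since without it the proposed morphism cannot be surjective), one has $\overline{\textrm{im}\,\Pi} = \bG/\bN$. With these two identifications in place, the canonical map $\Pi_{\bH/\ker\Pi \to \overline{\textrm{im}\,\Pi}}$ of \Cref{thm:first_iso_thm} is precisely the candidate isomorphism $\bH/\bH\wedge\bN \to \bG/\bN$, and the action on $\Linf(\overline{\textrm{im}\,\Pi})$ appearing in condition (ii) of \Cref{thm:first_iso_thm} coincides with the action of $\bH/\bH\wedge\bN$ on $\Linf(\bG/\bN)$ referenced in the statement.

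Once these translations are made, the equivalence (i)$\Leftrightarrow$(ii) of \Cref{thm:first_iso_thm} reads exactly as the content of the corollary, so I do not expect any substantive obstacle. The only matter to double-check is that the right quantum group homomorphism associated to the induced morphism $\bH/\bH\wedge\bN \to \bG/\bN$ is indeed the same action used in \Cref{thm:first_iso_thm}(ii); this is automatic from how these actions are built from the underlying bicharacters (via $\alpha(\cdot) = V(\cdot \otimes \I) V^*$), so the verification is purely formal.
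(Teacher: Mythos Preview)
Your proposal is correct and is exactly the paper's approach: the corollary is obtained by combining \Cref{th.2nd} (which identifies $\ker\Pi=\bH\wedge\bN$ and, under $\bG=\bH\vee\bN$, $\overline{\textrm{im}\,\Pi}=\bG/\bN$) with the equivalence (i)$\Leftrightarrow$(ii) of \Cref{thm:first_iso_thm}. Your remark that the hypothesis $\bG=\bH\vee\bN$ is implicit in the corollary (carried over from \Cref{th.2nd}) is the right reading of the context.
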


\begin{remark}\label{re.cases}
The assumptions of \Cref{th.2nd.corr} hold trivially when $\HH$ is compact. 

Assume on the other hand that $\bN$ is compact. Then, since the action
of $\HH$ on $\Linf(\GG)$ is integrable and we have a conditional
expectation onto $\Linf(\GG/\bN)$, the integrability assumption also
holds in this case.
 
Finally, note that every morphism $\Pi$ of discrete quantum groups   automatically identifies $\HH/\textrm{ker}\Pi$ with $\overline{\textrm{im}\Pi}$. In conclusion,
\Cref{th.2nd.corr} also goes through when all quantum groups in sight are
discrete. 
\end{remark}

Let us also note that equivalent statements ib \Cref{th.2nd.corr} fails (and hence so does the second
isomorphism theorem) in general even classically, for locally compact
abelian groups, as the following example shows.

\begin{example}\label{ex.pathol} Consider the group $\bG=\bT^2\times \bR$, and the subgroups
\begin{equation*}
  \bH=\{(e^{it\theta},1,t)\ |\ t\in \bR\}\text{ and } \bN=\{(1,e^{is\phi},s)\ |\ s\in \bR\}
\end{equation*}
for real numbers $\theta$ and $\phi$ that are incommensurable
(i.e. linearly independent over $\bQ$). Then, the subgroup 
\begin{equation*}
  \{(e^{it\theta},e^{-it\phi},0)\}\subset  \bT^2\times\{0\}
\end{equation*}
of $\bH\bN$ is dense $\bT^2\times\{0\}$ and hence the closure $\bH\vee\bN$ of $\bH\bN$
contains $\bT^2\times\{0\}$. But the product of this latter group with
$\bH$ is clearly all of $\bG$, and we have $\bH\vee\bN=\bG$. 

Now, $\bH/\bH\wedge\bN$ is a one-dimensional Lie group whereas
$\bG/\bN$ is a two-dimensional one, and hence the conditions of
\Cref{th.2nd.corr} cannot possibly hold.   
\end{example}

The fundamental characteristic of \Cref{ex.pathol} is that the naive
product $\bH\bN$ is not closed in $\bG$, and hence $\bH\vee \bN$ is
``larger than expected''. Indeed, classically, it is this failure of
$\bH\bN$ to be closed that prevents the conditions of
\Cref{th.2nd.corr} from holding. This is summarized in the following
result.

\begin{proposition}\label{pr.2nd-class}
  Let $\bG$ be a classical locally compact group, and $\bH\le \bG$ and
  $\bN\trianglelefteq \bG$ closed subgroups.  

  Then, $\bH/\bH\wedge \bN$ acts integrably on $\bG/\bN$ if and only
  if for every $(\bH\wedge \bN)$-invariant closed subset $\bF$ of
  $\bH$ the product $\bF\bN$ is closed.
\end{proposition}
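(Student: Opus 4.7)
The plan is to mediate both conditions through the canonical continuous injective group homomorphism
\[
\phi:\bH/(\bH\wedge\bN)\longrightarrow \bG/\bN,\qquad h(\bH\wedge\bN)\mapsto hN,
\]
well defined and injective because $\bN$ is normal (and in the classical setting $\bH\wedge\bN$ coincides with the set-theoretic intersection $\bH\cap\bN$). The strategy is to show that each side of the proposition is equivalent to the single statement that $\phi$ is a closed embedding of locally compact groups, or equivalently that $\phi$ is proper.

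The right hand condition translates directly into $\phi$ being a closed map. Indeed, the $(\bH\wedge\bN)$-invariant closed subsets $\bF\subseteq\bH$ are precisely the preimages under $\bH\to\bH/(\bH\wedge\bN)$ of closed subsets $\overline{\bF}\subseteq\bH/(\bH\wedge\bN)$, and since $\bG\to\bG/\bN$ is open, $\bF\bN$ is closed in $\bG$ if and only if $\phi(\overline{\bF})=\bF\bN/\bN$ is closed in $\bG/\bN$. For a continuous injective homomorphism of locally compact groups, being a closed map is equivalent to being a closed embedding: the choice $\bF=\bH$ shows that the image of $\phi$ is closed, and a continuous closed bijection onto its image is automatically a homeomorphism. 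Closed embeddings of locally compact groups are in turn precisely the proper continuous injective homomorphisms.

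For the left hand condition I will invoke the classical fact that an action $\beta$ of a locally compact group $\bL$ on $\Linf(X)$ arising from a continuous action $\bL\curvearrowright X$ is integrable in the sense of \cite{KKS} if and only if the latter action is proper. For $f\in C_c(X)^+$ (which is $\sigma$-weakly dense in $\Linf(X)^+$), the slice $(\id\otimes\psi_\bL)(\beta(f))(x)=\int_\bL f(\ell\cdot x)\,d\ell$ is pointwise finite exactly when each set $\{\ell:\ell\cdot x\in\mathrm{supp}(f)\}$ has compact closure in $\bL$, which is the propriety criterion; conversely, non-propriety produces $f$ for which the partial Haar integral fails to land in $\Linf(X)^+$. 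In our setting the action of $\bL=\bH/(\bH\wedge\bN)$ on $\bG/\bN$ is free (stabilisers are trivial by normality of $\bN$) and factors through $\phi$ by left multiplication, so $\{\ell:\phi(\ell)\cdot x\in K\}=\phi^{-1}(Kx^{-1})$ and propriety of the action coincides with propriety of $\phi$. The main obstacle I foresee is the careful classical identification of integrability with propriety, specifically ensuring that the partial Haar integral of functions in a $\sigma$-weakly dense family lands in $\Linf(\bG/\bN)^+$ under the propriety hypothesis; the remaining topological manipulations with closed embeddings of locally compact groups are standard.
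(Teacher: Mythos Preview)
Your reduction of the right-hand condition to ``$\phi$ is a closed map, hence a closed embedding'' is correct and matches the paper's treatment of that half. The paper likewise mediates both sides through $\phi$, but for the integrability side it does not argue via propriety: it invokes \Cref{th.2nd.corr} (resting on the first isomorphism theorem \Cref{thm:first_iso_thm}), which says directly that integrability of the $\bH/(\bH\wedge\bN)$-action is equivalent to $\bH/(\bH\wedge\bN)\to(\bH\vee\bN)/\bN$ being an isomorphism, i.e.\ to $\phi$ being a closed embedding into $\bG/\bN$. So the paper outsources to already-established operator-algebraic machinery precisely the step you flag as the main obstacle.

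Your direct classical route through propriety has a genuine gap in the direction ``integrable $\Rightarrow$ proper''. The claim that $\int_\bL f(\ell\cdot x)\,d\ell$ is pointwise finite exactly when $\{\ell:\ell\cdot x\in\mathrm{supp}(f)\}$ has compact closure is false in general (a non-relatively-compact set can have finite Haar measure), and pointwise finiteness is not what is needed anyway: integrability demands $T(f)\in\Linf(X)^+$, i.e.\ essential boundedness, and one must exclude integrable elements from all of $\Linf(X)^+$, not only from $C_c(X)^+$. The line ``non-propriety produces $f$ for which the partial Haar integral fails to land in $\Linf(X)^+$'' is exactly the substantive statement needing proof. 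The direction ``proper $\Rightarrow$ integrable'' is fine (translation by a closed subgroup is integrable), so your diagnosis of where the difficulty lies is accurate; the cleanest completion, given what the paper already has, is to quote \Cref{th.2nd.corr} rather than to redevelop the integrability/propriety correspondence by hand.
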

\begin{proof}
  According to \Cref{th.2nd.corr}, the integrability of the
  action in the statement is equivalent to the canonical map
  \begin{equation}\label{eq:can_2nd}
    \bH/(\bH\wedge \bN) \to (\bH\vee\bN)/\bN
  \end{equation}
being an isomorphism. We will use this equivalence throughout the
proof, implicitly or explicitly.

\vspace{.5cm}

($\Leftarrow$) Suppose $\bF\bN$ is closed in $\bG$ for every closed
$\bF\subseteq \bH$. Applying this to $\bF=\bH$ first, we have
$\bH\vee\bN=\bH\bN$ and hence the canonical one-to-one morphism  
\Cref{eq:can_2nd} is also onto. 

Now note that the composition
\begin{equation*}
  \bH\to \bG\to \bG/\bN
\end{equation*}
realizes $\bH/(\bH\wedge \bN)$ as a closed subgroup of the right hand
side. Indeed, it induces an embedding of the former group into
$\bG/\bN$, and the condition on $\bF\bN$ being closed means precisely
that the induced one-to-one map
\begin{equation*}
  \bH/(\bH\wedge \bN) \to \bG/\bN
\end{equation*}
is closed. 

All in all, \Cref{eq:can_2nd} is a bijective inclusion of one closed
subgroup of $\bG/\bN$, namely $\bH/(\bH\wedge \bN)$, into another,
i.e. $\bH\bN/\bN$. It is then an isomorphism, and the conclusion
follows from \Cref{th.2nd.corr}.

\vspace{.5cm}

($\Rightarrow$) Conversely, suppose the action in question is
integrable, and hence by \Cref{th.2nd.corr} the morphism
\Cref{eq:can_2nd} is bijective. The diagram 
\begin{equation*}
  \begin{tikzpicture}[auto,baseline=(current  bounding  box.center)]
    \path[anchor=base] (0,0) node (h) {$\bH$} +(2,.5) node (hk)
    {$\bH\vee\bN$} +(5,0) node (hk/k) {$(\bH\vee\bN)/\bN$} +(2,-.5)
    node (h/) {$\bH/(\bH\wedge\bN)$};
         \draw[->] (h) to [bend left=6] (hk);
         \draw[->] (hk) to [bend left=6] (hk/k);
         \draw[->] (h) to [bend right=6] (h/);
         \draw[->] (h/) to [bend right=6] node[pos=.5,auto,swap]
         {$\scriptstyle \cong$} (hk/k);
  \end{tikzpicture}
\end{equation*}
shows that $\bH\vee\bN$ is generated as a (plain, not topological)
group by $\bH$ and $\bN$. Since $\bN$ is normal, this in turn implies
$\bH\vee\bN = \bH\bN$, so that the latter product must be closed. 

Moreover, the fact that \Cref{eq:can_2nd} is a homeomorphism implies
that it is in particular closed. This means that the image of every
closed subset $\bF\subseteq \bH$ as in the statement is closed in
$\bG/\bN$, and hence its preimage $\bF\bN$ through the quotient map
$\bG\to \bG/\bN$ is closed. 
\end{proof}

Although quite explicit, the closure condition in \Cref{pr.2nd-class}
might be somewhat inconvenient to check. In view of this, one might
wonder whether the seemingly weaker condition that $\bH\bN$ be closed
in $\bG$ is sufficient. \Cref{ex.pathol-class} shows that this is not
the case, even in the case of classical {\it abelian} locally compact
groups.

Before spelling out the example, let us clarify what it is meant to
do. Placing ourselves entirely within the context of locally compact
abelian groups, consider for simplicity the case when $\bH\wedge \bN$
is trivial. Moreover, we may further assume harmlessly that the subgroup
$\bH\bN\le \bG$ (which is supposed to be closed anyway) is all of
$\bG$. 

All in all, we will have an algebraic decomposition
\begin{equation}
  \label{eq:dec}
  \bG=\bH\oplus \bN.
\end{equation}
Then, the condition from \Cref{pr.2nd-class} and its symmetric
counterpart (i.e. with the roles of $\bH$ and $\bN$ interchanged)
jointly mean precisely that the decomposition \Cref{eq:dec} is one of
{\it topological} abelian groups as well as abstract ones.  

In conclusion, in order to show that the closedness of $\bH\bN$ does not
entail the second isomorphism theorem, it suffices to exhibit a
locally compact abelian group $\bG$ which decomposes as \Cref{eq:dec}
abstractly for closed subgroups $\bH$ and $\bN$, but not
topologically. \Cref{ex.pathol-class} achieves this by choosing $\bH$
and $\bN$ to be discrete, whereas $\bG$ is not.

\begin{example}\label{ex.pathol-class}
  We take $\bG$ to be the direct product between a copy of the compact
  additive group $\bZ_p$ of $p$-adic integers for some odd prime
  number $p$, and a discrete copy $\Gamma$ of the self-same group
  $\bZ_p$ (in other words, $\Gamma$ is $\bZ_p$ as an abstract group,
  but with discrete topology). 

Now, in $\bG=\bZ_p\times\Gamma$ we have a diagonal subgroup
\begin{equation*}
  \bH = \{(g,g)\ |\ g\in \bZ_p\}
\end{equation*}
as well as an anti-diagonal one, 
\begin{equation*}
  \bN=\{(g,-g)\ |\ g\in \bZ_p\}.
\end{equation*}
We have $\bH\wedge\bN=\{0\}$ because $\bZ_p$ is torsion-free, and also
$\bH+\bN=\bG$ because $\bZ_p$ is divisible by $2$. Moreover, $\bH$ and
$\bN$ are easily seen to both be closed in $\bG$ and discrete. By
construction, though, $\bG$ is not. The preceding discussion explains
why this will do. 
\end{example}

\subsection{The third isomorphism theorem}

Recall (\cite[$\S$3.3, Theorem 19]{DF}) that this states that given normal subgroups $N$ and $H$ of $G$ with $N\le H$, we have $H/N\trianglelefteq G/N$ and moreover 
\begin{equation*}
  (G/N)/(H/N)\cong G/H.
\end{equation*}

Consider now the typical setup for the third isomorphism theorem: a locally compact quantum group $\bG$, and normal closed quantum subgroups $\bN\le \bH$ of $\bG$. Then, because the composition
\begin{equation*}
  \bN\to \bH\to \bG\to \bG/\bN
\end{equation*}
is trivial, \Cref{le.univ_quot} ensures that we have a factorization
\begin{equation*}
  \begin{tikzpicture}[auto,baseline=(current  bounding  box.center)]
    \path[anchor=base] (0,0) node (h) {$\bH$} +(4,0) node (g/n) {$\bG/\bN$} +(2,.5) node (g) {$\bG$} +(2,-.5) node (h/n) {$\bH/\bN$};
         \draw[->] (h) to [bend left=6] (g);
         \draw[->] (g) to [bend left=6] (g/n);
         \draw[->] (h) to [bend right=6] (h/n);
         \draw[->] (h/n) to [bend right=6] (g/n);
  \end{tikzpicture}
\end{equation*}
of the top composition $\bH\to \bG\to \bG/\bN$. We will now examine bottom right morphism $\bH/\bN\to \bG/\bN$. 

In general, we say that a morphism $\Pi:\bP\to \bQ$ of locally compact quantum groups {\it has trivial kernel} if the quotient quantum group 
\begin{equation*}
  \bP\to \bP/\mathrm{ker} \Pi
\end{equation*}
of \cite[Definition 4.4]{KKS} is an isomorphism. Let us recal that $\Pi:\bP\to\bQ$ induces a morphism $\Pi_1:\bP/\ker\Pi\to\bQ$ which has trivial kernel.
\begin{lemma}\label{le.triv_ker}
  The canonical morphism $\bH/\bN\to \bG/\bN$ has trivial kernel. 
\end{lemma}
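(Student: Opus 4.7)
The plan is to reduce the lemma to the identification $\Linf(\bH/\ker\Pi_1) = \Linf(\bH/\bN)$, where $\Pi_1 := \Pi\circ q\colon \bH\to \bH/\bN\to \bG/\bN$ and $q$ is the canonical quotient; note that $\Pi_1$ coincides with the composition $\bH\hookrightarrow \bG\twoheadrightarrow \bG/\bN$ of the embedding and the quotient by $\bN$. Granting this identification, I would combine it with the routine observation that $\Linf(\bH/\ker\Pi_1) = \iota\bigl(\Linf((\bH/\bN)/\ker\Pi)\bigr)$, where $\iota\colon \Linf(\bH/\bN)\hookrightarrow \Linf(\bH)$ is the canonical Baaj-Vaes inclusion; together these force $\Linf((\bH/\bN)/\ker\Pi) = \Linf(\bH/\bN)$, which by definition is the statement that $\Pi$ has trivial kernel.

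For the key identification, I would pass to the codual inside $\Linf(\hh\bH)$ using \Cref{codual_alpha}:
\[
\cd\bigl(\Linf(\bH/\ker\Pi_1)\bigr) = \{y\in\Linf(\hh\bH) : \hh\alpha_1(y) = y\otimes \I\},
\]
where $\hh\alpha_1$ is the right quantum group homomorphism associated to $\hh\Pi_1\colon \hh{\bG/\bN}\to \hh\bH$. Because $\Pi_1$ is the restriction to $\bH\le \bG$ of $\Pi_0\colon \bG\to \bG/\bN$, \Cref{restriclemma} gives $\hh\alpha_1 = \hh\alpha_0|_{\Linf(\hh\bH)}$, where $\hh\alpha_0$ is attached to the embedding $\hh{\bG/\bN}\hookrightarrow \hh\bG$. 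Applying \Cref{normal_subgroup1} to $\bN\trianglelefteq \bG$ identifies the fixed subalgebra of $\hh\alpha_0$ in $\Linf(\hh\bG)$ as $\Linf(\hh\bN)$; using $\bN\le \bH$, which forces $\Linf(\hh\bN)\subset \Linf(\hh\bH)$, the fixed subalgebra inside $\Linf(\hh\bH)$ remains $\Linf(\hh\bN)$. On the other hand, \Cref{rem:coduality} gives $\cd(\Linf(\bH/\bN)) = \Linf(\hh\bN)$ inside $\Linf(\hh\bH)$, so bijectivity of the coduality yields the claimed equality.

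The auxiliary identity $\Linf(\bH/\ker\Pi_1) = \iota\bigl(\Linf((\bH/\bN)/\ker\Pi)\bigr)$ follows from \Cref{q_def} together with the factorization $\alpha_1 = (\id\otimes\iota)\circ\alpha_{\Pi}$ of the corresponding right quantum group homomorphisms (as is easily verified by slicing, once one has this identity, the equality of the two sides of \Cref{q_def} follows immediately); this factorization can be read off the bicharacters of $\Pi$, $q$, and $\Pi_1$, and just expresses the classical fact that the $\bH$-action on $\Linf(\bG/\bN)$ factors through the quotient $\bH/\bN$. The trickiest part of the argument will be justifying this factorization carefully at the bicharacter level, since it requires tracking universal versus reduced $\mathrm{C}^*$-algebras through the quotient $q$; the underlying intuition, however, is entirely classical and the verification is routine given the machinery recalled in \Cref{1}.
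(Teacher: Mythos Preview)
Your proof is correct and follows essentially the same route as the paper. The paper's argument is much terser: it simply cites \Cref{th.2nd} applied to $\eta\colon \bH\to \bG\to \bG/\bN$ to get $\ker\eta = \bH\wedge\bN = \bN$ (using $\bN\le\bH$), and then invokes the general remark, stated just before the lemma, that the morphism induced on $\bP/\ker\Pi$ by any $\Pi\colon\bP\to\bQ$ has trivial kernel. Your step 2 unpacks the proof of \Cref{th.2nd} (via \Cref{codual_alpha}, \Cref{restriclemma}, and \Cref{normal_subgroup1}, exactly as in that proof), and your step 3 supplies an explicit argument for the general remark in this particular case; both are correct but could be replaced by the corresponding citations.
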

\begin{proof}
 Let us consider the morphism 
  \begin{equation*}
    \eta:\bH\to \bG\to \bG/\bN.
  \end{equation*} Using \Cref{th.2nd} we see that  $\ker\eta = \HH\wedge\bN = \bN$. In particular the kernel of the induced morphism $\HH/\ker\eta \to\GG/\bN$ is trivial. 
\end{proof}

\begin{lemma}\label{le.full_im}
  The closed image of the canonical map $\bG/\bN\to \bG/\bH$ is full. 
\end{lemma}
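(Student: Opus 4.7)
My plan is to compute $\overline{\textrm{im}\,\Pi}$ directly from \Cref{Hker1} applied to the dual morphism $\hh\Pi$. The first step is to identify $\hh\Pi$ explicitly. Since $\bN\le \bH$, the canonical surjection $\bG\to\bG/\bH$ factors as $\bG\to\bG/\bN\xrightarrow{\Pi}\bG/\bH$; dualising, the inclusion $\hh{\bG/\bH}\hookrightarrow\hh\bG$ of closed quantum subgroups of $\hh\bG$ factors as $\hh{\bG/\bH}\xrightarrow{\hh\Pi}\hh{\bG/\bN}\hookrightarrow\hh\bG$. This yields the chain of Baaj-Vaes subalgebras
\[
  \Linf(\hh{\bG/\bH})\subseteq \Linf(\hh{\bG/\bN})\subseteq \Linf(\hh\bG),
\]
in which the first inclusion realises $\hh\Pi$ as in \Cref{cl_q_sub_mor}. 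Dualising once more, let $\gamma:\Linf(\bG/\bH)\hookrightarrow\Linf(\bG/\bN)$ be the corresponding inclusion of Baaj-Vaes subalgebras; the bicharacter of $\hh\Pi$ is then $V_{\hh\Pi}=(\gamma\otimes\id)(\ww^{\hh{\bG/\bH}})\in \Linf(\bG/\bN)\vtens\Linf(\hh{\bG/\bH})$.

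Next I plug $V_{\hh\Pi}$ into \Cref{Hker1}, obtaining
\[
  \Linf(\overline{\textrm{im}\,\Pi})=\{((\omega\circ\gamma)\otimes\id)(\ww^{\hh{\bG/\bH}}):\omega\in\Linf(\bG/\bN)_*\}^{\sigma-\textrm{cls}}.
\]
Every normal functional on a von Neumann subalgebra extends to a normal functional on the ambient algebra, so the restrictions $\omega\circ\gamma=\omega|_{\Linf(\bG/\bH)}$ exhaust $\Linf(\bG/\bH)_*$ as $\omega$ varies in $\Linf(\bG/\bN)_*$. The standard generation formula $\Linf(\hh\bK)=\{(\omega'\otimes\id)(\ww^{\hh\bK}):\omega'\in\Linf(\bK)_*\}^{\sigma-\textrm{cls}}$ applied with $\bK=\bG/\bH$ then gives $\Linf(\overline{\textrm{im}\,\Pi})=\Linf(\hh{\bG/\bH})$, i.e.\ $\overline{\textrm{im}\,\Pi}=\bG/\bH$.

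I expect the main technical hurdle to be the bookkeeping in the first step---ensuring that the dualised factorisation really identifies $\hh\Pi$ as the chain inclusion above and that its bicharacter has the claimed form via \Cref{cl_q_sub_mor}. Once $V_{\hh\Pi}$ is pinned down, the remainder is a direct slicing computation.
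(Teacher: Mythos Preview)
Your argument is correct and amounts to the same computation the paper carries out (more tersely): both identify the bicharacter of $\Pi:\bG/\bN\to\bG/\bH$ as $\ww^{\bG/\bH}$ sitting in $\Linf(\hh{\bG/\bH})\vtens\Linf(\bG/\bN)$, and then slice. The paper phrases this as ``the closed image of the induced map $\bG/\bN\to\bG/\bH$ coincides with that of $\bG\to\bG/\bH$, which is full by \Cref{normal_subgroup}'', while you work on the dual side and invoke \Cref{Hker1} explicitly; unpacking the paper's sentence gives exactly your slicing step.

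Two notational slips to clean up. First, the displayed chain should read
\[
  \Linf(\bG/\bH)\subseteq \Linf(\bG/\bN)\subseteq \Linf(\bG),
\]
since the relation $\hh{\bG/\bH}\le\hh{\bG/\bN}\le\hh\bG$ in $\mathcal{QS}(\hh\bG)$ is by definition the Baaj--Vaes inclusion of the \emph{double} duals; there is no canonical embedding $\Linf(\hh{\bG/\bH})\hookrightarrow\Linf(\hh{\bG/\bN})$. (You effectively correct this in the next sentence with $\gamma$.) Second, what you compute via \Cref{Hker1} applied to $\hh\Pi$ is $\Linf(\hh{\bG/\bH}/\ker\hh\Pi)=\Linf(\hh{\overline{\textrm{im}\,\Pi}})\subseteq\Linf(\hh{\bG/\bH})$, not $\Linf(\overline{\textrm{im}\,\Pi})$; the conclusion $\Linf(\hh{\overline{\textrm{im}\,\Pi}})=\Linf(\hh{\bG/\bH})$ then gives $\overline{\textrm{im}\,\Pi}=\bG/\bH$ as desired.
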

\begin{proof}
  As noted in \Cref{normal_subgroup} the closed image of $\Pi:\GG\to\GG/\HH$ is full. Since $\bN\subset\bH = \ker\Pi$ the induced morphism  $\GG/\bN\to\GG/\HH$ exists and  its closed image coincides with the one of $\Pi:\GG\to\GG/\HH $ thus it  is also full.  
\end{proof}

Let us gather up all of the ingredients we have so far in the form of \Cref{le.triv_ker,le.full_im} into a weak version of the third isomorphism theorem (to be improved on later):

\begin{proposition}\label{pr.3rd_weak}
  Given   normal closed quantum subgroups $\bN\le \bH$ of a locally compact quantum group $\bG$, the canonical morphism $\Pi_1:\bH/\bN\to \bG/\bN$ has trivial kernel and its closed image is precisely the kernel of $\Pi_2:\bG/\bN\to \bG/\bH$. 
 
\end{proposition}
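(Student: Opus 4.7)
The first claim that $\ker\Pi_1$ is trivial was already established in \Cref{le.triv_ker}, so my plan is to focus on the second claim that $\overline{\mathrm{im}\,\Pi_1}$ equals $\ker\Pi_2$ as closed quantum subgroups of $\bG/\bN$. I will prove the two opposite containments separately, translating everything to the von Neumann algebra side.

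The starting observation is that $\Pi_2\circ\Pi_1$ is trivial: the composition $\bH\hookrightarrow\bG\twoheadrightarrow\bG/\bH$ is manifestly trivial, it factors through $\bH/\bN$ via \Cref{le.univ_quot}, and the resulting factorization coincides with $\Pi_2\circ\Pi_1$ because $\Pi_1$ and $\Pi_2$ are themselves obtained by such factorizations. For the easy inclusion $\overline{\mathrm{im}\,\Pi_1}\le\ker\Pi_2$ I would then invoke \Cref{codual_alpha} applied to $\Pi_2$. Since $\overline{\mathrm{im}\,\Pi_2}=\bG/\bH$ by \Cref{le.full_im}, the codual calculation gives
\[\Linf(\hh{\ker\Pi_2}) = \{y\in\Linf(\hh{\bG/\bN}) : \hh\alpha_2(y)=y\otimes\I\},\]
where $\hh\alpha_2$ is the right quantum group homomorphism attached to $\hh\Pi_2$. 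Translating the triviality of $\Pi_2\circ\Pi_1$ into bicharacter language should yield $(\hh\alpha_2\otimes\id)(V_1) = V_1\otimes\I$ for the bicharacter $V_1$ of $\Pi_1$, whence every first leg of $V_1$ is $\hh\alpha_2$-invariant and therefore lies in $\Linf(\hh{\ker\Pi_2})$; a $\sigma$-weak closure argument then gives the inclusion.

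The reverse containment $\ker\Pi_2\le\overline{\mathrm{im}\,\Pi_1}$ will be the main obstacle. My plan is to identify both sides, viewed inside $\Linf(\hh\bG)$, with the intersection $\Linf(\hh\bH)\cap\Linf(\hh{\bG/\bN})$. On the $\ker\Pi_2$ side this drops out of the formula above, since $\cd_{\bG/\bN}(\Linf(\bG/\bH)) = \Linf(\bG/\bH)'\cap\Linf(\hh{\bG/\bN})$, combined with $\Linf(\hh\bH) = \cd_\bG(\Linf(\bG/\bH)) = \Linf(\bG/\bH)'\cap\Linf(\hh\bG)$. On the $\overline{\mathrm{im}\,\Pi_1}$ side I would exploit that $\Pi_1$ is the factorization through $\bH/\bN$ of the composition $\bH\to\bG\to\bG/\bN$: at the level of universal lifts $V_1$ arises from $\wW^\bH$ by applying the quotient $\C_0^u(\bH)\to\C_0^u(\bH/\bN)$ in the second leg, so the first legs of $V_1$ land in $\Linf(\hh\bH)\cap\Linf(\hh{\bG/\bN})$ and in fact $\sigma$-weakly span this intersection. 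The delicate point --- which I expect to absorb most of the work --- is tracking the universal lifts and reducing morphisms across the three LCQGs involved so that the two descriptions of the first-leg algebra match; once this identification is in place the proposition follows.
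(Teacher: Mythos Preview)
Your argument for the trivial kernel and for the inclusion $\overline{\mathrm{im}\,\Pi_1}\le\ker\Pi_2$ is fine. The gap is in the reverse inclusion, where you propose to form the intersection $\Linf(\hh\bH)\cap\Linf(\hh{\bG/\bN})$ inside $\Linf(\hh\bG)$. This intersection is not well defined: $\Linf(\hh\bH)$ is a Baaj-Vaes subalgebra of $\Linf(\hh\bG)\subset B(\Ltwo(\bG))$, whereas $\Linf(\hh{\bG/\bN})$ acts on $\Ltwo(\bG/\bN)$ and has no natural embedding into $\Linf(\hh\bG)$ in general (take $\bG=\bR$, $\bN=\bZ$: there is no normal embedding of $\mathrm{vN}(\bT)\cong\ell^\infty(\bZ)$ into $\mathrm{vN}(\bR)\cong\Linf(\bR)$). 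Consequently $\cd_{\bG}$ and $\cd_{\bG/\bN}$ are commutants taken in different Hilbert spaces and cannot be combined as you write. Relatedly, the unitary you build from $\wW^\bH$ by quotienting the second leg has first leg in $\Linf(\hh\bH)\subset\Linf(\hh\bG)$, while the genuine bicharacter $V_1$ of $\Pi_1$ has first leg in $\Linf(\hh{\bG/\bN})$; these are different objects, so you are not computing the first-leg algebra of $V_1$.

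The paper avoids this by dualizing once more and working on the $\Linf(\bG)$ side, where the relevant algebras genuinely nest: $\Linf(\bG/\bH)\subset\Linf(\bG/\bN)\subset\Linf(\bG)$. Rather than compare $\Linf(\hh{\overline{\mathrm{im}\,\Pi_1}})$ and $\Linf(\hh{\ker\Pi_2})$ directly, it compares their coduals inside $\Linf(\bG/\bN)$ and shows both equal $\Linf(\bG/\bH)$. For $\overline{\mathrm{im}\,\Pi_1}$ the key is \Cref{restriclemma}: the right quantum group homomorphism $\alpha_{\bH/\bN\to\bG/\bN}$ is the restriction of $\alpha_{\bH\to\bG}$ to $\Linf(\bG/\bN)$, so the fixed-point algebra
\[
\{x\in\Linf(\bG/\bN):\alpha_{\bH/\bN\to\bG/\bN}(x)=x\otimes\I\}
\]
is $\Linf(\bG/\bH)$, whence $\Linf((\bG/\bN)/\overline{\mathrm{im}\,\Pi_1})=\Linf(\bG/\bH)$. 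For $\ker\Pi_2$ one observes that the bicharacter of $\Pi_2$ is $\ww^{\bG/\bH}$ viewed in $\Linf(\hh{\bG/\bH})\vtens\Linf(\bG/\bN)$, which gives $\Linf((\bG/\bN)/\ker\Pi_2)=\Linf(\bG/\bH)$ directly from \Cref{Hker1}. Since $\cd_{\bG/\bN}$ is a lattice anti-isomorphism, the conclusion follows.
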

\begin{proof}
 Let $\Pi:\hh\GG\to\hh\HH$ be the morphism which is dual to the embedding $\HH\le\GG$. Let us recall  that  $\Linf(\GG/\bN)$ being a Baaj-Vaes subalgebra of $\Linf(\GG)$ can be interpreted as   $\hh{\GG/\bN}\le\hh\GG$. Using \Cref{restriclemma} to $\Pi:\hh\GG\to\hh\HH$ and $\hh{\GG/\bN}\le\hh\GG$ we conclude that the right quantum group homomorphism $\alpha_{\HH\to\GG/\bN}: \Linf(\GG/\bN)\to \Linf(\GG/\bN)\vtens\Linf(\HH)$ is the restriction of the right quantum group homomorphism $\alpha_{\HH\to\GG}:\Linf(\GG)\to \Linf(\GG)\vtens\Linf(\HH)$ to $\Linf(\GG/\bN)\subset \Linf(\GG)$. Using \Cref{le.triv_ker} we conclude that the kernel of the morphism $\HH\to\GG/\bN$ is equal $\bN$ and using \Cref{q_def} we get $\alpha_{\HH\to\GG/\bN}(\Linf(\GG/\bN))\subset\Linf(\GG/\bN)\vtens\Linf(\HH/\bN)$. Summarizing the restriction of $\alpha_{\HH\to\GG}:\Linf(\GG)\to\Linf(\GG)\vtens\Linf(\HH)$ to $\Linf(\GG/\bN)$ induces   right quantum group homomorphism
 \[\alpha_{\HH/\bN\to\GG/\bN}:\Linf(\GG/\bN)\to\Linf(\GG/\bN)\vtens\Linf(\HH/\bN). \]  In particular 
 \begin{equation}\label{quotient_N}\Linf(\GG/\HH) = \{x\in\Linf(\GG/\bN): \alpha_{\HH/\bN\to\GG/\bN}(x) = x\otimes\I\}.\end{equation}
 Recalling that $\HH/\bN\to\GG/\bN$ is denoted by  $\Pi_1$ let us consider  $\overline{\textrm{im}\Pi_1}\leq\GG/\bN$.
 Equation \Cref{quotient_N} then shows that  \begin{equation}\label{quotient_N1}\Linf((\GG/\bN)/\overline{\textrm{im}\Pi_1})=\cd(\Linf(\hh{\overline{\textrm{im}\Pi_1}})) = \Linf(\GG/\HH).\end{equation} On the other hand noting that $\Pi_2:\bG/\bN\to \bG/\bH$  is represented by the bicharacter $\ww^{\GG/\HH}\in\Linf(\hh{\GG/\HH})\vtens\Linf(\GG/\bN)$ (where we used that $\Linf(\GG/\HH)\subset \Linf(\GG/\bN)$) we get  \[\Linf((\GG/\bN)/\ker\Pi_2) = \{(\omega\otimes\id)(\ww^{\GG/\HH}):\omega\in\Linf(\hh{\GG/\HH})_*\}^{\cls  }= \Linf(\GG/\HH)\] which together with \Cref{quotient_N1} shows that $\ker\Pi_2 = \overline{\textrm{im}\Pi_1}$. 
 
\end{proof}

In order to have a full analogue of \cite[$\S$3.3, Theorem 19]{DF}, we would further want to know that the canonical morphism $\bH/\bN\to \bG/\bN$ identifies the former with a closed quantum subgroup of the latter. Moreover, in view of \Cref{pr.3rd_weak} and \Cref{thm:first_iso_thm}, this amounts to showing that the action of $\bH/\bN$ on $\bG/\bN$ is integrable.

To this end, we will first need the following Weyl-integral-formula-type result.

\begin{proposition}\label{pr.weyl}
Given a normal closed quantum subgroup $\bN\trianglelefteq \bG$ a left-invariant Haar weight $\varphi_\bG$ can be expressed as 
\begin{equation*}
  \varphi_{\bG/\bN}\circ T,
\end{equation*}
where 
\begin{equation}\label{eq:op_wt}
    \begin{tikzpicture}[auto,baseline=(current  bounding  box.center)]
    \path[anchor=base] (0,0) node (g) {$\Linf(\bG)$} +(4,.5) node (gn) {$\Linf(\bG)\otimes \Linf(\bN)$} +(8,0) node (g/n) {$\Linf(\bG/\bN)$};
         \draw[->] (g) to [bend left=6] (gn);
         \draw[->] (gn) to [bend left=6] node[pos=.5,auto] {$\scriptstyle \id\otimes\varphi_\bN$} (g/n);
         \draw[->] (g) to [bend right=6] node[pos=.5,auto,swap] {$\scriptstyle T$} (g/n);
  \end{tikzpicture}
\end{equation}
is a faithful semifinite normal operator-valued weight. 
\end{proposition}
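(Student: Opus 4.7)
The plan is to construct $T$ as a partial application of the Haar weight $\varphi_\bN$ to a canonical coaction. Let $\alpha := \alpha_{\bN\to\bG}:\Linf(\bG)\to\Linf(\bG)\vtens\Linf(\bN)$ denote the right quantum group homomorphism associated to the inclusion $\bN\le\bG$, and define
\begin{equation*}
  T(x) := (\id\otimes\varphi_\bN)(\alpha(x)),\qquad x\in\Linf(\bG)^+,
\end{equation*}
with values in the extended positive cone of $\Linf(\bG)$ as customary for operator-valued weights. The triangle in \Cref{eq:op_wt} commutes by construction, so the proposition reduces to three points: the image of $T$ lies in the extended positive cone of $\Linf(\bG/\bN)$; the map $T$ is a faithful normal operator-valued weight; and $\varphi_{\bG/\bN}\circ T = \varphi_\bG$ after a suitable normalization of $\varphi_{\bG/\bN}$.

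For the first point, one uses the dual of the characterization in \Cref{normal_subgroup1}, namely that normality of $\bN$ gives $\Linf(\bG/\bN)=\{x\in\Linf(\bG):\alpha(x)=x\otimes\I\}$. The identity $\alpha(T(x))=T(x)\otimes\I$ follows from the coaction identity $(\alpha\otimes\id)\circ\alpha=(\id\otimes\Delta_\bN)\circ\alpha$ combined with left invariance of $\varphi_\bN$ in the form $(\id\otimes\varphi_\bN)\circ\Delta_\bN=\varphi_\bN(\cdot)\I$: applying $(\id\otimes\id\otimes\varphi_\bN)$ to both sides of the coaction identity collapses the $\bN$-leg. Normality of $T$ is inherited from that of $\alpha$ and $\varphi_\bN$. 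Faithfulness: if $T(x)=0$ for $x\ge 0$, then $\varphi_\bN((\omega\otimes\id)(\alpha(x)))=0$ for every positive normal $\omega\in\Linf(\bG)_*$, so faithfulness of $\varphi_\bN$ gives $(\omega\otimes\id)(\alpha(x))=0$ for all such $\omega$, hence $\alpha(x)=0$, and injectivity of $\alpha$ forces $x=0$. The $\Linf(\bG/\bN)$-bimodule property $T(axb)=aT(x)b$ for $a,b\in\Linf(\bG/\bN)$ is immediate from $\alpha(a)=a\otimes\I$ and $\alpha(b)=b\otimes\I$.

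To identify the composition $\varphi_{\bG/\bN}\circ T$ with $\varphi_\bG$, the central technical fact is
\begin{equation*}
  (\id\otimes T)\circ\Delta_\bG = \Delta_{\bG/\bN}\circ T,
\end{equation*}
where on the right one uses that the restriction of $\Delta_\bG$ to the Baaj-Vaes subalgebra $\Linf(\bG/\bN)$ coincides with $\Delta_{\bG/\bN}$. This identity follows from $(\Delta_\bG\otimes\id)\circ\alpha = (\id\otimes\alpha)\circ\Delta_\bG$, the defining intertwining property of the right quantum group homomorphism $\alpha$, by slicing out the $\bN$-leg with $\varphi_\bN$. Applying $\id\otimes\varphi_{\bG/\bN}$ to both sides and invoking left invariance of $\varphi_{\bG/\bN}$ on $\bG/\bN$ yields left invariance of $\varphi_{\bG/\bN}\circ T$ on $\Linf(\bG)$. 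Once semifiniteness is known, Van Daele's uniqueness of the left Haar weight up to a positive scalar forces $\varphi_{\bG/\bN}\circ T = c\,\varphi_\bG$, and $c$ is absorbed into the normalization of $\varphi_{\bG/\bN}$.

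The main technical obstacle is establishing semifiniteness of $T$, i.e.\ producing a $\sigma$-weakly dense set of $x\in\Linf(\bG)^+$ for which $T(x)$ actually lies in $\Linf(\bG/\bN)^+$ rather than merely in the extended positive cone. The approach I would take is to approximate $\varphi_\bN$ by an increasing net of bounded positive normal functionals $\varphi_\bN^{(i)}$, producing bounded normal maps $T_i:=(\id\otimes\varphi_\bN^{(i)})\circ\alpha:\Linf(\bG)\to\Linf(\bG/\bN)$ with $T_i\nearrow T$, and then exploit semifiniteness of $\varphi_\bN$ together with an approximate identity for $\mathcal{M}^+_{\varphi_\bN}$ to stabilize the net on a weakly dense positive cone. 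Alternatively, one may invoke Haagerup's theory of operator-valued weights: the inclusion $\Linf(\bG/\bN)\subseteq\Linf(\bG)$ arising from a normal closed quantum subgroup is known to admit a canonical faithful normal semifinite operator-valued weight, which agrees with $T$ by construction and by the intertwining with $\Delta_\bG$ established above.
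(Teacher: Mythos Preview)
Your overall strategy---defining $T=(\id\otimes\varphi_\bN)\circ\alpha$, checking that it lands in $\Linf(\bG/\bN)$, verifying faithfulness and normality, and then proving left invariance of $\varphi_{\bG/\bN}\circ T$ via the intertwining $(\Delta_\bG\otimes\id)\circ\alpha=(\id\otimes\alpha)\circ\Delta_\bG$ followed by uniqueness of the Haar weight---is exactly the route the paper takes, only spelled out in more detail. The paper abbreviates your first block of arguments by citing \cite[Proposition 1.3]{Vae01} and calls the invariance check ``routine''.

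The genuine gap is at semifiniteness. Neither of your two proposals actually closes it. Approximating $\varphi_\bN$ by bounded normal functionals $\varphi_\bN^{(i)}$ gives you $T_i\nearrow T$, but to conclude semifiniteness of $T$ you would need a $\sigma$-weakly dense set of $x\in\Linf(\bG)^+$ with $\sup_i T_i(x)<\infty$; semifiniteness of $\varphi_\bN$ is a statement about $\Linf(\bN)$, not about how the second leg of $\alpha(x)$ sits inside $\mathcal{M}_{\varphi_\bN}^+$, so ``an approximate identity for $\mathcal{M}_{\varphi_\bN}^+$'' does not by itself produce such $x$. Your second proposal appeals to Haagerup's existence theorem, but that theorem requires you to exhibit compatible n.s.f.\ weights on $\Linf(\bG)$ and $\Linf(\bG/\bN)$ with matching modular data, which is at least as hard as what you are trying to prove.

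The missing ingredient, and the one the paper invokes, is the theorem (\cite[Theorem 6.2]{KKS}) that the canonical action of any \emph{closed} quantum subgroup $\bN\le\bG$ on $\Linf(\bG)$ is integrable. By definition this says precisely that $\{x\in\Linf(\bG)^+:(\id\otimes\varphi_\bN)(\alpha(x))\in\Linf(\bG)^+\}$ is $\sigma$-weakly dense, i.e.\ that $T$ is semifinite. Once you plug that fact in, your argument is complete and coincides with the paper's.
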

\begin{proof}
  The fact that the composition \Cref{eq:op_wt} is a faithful normal operator-valued weight (in the sense of \cite[Definition IX.4.12]{tak2}) into the right hand side (one needs to check that it lands in the algebra of $\bN$-invariants of $\Linf(\bG)$) is essentially \cite[Proposition 1.3]{Vae01}. 

The integrability \cite[Theorem 6.2]{KKS} of the action of the closed subgroup $\bN$ on $\bG$ means by definition that $T$ is semifinite, and hence pre-composing with $T$ turns semifinite weights on $\Linf(\bG/\bN)$ into semifinite weights on $\Linf(\bG)$ (see also e.g. \cite[Definition 8.1]{Kus02}). Finally, the requisite invariance property of $\varphi_{\bG/\bN}\circ T$ is a routine computation, using the invariance properties of $\varphi_{\bG/\bN}$ and $T$.
\end{proof}

Given a morphism $\Pi:\bP\to \bQ$ of locally compact quantum groups, we will denote by $T_{\bP\to \bQ}$ the operator-valued weight
\begin{equation*}
    \begin{tikzpicture}[auto,baseline=(current  bounding  box.center)]
    \path[anchor=base] (0,0) node (q) {$\Linf(\bQ)$} +(4,0) node (qp) {$\Linf(\bQ)\vtens \Linf(\bP)$} +(8,0) node (q_2) {$\Linf(\bQ)$.};
         \draw[->] (q) -- (qp);
         \draw[->] (qp) to node[pos=.5,auto] {$\scriptstyle \id\otimes\varphi_{\bP}$} (q_2);
  \end{tikzpicture}
\end{equation*}
Let us note that in general $T_{\bP\to \bQ}$ is not semifinite.

Finally, \Cref{pr.weyl} will help in proving the missing integrability ingredient we remarked on above:

\begin{proposition}\label{pr.int}
  Given closed normal subgroups $\bN\le \bH$ of a locally compact quantum group $\bG$, the canonical action of $\bH/\bN$ on $\bG/\bN$ is integrable. 
\end{proposition}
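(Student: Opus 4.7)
The plan is to realize the operator-valued weight
\begin{equation*}
 T_{\bH/\bN\to\bG/\bN}\colon\Linf(\bG/\bN)^+\longrightarrow\Linf(\bG/\bH)^+_{\mathrm{ext}},
 \qquad y\mapsto (\id\otimes\varphi_{\bH/\bN})(\alpha_{\bH/\bN\to\bG/\bN}(y)),
\end{equation*}
whose semifiniteness is, by definition, the content of integrability of the $(\bH/\bN)$-action on $\bG/\bN$, as the ``second factor'' of a decomposition of $T_{\bH\to\bG}$. The advantage is that $T_{\bH\to\bG}$ is already known to be semifinite by \cite[Theorem 6.2]{KKS} applied to $\bH\trianglelefteq\bG$ (and likewise $T_{\bN\to\bG}$ is semifinite by the same result applied to $\bN\trianglelefteq\bG$). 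I will exploit \Cref{pr.weyl} freely, in both the absolute form (for $\bH,\bN\trianglelefteq\bG$) and in its relative form (for $\bN\trianglelefteq\bH$).

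My first step is to establish the compatibility identity
\begin{equation}\label{eq:comp-sketch}
 T_{\bH\to\bG}\;=\;T_{\bH/\bN\to\bG/\bN}\circ T_{\bN\to\bG}
\end{equation}
on $\Linf(\bG)^+$, interpreted in the extended positive cone. The proof unpacks the left-hand side using \Cref{pr.weyl} for $\bN\trianglelefteq\bH$, which rewrites $\varphi_\bH=\varphi_{\bH/\bN}\circ T_{\bN\to\bH}$, and then combines this with the elementary action cocycle identity
\begin{equation*}
 (\id\otimes\alpha_{\bN\to\bH})\circ\alpha_{\bH\to\bG}\;=\;(\alpha_{\bH\to\bG}\otimes\id)\circ\alpha_{\bN\to\bG},
\end{equation*}
obtained from $(\id\otimes\Delta_\bH)\circ\alpha_{\bH\to\bG}=(\alpha_{\bH\to\bG}\otimes\id)\circ\alpha_{\bH\to\bG}$ by projecting the last leg along $\Linf(\bH)\to\Linf(\bN)$. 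Fubini in the $\bH$-leg and \Cref{le.univ_quot+} (which ensures that $\alpha_{\bH\to\bG}$ descends to $\alpha_{\bH/\bN\to\bG/\bN}$ on $\bN$-invariants) yield \Cref{eq:comp-sketch}.

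My second step is to deduce semifiniteness of $T_{\bH/\bN\to\bG/\bN}$ from \Cref{eq:comp-sketch}. Composing both sides with $\varphi_{\bG/\bH}$ and invoking \Cref{pr.weyl} for both $\bH\trianglelefteq\bG$ and $\bN\trianglelefteq\bG$ gives
\begin{equation*}
 (\varphi_{\bG/\bH}\circ T_{\bH/\bN\to\bG/\bN})\circ T_{\bN\to\bG}\;=\;\varphi_\bG\;=\;\varphi_{\bG/\bN}\circ T_{\bN\to\bG}
\end{equation*}
on $\Linf(\bG)^+$. Since $T_{\bN\to\bG}$ is an n.s.f.\ operator-valued weight, its image on the positive definition domain is $\sigma$-weakly dense in $\Linf(\bG/\bN)^+$; hence the two normal weights $\varphi_{\bG/\bH}\circ T_{\bH/\bN\to\bG/\bN}$ and $\varphi_{\bG/\bN}$ on $\Linf(\bG/\bN)$ agree on this dense set, and by normality agree throughout. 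Semifiniteness of $\varphi_{\bG/\bN}$ therefore forces semifiniteness of $\varphi_{\bG/\bH}\circ T_{\bH/\bN\to\bG/\bN}$, which in turn forces semifiniteness of $T_{\bH/\bN\to\bG/\bN}$ itself (using faithfulness of $\varphi_{\bG/\bH}$ to pull the finiteness through).

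The main obstacle is the careful bookkeeping needed to justify \Cref{eq:comp-sketch} at the level of the extended positive cones, since $T_{\bN\to\bG}(x)$ can genuinely land in $\Linf(\bG/\bN)^+_{\mathrm{ext}}$ rather than in $\Linf(\bG/\bN)^+$, and one must verify that $T_{\bH/\bN\to\bG/\bN}$ makes sense on such extended positive values and that the equality holds not merely on a dense subset but on the full cone. Once this is settled, the passage from semifiniteness of $T_{\bN\to\bG}$ and of the composed weight to semifiniteness of $T_{\bH/\bN\to\bG/\bN}$ is routine.
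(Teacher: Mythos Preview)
Your approach is essentially the same as the paper's: both arguments hinge on the factorization
\[
  T_{\bH\to\bG}\;=\;T_{\bH/\bN\to\bG/\bN}\circ T_{\bN\to\bG},
\]
obtained by combining the Weyl formula $\varphi_\bH=\varphi_{\bH/\bN}\circ T_{\bN\to\bH}$ with the action cocycle identity (your displayed equation; the paper packages the same computation as the commutative diagrams \Cref{eq:big,eq:small}), and then use the known semifiniteness of $T_{\bH\to\bG}$ and $T_{\bN\to\bG}$ (via \cite[Theorem~6.2]{KKS}) to conclude.

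The one place where the paper is cleaner is your second step. Rather than composing with $\varphi_{\bG/\bH}$ and invoking a density argument to identify $\varphi_{\bG/\bH}\circ T_{\bH/\bN\to\bG/\bN}$ with $\varphi_{\bG/\bN}$, the paper simply observes that for any $x\in\Linf(\bG)^+$ which is $\bH$-integrable, the element $T_{\bN\to\bG}(x)\in\Linf(\bG/\bN)^+$ is $(\bH/\bN)$-integrable by the factorization, and then appeals to \cite[Proposition~6.2]{Kus02} (one integrable element suffices for semifiniteness). This sidesteps your density step, which as written is not quite airtight: two normal weights agreeing on a $\sigma$-weakly dense subset of the positive cone need not agree everywhere, since lower semicontinuity only gives one-sided inequalities against nets. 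Your conclusion is correct, but the paper's route to it is both shorter and avoids this subtlety.
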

\begin{proof}
 We have to show that the operator-valued weight $T_{\bH/\bN\to \bG/\bN}$ defined as
\begin{equation*}
    \begin{tikzpicture}[auto,baseline=(current  bounding  box.center)]
    \path[anchor=base] (0,0) node (g/n) {$\Linf(\bG/\bN)$} +(4,0) node (g/nh/n) {$\Linf(\bG/\bN)\vtens \Linf(\bH/\bN)$} +(8,0) node (g/n_2) {$\Linf(\bG/\bN)$};
         \draw[->] (g/n) -- (g/nh/n);
         \draw[->] (g/nh/n) to node[pos=.5,auto] {$\scriptstyle \id\vtens\varphi_{\bH/\bN}$} (g/n_2);
  \end{tikzpicture}
\end{equation*}
is semifinite, or equivalently, that there is at least one element of $\Linf(\bG/\bN)$ that is integrable with respect to the $(\bH/\bN)$-action (\cite[Proposition 6.2]{Kus02}). 

We have already observed via \cite[Theorem 6.2]{KKS} that actions of closed quantum subgroups are integrable, and hence $T_{\bN\to \bG}$ is semifinite. Similarly, $T_{\bH\to \bG}$ is semifinite. We will argue that for any $x\in \Linf(\bG)^+$ that is $\bH$-integrable, its image
\begin{equation*}
  T_{\bN\to \bG}(x)\in \Linf(\bG/\bN)
\end{equation*}
 is $(\bH/\bN)$-integrable; as observed, this is sufficient to finish the proof of the proposition.

First, consider the following diagram of operator-valued weights and von Neumann algebra homomorphisms, where commutativity is immediate from the definitions:
\begin{equation}\label{eq:big}
      \begin{tikzpicture}[auto,baseline=(current  bounding  box.center)]
    \path[anchor=base] (0,0) node (1) {$\Linf(\bG)$} +(0,1) node (2) {$\Linf(\bG)\vtens \Linf(\bH)$} +(5,2) node (3) {$\Linf(\bG)\vtens \Linf(\bH)\vtens \Linf(\bN)$} +(10,1) node (4) {$\Linf(\bG)\vtens \Linf(\bH/\bN)$} +(0,-1) node (5) {$\Linf(\bG)\vtens \Linf(\bN)$} +(5,-2) node (6) {$\Linf(\bG/\bN)$} +(10,-1) node (7) {$\Linf(\bG/\bN)\vtens \Linf(\bH/\bN)$};
         \draw[->] (1) -- (2);
         \draw[->] (1) -- (5);
         \draw[->] (2) to[bend left=6] (3);
         \draw[->] (3) to[bend left=6] node[pos=.5,auto] {$\scriptstyle \id\otimes\varphi_\bN$} (4);
         \draw[->] (5) to[bend right=6] node[pos=.5,auto] {$\scriptstyle \id\otimes\varphi_\bN$} (6);
         \draw[->] (6) to[bend right=6] (7);
         \draw[right hook->] (7) -- (4);
  \end{tikzpicture}
\end{equation}
Now further glue the commutative square 
\begin{equation}\label{eq:small}
      \begin{tikzpicture}[auto,baseline=(current  bounding  box.center)]
    \path[anchor=base] (0,0) node (1) {$\Linf(\bG/\bN)\otimes \Linf(\bH/\bN)$} +(4,0) node (2) {$\Linf(\bG/N)$} +(4,2) node (3) {$\Linf(\bG)$} +(0,2) node (4) {$\Linf(\bG)\otimes \Linf(\bH/\bN)$};
         \draw[->] (1) to node[pos=.5,auto] {$\scriptstyle \id\otimes \varphi_{\bH/\bN}$} (2);
         \draw[right hook->] (1) -- (4);
         \draw[right hook->] (2) -- (3);
         \draw[->] (4) to node[pos=.5,auto] {$\scriptstyle \id\otimes \varphi_{\bH/\bN}$} (3);
  \end{tikzpicture}
\end{equation}
to the right hand side of \Cref{eq:big}. 

Using the Weyl integration formula (\Cref{pr.weyl}) for the normal subgroup $\bN\trianglelefteq \bH$, we can see that the composition of the top half of \Cref{eq:big} with the top horizontal arrow of \Cref{eq:small} yields precisely the semifinite operator-valued weight $T_{\bH\to \bG}$. The commutativity of the compound diagram obtained by gluing \Cref{eq:big,eq:small} then proves our assertion that the image through $T_{\bN\to \bG}$ of an $\bH$-integrable element of $\Linf(\bG)$ is $(\bH/\bN)$-integrable, thus completing the proof. 
\end{proof}

In summary, we obtain

\begin{theorem}\label{th.3rd}
  Let $\bN\le \bH\trianglelefteq \bG$ be inclusions of closed locally compact quantum subgroups, and assume furthermore that $\bN$ is normal in $\bG$. Then, we have 
  \begin{equation*}
    \bH/\bN\trianglelefteq \bG/\bN\quad \text{and} \quad (\bG/\bN)/(\bH/\bN)\cong \bG/\bH. 
  \end{equation*}
\end{theorem}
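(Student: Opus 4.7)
The plan is to combine the two preparatory results \Cref{pr.3rd_weak} and \Cref{pr.int} with the First Isomorphism Theorem \Cref{thm:first_iso_thm}. Write $\Pi_1 : \bH/\bN \to \bG/\bN$ and $\Pi_2 : \bG/\bN \to \bG/\bH$ for the canonical morphisms produced by \Cref{le.univ_quot}.

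For the normality assertion $\bH/\bN \trianglelefteq \bG/\bN$, the idea is that \Cref{pr.3rd_weak} already gives $\ker\Pi_1 = \{e\}$ and $\overline{\textrm{im}\,\Pi_1} = \ker\Pi_2$, while \Cref{pr.int} supplies the integrability of the canonical $(\bH/\bN)$-action on $\bG/\bN$. Feeding these two inputs into condition (ii) of \Cref{thm:first_iso_thm} and passing to condition (iii) will produce the identification of $\bH/\bN \cong (\bH/\bN)/\ker\Pi_1$ with a closed quantum subgroup of $\bG/\bN$, which by the second clause of \Cref{pr.3rd_weak} must coincide with $\ker\Pi_2$. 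To see that $\ker\Pi_2$ is itself normal in $\bG/\bN$, I would observe that $\Linf(\bG/\bH) \subset \Linf(\bG/\bN)$ is a Baaj-Vaes subalgebra (inherited from its being Baaj-Vaes in $\Linf(\bG)$) and moreover a normal coideal in $\Linf(\bG/\bN)$ (the normality condition being inherited from $\Linf(\bG)$); the dual picture of \Cref{rem:coduality} combined with \Cref{normal_subgroup} then exhibits $\ker\Pi_2$ as a normal quantum subgroup of $\bG/\bN$, yielding $\bH/\bN \cong \ker\Pi_2 \trianglelefteq \bG/\bN$.

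Having obtained this identification, the isomorphism $(\bG/\bN)/(\bH/\bN) \cong \bG/\bH$ is essentially tautological: $(\bG/\bN)/(\bH/\bN) = (\bG/\bN)/\ker\Pi_2$, and by the very construction of $\ker\Pi_2$ this last quantum group has $\Linf$-algebra equal to $\Linf(\bG/\bH)$. The single subtlety to watch is in the application of condition (ii) of \Cref{thm:first_iso_thm}: it is phrased in terms of integrability of the $(\bH/\bN)$-action on $\overline{\textrm{im}\,\Pi_1}$, whereas \Cref{pr.int} furnishes integrability on the full $\bG/\bN$. Since $\overline{\textrm{im}\,\Pi_1} = \ker\Pi_2$ is a (normal) Baaj-Vaes subalgebra of $\bG/\bN$, the relevant operator-valued weight should restrict to a semifinite weight on it and the required integrability should follow; verifying this cleanly is the main technical point of the argument.
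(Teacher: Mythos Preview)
Your proposal is correct and follows the same route as the paper: combine \Cref{pr.3rd_weak} (trivial kernel, closed image $=\ker\Pi_2$) with \Cref{pr.int} (integrability) and feed into \Cref{thm:first_iso_thm}. The paper's own proof is literally two sentences doing exactly this.

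The one place you diverge is the ``subtlety'' you flag at the end: you worry that condition~(ii) of \Cref{thm:first_iso_thm} refers to integrability of the $(\bH/\bN)$-action on $\Linf(\overline{\mathrm{im}\,\Pi_1})$, whereas \Cref{pr.int} gives integrability on the full $\Linf(\bG/\bN)$, and you propose to bridge this by a restriction argument. This detour is unnecessary. The paper's own formulation (see the paragraph immediately preceding \Cref{pr.weyl}, and compare \Cref{th.2nd.corr}) makes clear that what is being invoked is the characterization from \cite[Theorem~6.2, Corollary~6.5]{KKS} underlying \Cref{thm:first_iso_thm}: for a morphism with trivial kernel, integrability of the induced action on the \emph{full} target $\Linf(\bG/\bN)$ is exactly what is equivalent to the morphism identifying the source with a closed quantum subgroup. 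So \Cref{pr.int} plugs in directly, with no need to pass to $\overline{\mathrm{im}\,\Pi_1}$ first. Your additional remarks on why $\ker\Pi_2$ is normal are fine but also already implicit in \Cref{pr.3rd_weak} (the codual $\Linf(\bG/\bH)$ sits as a Baaj-Vaes subalgebra inside $\Linf(\bG/\bN)$).
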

\begin{proof}
As noted above, \Cref{thm:first_iso_thm} and \Cref{pr.3rd_weak} reduce the problem to showing that the action of $\bH/\bN$ on $\bG/\bN$ resulting from the canonical morphism $\bH/\bN\to \bG/\bN$ is integrable. This is exactly what \Cref{pr.int} says. 
\end{proof}

In fact, some of the above results generalize somewhat so as to allow us to recover standard results on topological groups in the locally compact quantum setting. For instance, the conclusion that 
\begin{equation*}
  \bH/\bN\to \bG/\bN
\end{equation*}
is a closed embedding does not actually require the normality of $\bH$, and hence \Cref{th.3rd} extends to this general setup.

\begin{theorem}\label{th.3rd_gen}
  Let $\bN\le \bH\le \bG$ be closed embeddings of locally compact quantum groups, with $\bN$ normal in $\bG$. Then the canonical morphism 
  \begin{equation*}
    \bH/\bN\to \bG/\bN
  \end{equation*}
is a closed embedding, and 
\begin{equation*}
 \Linf ((\bG/\bN)/(\bH/\bN)) =\Linf( \bG/\bH)
\end{equation*}
 
\qedhere
\end{theorem}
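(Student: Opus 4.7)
The plan is to adapt the proof of \Cref{th.3rd}, verifying that normality of $\bH$ in $\bG$ was never actually used --- only normality of $\bN$ in $\bG$ (and hence, automatically, in $\bH$). The logic follows six steps: build the morphism $\Pi_1:\bH/\bN\to \bG/\bN$, show it has trivial kernel, show the induced action on $\Linf(\bG/\bN)$ is integrable, invoke \Cref{thm:first_iso_thm} to conclude $\Pi_1$ identifies $\bH/\bN$ as a closed quantum subgroup of $\bG/\bN$, and finally identify the resulting coideal with $\Linf(\bG/\bH)$.

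First, applying \Cref{norminter} with $\bN$ in the role of the normal subgroup and $\bH$ in the role of the ambient subgroup yields $\bN\wedge\bH=\bN\in\mathcal{NQS}(\bH)$, so $\bH/\bN$ makes sense as a locally compact quantum group. Next, the composition $\bN\to\bH\to\bG\to\bG/\bN$ is trivial, hence by \Cref{le.univ_quot} the morphism $\bH\to\bG/\bN$ factors through a morphism $\Pi_1:\bH/\bN\to\bG/\bN$. To see that $\Pi_1$ has trivial kernel, I would apply \Cref{th.2nd} to the morphism $\bH\to\bG/\bN$: since $\bN\le\bH$, the kernel of that composition is $\bH\wedge\bN=\bN$, so the induced morphism $\bH/\bN\to\bG/\bN$ has trivial kernel, exactly as in \Cref{le.triv_ker}.

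The main step is verifying that the action of $\bH/\bN$ on $\Linf(\bG/\bN)$ is integrable. Here I would replay the argument of \Cref{pr.int} essentially verbatim. That argument relies on three ingredients: the semifiniteness of $T_{\bN\to\bG}$ (which follows from $\bN$ being a closed quantum subgroup of $\bG$ via \cite[Theorem 6.2]{KKS}), the semifiniteness of $T_{\bH\to\bG}$ (similarly, from $\bH$ being a closed quantum subgroup of $\bG$), and the Weyl-type integration formula of \Cref{pr.weyl} applied to $\bN\trianglelefteq\bH$. None of these uses normality of $\bH$ in $\bG$: the commutative diagram gluing \Cref{eq:big} and \Cref{eq:small} from the proof of \Cref{pr.int} exhibits $T_{\bH/\bN\to\bG/\bN}(T_{\bN\to\bG}(x))=T_{\bH\to\bG}(x)$, so whenever $x\in\Linf(\bG)^+$ is $\bH$-integrable, $T_{\bN\to\bG}(x)\in\Linf(\bG/\bN)$ is $(\bH/\bN)$-integrable. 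The semifiniteness of $T_{\bH\to\bG}$ guarantees plenty of such elements, whence $T_{\bH/\bN\to\bG/\bN}$ is semifinite. With integrability in hand, \Cref{thm:first_iso_thm} promotes $\Pi_1$ to a closed embedding $\bH/\bN\hookrightarrow\bG/\bN$.

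For the final identification, let $\alpha=\alpha_{\bH\to\bG}:\Linf(\bG)\to\Linf(\bG)\vtens\Linf(\bH)$ and let $\alpha':=\alpha_{\bH/\bN\to\bG/\bN}$. A direct computation with the bicharacter $\ww^\bH$ (analogous to \Cref{codual_alpha}) shows
\[\Linf(\bG/\bH)=\{x\in\Linf(\bG):\alpha(x)=x\otimes\I\},\]
and likewise $\Linf((\bG/\bN)/(\bH/\bN))=\{x\in\Linf(\bG/\bN):\alpha'(x)=x\otimes\I\}$. By \Cref{restriclemma}, $\alpha'$ is the restriction of $\alpha$ to $\Linf(\bG/\bN)\subseteq\Linf(\bG)$ (followed by the corestriction to $\Linf(\bG/\bN)\vtens\Linf(\bH/\bN)$, which is available because $\Pi_1$ has trivial kernel via \Cref{q_def}). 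Since $\bN\le\bH$ forces $\Linf(\bG/\bH)\subseteq\Linf(\bG/\bN)$, the two fixed-point algebras coincide. The main obstacle is the integrability verification in Step 3, where one must check that every hypothesis in the proof of \Cref{pr.int} remains available once normality of $\bH$ in $\bG$ is dropped; inspection confirms that it does, so the generalization is essentially free.
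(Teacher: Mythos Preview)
Your proposal is correct and follows precisely the approach the paper intends: the paper does not give an explicit proof of \Cref{th.3rd_gen} but merely remarks, in the paragraph preceding it, that the argument for \Cref{th.3rd} goes through once one observes that normality of $\bH$ in $\bG$ was never invoked. Your step-by-step verification that \Cref{le.triv_ker}, \Cref{pr.int}, and the fixed-point identification from the proof of \Cref{pr.3rd_weak} all survive without that hypothesis is exactly what the paper leaves to the reader.
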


Let us now briefly go back to the setup of \Cref{th.2nd}: $\bH$ and $\bN$ are closed quantum subgroups of $\bG$, with $\bN$ normal. Then, by \Cref{le.univ_quot}, the composition $\bH\to \bG\to \bG/\bN$ always factors as 
\begin{equation*}
  \begin{tikzpicture}[auto,baseline=(current  bounding  box.center)]
    \path[anchor=base] (0,0) node (h) {$\bH$} +(4,0) node (g/k) {$\bG/\bN$} +(2,.5) node (g) {$\bG$} +(2,-.5) node (h/hk) {$\bH/\bH\wedge \bN$};
         \draw[->] (h) to [bend left=6] (g);
         \draw[->] (g) to [bend left=6] (g/k);
         \draw[->] (h) to [bend right=6] (h/hk);
         \draw[->] (h/hk) to [bend right=6] (g/k);
  \end{tikzpicture}
\end{equation*}
Moreover, \Cref{th.3rd_gen} ensures that we can regard $\bH\vee\bN/\bN$ as a closed subgroup of $\bG/\bN$, and an examination of the proof of \Cref{th.2nd} shows that we actually have the following amplification of \Cref{th.2nd}.

\begin{theorem}\label{th.2nd_soup}
Let $\bH\le \bG$ and $\bN\trianglelefteq \bG$ be closed quantum subgroups of a locally compact quantum group. Then, the canonical morphism 
\begin{equation*}
  \bH/\bH\wedge \bN\to \bG/\bN
\end{equation*}
has trivial kernel and closed image $\bH\vee\bN/\bN\le \bG/\bN$. 
\qedhere 
\end{theorem}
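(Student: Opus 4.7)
The plan is to derive this as an amplification of \Cref{th.2nd} combined with \Cref{th.3rd_gen}. Let $\Pi: \bH \to \bG/\bN$ denote the composition $\bH \to \bG \to \bG/\bN$ (this makes sense because $\bN \trianglelefteq \bG$ gives a quotient). The first assertion is essentially free from \Cref{th.2nd}: that result identifies $\ker\Pi = \bH \wedge \bN$, so the induced morphism $\bH/(\bH\wedge\bN) = \bH/\ker\Pi \to \bG/\bN$ has trivial kernel by construction (the passage from $\Pi$ to the map out of $\bH/\ker\Pi$ is precisely what makes the new kernel trivial, as recorded in \Cref{le.triv_ker}).

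For the closed image assertion I would first pass to the ambient group $\bM := \bH\vee\bN$. By \Cref{norminter} the normality $\bN \trianglelefteq \bG$ descends to $\bN \trianglelefteq \bM$, and clearly $\bH \vee \bN = \bM$ by definition. I can therefore apply the second half of \Cref{th.2nd}, now with the pair $(\bH,\bN)$ inside $\bM$ in place of inside $\bG$, to conclude that the restricted morphism $\Pi': \bH \to \bM/\bN$ has $\overline{\mathrm{im}\,\Pi'} = \bM/\bN$. Equivalently, the induced map $\bH/(\bH\wedge\bN) \to \bM/\bN = (\bH\vee\bN)/\bN$ has full closed image.

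The next step is to invoke \Cref{th.3rd_gen} applied to the chain $\bN \le \bM \le \bG$ (with $\bN$ normal in $\bG$, hence in $\bM$): this theorem gives a closed embedding $(\bH\vee\bN)/\bN = \bM/\bN \hookrightarrow \bG/\bN$. Composing, the canonical map $\bH/(\bH\wedge\bN)\to \bG/\bN$ factors as a morphism with full closed image onto $\bM/\bN$, followed by a closed embedding $\bM/\bN \hookrightarrow \bG/\bN$. Identifying the closed image of this composition with $(\bH\vee\bN)/\bN$ sitting inside $\bG/\bN$ is then immediate.

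The only potentially subtle point, which I expect to be the main obstacle, is verifying that the notion of ``closed image'' behaves correctly under composition with a closed embedding, so that the closed image of the composite coincides with the image of $(\bH\vee\bN)/\bN$ in $\bG/\bN$. This requires an argument at the level of the von Neumann algebras of the form \Cref{Hker1}: identifying the $\sigma$-weak closure of the relevant set of slices of the bicharacter associated to the composite with the corresponding closure for $(\bH\vee\bN)/\bN$, using that closed embeddings of quantum groups correspond to Baaj--Vaes inclusions at the dual level. This should be a routine unwinding of the definitions given the machinery developed in \Cref{subse.lc}, but it is the one place where the argument goes beyond simply combining the two named theorems.
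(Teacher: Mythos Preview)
Your approach is correct and is essentially the same as the paper's. The paper's own ``proof'' of this theorem is just the one-line remark that \Cref{th.3rd_gen} lets one regard $(\bH\vee\bN)/\bN$ as a closed subgroup of $\bG/\bN$, and that ``an examination of the proof of \Cref{th.2nd}'' then yields the amplification; your proposal is precisely that examination made explicit, namely replacing the ambient $\bG$ in \Cref{th.2nd} by $\bM=\bH\vee\bN$ so that the hypothesis $\bH\vee\bN=\bM$ is tautologically satisfied.

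Regarding the point you flag as the main obstacle: it really is routine. The bicharacter $V$ for $\Pi:\bH\to\bG/\bN$ is the same element as the bicharacter for $\Pi':\bH\to\bM/\bN$, viewed through the Baaj--Vaes inclusion $\Linf(\hh{\bM/\bN})\subset\Linf(\hh{\bG/\bN})$ furnished by \Cref{th.3rd_gen}. Since a Baaj--Vaes subalgebra is $\sigma$-weakly closed, the $\sigma$-weak closures of the set of slices $\{(\id\otimes\omega)(V):\omega\in\Linf(\bH)_*\}$ computed in either ambient algebra coincide, so $\Linf(\hh{\overline{\textrm{im}\,\Pi}})=\Linf(\hh{\overline{\textrm{im}\,\Pi'}})=\Linf(\hh{\bM/\bN})$ as subalgebras of $\Linf(\hh{\bG/\bN})$. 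No further unwinding is needed.
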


\subsection{The modular law and Zassenhauss lemma}

We now proceed to address an analogue of the Zassenhauss lemma for locally compact quantum groups. First, recall the classical (non-topological) statement, for instance as in \cite[Vol. 1, p. 77]{kur}.
\begin{proposition}\label{zass-classic}
  Let $A'\trianglelefteq A$ and $B'\trianglelefteq B$ be subgroups of a group $G$. Then, we have a canonical isomorphism 
  \begin{equation*}
    \frac{A'(A\cap B)}{A'(A\cap B')}\cong \frac{B'(A\cap B)}{B'(A'\cap B)}.
  \end{equation*}
\end{proposition}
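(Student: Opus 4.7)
The plan is to exhibit a third subquotient of $G$ to which both sides of \Cref{zass-classic} are canonically isomorphic, namely
\begin{equation*}
  \frac{A\cap B}{(A'\cap B)(A\cap B')}.
\end{equation*}
Transitivity of isomorphism then delivers the statement. Since the roles of the pairs $(A,A')$ and $(B,B')$ in the formula are interchangeable, it suffices to produce the isomorphism for the left hand side and invoke this symmetry for the right.

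First I would check that the displayed products are honest subgroups and that $A'(A\cap B')\trianglelefteq A'(A\cap B)$. That $A'(A\cap B)$ is a subgroup is immediate from $A'\trianglelefteq A$, and the same observation applied inside $A'(A\cap B)$ handles $A'(A\cap B')$; normality of the latter in the former follows by combining $A'\trianglelefteq A$ with $B'\trianglelefteq B$ to show that conjugation by any element of $A'(A\cap B)$ preserves $A'(A\cap B')$. With these structural facts in place, I would apply the classical second isomorphism theorem to the inclusion $A\cap B\hookrightarrow A'(A\cap B)$ composed with the quotient map $A'(A\cap B)\twoheadrightarrow A'(A\cap B)/A'(A\cap B')$; surjectivity of the resulting map is clear from the product decomposition of the numerator, so what remains is to identify its kernel.

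The technical core is the computation
\begin{equation*}
 (A\cap B)\cap A'(A\cap B')\;=\;(A'\cap B)(A\cap B'),
\end{equation*}
which is exactly the Dedekind modular law applied inside $A\cap B$ to the chain $A\cap B'\subseteq A\cap B$: the left hand side equals $\bigl((A\cap B)\cap A'\bigr)(A\cap B')=(A'\cap B)(A\cap B')$. Feeding this into the second isomorphism theorem yields
\begin{equation*}
 \frac{A'(A\cap B)}{A'(A\cap B')}\;\cong\;\frac{A\cap B}{(A'\cap B)(A\cap B')},
\end{equation*}
and the mirror argument with $A$ and $B$ swapped gives the corresponding identification for the right hand side, finishing the proof.

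The one subtlety worth emphasizing is the bookkeeping around set-theoretic products and normality propagation in the first step; classically this is essentially routine, but it is precisely the place where the argument cannot be copied verbatim into the locally compact quantum setting, because products of closed quantum subgroups need not be closed and normality of $\bA'\vee(\bA\wedge \bB')$ inside $\bA'\vee(\bA\wedge \bB)$ requires the well positioning or normalization conditions developed in \Cref{subse.well} (cf. \Cref{def.norm,pr.well}). This is the same obstruction that forced the extra hypotheses in \Cref{pr.mod_cpct,pr.mod_discr}, and is the structural reason why a locally compact version of Zassenhaus will need analogous assumptions.
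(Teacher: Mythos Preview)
Your proposal is correct and follows essentially the same approach the paper sketches immediately after the statement: both pass through the intermediate subquotient $\dfrac{A\cap B}{(A'\cap B)(A\cap B')}$ via the second isomorphism theorem, with the modular law supplying the kernel computation $(A\cap B)\cap A'(A\cap B')=(A'\cap B)(A\cap B')$. Your additional remarks on why the argument does not transport verbatim to the locally compact quantum setting are apt and align with the paper's subsequent discussion.
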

\begin{remark}
  The statement implicitly includes the claims that the group products appearing in the formula (such as $A'(A\cap B)$) are indeed subgroups of $G$, and the groups appearing as denominators are normal in those appearing as numerators.  
\end{remark}
Recall (\cite[Vol. 1, p. 77]{kur}) that the proof typically proceeds through the second isomorphism theorem for groups (which \Cref{th.2nd} replicates) by using it to implement connecting isomorphisms  
\begin{equation*}
  \begin{tikzpicture}[auto,baseline=(current  bounding  box.center)]
    \path[anchor=base] (0,0) node (1) {$\frac{A'(A\cap B)}{A'(A\cap B')}$} +(8,0) node (2) {$\frac{B'(A\cap B)}{B'(A'\cap B)}$} +(4,-.5) node (3) {$\frac{A\cap B}{(A'\cap B)(A\cap B')}$};
         \draw[->] (1) to [bend right=6] node[pos=.5,auto,swap] {$\scriptstyle \cong$} (3);
         \draw[->] (2) to [bend left=6] node[pos=.5,auto] {$\scriptstyle \cong$} (3);
  \end{tikzpicture}
\end{equation*}
We will adopt a similar approach here, but we need some preparatory remarks. 

First, note that it is implicit in the proof sketch we have just recalled that under the assumptions of the Zassenhaus lemma we have e.g. 
\begin{equation*}
  (A\cap B)\cap (B'(A'\cap B)) = (A\cap B')(A'\cap B).
\end{equation*}
Given that $A'\cap B$ is a normal subgroup of $A\cap B$ and normalizes $B'$, this follows from the modularity law for  subgroups which we will use in the following form:
\begin{equation*}
  L\le H\le G,\quad M\le G\quad\text{and}\quad L\text{ normalizes }M\quad\Rightarrow\quad H\cap ML=(H\cap M)L.
\end{equation*}

\Cref{th.mod_lc} is an analogue of modularity in the locally compact quantum setting. Let us first prove an easy inclusion.

\begin{lemma}\label{lemma:cont_modul}
 Let $\GG$ be a locally compact quantum groups  $\bM,\bH\leq\GG$, and $\bL\le \HH$. Then  \[(\bH\wedge\bM)\vee\bL\le\bH\wedge (\bM\vee\bL).\] 
\end{lemma}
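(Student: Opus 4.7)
The plan is to prove this inclusion by pure lattice-theoretic manipulation, using nothing beyond the definitions of $\wedge$ and $\vee$ as meet and join on $\mathcal{QS}(\GG) = \mathcal{BV}(\hh\GG)$ recorded in \Cref{def:lattice_coideals,def:lattice_coideals1}. In fact, the inequality $(\bH\wedge\bM)\vee\bL \le \bH\wedge(\bM\vee\bL)$ is a generic feature of any lattice whenever $\bL\le\bH$, and the proof for $\mathcal{QS}(\GG)$ is no different.

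First, I would argue that $(\bH\wedge\bM)\vee\bL\le \bH$. Indeed $\bH\wedge\bM\le\bH$ by definition of $\wedge$, and $\bL\le\bH$ is part of the hypothesis, so their join is bounded above by $\bH$ (the join being the least upper bound). Next, I would show that $(\bH\wedge\bM)\vee\bL\le \bM\vee\bL$. Here $\bH\wedge\bM\le\bM\le\bM\vee\bL$ and $\bL\le\bM\vee\bL$, so again the least upper bound of the two satisfies the required inequality. Combining the two inequalities with the defining property of $\wedge$ (it is the greatest lower bound), we conclude $(\bH\wedge\bM)\vee\bL\le \bH\wedge(\bM\vee\bL)$.

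The only step that deserves a sentence of justification is the translation to the lattice framework: for $\bH,\bM,\bL\in\mathcal{QS}(\GG)$, the relation $\bH_1\le\bH_2$ corresponds to $\Linf(\hh{\bH_1})\subseteq \Linf(\hh{\bH_2})$, and the operations $\wedge,\vee$ are, respectively, intersection and von-Neumann-algebraic join of Baaj-Vaes subalgebras of $\Linf(\hh\GG)$. There is no genuine obstacle here; the inequality is one-sided precisely because the reverse inclusion (the actual modular law) requires some form of well-positioning, as will be addressed in \Cref{th.mod_lc}.
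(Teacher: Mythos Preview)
Your proof is correct and takes essentially the same approach as the paper: both argue purely lattice-theoretically, using only the universal properties of $\wedge$ and $\vee$ together with the hypothesis $\bL\le\bH$. The paper phrases it by showing that each of $\bH\wedge\bM$ and $\bL$ lies below $\bH\wedge(\bM\vee\bL)$, whereas you show that $(\bH\wedge\bM)\vee\bL$ lies below each of $\bH$ and $\bM\vee\bL$; these are the two obvious sides of the same elementary lattice argument.
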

\begin{proof}
  Let us note that $\bH\wedge\bM\le\bH\wedge (\bM\vee\bL)$. Moreover, by assumption   $\bL\le\bH$  and clearly $ \bL\leq \bM\vee\bL$ thus $\bL\leq\bH\wedge (\bM\vee\bL)$. This altogether shows that $(\bH\wedge\bM)\vee\bL\le\bH\wedge (\bM\vee\bL)$. 
\end{proof}

We now turn to sufficient conditions for an inclusion reversal in
\Cref{lemma:cont_modul}. The material surrounding \Cref{def.well}
above will be needed here.

\begin{theorem}\label{th.mod_lc}
  Let $\bL\le \bH$ and $\bM$ be closed quantum subgroups of a locally compact quantum group $\GG$ 
  such that $\bL$ normalizes $\bM$. Then, we have
  \begin{equation}\label{eq:mod_lc}
    \bH\wedge (\bM\vee\bL)=(\bH\wedge\bM)\vee\bL.
  \end{equation}
if either
\begin{enumerate}
\renewcommand{\labelenumi}{(\alph{enumi})}
  \item $\bL$ is compact, or 
  \item $\bH$ is open in $\bG$. 
\end{enumerate}
\end{theorem}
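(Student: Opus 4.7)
The claim is exactly the reverse of the inclusion in \Cref{lemma:cont_modul}, namely
\[
\bH\wedge(\bM\vee\bL)\le(\bH\wedge\bM)\vee\bL.
\]
I treat (a) and (b) separately; throughout I write $A_\bK:=\Linf(\bG/\bK)$.

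\textbf{Case (a): $\bL$ compact.} The plan is to reduce to the situation $\bG=\bM\vee\bL$ by passing to $\bG':=\bM\vee\bL$ and replacing $\bH$ by $\bH':=\bH\wedge\bG'$. This reduction is legitimate because the meets and joins of $\bH',\bM,\bL$ computed inside $\bG'$ agree with those computed inside $\bG$ (each being the smallest/largest intrinsic closed quantum subgroup containing/contained in them). In the reduced setting $\bM\trianglelefteq\bG$, and \Cref{pr.well}(c) therefore gives well positioning $A_{\bH'}\vee A_\bM=\{A_{\bH'}A_\bM\}^{\cls}$. The Haar state of the compact $\bL$ produces a normal conditional expectation $E_\bL=(\id\otimes\varphi_\bL)\circ\alpha_{\bL\to\bG}:\Linf(\bG)\to A_\bL$, which is a bimodule map over $A_{\bH'}\subseteq A_\bL$. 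Applying \Cref{lemma:normalization_quotient} to the normalization of $\bM$ by $\bL$ gives $\alpha_{\bL\to\bG}(A_\bM)\subseteq A_\bM\vtens\Linf(\bL)$, whence $E_\bL(A_\bM)\subseteq A_\bM\cap A_\bL=A_{\bM\vee\bL}=\bC$ in the reduced setting. For $x\in(A_{\bH'}\vee A_\bM)\cap A_\bL$, well positioning lets me write $x=\lim\sum_i a_ib_i$ $\sigma$-weakly with $a_i\in A_{\bH'}$ and $b_i\in A_\bM$, and then
\[
x=E_\bL(x)=\lim\sum_i a_iE_\bL(b_i)\in A_{\bH'},
\]
since each $E_\bL(b_i)$ is a scalar and $A_{\bH'}$ is $\sigma$-weakly closed. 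Dualizing this equality via $\cd$ yields the modular law inside $\bG'$ and hence in $\bG$.

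\textbf{Case (b): $\bH$ open.} I work on the dual side, where the required inclusion reads $\Linf(\hh\bH)\cap\Linf(\hh{\bM\vee\bL})\subseteq\Linf(\hh{(\bH\wedge\bM)\vee\bL})$. Openness of $\bH$ supplies a normal conditional expectation $T:\Linf(\hh\bG)\to\Linf(\hh\bH)$ satisfying \Cref{eq:expect}, and by \Cref{pr.aux_discr_lc} one has $T(\Linf(\hh\bM))\subseteq\Linf(\hh{\bH\wedge\bM})$. Since $\bL$ normalizes $\bM$, \Cref{norminter} gives $\Linf(\hh{\bM\vee\bL})=\{\Linf(\hh\bL)\cdot\Linf(\hh\bM)\}^{\cls}$. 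For $w\in\Linf(\hh\bH)\cap\Linf(\hh{\bM\vee\bL})$, write $w=\lim\sum_i x_iy_i$ with $x_i\in\Linf(\hh\bL)\subseteq\Linf(\hh\bH)$ (the inclusion using $\bL\le\bH$) and $y_i\in\Linf(\hh\bM)$. Normality and $\Linf(\hh\bH)$-bilinearity of $T$ then yield
\[
w=T(w)=\lim\sum_i x_iT(y_i)\in\{\Linf(\hh\bL)\cdot\Linf(\hh{\bH\wedge\bM})\}^{\cls},
\]
and a second invocation of \Cref{norminter}---using that $\bL$ also normalizes $\bH\wedge\bM$---identifies this closure with $\Linf(\hh{(\bH\wedge\bM)\vee\bL})$, giving the claim.

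\textbf{Main obstacles.} The principal technical point is the assertion in case (b) that $\bL$ normalizes $\bH\wedge\bM$. This should follow by combining the given normalization of $\bM$ by $\bL$ with the trivial normalization of $\bH$ by $\bL$ (as $\bL\le\bH$), together with the fact that a subspace $X\subseteq\Linf(\bL)\vtens\Linf(\hh\bG)$ which lies inside both $\Linf(\bL)\vtens\sN_1$ and $\Linf(\bL)\vtens\sN_2$ is contained in $\Linf(\bL)\vtens(\sN_1\cap\sN_2)$, applied to $\sN_1=\Linf(\hh\bH)$ and $\sN_2=\Linf(\hh\bM)$. A secondary point is the consistency check in case (a)'s reduction, that $(\bH\wedge\bM)\vee\bL$ computed in $\bG'$ coincides with its counterpart in $\bG$, which is routine but should be stated explicitly.
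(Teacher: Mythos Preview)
Your proof is correct and follows essentially the same strategy as the paper's. In part (a) you perform the reduction to $\bG=\bM\vee\bL$ at the outset, whereas the paper first argues under a well-positioning hypothesis on $\bH$ and $\bM$ and only reduces at the end; the content is the same. Part (b) is identical to the paper's argument modulo the order of the products (you use $\Linf(\hh\bL)\cdot\Linf(\hh\bM)$ rather than $\Linf(\hh\bM)\cdot\Linf(\hh\bL)$, both being legitimate by \Cref{norminter}).

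The concern you flag as the ``main obstacle'' in (b) is a non-issue. Once you have shown
\[
w\in\{\Linf(\hh\bL)\cdot\Linf(\hh{\bH\wedge\bM})\}^{\cls},
\]
you are finished: this closed linear span is automatically contained in the von Neumann algebra $\Linf(\hh{(\bH\wedge\bM)\vee\bL})$ generated by the two factors, and that inclusion is all that the argument requires. No second application of \Cref{norminter}, and hence no verification that $\bL$ normalizes $\bH\wedge\bM$, is necessary. (Your intersection argument for the weak normalization condition is correct, but it is superfluous for the present purpose.) The paper's proof makes the same move implicitly when it asserts that the right-hand side of the goal ``corresponds to'' $\{(\Linf(\hh\bH)\wedge\Linf(\hh\bM))\Linf(\hh\bL)\}^{\cls}$; only the inclusion of this set into $\Linf(\hh{(\bH\wedge\bM)\vee\bL})$ is actually used.
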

\begin{proof}
We address the two versions of the result separately. 

\vspace{.5cm}

{\bf Proof of part (a).} Here we rephrase the desired conclusion in
terms of the quantum homogeneous spaces $A_\bullet$ for $\bullet=\bH$,
$\bM$, etc (see notation in \Cref{subse.well}). Since $\cd$ is an anti-isomorphism of lattices the
sought-after conclusion is
\begin{equation}\label{eq:mod_lc_bis}
  A_\bH \vee (A_\bM\wedge A_\bL) = (A_\bH\vee A_\bM)\wedge A_\bL. 
\end{equation}
Using \Cref{lemma:cont_modul} we see that  the right hand side contains the left hand side. We hence focus on proving the opposite inclusion. 

Let us first consider the case when  $\bH$ and $\bM$ are relatively well positioned in the sense of \Cref{def.well}. 
Now, as in the proof of \Cref{pr.int}, consider the operator-valued weights 
\begin{equation*}
  T_{\bullet\to \bG}:\Linf(\bG)\to A_\bullet. 
\end{equation*}
Since $\bL$ is assumed to be compact, $T=T_{\bL\to \bG}$ actually restricts to the identity on $A_\bL$, and hence also on the right hand side of \Cref{eq:mod_lc_bis}. 

On the other hand, in order to study the result of applying $T$ to the
right hand side algebra of \Cref{eq:mod_lc_bis}, it suffices by
\Cref{eq:well} to look at products
\begin{equation*}
  x=x_\bH x_\bM,\ x_\bH\in A_\bH,\ x_\bM\in A_\bM
\end{equation*}
When applied to the latter, due to the preservation by $T$ of bimodule
structures over $A_\bH\subseteq A_\bL$, $T$ produces the element 
\begin{equation*}
  x_\bH T(x_\bM).
\end{equation*}
We would be finished if we could show that $T(x_\bM)\in A_\bM\wedge A_\bL$; this is what \Cref{le.aux_cpct_lc} below does. 

In order to drop the well positioning assumption   let us consider $\bD = \HH\wedge (\bL\vee\bM)$. The containment $\Linf(\hh\HH)\wedge (\Linf(\hh\bM)\vee\Linf(\hh\bL))\subset (\Linf(\hh\HH)\wedge\Linf(\hh\bM))\vee \Linf(\hh\bL)$, which is effectively proved above under the well positioning assumption of $\HH$  and $\bL$,  is equivalent with the following containment \begin{equation}\label{eq:L_position}\Linf(\hh\bD)\wedge (\Linf(\hh\bM)\vee\Linf(\hh\bL))\subset (\Linf(\hh\bD)\wedge\Linf(\hh\bM))\vee \Linf(\hh\bL).\end{equation} Since $\bD,\bL,\bM\le \bL\vee\bM$, proving \Cref{eq:L_position}  we can substitute $\bM\vee\bL$ for $\GG$. After this substitution    the normalization assumption  of $\bM$ by $\bL$ gets replaced by  the normality of   $\bM$   in $\GG$. Using \Cref{pr.well} we see that $\bD$ and $\bM$ are well positioned  and by the first part of the proof \Cref{eq:L_position} holds, thus we are done. 

\vspace{.5cm}

{\bf Proof of part (b).} Here, we translate the claim into an equivalent statement for the underlying von Neumann algebras of the dual groups $\hh{\bG}$, $\hh{\bH}$, etc. 

Since $\bL$ normalizes $\bM$  we can use \Cref{norminter} and get 
\begin{equation*}
 \Linf(\hh{\bM\vee\bL}) =   \{\Linf(\hh\bM)\Linf(\hh\bL)\}^{\cls }.
\end{equation*}
  In conclusion, the von Neumann subalgebra of $\Linf(\hh{\bG})$ corresponding to the left hand side of \Cref{eq:mod_lc} is
\begin{equation*}
  \Linf(\hh{\bH})\wedge \{\Linf(\hh\bM)\Linf(\hh\bL)\}^{\cls }.
\end{equation*}
Similarly, the right hand side corresponds to 
\begin{equation*}
  \{(\Linf(\hh{\bH})\wedge  \Linf(\hh\bM)) \Linf(\hh\bL)\}^{\cls },  
\end{equation*}
and we seek to prove
\begin{equation}\label{eq:mod_open_goal}
 \Linf(\hh{\bH})\wedge \{\Linf(\hh\bM)\Linf(\hh\bL)\}^{\cls } =  \{(\Linf(\hh{\bH})\wedge  \Linf(\hh\bM)) \Linf(\hh\bL)\}^{\cls }.
\end{equation}
As in the first part, the inclusion of the right hand side in the left hand side is \Cref{lemma:cont_modul}, and we only need to prove `$\subseteq$'.

We will use essentially the same strategy as in the proof of part (1), substituting for $T_{\bL\to \bG}:\Linf(\bG)\to A_\bL$ from that other proof the expectation
\begin{equation*}
  T:\Linf(\hh{\bG})\to \Linf(\hh{\bH})
\end{equation*}
corresponding to the compatible Haar weights on the two von Neumann algebras (this is where the openness of $\bH$ is essential; see e.g. \cite[Theorem 7.5]{KalKS}). 

As before, applying $T$ to the left hand side of \Cref{eq:mod_open_goal} on the one hand acts as the identity, and on the other produces from a product
\begin{equation*}
  x=x_{\bM} x_{\bL},\ x_{\bM}\in \Linf(\hh{\bM}),\ x_{\bL}\in \Linf(\hh{\bL})
\end{equation*}
the element 
\begin{equation*}
  T(x_\bM)x_\bL
\end{equation*}
due to the $\Linf(\hh\bL)$-bimodule map property of $T$. The conclusion that $x=T(x)$ belongs to the right hand side of \Cref{eq:mod_open_goal} now follows from the fact that
\begin{equation*}
  T(x_\bM)\in \Linf(\hh{\bH})\wedge \Linf(\hh{\bM}),
\end{equation*}
which in turn relies on \Cref{pr.aux_discr_lc}. 
\end{proof}

\begin{remark}
The reader should note that since $\bL=\bH\wedge \bL$, the modular law is really a form of the distributive law $\bH\wedge (\bM\vee\bL)=(\bH\wedge \bM)\vee(\bH\wedge \bL)$. The latter, however, is false in general.
\end{remark}

\begin{lemma}\label{le.aux_cpct_lc}
  Let $\bG$ be a locally compact quantum group, $\bL\le \bG$ a compact quantum subgroup, and $\bM\le \bG$ a closed quantum subgroup normalized by $\bL$. Then, the expectation
  \begin{equation*}
    T:\Linf(\bG)\to A_\bL
  \end{equation*}
leaves $A_\bM$ invariant. 
\end{lemma}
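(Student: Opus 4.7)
My plan is to realize the expectation $T\colon \Linf(\bG)\to A_\bL$ concretely in terms of the right action of $\bL$ on $\bG$, and then invoke \Cref{lemma:normalization_quotient} to transport invariance under $\bM$ through $T$.

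Let $\alpha=\alpha_{\bL\to\bG}\colon \Linf(\bG)\to \Linf(\bG)\vtens \Linf(\bL)$ denote the right quantum group homomorphism associated to the closed embedding $\bL\le\bG$. Because $\bL$ is compact, its Haar weight is a normal state $h_\bL$ on $\Linf(\bL)$, and the standard argument (i.e.\ the quantum version of averaging $f\mapsto \int_\bL f(\,\cdot\, l)\,dl$) shows that the map
\begin{equation*}
  T := (\id\otimes h_\bL)\circ\alpha\colon \Linf(\bG)\to \Linf(\bG)
\end{equation*}
is a normal conditional expectation whose image is precisely the fixed-point algebra of $\alpha$, which equals $A_\bL=\Linf(\bG/\bL)$. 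This is the $T$ appearing in the proof of \Cref{pr.mod_cpct} and in the preceding setup.

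Next, the hypothesis that $\bL$ normalizes $\bM$ places us exactly in the scope of \Cref{lemma:normalization_quotient}, applied with $\HH=\bM$ and with the proposition's ``$\bM$'' taken to be our $\bL$. It yields
\begin{equation*}
  \alpha(A_\bM)\;=\;\alpha_{\bL\to\bG}\bigl(\Linf(\bG/\bM)\bigr)\;\subset\; \Linf(\bG/\bM)\vtens \Linf(\bL)\;=\;A_\bM\vtens \Linf(\bL).
\end{equation*}
Combining this with the formula for $T$, for every $x\in A_\bM$ we have
\begin{equation*}
  T(x)=(\id\otimes h_\bL)(\alpha(x))\in (\id\otimes h_\bL)\bigl(A_\bM\vtens\Linf(\bL)\bigr)\subset A_\bM,
\end{equation*}
since slicing by a normal state on the second tensor factor sends $A_\bM\vtens\Linf(\bL)$ into $A_\bM$. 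This proves the lemma.

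The only point worth checking carefully is the first step, i.e.\ that the conditional expectation onto $A_\bL$ coming from the compactness of $\bL$ really does coincide with $(\id\otimes h_\bL)\circ\alpha_{\bL\to\bG}$; I expect no difficulty here, as this is the standard description of the averaging projection for a compact quantum subgroup (and is implicit in \Cref{pr.mod_cpct}). Once that identification is in place, the proof is essentially a one-line application of \Cref{lemma:normalization_quotient}.
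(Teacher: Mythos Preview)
Your proof is correct and follows essentially the same route as the paper's own argument: both identify $T$ as $(\id\otimes h_\bL)\circ\alpha_{\bL\to\bG}$, invoke \Cref{lemma:normalization_quotient} to obtain $\alpha_{\bL\to\bG}(A_\bM)\subset A_\bM\vtens\Linf(\bL)$, and then slice by the Haar state to conclude. Your write-up is somewhat more explicit about the identification of $T$ and the substitution of variables in \Cref{lemma:normalization_quotient}, but the underlying argument is identical.
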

\begin{proof}
Indeed, the normalization condition ensures that the right action of
$\bL$ on $\bG$ descends to an action on the quantum homogeneous space
$\bG/\bM$ via the commutative diagram (see \Cref{lemma:normalization_quotient})
\begin{equation*}
  \begin{tikzpicture}[auto,baseline=(current  bounding  box.center)]
    \path[anchor=base] (0,0) node (1) {$\Linf(\bG)$} +(0,-2) node (2)
    {$A_\bM$} +(3,-2) node (3) {$A_{\bM}\vtens \Linf(\bL)$} +(3,0)
    node (4) {$\Linf(\bG)\vtens\Linf(\bL)$};
         \draw[->] (1) -- (4);
         \draw[->] (2) -- (3);
         \draw[right hook->] (2) -- (1);
         \draw[right hook->] (3) -- (4);
  \end{tikzpicture}
\end{equation*} 
The conclusion now follows from the definition of the expectation $T$
as the coaction
\begin{equation*}
  \Linf(\bG)\to \Linf(\bG)\vtens \Linf(\bL)
\end{equation*}
followed by an application of the Haar state $\phi_\bL$ to the right hand
tensorand. 
\end{proof}

\begin{remark}
  We note that an appropriately rephrased version of \Cref{le.aux_cpct_lc} holds under the weaker requirement that   $\bL/\bL\wedge \bM$ acts integrably on $\bG/\bM$. $T$ would then restrict to a semifinite operator-valued weight 
  \begin{equation*}
    A_\bM \to A_\bL\wedge A_\bM. 
  \end{equation*}
We do, however, need compactness in the proof of \Cref{th.mod_lc} above, where the operator-valued weight $T$ had to be an expectation and hence fix $A_\bL$ pointwise. 
\end{remark}

Note that \Cref{th.mod_lc} does not hold in full generality, even for classical locally compact abelian groups. In order to see this, we can modify \Cref{ex.pathol} as follows.

\begin{example}\label{ex.pathol_bis}
Our ambient group $\bG=\bT^4\times\bR$ is written as in
\Cref{ex.pathol}, multiplicatively in the first four variables and
additively in the last. 

We then take 
\begin{equation*}
  \bM = \{(e^{ix\theta_1},\cdots,e^{ix\theta_4},x)\ |\ x\in \bR\}
\end{equation*}
and
\begin{equation*}
  \bL = \{(e^{is\phi},1,1,1,s)\ |\ s\in \bR\}.  
\end{equation*}
with $\phi$ and $\theta_i$ linearly independent over $\bQ$. Finally, let 
\begin{equation*}
  \bH = \{(e^{is\phi}, e^{it\phi},1,1,s+t)\ |\ s,t\in \bR\}. 
\end{equation*}
$\bH$ is easily seen to be a two-dimensional closed Lie subgroup of
$\bG$ that contains $\bL$ and intersects $\bM$ trivially. Since
$\bM\bL$ is dense in $\bG$, we have $\bH\wedge (\bM\vee\bL)=\bH$ on the left
hand side of \Cref{eq:mod_lc}. On the other hand, the right hand side $(\bH\wedge \bM)\vee\bL$ is $\bL$.
\end{example}

As in \Cref{se.lr} above, we have the following consequence of
modularity (i.e. of \Cref{th.mod_lc}).

\begin{proposition}\label{pr.zas_lc}
  Let $\bA'\trianglelefteq \bA$ and $\bB'\trianglelefteq \bB$ be
  either
  \begin{enumerate}
    \renewcommand{\labelenumi}{(\alph{enumi})}
     \item compact or 
     \item open 
  \end{enumerate}
  quantum subgroups of a locally compact quantum group $\bG$. Then, we have an isomorphism
  \begin{equation*}
    \frac{\bA'\vee(\bA\wedge \bB)}{\bA'\vee(\bA\wedge \bB')} \cong \frac{\bB'\vee(\bA\wedge \bB)}{\bB'\vee(\bA'\wedge \bB)}. 
  \end{equation*}
\end{proposition}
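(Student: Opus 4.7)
The plan is to mimic the proof of \Cref{pr.zas}, substituting the locally compact versions of the modular law (\Cref{th.mod_lc}) and the Second Isomorphism Theorem (\Cref{th.2nd_soup}) for the linearly reductive versions used there. By symmetry of the statement, it suffices to construct an isomorphism
\[
\frac{\bA'\vee(\bA\wedge \bB)}{\bA'\vee(\bA\wedge \bB')} \;\cong\; \frac{\bA\wedge \bB}{(\bA'\wedge \bB)\vee(\bA\wedge \bB')}
\]
and then apply the analogous argument with the roles of the primed and unprimed groups swapped.

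Set $\bG_1 = \bA'\vee(\bA\wedge\bB)$, $\bH_1 = \bA\wedge\bB$, $\bN_1 = \bA'\vee(\bA\wedge\bB')$. First I would verify that $\bN_1 \trianglelefteq \bG_1$: since $\bA'\trianglelefteq\bA$ and both $\bA\wedge\bB'$ and $\bA\wedge\bB$ lie in $\bA$, they normalize $\bA'$ in the sense of \Cref{def:normalization}, and \Cref{norminter} then yields the required normality of $\bN_1$ inside $\bG_1$. Tautologically $\bH_1\vee\bN_1 = \bG_1$.

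Next I would compute $\bH_1\wedge\bN_1$ by applying \Cref{th.mod_lc} with $\bH=\bA\wedge\bB$, $\bL=\bA\wedge\bB'$ and $\bM=\bA'$. One has $\bL\le\bH$, and $\bL$ normalizes $\bM$ because $\bL\le\bA$ and $\bM=\bA'\trianglelefteq\bA$. In case (a), $\bL=\bA\wedge\bB'$ is compact because it is closed in the compact $\bA$, triggering hypothesis (a) of \Cref{th.mod_lc}; in case (b), $\bH=\bA\wedge\bB$ is open inside the ambient group $\bG_1$ by \Cref{pr.aux_discr_lc}, triggering hypothesis (b). Either way, \Cref{th.mod_lc} gives
\[
\bH_1\wedge\bN_1 = (\bA\wedge\bB)\wedge\bigl(\bA'\vee(\bA\wedge\bB')\bigr) = \bigl((\bA\wedge\bB)\wedge\bA'\bigr)\vee(\bA\wedge\bB') = (\bA'\wedge\bB)\vee(\bA\wedge\bB'),
\]
using $(\bA\wedge\bB)\wedge\bA' = \bA'\wedge\bB$ since $\bA'\le\bA$.

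Now I would invoke \Cref{th.2nd_soup} applied to $\bH_1 \le \bG_1$ and $\bN_1 \trianglelefteq \bG_1$: the canonical morphism $\bH_1/(\bH_1\wedge\bN_1) \to \bG_1/\bN_1$ has trivial kernel and closed image $\bH_1\vee\bN_1/\bN_1 = \bG_1/\bN_1$. To upgrade this to an actual isomorphism one invokes \Cref{th.2nd.corr}, which requires integrability of the induced action of $\bH_1/(\bH_1\wedge\bN_1)$ on $\bG_1/\bN_1$. In case (a), $\bH_1$ is compact (being a closed quantum subgroup of the compact group $\bA$), so integrability is automatic by \Cref{re.cases}. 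The main obstacle is case (b): here I would adapt the operator-valued-weight technique from the proof of \Cref{pr.int}, using the expectation associated to the openness of $\bH_1$ in $\bG_1$ (cf.\ the dual-side expectation used in the proof of \Cref{th.mod_lc}(b)) to produce a nonzero integrable element and hence semifiniteness of the relevant weight $T_{\bH_1/\bN_1\cap\bH_1 \to \bG_1/\bN_1}$. Once the isomorphism is established, the symmetric construction with $\bA$ and $\bB$ interchanged yields the second isomorphism to the same common middle quotient, completing the proof.
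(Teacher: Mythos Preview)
Your overall strategy matches the paper's: reduce both sides to the common subquotient $(\bA\wedge\bB)/((\bA'\wedge\bB)\vee(\bA\wedge\bB'))$ via the Second Isomorphism Theorem, after computing $\bH_1\wedge\bN_1$ by the modular law \Cref{th.mod_lc} applied with $\bH=\bA\wedge\bB$, $\bL=\bA\wedge\bB'$, $\bM=\bA'$. The paper's proof is essentially the same two-line sketch, and your handling of case~(a) is correct.

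There is, however, a genuine gap in your treatment of case~(b). You correctly flag integrability of the $\bH_1/(\bH_1\wedge\bN_1)$-action on $\bG_1/\bN_1$ as the issue, but your proposed fix---``adapt the operator-valued-weight technique from \Cref{pr.int} using the expectation associated to openness''---is both vague and aimed at the wrong side: the expectation from openness lives on the \emph{dual} $\Linf(\hh{\bG_1})\to\Linf(\hh{\bH_1})$, and it is not clear how you would extract integrability of an action on $\Linf(\bG_1/\bN_1)$ from it. The clean argument is simpler. Since $\bA'$ is open in $\bG$ and $\bA'\le\bN_1\le\bG_1\le\bG$, \Cref{cor.passing} and \Cref{pr.open_trans} give that $\bN_1$ is open in $\bG_1$; likewise $\bA\wedge\bB'$ is open in $\bG$ (by \Cref{pr.aux_discr_lc} and \Cref{pr.open_trans}, as in the paper's proof), hence $\bH_1\wedge\bN_1$ is open in $\bH_1$. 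Both quotients $\bG_1/\bN_1$ and $\bH_1/(\bH_1\wedge\bN_1)$ are therefore discrete, and \Cref{re.cases} (last sentence) gives the isomorphism directly, with no need to manufacture integrable elements by hand.

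A minor point on the modular-law step in~(b): \Cref{th.mod_lc}(b) asks for $\bH$ open in the ambient group. Your phrasing ``open inside the ambient group $\bG_1$'' is fine if you regard $\bG_1$ as ambient, but the paper works in $\bG$ and notes (via \Cref{pr.aux_discr_lc} then \Cref{pr.open_trans}) that $\bA\wedge\bB$ is open in $\bG$ itself; either route works, but you should state which ambient you are using and why the openness hypothesis is met there.
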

\begin{proof}
  This follows from \Cref{th.mod_lc} in much the same way in which
  \Cref{pr.zas} follows from \Cref{pr.mod_cpct,pr.mod_discr}, by
  applying the earlier result to  $\bH=\bA\wedge \bB$, $\bL=\bA\wedge
  \bB'$ and $\bM=\bA'$. 

  Everything goes through as before, modulo the observation that in
  the open case we need \Cref{pr.aux_discr_lc} in order to conclude
  that $\bA\wedge \bB$ is open in the open subgroup $\bB$, and hence
  is also open in $\bG$ by \Cref{pr.open_trans}.
\end{proof}

\begin{remark}
  In case (a) of \Cref{pr.zas_lc} it is enough that $\bL$ (and hence say $\bB'$) be compact.  
\end{remark}

\subsection{Schreier and J\"ordan-Holder-type results}

We devote this subsection to certain partial analogues of
\Cref{th.sch,th.jh,pr.jh} in the setting of locally compact quantum
groups.   

In this context, the relevant notions of (sub)normal series and
refinements thereof make sense virtually verbatim, so we point to
\Cref{def.series,def.refine} for a reminder. 

We write $\{\bG_\ell\}_{\ell\ge 0}$ for the generic (sub)normal series  
\begin{equation}\label{eq:series_lc}
  \bG=\bG_{0}\ge\bG_{1}\ge\bG_{2}\ge\bG_{3}\ge\cdots\ge\bG_{k}= 1.
\end{equation}
of closed quantum subgroups of a locally compact quantum group $\bG$.

\begin{theorem}\label{th.sch_lc}
  Let $\bG$ be a locally compact quantum group. Then, any two
  subnormal series $\{\bG_\ell\}$ and $\bG'_t$ of $\bG$ admit
  equivalent refinements, provided 
  \begin{equation*}
    \bG_\ell,\ \ell\ge 1\quad \text{and} \quad \bG'_t,\ t\ge 1
  \end{equation*}
  are 
  \begin{enumerate}
    \renewcommand{\labelenumi}{(\alph{enumi})}
     \item compact or 
     \item open. 
  \end{enumerate}
\end{theorem}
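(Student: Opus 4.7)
The plan is to adapt the proof of \Cref{th.sch} to the locally compact setting by replacing its input Zassenhaus lemma (\Cref{pr.zas}) with the corresponding locally compact version (\Cref{pr.zas_lc}). Given two subnormal series $\bG=\bG_0\ge\bG_1\ge\cdots\ge\bG_k=1$ and $\bG=\bG'_0\ge\bG'_1\ge\cdots\ge\bG'_l=1$ as in the statement, I would first set up the standard Schreier double-indexed families
\begin{equation*}
  \bG_{ij} := \bG_i \vee (\bG_{i-1}\wedge \bG'_j), \qquad \bG'_{ij} := \bG'_j \vee (\bG'_{j-1}\wedge \bG_i),
\end{equation*}
for $1\le i\le k$ and $1\le j\le l$ (with the natural boundary values when an index is zero). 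The telescoping identities $\bG_{i0}=\bG_{i-1}$, $\bG_{il}=\bG_i$, together with their $\bG'$-counterparts, mean that listing the $\bG_{ij}$ (respectively the $\bG'_{ij}$) in lexicographic order yields sequences interpolating the original series; after discarding any repeated consecutive terms to ensure properness, these become candidate subnormal refinements.

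Next I would invoke \Cref{pr.zas_lc} at each pair $(i,j)$ with $i,j\ge 1$, applied to $\bA=\bG_{i-1}$, $\bA'=\bG_i$, $\bB=\bG'_{j-1}$, $\bB'=\bG'_j$. Under both hypotheses of the theorem, the two normal subgroups $\bA'=\bG_i$ and $\bB'=\bG'_j$ are compact (case (a)) or open (case (b)), which is exactly what \Cref{pr.zas_lc} requires; the Remark following \Cref{pr.zas_lc} takes care of case (a) when the outer group need not be compact. Each such application simultaneously yields the normality $\bG_{ij}\trianglelefteq\bG_{i(j-1)}$ (and its $\bG'$-analogue), so that the refined sequences are genuine subnormal series, together with the factor isomorphism $\bG_{ij}\backslash\bG_{i(j-1)}\cong\bG'_{ij}\backslash\bG'_{(i-1)j}$. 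Normality across the block boundaries $\bG_{il}=\bG_i=\bG_{(i+1)0}$ follows from the Zassenhaus application at $(i+1,1)$. The collection of Zassenhaus isomorphisms then provides, via the matching $(i,j)\leftrightarrow(i,j)$, a bijection between the factors of the two refinements that identifies them up to isomorphism, which is equivalence in the sense of \Cref{def.refine}.

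The main obstacle I anticipate is justifying the Zassenhaus applications in the boundary cases $i=1$ or $j=1$, where one of the ambient groups $\bA=\bG$ or $\bB=\bG$ is neither compact nor open. This is precisely why the hypothesis of the theorem imposes the compact/open property only for $\ell,t\ge 1$: the statement of \Cref{pr.zas_lc} together with the ensuing remark constrains only the smaller normal subgroups $\bA',\bB'$ in the Zassenhaus configuration, leaving the ambient groups unconstrained. A secondary, routine concern is the pruning of repeated consecutive terms in the refinements while preserving the bijection on nontrivial factors, which works exactly as in the classical Schreier argument.
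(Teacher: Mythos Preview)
Your proposal is correct and follows essentially the same route as the paper: the paper's own proof is simply ``One can simply imitate the proof of \Cref{th.sch}, making use of parts (a) and (b) of \Cref{pr.zas_lc} respectively for the two parts of the present result,'' which is exactly what you spell out. Your discussion of the boundary cases $i=1$ or $j=1$ (where $\bA=\bG$ or $\bB=\bG$) is accurate: in case (b) the ambient group $\bG$ is trivially open in itself, and in case (a) the Remark following \Cref{pr.zas_lc} shows that only $\bA'$ and $\bB'$ need be compact, which they are by hypothesis since $i,j\ge 1$.
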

\begin{proof}
  One can simply imitate the proof of \Cref{th.sch}, making use of
  parts (a) and (b) of \Cref{pr.zas_lc} respectively for the two parts
  of the present result.  
\end{proof}

As for an analogue of \Cref{th.jh,pr.jh}, we have

\begin{theorem}\label{th.jh_lc}
  Let $\bG$ be a locally compact quantum group. Then, all composition
  series of $\bG$ consisting of 
  \begin{enumerate}
    \renewcommand{\labelenumi}{(\alph{enumi})}
     \item compact or 
     \item open 
  \end{enumerate}
  quantum subgroups are equivalent.
\end{theorem}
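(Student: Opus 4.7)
The plan is to mirror the strategy used in the proofs of \Cref{th.jh} and \Cref{pr.jh}: deduce the Jordan--H\"older-type statement directly from the Schreier refinement theorem, \Cref{th.sch_lc}, together with the observation recorded in \Cref{re.comp} that a composition series admits no proper refinement. Concretely, given two composition series of $\bG$ whose non-trivial terms are all compact (respectively all open), I would apply \Cref{th.sch_lc} to produce equivalent refinements $\{\bG_{ij}\}$ and $\{\bH_{ij}\}$ of the two series. Since by definition (cf. \Cref{re.comp}) neither original series can be refined strictly, each of these refinements must reduce, after discarding repeated terms, to the corresponding original composition series. Combined with the equivalence of the refinements asserted by \Cref{th.sch_lc}, this yields equivalence of the two composition series themselves.

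The bulk of the argument is therefore a formal transfer from \Cref{th.sch_lc}; the only genuine verification is that the refinement construction in the proof of \Cref{th.sch_lc}, which forms elements of the type $\bG_i \vee (\bG_{i-1}\wedge \bH_j)$, remains within the compact (resp. open) class so that \Cref{pr.zas_lc} continues to apply at each refinement step. In case (a), each refined term sits inside the compact ambient group $\bG_{i-1}$ and is a join of closed quantum subgroups therein, hence is compact; intersections such as $\bG_{i-1}\wedge \bH_j$ are likewise compact. In case (b), the corresponding preservation is provided by \Cref{pr.aux_discr_lc} (openness of $\bH\wedge\bM$ when $\bH$ is open) and \Cref{pr.open_trans} (transitivity of openness through a chain of closed embeddings), which together show that the intermediate subgroups produced during refinement are open in $\bG$.

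The main obstacle I anticipate is precisely this bookkeeping: one must verify that at every refinement step both the terms and the successive ratios satisfy the hypotheses needed to invoke the appropriate half of \Cref{pr.zas_lc} (hence \Cref{th.sch_lc}). Once this class-preservation is established, the deduction of the equivalence of composition series from Schreier is formal. In outline, I would therefore structure the proof as follows: first fix two composition series $\{\bG_\ell\}$ and $\{\bG'_t\}$ satisfying the hypothesis of case (a) or (b); second, invoke the relevant class-preservation lemmas above to justify that the Schreier construction stays within the class; third, apply \Cref{th.sch_lc} to obtain equivalent refinements; and finally, use \Cref{re.comp} to collapse these refinements back to the original series, thereby obtaining the desired equivalence.
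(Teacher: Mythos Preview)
Your proposal is correct and follows the same route as the paper: deduce the result from \Cref{th.sch_lc} together with \Cref{re.comp}, exactly as \Cref{th.jh} is deduced from \Cref{th.sch}. The class-preservation bookkeeping you flag as the ``only genuine verification'' is in fact already absorbed into the proofs of \Cref{pr.zas_lc} and \Cref{th.sch_lc} (see the last paragraph of the proof of \Cref{pr.zas_lc}), so once you invoke \Cref{th.sch_lc} as a black box there is nothing further to check; the paper accordingly dispatches the proof in a single sentence.
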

\begin{proof}
Just as the proof of \Cref{th.jh}, this follows mechanically once we
have \Cref{th.sch_lc} above. 
\end{proof}

The compact versions of \Cref{th.sch_lc,th.jh_lc} refer to subnormal
series \Cref{eq:series_lc} in which all $\bG_\ell$, $\ell\ge 1$ are
compact, but $\bG=\bG_0$ need not be so. Let us note that this is
equivalent to the subquotient $\bG/\bG_1$ being non-compact. Indeed,
we have

\begin{proposition}\label{pr.cpct_series}
  A locally compact quantum group $\bG$ is compact if and only if it
  admits a subnormal series \Cref{eq:series_lc} with compact quotients
  $\bG_i/\bG_{i+1}$. 
\end{proposition}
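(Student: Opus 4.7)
The forward implication is trivial: if $\bG$ is compact, then the length-one series $\bG\ge \mathds{1}$ has the single compact subquotient $\bG/\mathds{1}=\bG$.

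For the converse, the plan is to induct on the length $k$ of the subnormal series
\begin{equation*}
  \bG=\bG_0\ge \bG_1\ge\cdots\ge \bG_k=\mathds{1}.
\end{equation*}
The base case $k=1$ is immediate, since then $\bG\cong \bG_0/\bG_1$ is compact by assumption. For the inductive step, note that $\bG_1$ inherits the tail of the series, $\bG_1\ge \bG_2\ge \cdots\ge \bG_k=\mathds{1}$, with all subquotients still compact, so by the induction hypothesis $\bG_1$ is compact. Combined with the assumption that $\bG/\bG_1$ is compact, everything reduces to establishing the following extension property: \emph{if $\bN\trianglelefteq \bG$ with both $\bN$ and $\bG/\bN$ compact, then $\bG$ is compact.}

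To prove this extension property, I would invoke the Weyl integration formula of \Cref{pr.weyl}, which expresses a left-invariant Haar weight of $\bG$ as
\begin{equation*}
  \varphi_\bG = \varphi_{\bG/\bN}\circ T,\qquad T=(\id\otimes \varphi_\bN)\circ\alpha,
\end{equation*}
where $\alpha:\Linf(\bG)\to \Linf(\bG)\vtens \Linf(\bN)$ is the canonical action. Since $\bN$ is compact, $\varphi_\bN$ is a (normalized, finite) Haar state, so $T$ is unital; consequently $T(\I)=\I$. Since $\bG/\bN$ is compact, $\varphi_{\bG/\bN}$ is a Haar state, and applying it yields $\varphi_\bG(\I)=\varphi_{\bG/\bN}(T(\I))=1<\infty$. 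Thus $\varphi_\bG$ is finite, which characterizes compactness of $\bG$ (equivalently, $\I\in\C_0(\bG)$).

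The only step that requires any care is the extension property, and even there the main input—the Weyl integration formula together with the integrability of the action of a closed quantum subgroup—is already available in the paper. In particular, the fact that $T$ is well-defined and that the composition $\varphi_{\bG/\bN}\circ T$ equals $\varphi_\bG$ in the stated sense is precisely the content of \Cref{pr.weyl}, so no further analytic obstacles arise.
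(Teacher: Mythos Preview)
Your proof is correct and follows essentially the same approach as the paper's own proof: both reduce the converse by induction to the extension property (if $\bN\trianglelefteq\bG$ with $\bN$ and $\bG/\bN$ compact then $\bG$ is compact) and then appeal to the Weyl integration formula (\Cref{pr.weyl}) to conclude that the Haar weight of $\bG$ is a state. You have simply spelled out the details more explicitly than the paper, which compresses the argument to two sentences.
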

\begin{proof}
  The direct implication `$\Rightarrow$' is immediate by considering
  the trivial length-zero series, so we focus on the opposite
  implication. 

  By induction, it suffices to show that if $\bL\trianglelefteq \bG$
  is compact along with $\bG/\bL$, then so is $\bG$. This in turn
  follows from the fact that by the Weyl integration formula proven
  above (\Cref{pr.weyl}) the Haar weight of $\bG$ is a state.
\end{proof}



\def\cprime{$'$}
\begin{bibdiv}
\begin{biblist}

\bib{abe}{book}{
      author={Abe, Eiichi},
       title={Hopf algebras},
      series={Cambridge Tracts in Mathematics},
   publisher={Cambridge University Press, Cambridge-New York},
        date={1980},
      volume={74},
        ISBN={0-521-22240-0},
        note={Translated from the Japanese by Hisae Kinoshita and Hiroko
  Tanaka},
      review={\MR{594432}},
}

\bib{AD}{article}{
      author={Andruskiewitsch, N.},
      author={Devoto, J.},
       title={{Extensions of Hopf algebras}},
        date={1995},
     journal={Algebra i Analiz},
      volume={7},
      number={1},
       pages={22\ndash 61},
}

\bib{borel}{book}{
      author={Borel, Armand},
       title={Linear algebraic groups},
     edition={Second},
      series={Graduate Texts in Mathematics},
   publisher={Springer-Verlag, New York},
        date={1991},
      volume={126},
        ISBN={0-387-97370-2},
  url={http://dx.doi.org.offcampus.lib.washington.edu/10.1007/978-1-4612-0941-6},
      review={\MR{1102012}},
}

\bib{BCV}{article}{
      author={{Brannan}, M.},
      author={{Collins}, B.},
      author={{Vergnioux}, R.},
       title={{The Connes embedding property for quantum group von Neumann
  algebras}},
        date={2014-12},
     journal={ArXiv e-prints},
      eprint={1412.7788},
}

\bib{CP}{book}{
      author={Chari, Vyjayanthi},
      author={Pressley, Andrew},
       title={A guide to quantum groups},
   publisher={Cambridge University Press, Cambridge},
        date={1994},
        ISBN={0-521-43305-3},
      review={\MR{1300632 (95j:17010)}},
}

\bib{free-unit}{article}{
      author={Chirvasitu, Alexandru},
       title={Free unitary groups are (almost) simple},
        date={2012},
        ISSN={0022-2488},
     journal={J. Math. Phys.},
      volume={53},
      number={12},
       pages={123509, 7},
  url={http://dx.doi.org.offcampus.lib.washington.edu/10.1063/1.4764860},
      review={\MR{3405899}},
}

\bib{chi_cos}{article}{
      author={Chirv{\u{a}}situ, Alexandru},
       title={Cosemisimple {H}opf algebras are faithfully flat over {H}opf
  subalgebras},
        date={2014},
        ISSN={1937-0652},
     journal={Algebra Number Theory},
      volume={8},
      number={5},
       pages={1179\ndash 1199},
         url={http://dx.doi.org/10.2140/ant.2014.8.1179},
      review={\MR{3263140}},
}

\bib{chi_rfd}{article}{
      author={Chirvasitu, Alexandru},
       title={Residually finite quantum group algebras},
        date={2015},
        ISSN={0022-1236},
     journal={J. Funct. Anal.},
      volume={268},
      number={11},
       pages={3508\ndash 3533},
  url={http://dx.doi.org.offcampus.lib.washington.edu/10.1016/j.jfa.2015.01.013},
      review={\MR{3336732}},
}

\bib{dnr}{book}{
      author={D{\u{a}}sc{\u{a}}lescu, Sorin},
      author={N{\u{a}}st{\u{a}}sescu, Constantin},
      author={Raianu, {\c{S}}erban},
       title={Hopf algebras},
      series={Monographs and Textbooks in Pure and Applied Mathematics},
   publisher={Marcel Dekker, Inc., New York},
        date={2001},
      volume={235},
        ISBN={0-8247-0481-9},
        note={An introduction},
      review={\MR{1786197}},
}

\bib{DKSS}{article}{
      author={Daws, Matthew},
      author={Kasprzak, Pawe{\l}},
      author={Skalski, Adam},
      author={So{\l}tan, Piotr~M},
       title={Closed quantum subgroups of locally compact quantum groups},
        date={2012},
     journal={Advances in Mathematics},
      volume={231},
      number={6},
       pages={3473\ndash 3501},
}

\bib{DK}{article}{
      author={Dijkhuizen, Mathijs~S.},
      author={Koornwinder, Tom~H.},
       title={C{QG} algebras: a direct algebraic approach to compact quantum
  groups},
        date={1994},
        ISSN={0377-9017},
     journal={Lett. Math. Phys.},
      volume={32},
      number={4},
       pages={315\ndash 330},
  url={http://dx.doi.org.offcampus.lib.washington.edu/10.1007/BF00761142},
      review={\MR{1310296}},
}

\bib{D}{article}{
      author={Drinfel{\cprime}d, V.~G.},
       title={Quantum groups},
        date={1986},
        ISSN={0373-2703},
     journal={Zap. Nauchn. Sem. Leningrad. Otdel. Mat. Inst. Steklov. (LOMI)},
      volume={155},
      number={Differentsialnaya Geometriya, Gruppy Li i Mekh. VIII},
       pages={18\ndash 49, 193},
  url={http://dx.doi.org.offcampus.lib.washington.edu/10.1007/BF01247086},
      review={\MR{869575}},
}

\bib{DF}{book}{
      author={Dummit, David~S.},
      author={Foote, Richard~M.},
       title={Abstract algebra},
     edition={Third},
   publisher={John Wiley \& Sons, Inc., Hoboken, NJ},
        date={2004},
        ISBN={0-471-43334-9},
      review={\MR{2286236}},
}

\bib{KalKS}{article}{
      author={{Kalantar}, M.},
      author={{Kasprzak}, P.},
      author={{Skalski}, A.},
       title={{Open quantum subgroups of locally compact quantum groups}},
        date={2016},
     journal={Advances in Mathematics},
      volume={303},
       pages={322\ndash 359},
}

\bib{KK}{article}{
      author={{Kasprzak}, P.},
      author={{Khosravi}, F.},
       title={{Coideals, quantum subgroups and idempotent states}},
        date={2016-06},
     journal={ArXiv e-prints},
      eprint={1606.00576},
}

\bib{KKS}{article}{
      author={{Kasprzak}, P.},
      author={{Khosravi}, F.},
      author={{So{\l}tan}, P.~M.},
       title={{Integrable actions and quantum subgroups}},
        date={2016-03},
     journal={ArXiv e-prints},
      eprint={1603.06084},
}

\bib{KSS}{article}{
      author={Kasprzak, P.},
      author={Skalski, A.},
      author={So{\l}tan, P.M.},
       title={{The canonical central exact sequence for locally compact quantum
  groups}},
        date={2016},
     journal={arXiv:1508.02943 preprint, To appear in Mathematische
  Nachrichten},
}

\bib{embed}{article}{
      author={Kasprzak, Pawe{\l}},
      author={So{\l}tan, Piotr},
       title={Embeddable quantum homogeneous spaces},
        date={2014},
     journal={Journal of Mathematical Analysis and Applications},
      volume={411},
      number={2},
       pages={574\ndash 591},
}

\bib{KSProj}{article}{
      author={Kasprzak, Pawe{\l}},
      author={So{\l}tan, Piotr},
       title={Quantum groups with projection and extensions of locally compact
  quantum groups},
        date={2014},
     journal={ArXiv e-prints},
      eprint={arXiv:1412.0821},
}

\bib{K}{book}{
      author={Kassel, Christian},
       title={Quantum groups},
      series={Graduate Texts in Mathematics},
   publisher={Springer-Verlag, New York},
        date={1995},
      volume={155},
        ISBN={0-387-94370-6},
  url={http://dx.doi.org.offcampus.lib.washington.edu/10.1007/978-1-4612-0783-2},
      review={\MR{1321145}},
}

\bib{KS}{book}{
      author={Klimyk, Anatoli},
      author={Schm{\"u}dgen, Konrad},
       title={Quantum groups and their representations},
      series={Texts and Monographs in Physics},
   publisher={Springer-Verlag, Berlin},
        date={1997},
        ISBN={3-540-63452-5},
         url={http://dx.doi.org/10.1007/978-3-642-60896-4},
      review={\MR{1492989 (99f:17017)}},
}

\bib{kur}{book}{
      author={Kurosh, A.~G.},
       title={The theory of groups},
   publisher={Chelsea Publishing Co., New York},
        date={1960},
        note={Translated from the Russian and edited by K. A. Hirsch. 2nd
  English ed. 2 volumes},
      review={\MR{0109842}},
}

\bib{KV}{article}{
      author={Kustermans, J.},
      author={Vaes, S.},
       title={{Locally compact quantum groups}},
        date={2000},
     journal={Ann.~Scient.~\'{E}c.~Norm.~Sup.},
      number={33},
       pages={837\ndash 934},
}

\bib{Kus02}{article}{
      author={Kustermans, Johan},
       title={Induced corepresentations of locally compact quantum groups},
        date={2002},
        ISSN={0022-1236},
     journal={J. Funct. Anal.},
      volume={194},
      number={2},
       pages={410\ndash 459},
  url={http://dx.doi.org.offcampus.lib.washington.edu/10.1006/jfan.2002.3953},
      review={\MR{1934609}},
}

\bib{univ}{article}{
      author={Kustermans, Johan},
       title={Locally compact quantum groups in the universal setting},
        date={2005},
     journal={Journal of the Institute of Mathematics of Jussieu},
      volume={4},
      number={01},
       pages={135\ndash 173},
}

\bib{MW}{article}{
      author={Masuoka, Akira},
      author={Wigner, David},
       title={Faithful flatness of {H}opf algebras},
        date={1994},
        ISSN={0021-8693},
     journal={J. Algebra},
      volume={170},
      number={1},
       pages={156\ndash 164},
  url={http://dx.doi.org.offcampus.lib.washington.edu/10.1006/jabr.1994.1332},
      review={\MR{1302835}},
}

\bib{SLW12}{article}{
      author={{Meyer}, R.},
      author={{Roy}, S.},
      author={{Woronowicz}, S.~L.},
       title={{Homomorphisms of quantum groups}},
        date={2012},
     journal={M{\"u}nster Journal of Mathematics},
      volume={5},
      number={1},
       pages={1 \ndash  24},
}

\bib{milneLAG}{misc}{
      author={Milne, James~S.},
       title={Lie algebras, algebraic groups, and lie groups},
        date={2013},
        note={Available at www.jmilne.org/math/},
}

\bib{git}{book}{
      author={Mumford, D.},
      author={Fogarty, J.},
      author={Kirwan, F.},
       title={Geometric invariant theory},
     edition={Third},
      series={Ergebnisse der Mathematik und ihrer Grenzgebiete (2) [Results in
  Mathematics and Related Areas (2)]},
   publisher={Springer-Verlag, Berlin},
        date={1994},
      volume={34},
        ISBN={3-540-56963-4},
  url={http://dx.doi.org.offcampus.lib.washington.edu/10.1007/978-3-642-57916-5},
      review={\MR{1304906}},
}

\bib{nagata}{article}{
      author={Nagata, Masayoshi},
       title={Complete reducibility of rational representations of a matric
  group.},
        date={1961/1962},
        ISSN={0023-608X},
     journal={J. Math. Kyoto Univ.},
      volume={1},
       pages={87\ndash 99},
      review={\MR{0142667}},
}

\bib{natale}{article}{
      author={Natale, Sonia},
       title={Jordan-h{\"o}lder theorem for finite dimensional hopf algebras},
        date={2015},
     journal={Proceedings of the American Mathematical Society},
      volume={143},
      number={12},
       pages={5195\ndash 5211},
}

\bib{porst_limcolim}{article}{
      author={Porst, Hans-E.},
       title={Limits and colimits of {H}opf algebras},
        date={2011},
        ISSN={0021-8693},
     journal={J. Algebra},
      volume={328},
       pages={254\ndash 267},
  url={http://dx.doi.org.offcampus.lib.washington.edu/10.1016/j.jalgebra.2010.10.014},
      review={\MR{2745565}},
}

\bib{rad_book}{book}{
      author={Radford, David~E.},
       title={Hopf algebras},
      series={Series on Knots and Everything},
   publisher={World Scientific Publishing Co. Pte. Ltd., Hackensack, NJ},
        date={2012},
      volume={49},
        ISBN={978-981-4335-99-7; 981-4335-99-1},
      review={\MR{2894855}},
}

\bib{rot}{book}{
      author={Rotman, Joseph~J.},
       title={An introduction to the theory of groups},
     edition={Fourth},
      series={Graduate Texts in Mathematics},
   publisher={Springer-Verlag, New York},
        date={1995},
      volume={148},
        ISBN={0-387-94285-8},
  url={http://dx.doi.org.offcampus.lib.washington.edu/10.1007/978-1-4612-4176-8},
      review={\MR{1307623}},
}

\bib{schau_tann}{book}{
      author={Schauenburg, Peter},
       title={Tannaka duality for arbitrary {H}opf algebras},
      series={Algebra Berichte [Algebra Reports]},
   publisher={Verlag Reinhard Fischer, Munich},
        date={1992},
      volume={66},
        ISBN={3-88927-100-6},
      review={\MR{1623637 (99c:18005)}},
}

\bib{schn}{article}{
      author={Schneider, Hans-J{\"u}rgen},
       title={Principal homogeneous spaces for arbitrary {H}opf algebras},
        date={1990},
        ISSN={0021-2172},
     journal={Israel J. Math.},
      volume={72},
      number={1-2},
       pages={167\ndash 195},
  url={http://dx.doi.org.offcampus.lib.washington.edu/10.1007/BF02764619},
        note={Hopf algebras},
      review={\MR{1098988}},
}

\bib{Sweedler}{book}{
      author={Sweedler, Moss~E.},
       title={Hopf algebras},
      series={Mathematics Lecture Note Series},
   publisher={W. A. Benjamin, Inc., New York},
        date={1969},
      review={\MR{0252485 (40 \#5705)}},
}

\bib{tak2}{book}{
      author={Takesaki, M.},
       title={Theory of operator algebras. {II}},
      series={Encyclopaedia of Mathematical Sciences},
   publisher={Springer-Verlag, Berlin},
        date={2003},
      volume={125},
        ISBN={3-540-42914-X},
  url={http://dx.doi.org.offcampus.lib.washington.edu/10.1007/978-3-662-10451-4},
        note={Operator Algebras and Non-commutative Geometry, 6},
      review={\MR{1943006}},
}

\bib{tak}{article}{
      author={Takeuchi, Mitsuhiro},
       title={Relative {H}opf modules---equivalences and freeness criteria},
        date={1979},
        ISSN={0021-8693},
     journal={J. Algebra},
      volume={60},
      number={2},
       pages={452\ndash 471},
         url={http://dx.doi.org/10.1016/0021-8693(79)90093-0},
      review={\MR{549940 (82m:16006)}},
}

\bib{Vae01}{article}{
      author={Vaes, Stefaan},
       title={The unitary implementation of a locally compact quantum group
  action},
        date={2001},
        ISSN={0022-1236},
     journal={J. Funct. Anal.},
      volume={180},
      number={2},
       pages={426\ndash 480},
  url={http://dx.doi.org.offcampus.lib.washington.edu/10.1006/jfan.2000.3704},
      review={\MR{1814995}},
}

\bib{BValg}{article}{
      author={Vaes, Steffan},
      author={Baaj, Saad},
       title={Double crossed products of locally compact quantum groups},
        date={2001},
     journal={International Journal of Mathematics},
      volume={12},
      number={3},
       pages={289\ndash 338},
}

\bib{Vain_Vaes}{article}{
      author={Vaes, Steffan},
      author={Vainerman, Leonid},
       title={Extensions of locally compact quantum groups and the bicrossed
  product construction},
        date={2003},
     journal={Advances in Mathematics},
      volume={175},
      number={1},
       pages={1\ndash 101},
}

\bib{wang-free}{article}{
      author={Wang, Shuzhou},
       title={Free products of compact quantum groups},
        date={1995},
        ISSN={0010-3616},
     journal={Comm. Math. Phys.},
      volume={167},
      number={3},
       pages={671\ndash 692},
  url={http://projecteuclid.org.offcampus.lib.washington.edu/euclid.cmp/1104272163},
      review={\MR{1316765}},
}

\bib{MR3119236}{article}{
      author={Wang, Shuzhou},
       title={Equivalent notions of normal quantum subgroups, compact quantum
  groups with properties {$F$} and {$FD$}, and other applications},
        date={2014},
        ISSN={0021-8693},
     journal={J. Algebra},
      volume={397},
       pages={515\ndash 534},
         url={http://dx.doi.org/10.1016/j.jalgebra.2013.09.014},
      review={\MR{3119236}},
}

\end{biblist}
\end{bibdiv}

\Addresses

\end{document}